\newtheorem{theorem}{Theorem}[section]
\newtheorem{corollary}[theorem]{Corollary} 
\newtheorem{lemma}[theorem]{Lemma}
\newtheorem{proposition}[theorem]{Proposition}
\theoremstyle{definition}
\newtheorem{definition}[theorem]{Definition}
\newtheorem{remark}[theorem]{Remark}
\newtheorem{example}[theorem]{Example}
\newtheorem{problem}[theorem]{Problem}
\newtheorem{notation}[theorem]{Notation}
\newtheorem{claim}[theorem]{Claim}
\newcommand{\spa} {{\rm span}}
\newcommand{\SING} {{\rm SING}}
\newcommand{\NSING} {{\rm NSING}}
\newcommand{\R}{{\mathbb R}}
\newcommand{\C}{{\mathbb C}}
\newcommand{\N}{{\mathbb N}}
\newcommand{\Q}{{\mathbb Q}}
\newcommand{\Z}{{\mathbb Z}}
\newcommand{\Mat}{\operatorname{Mat}}
\newcommand{\GL}{\operatorname{GL}}
\newcommand{\SL}{\operatorname{SL}}
\newcommand{\ST} {\operatorname{ST}}
\newcommand{\Lie} {{\rm Lie}}
\newcommand{\gl}{{\mathfrak{gl}}}
\newcommand{\sll}{{\mathfrak{sl}}}
\newcommand{\sgn} {{\rm sgn}}
\newcommand{\cd} {\star}
\newcommand{\llangle}{\,\,(\!\!\!\!<\!}
\newcommand{\rrangle}{\!>\!\!\!\!)\,\,}
\newcommand{\g} {\mathfrak{g}}
\newcommand{\h} {\mathfrak{h}}
\newcommand{\End}{{\rm End}}
\newcommand{\Supp} {{\rm Supp}}
\newcommand{\USupp} {{\rm USupp}}
\title{Singular tuples of matrices is not a null cone \\ (and, the symmetries of algebraic varieties)}
\author{Visu Makam} \thanks{Visu's research was supported by NSF grant No. DMS -1638352 and NSF grant No. CCF-1412958.}
\address{School of Mathematics, Institute for Advanced Study, Princeton}
\email{visu@ias.edu}
\author{Avi Wigderson} \thanks{
Avi's research was supported by NST grant No. CCF-1412958.}
\address{School of Mathematics, Institute for Advanced Study, Princeton}
\email{avi@ias.edu}
\thanks{}
\keywords{}
\begin{document}

\maketitle
\begin{abstract}
The following multi-determinantal algebraic variety plays a central role in algebra, algebraic geometry and computational complexity theory: $\SING_{n,m}$, consisting of all $m$-tuples of $n\times n$ complex matrices which span {\em only} singular matrices. In particular, an efficient deterministic algorithm testing membership in $\SING_{n,m}$ will imply super-polynomial circuit lower bounds, a holy grail of the theory of computation.

A sequence of recent works suggests such efficient algorithms for memberships in a general class of algebraic varieties, namely the {\em null cones} of linear group actions. Can this be used for the problem above? Our main result is negative: $\SING_{n,m}$ is {\em not} the null cone of any (reductive) group action! This stands in stark contrast to a non-commutative analog of this variety, and points to an inherent structural difficulty of $\SING_{n,m}$.

To prove this result we identify precisely the group of symmetries of $\SING_{n,m}$. We find this characterization, and the tools we introduce to prove it, of independent interest. Our work significantly generalizes a result of Frobenius for the special case $m=1$, and suggests a general method for determining the symmetries of algebraic varieties.
\end{abstract}

\tableofcontents


\section{Introduction} \label{sec:intro}
We start the introduction with a general discussion of the main problems and their motivations. Next we turn to describe our main object of study - singular spaces of matrices. We end by formally stating our main results. While a few technical terms here may be unfamiliar to some readers, we will have a simple running example to demonstrate all essential notions. Throughout, the underlying field is the complex numbers $\C$. 
\subsection{Motivation and main problems}
Consider a  (reductive\footnote{A technical term that includes all classical groups.}) 
group $G$ acting (algebraically) on a vector space $V$ by linear transformations. Understanding this very general setting is the purview of {\em invariant theory}. As a simple, and very relevant running example, consider the following.

\begin{example} [Running example] \label{E-running} 
Consider $G= \SL_n$ acting on $n\times n$ matrices (namely $V=\C^{n^2}$) by left multiplication, i.e, the action of $P \in \SL_n$ sends the matrix $X$ to $PX$.
\end{example}

A group action partitions $V$ into {\em orbits}: the orbit of $v$ is the set of all points in $V$ it can be moved to by an element $g\in G$. An even more natural object in our setting is the {\em orbit closure}: all limit points of an orbit\footnote{where limits can be equivalently  taken in the Euclidean or Zariski topology}. 

The {\em null cone} of a group action is the set of points $v \in V$ whose orbit closure contains the origin, namely the point $0$. Null cones of group actions are central to invariant theory, and are interesting algebraic objects to study in mathematics and physics. More recently, connections to fundamental problems in computational complexity have surfaced. Diverse problems (see \cite{GGOW16, BGFOWW}) such as bipartite matching, equivalence of non-commutative rational expressions, tensor scaling and quantum distillation, can each be formulated (for specific choices of $G,V$ and an action) as a {\em null cone membership problem} -- given a point $v \in V$, decide if it is in the null cone. Note that in our running example, i.e., Example~\ref{E-running}, the null cone is precisely the set of singular matrices.

A closely related problem is the {\em orbit closure intersection problem} -- given $v,w \in V$, decide if the orbit closure of $v$ and $w$ intersect. The orbit closure intersection problem is a generalization of the null cone membership problem, and this too has many connections with arithmetic complexity. For example, the graph isomorphism problem can be phrased as an orbit closure intersection problem! We refer to \cite{GCTV} for more details on the aforementioned problems and their relevance in the Geometric Complexity Theory (GCT) program, which is an algebro-geometric approach to the VP vs VNP problem (an algebraic analog of P vs NP). Note that in Example~\ref{E-running}, the orbit closure of two matrices $X$ and $Y$ intersect precisely when $\det(X)=\det(Y)$.

In an exciting series of recent works, efficient algorithms for the null cone membership and orbit closure intersection problems in various cases have been discovered, and moreover techniques have developed that may allow significant generalization of their applicability \cite{GGOW16, IQS2, FS, DM, DMOC, GGOW18,AGLOW18,BGFOWW,BGFOWW2,CF, DM-arbchar, DM-siq}. Curiously, Geometric Complexity Theory (morally) predicts efficient algorithms for null cone membership problems in great generality (see \cite{GCTV} for precise formulations), although establishing this remains an elusive goal. 

What is remarkable is the possibility that such efficient algorithms, through the work of \cite{KI}, may enable proving non-trivial {\em lower bounds} on computation, the major challenge of computational complexity. Specifically, what is needed is a deterministic polynomial time algorithm for a problem called Symbolic Determinant Identity Testing (SDIT)\footnote{A canonical version of the Polynomial Identity Testing (PIT) problem.} that is central to this work, and will be defined soon. SDIT happens to be a membership problem in an {\em algebraic subvariety}, a context generalizing null cones.

A subset $S \subseteq V$ is called an algebraic subvariety\footnote{We do not require irreducibility in our definition of varieties.} (or simply a {\em subvariety}) if it is the zero locus of a collection of polynomial functions on $V$. Many algorithmic problems can be phrased as ``membership in a subvariety'', and is non-trivial when the underlying set of polynomials is given implicitly or are difficult to compute. It is a fundamental result of invariant theory that {\em every null cone is an algebraic subvariety}, a connection which goes through {\em invariant polynomials} of group actions. A polynomial function $f$ on $V$ is called invariant if it is constant along orbits, i.e., $f(gv) = f(v)$ for all $g \in G, v \in V$. Invariant polynomials form a graded subring of $\C[V]$, the ring of polynomial functions on $V$. Mumford proved that the orbit closures of any two points $v,w \in V$  intersect, if and only if $f(v) = f(w)$ for all invariant polynomials\footnote{Reductivity is essential for this.}, see \cite{Mumford}. As a consequence, the null cone can also be described as the zero locus of all (non-constant) homogenous invariant polynomials. Indeed, this analytic-algebraic connection provides the path to structural and algorithmic understanding of the null cone membership and orbit closure intersection problems  via invariant theory.

Summarizing,  if a subvariety $S \subseteq V$ happens to be a null cone for some group action, then the aforementioned algorithms can be used to decide ``membership in $S$'', with the exciting possibility that they could very well be efficient. Of course, not every subvariety is a null cone, which leads to the following interesting problem:

\begin{problem} \label{prob.nullcone}
Given a subvariety $S \subseteq V$, is it the null cone for the (algebraic) action of a (reductive) group $G$ on $V$?
\end{problem}

\begin{remark}
We specifically refer to $S$ as a {\em subvariety} of $V$ rather than just call it a {\em variety} for the following reason. In the above problem, it is important that we view $S$ as a subset of $V$. As an abstract variety, a different embedding of $S$ into another vector space could very well make it a null cone.\footnote{For example, if we consider the parabola described as the zero locus of $y - x^2$ in $\C^2$, this is not a null cone (because null cones are stable under scalar multiplication, but the parabola isn't. However, as a variety, this is just the affine line, which is definitely a null cone (for the action of $\C^*$ on $\C$ by multiplication).} Our setting of a given embedding makes the problem well-defined. 
\end{remark}

We now make an important observation. If $S$ is to be the null cone for the action of a group $G$, then the group must ``preserve'' $S$, i.e., for all $g \in G$, we must have $gS = S$. We define the {\em group of symmetries} to be the (largest) subgroup of $\GL(V)$ consisting of all linear transformations that preserve $S$. With reference to Example~\ref{E-running}, one might ask which is the largest group of symmetries in $\GL_{n^2}$ which preserves the set $n\times n$ the singular matrices (which is defined by the zeros of the single determinant polynomial). This question was resolved by Frobenius \cite{Frob} as we will later see, and is a very special case of our main technical result. 

So, the (hypothetical) acting group $G$ must be a subgroup\footnote{Any group $G$ acting on $V$ gives a map $\rho:G \rightarrow \GL(V)$. The null cone for $G$ is the same as the null cone for $\rho(G)$, so we can always restrict ourselves to subgroups of $\GL(V)$ when concerned about Problem~\ref{prob.nullcone}. Moreover, note that if $G$ is reductive, so is $\rho(G)$.} of the group of symmetries of $S$. Roughly speaking, this provides an important ``upper bound'' to the groups that one must consider while resolving Problem~\ref{prob.nullcone}. 
 
\begin{problem} \label{prob.gos}
Given a subvariety $S \subseteq V$, compute its group of symmetries.
\end{problem}

Needless to say, the important role of symmetries in mathematics in present just about every branch, and exploiting symmetries is an immensely powerful tool. Specifically, the fact that the determinant and permanent polynomials are {\em defined} by their symmetries form the starting point to the GCT of Mulmuley and Sohoni \cite{MS01,MS02} program mentioned earlier towards the VP $\neq$ VNP conjecture. Computing the group of symmetries of an algebraic variety is an extremely natural problem (even in the absence of Problem~\ref{prob.nullcone}!), and may be useful for other purposes. We now elaborate informally on the path we take to solve Problem~\ref{prob.gos}, and another natural problem it raises.

The group of symmetries of an algebraic subvariety $S \subseteq V$ is always an algebraic subgroup of $\GL(V)$ (and hence a Lie subgroup). Suppose that $H$ is an algebraic group that acts linearly on a vector space $V$. It is a fact that the null cone for the action of its identity component\footnote{The identity component is the connected component of $H$ that contains the identity element. It is always an algebraic subgroup.} (denoted $H^\circ$) is the same as the null cone for the action of $H$. Thus, for Problem~\ref{prob.nullcone}, one might as well study the {\em connected group of symmetries}, i.e., the identity component of the group of symmetries. Indeed, if $S$ is the null cone for the action of a reductive group $G$, then it is the null cone for the action of its identity component $G^\circ$, which must be a subgroup of the connected group of symmetries. Thus, we are led to problem below.

\begin{problem} \label{prob.cgos}
Given a subvariety $S \subseteq V$, compute its connected group of symmetries.
\end{problem}

To understand that Problem~\ref{prob.cgos} really is much easier than Problem~\ref{prob.gos}, one needs to realize that connected group of symmetries is a connected algebraic subgroup of $\GL(V)$, and so in particular is determined by its Lie algebra (which is a Lie subalgebra of the Lie algebra of $\GL(V)$). Roughly speaking, we will use this to ``linearize'' the problem.

Algebraic subvarieties are defined as the zero locus of a collection of polynomials. Suppose we have a  collection of homogeneous polynomials $\{f_i:i \in I\}$, and let $S$ be its zero locus. If the ring of invariants for the action of some group $G$ is precisely $\C[f_i : i \in I]$, then $S$ would be the null cone (recall that the null cone can be seen as the zero locus of non-constant homogeneous invariant polynomials). This brings us to another interesting problem, which can be seen as a scheme-theoretic version of Problem~\ref{prob.nullcone}

\begin{problem} \label{prob.invring}
Given  a collection of polynomials $\{f_i: i \in I\}$ on $V$, is there a group $G$ acting on $V$ by linear transformations such that the ring of invariants is $\C[\{f_i: i \in I\}]$.
\end{problem}

Curiously, the above problem is in some sense is an inverse problem to the classical one in invariant theory: there, given a group action on $V$, we seek its invariant polynomials, whereas here we are given the polynomials, and seek the group which makes them all invariant. 

\begin{remark}
Both Problem~\ref{prob.gos} and Problem~\ref{prob.invring} belong to a general class of problems called {\em linear preserver problems}. We refer the reader to the survey \cite{LP-survey} which contains in particular some general techniques for approaching linear preserver problems. These techniques do not seem to be sufficient for us.
\end{remark}

Finally, let us mention that all the aforementioned problems are very natural, interesting in their own right, and could potentially use tools from invariant theory, representation theory, Lie theory, algebraic geometry, commutative algebra and computational complexity. 

\subsection{The algebraic variety SING and the computational problem SDIT}
Having introduced the problems of interest, let us introduce the subvariety which we will be the main focus of this paper. 
Let $\Mat_{n}$ denote $n \times n$ matrices with entries in $\C$. Let $t_1,\dots,t_m$ be indeterminates, and let $\C(t_1,\dots,t_m)$ denote the function field in $m$ indeterminates. Define
\begin{align}
\SING_{n,m} \triangleq \left\{X = (X_1,\dots,X_m)\in \Mat_{n}^m\ |\ \sum_{i=1}^m t_i X_i \text{ singular (over $\C(t_1,\dots,t_m)$)}\right\}.
\end{align}

Note that $\SING_{n,m} \subseteq V=\C^{mn^2}$, given by the zero locus of all polynomials $\{\det(c_1X_1 + c_2X_2+ \dots + c_mX_m)\,:\, c_i \in \C\}$. While this is an uncountable set, one can easily make it finite. Another important note is that the case $m=1$ is the null cone for our simple running example (Example~\ref{E-running}) of the previous subsection!

The subvariety $\SING_{n,m}$ is of central importance in computational complexity. The membership problem for $\SING_{n,m}$ (i.e., given $X \in  \Mat_{n}^m$, decide if $X \in \SING_{n,m}$) is often called Symbolic Determinant Identity Testing (SDIT). This problem SDIT is also sometimes referred to as the {\em Edmonds' problem}, as Edmond' paper ~\cite{Edm67} first explicitly defined it and asked if it has a polynomial time algorithm. Note that any {\em fixed} tuple $X = (X_1,\dots,X_m) \in \SING_{n,m}$ if and only if the {\em symbolic} determinant $\det(t_1X_1 + t_2X_2+ \dots t_mX_m)$ vanishes identically when viewed now as a polynomial in the new variables $t_1,\dots,t_m$. This viewpoint immediately provides an efficient {\em probabilistic}  algorithm for the SDIT \cite{Lovasz'79}: given $X$, simply pick (appropriately) at random values for the variables $t_i$ and evaluate the resulting {\em numeric} determinant. 

The importance of determining the complexity of SDIT stems from several central results in arithmetic complexity and beyond. First, Valiant's completeness theorem for VP \cite{Valiant} implies that SDIT captures the general problem of Polynomial Identity Testing (PIT) problem (see the survey \cite{SY}, for background and status of this problem, and more generally on arithmetic complexity). An equivalent way of phrasing Valiant's result is that SDIT {\em is} the {\em word problem} for $\C(t_1,\dots,t_m)$, namely testing if a rational expression in $\C(t_1,\dots,t_m)$ is identically zero.  A second, and far more surprising result we already mentioned, of Kabanets and Impagliazzo (see \cite{KI}), shows that efficient {\em deterministic} algorithms for PIT would imply circuit lower bounds, a holy grail of complexity theory. SDIT also plays an important role in the GCT program, see \cite{GCTV}. Finally, the structural study of the variety $\SING_{n,m}$, namely of singular spaces of matrices is a rich subject in linear algebra and geometry (see e.g. \cite{FR, EH,RW,RM1,RM2,RM3}).

It is illustrative to compare with the non-commutative version of the above story, and we will do so. Let $t_1,\dots,t_m$ be now {\em non-commuting} indeterminates, and let $\C\llangle t_1,\dots,t_m \rrangle$ denote the free skew field\footnote{The free skew field  is intuitively the natural non-commutative analog of $\C(t_1,\dots,t_m)$, namely may be viewed as the field of fractions completing non-commutative polynomials. However, we note that its very existence, let alone its construction is highly non-trivial, and was first established by Amitsur \cite{Am66} (see also \cite{Cohn}). For one illustration of the complexity of this field, it is easy to see that unlike in the commutative case, its elements cannot be represented as ratios of polynomials (or any finite number of inversions - an important result of \cite{Reutenauer}).}. Consider 
$$
\NSING_{n,m} \triangleq \left\{X = (X_1,\dots,X_m) \in \Mat_{n}^m\ | \ \sum_i t_iX_i \text{ singular (over $\C\llangle t_1,\dots,t_m \rrangle$)} \right\},
$$
which is clearly a non-commutative analog of $\SING_{n,m}$. Moreover, membership in $\NSING_{n,m}$ captures the word problem over the free skew field $\C\llangle t_1,\dots,t_m \rrangle$ (often called non-commutative rational identity testing (RIT)) in precisely the same manner as membership in $\SING_{n,m}$ captures the word problem over the function field $\C(t_1,\dots,t_m)$. 

The surprising fact is that membership in $\NSING_{n,m}$ {\em does} have polynomial time deterministic algorithms, see \cite{GGOW16, IQS2}. The main point to note is that the algorithms use crucially the fact that $\NSING_{n,m}$ is a null cone! Indeed, it is the null cone for the so called left-right action of $\SL_n \times \SL_n$ on $\Mat_{n}^m$ which is defined by:
$$
(P,Q) \cdot (X_1,\dots,X_m) = (PX_1Q^t,PX_2Q^t,\dots,PX_mQ^t),
$$
where $Q^t$ denotes the transpose of the matrix $Q$. In view of this, it is only natural to ask whether a similar story can be used to give an efficient algorithm for membership in $\SING_{n,m}$. This provides the principal motivation for studying Problem~\ref{prob.nullcone}.

For $S = \SING_{n,m}$, in this paper, we will answer Problem~\ref{prob.nullcone} and Problem~\ref{prob.gos} (and hence also Problem~\ref{prob.cgos}). Moreover, recall that $\SING_{n,m}$ is the zero locus of a natural collection of polynomials, namely $\{\det(\sum_i c_i X_i) : c_i \in \C\}$. We also give a negative answer to Problem~\ref{prob.invring} for this collection of polynomials. We will now proceed to give precise statements.

\subsection{Main results}
We begin by stating the main result, i.e., a negative answer to Problem~\ref{prob.nullcone} for $\SING_{n,m}$.

\begin{theorem} \label{theo:nullcone}
Let $n,m \geq 3$. Let $G$ be any reductive group acting algebraically on $\Mat_{n}^m$ by linear transformations. Then the null cone for the action of $G$ is not equal to $\SING_{n,m}$.
\end{theorem}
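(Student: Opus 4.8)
The plan is to reduce Theorem~\ref{theo:nullcone} to the determination of the group of symmetries of $\SING_{n,m}$, and then apply a reductivity obstruction.

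\medskip

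\noindent\textbf{Step 1: Reduce to the connected group of symmetries.} As explained in the introduction, if $\SING_{n,m}$ were the null cone for a reductive group $G$ acting on $\Mat_n^m$, then (passing to $\rho(G)$ and then to $G^\circ$) it would be the null cone for a connected reductive subgroup $H \leq \GL(\Mat_n^m)$, and $H$ must lie inside the connected group of symmetries $\Gamma^\circ$ of $\SING_{n,m}$. So the first task is the main technical theorem of the paper: identify $\Gamma = \Gamma(\SING_{n,m})$ precisely. I expect the answer to be generated by the ``obvious'' symmetries: the left-right-type action $X_i \mapsto P X_i Q$ for $P,Q \in \GL_n$, the transpose $X_i \mapsto X_i^t$ (applied simultaneously to all coordinates), and the linear substitutions on the $m$-tuple itself, i.e. $(X_1,\dots,X_m)\mapsto (\sum_j a_{1j}X_j,\dots,\sum_j a_{mj}X_j)$ for $A=(a_{ij})\in \GL_m$ — since $\SING_{n,m}$ only depends on the span $\spa\{X_1,\dots,X_m\}$. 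Thus $\Gamma^\circ$ should be (a quotient of) $\GL_n \times \GL_n \times \GL_m$, acting via $(P,Q,A)$.

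\medskip

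\noindent\textbf{Step 2: Prove the symmetry characterization (the hard part).} This is where the real work is, generalizing Frobenius's $m=1$ theorem. The strategy I would use: a symmetry $g\in \GL(\Mat_n^m)$ preserves $\SING_{n,m}$, hence permutes the irreducible components, and more usefully must preserve the variety of tuples $X$ for which $\spa\{X_1,\dots,X_m\}$ is ``compression-like'' — e.g. the locus where all matrices in the span share a common kernel vector, or a common image hyperplane, these being extreme/maximal-dimensional strata of $\SING_{n,m}$. One recovers the $\GL_m$-action by noting $g$ must preserve the ruling of $\SING_{n,m}$ by the linear spaces $\{(X_1,\dots,X_m) : \spa\{X_i\} \subseteq W\}$ for $W$ a fixed singular subspace, forcing $g$ to respect the ``coordinate-tuple'' structure; then modding out by $\GL_m$ reduces essentially to understanding linear maps on $\Mat_n$ preserving the singular locus, which is Frobenius. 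Executing this cleanly — making ``preserve the stratification'' into an actual rigidity statement via tangent spaces / Lie algebra computations at well-chosen smooth points, and handling the low-rank coincidences — is the main obstacle, and I'd expect it to consume the bulk of the paper (the ``tools of independent interest'' alluded to in the abstract). Linearizing via the Lie algebra $\Lie(\Gamma)$ sitting inside $\gl(\Mat_n^m)$, as the introduction hints, is the right device: one computes the normalizer/centralizer-type conditions infinitesimally.

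\medskip

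\noindent\textbf{Step 3: Derive the contradiction with reductivity.} Once $\Gamma^\circ$ is pinned down as the image of $\GL_n \times \GL_n \times \GL_m$ (or its semisimple/connected part), a connected reductive $H$ with $\SING_{n,m}$ as null cone must be a reductive subgroup of this. The key point: the null cone of $H$ equals that of $H$ acting through its image in $\Gamma^\circ$, and for $\SING_{n,m}$ to be \emph{exactly} the null cone — not a proper subvariety of it — $H$ must be large; in particular it should have to contain the $\GL_m$ (or $\SL_m$) factor acting by substitutions on the tuple, because that factor is needed to see that an arbitrary singular space, not just one in special position, lies in the null cone. But then I would argue that the $\SL_m$-substitution action, combined with (a subgroup of) the left-right $\SL_n\times\SL_n$ action needed to kill tuples like $(X,0,\dots,0)$ with $X$ singular, generates a group whose null cone is strictly smaller (or strictly larger) than $\SING_{n,m}$ — e.g. there exist tuples in $\SING_{n,m}$ (pencils of singular matrices with no common kernel, Kronecker-type modules) whose orbit closure under \emph{any} reductive subgroup of $\Gamma^\circ$ does not contain $0$, by exhibiting a non-constant invariant (a suitable Kronecker/resultant-type polynomial) that does not vanish on them. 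Concretely, one uses the Hilbert–Mumford criterion: a point is in the null cone iff some one-parameter subgroup of $H\subseteq \Gamma^\circ$ drives it to $0$; one then checks that the available $1$-PSs in $\GL_n\times\GL_n\times\GL_m$ cannot annihilate certain well-chosen singular pencils (this is exactly where $n,m\geq 3$ enters — for $m\leq 2$ Kronecker's pencil theory makes $\SING_{n,2}$ much more rigid). Assembling this non-vanishing invariant, or equivalently the missing $1$-PS, against the constraint from Step 2 yields the contradiction. The anticipated main obstacle, to repeat, is Step 2; Steps 1 and 3 are conceptual packaging around it, with Step 3 requiring one clever explicit pencil and invariant.
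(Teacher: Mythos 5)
Your Steps 1 and 2 are aligned with the paper: the reduction to the connected group of symmetries is exactly the packaging used, and although your concrete plan for Step 2 (rigidity via compression-space strata) differs from the paper's execution (which determines the degree-$n$ piece of the vanishing ideal via representation theory and then does an explicit Lie-algebra computation on $\spa\{\det(\sum_i c_iX_i)\}$), you correctly flag the Lie-algebra linearization as the device and correctly predict the answer $\mathcal{G}_S^\circ = G_{n,m}$.

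The genuine gap is in Step 3. Your concrete plan — "check that the available $1$-PSs in $\GL_n\times\GL_n\times\GL_m$ cannot annihilate certain well-chosen singular pencils" — fails as stated: $G_{n,m}$ contains the scaling one-parameter subgroup $t\mapsto (t\,{\rm I}_m,{\rm I}_n,{\rm I}_n)$, which drives \emph{every} point of $\Mat_n^m$ to $0$, so no individual tuple is protected from all $1$-PSs of $\mathcal{G}_S^\circ$. The constraint one must exploit is global: if $\mathcal{N}_H(V)=\SING_{n,m}$ then for a maximal torus $T$ of $H$ the \emph{entire} torus null cone $\mathcal{N}_T(V)$ — which, after conjugating $T$ into the standard maximal torus of $G_{n,m}$, is a union of standard coordinate subspaces $L_I$ — must sit inside $\SING_{n,m}$. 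The paper then uses two structural facts you do not have: (i) a coordinate subspace $L_I$ lies in $\SING_{n,m}$ if and only if its union support is permutation-free, which is the same criterion as for $\NSING_{n,m}$; hence $\mathcal{N}_T(V)\subseteq\NSING_{n,m}$; and (ii) the symmetry groups of $\SING_{n,m}$ and $\NSING_{n,m}$ coincide, so $H$, which preserves $\SING_{n,m}$, also preserves $\NSING_{n,m}$, giving $\mathcal{N}_H(V)=H\cdot\mathcal{N}_T(V)\subseteq\NSING_{n,m}\subsetneq\SING_{n,m}$ for $n,m\ge 3$. Your proposal never brings $\NSING_{n,m}$ into the argument, and without (i) and (ii) there is no way to convert "the null cone is a union of $H$-translates of coordinate subspaces" into a contradiction. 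Your side-claim that $H$ must contain the $\SL_m$ substitution factor is also unsupported and is not needed in the actual proof.
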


First, and foremost, let us observe that the condition $n,m \geq 3$ cannot be removed or even improved. Indeed, if
 $n \leq 2$ or $m \leq 2$, we have $\SING_{n,m} = \NSING_{n,m}$ and hence it is a null cone! Thus, the above theorem gives the strongest possible statement of this nature. The above theorem follows from the following one, which has no restrictions on $n$ and $m$.
 
\begin{theorem} \label{theo:nullcone2}
Let $G$ be any reductive group acting algebraically on $V = \Mat_{n}^m$ by linear transformations which preserve $\SING_{n,m}$ (i.e., $g \cdot \SING_{n,m} = \SING_{n,m}$ for all $g \in G$). Let $\mathcal{N} = \mathcal{N}_G(V)$ denote the null cone for this action. If the null cone $\mathcal{N} \subseteq\SING_{n,m}$, then the null cone $\mathcal{N} \subseteq \NSING_{n,m}$.
\end{theorem}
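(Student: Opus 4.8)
The plan is to whittle $G$ down to a maximal torus, at which point the assertion becomes a short combinatorial fact about support patterns. If $\min(n,m)\le 2$ then $\SING_{n,m}=\NSING_{n,m}$ and there is nothing to prove, so I will assume $n,m\ge 3$ and invoke the symmetry characterization established in this paper: the group of linear symmetries of $\SING_{n,m}$ is generated by the left-right action $X_i\mapsto PX_iQ$ of $\GL_n\times\GL_n$, the mixing action $X_i\mapsto\sum_jA_{ij}X_j$ of $\GL_m$, and the transpose $X_i\mapsto X_i^t$. Write $\phi\colon\GL_n\times\GL_n\times\GL_m\to\GL(V)$ for the homomorphism realizing the first two, so that $\phi(\GL_n\times\GL_n\times\GL_m)$ is the connected symmetry group. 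The first thing I would record is that $\NSING_{n,m}$, not merely $\SING_{n,m}$, is stable under all of these: left-right multiplication and transpose visibly preserve singularity over $\C\llangle t_1,\dots,t_m\rrangle$, and the mixing action sends $\sum_i t_iX_i$ to $\sum_j s_jX_j$ with $s_j:=\sum_iA_{ij}t_i$, a $\C$-linear invertible substitution of the generators which extends to an automorphism of the free skew field and hence preserves singularity there.

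Next I would pass from $G$ to a torus. Since $G$ preserves $\SING_{n,m}$, the image of the defining representation $\rho\colon G\to\GL(V)$ lies in the symmetry group, hence $\rho(G^\circ)=\rho(G)^\circ\subseteq\phi(\GL_n\times\GL_n\times\GL_m)$, and $\mathcal{N}=\mathcal{N}_G=\mathcal{N}_{G^\circ}=\mathcal{N}_{\rho(G^\circ)}$. Let $\wt{G}\le\GL_n\times\GL_n\times\GL_m$ be the preimage of $\rho(G^\circ)$ under $\phi$. It contains the central torus $\ker\phi$ of scalar triples, so it is an extension of the reductive group $\rho(G^\circ)$ by a central torus, hence reductive; it acts on $V$ through $\phi$ and has null cone $\mathcal{N}$. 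Fix a maximal torus $T\le\wt{G}$; after conjugating (which replaces the action by a symmetry-conjugate and so affects neither the hypothesis nor the conclusion) I may assume $T$ lies in the diagonal torus of $\GL_n\times\GL_n\times\GL_m$. By the Hilbert--Mumford criterion every one-parameter subgroup of $\wt{G}$ is conjugate into $T$, so $\mathcal{N}=\wt{G}\cdot\mathcal{N}_T$; and since $\SING_{n,m}$ and $\NSING_{n,m}$ are both $\wt{G}$-stable, $\mathcal{N}\subseteq\SING_{n,m}$ is equivalent to $\mathcal{N}_T\subseteq\SING_{n,m}$ and $\mathcal{N}\subseteq\NSING_{n,m}$ to $\mathcal{N}_T\subseteq\NSING_{n,m}$. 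It therefore suffices to prove the statement with the torus $T$ in place of $G$.

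For the torus, let $W\subseteq\R^n\times\R^n\times\R^m$ be the rational span of the cocharacter lattice of $T$. The weight of the coordinate $(X_i)_{kl}$ relative to $(a,c,b)\in W$ is $a_k+c_l+b_i$, so Hilbert--Mumford gives $X\in\mathcal{N}_T$ iff $\Supp(X)\subseteq\Omega_{(a,c,b)}:=\{(i,k,l)\colon a_k+c_l+b_i>0\}$ for some $(a,c,b)\in W$; that is, $\mathcal{N}_T$ is the union over $(a,c,b)\in W$ of the coordinate subspaces $\{X\colon\Supp(X)\subseteq\Omega_{(a,c,b)}\}$. Now I would analyze a single such pattern via the bipartite graph on rows and columns with an edge $k\sim l$ whenever $a_k+c_l+\max_i b_i>0$. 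If this graph has no perfect matching, Hall's theorem gives a set $J$ of columns with $|N(J)|<|J|$, and then for \emph{every} tuple supported in $\Omega_{(a,c,b)}$ the coordinate subspace $\spa(e_j\colon j\in J)$ is carried into $\spa(e_k\colon k\in N(J))$ by all of the $X_i$, hence is a shrunk subspace, so every such tuple lies in $\NSING_{n,m}$ and a fortiori in $\SING_{n,m}$. If instead the graph has a perfect matching, then a generic tuple supported on $\Omega_{(a,c,b)}$ satisfies $\det(\sum_i t_iX_i)\not\equiv 0$ --- the matching contributes a monomial that cannot cancel, since the monomials of $\det(\sum_i t_iX_i)$ in the joint variables $\{t_i\}\cup\{(X_i)_{kl}\}$ are pairwise distinct --- so that pattern contains tuples outside $\SING_{n,m}$. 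Hence $\mathcal{N}_T\subseteq\SING_{n,m}$ forces every $\Omega_{(a,c,b)}$ with $(a,c,b)\in W$ to have no perfect matching, which in turn yields $\mathcal{N}_T\subseteq\NSING_{n,m}$, completing the argument.

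The step I expect to be most delicate is the reduction to the maximal torus: one must be confident that $\mathcal{N}=\wt{G}\cdot\mathcal{N}_T$ and, crucially, that $\NSING_{n,m}$ (and not just $\SING_{n,m}$) is invariant under the full connected symmetry group --- it is precisely here that the $\GL_m$-mixing is used as a substitution of non-commuting variables. Once that is in place, the torus case is the brief Hall's-theorem computation above, which in fact establishes the sharper statement $\mathcal{N}_T\subseteq\SING_{n,m}\iff\mathcal{N}_T\subseteq\NSING_{n,m}$. Everything, of course, rests on the symmetry characterization, which is the genuinely hard ingredient and is proved separately.
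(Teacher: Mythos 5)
Your proof is correct and follows essentially the same route as the paper: restrict $\rho(G)$ to the symmetry group $G_{n,m}\rtimes\Z/2$, conjugate a maximal torus into the standard diagonal torus so that $\mathcal{N}_T$ is a union of coordinate subspaces, show any coordinate subspace inside $\SING_{n,m}$ already lies in $\NSING_{n,m}$, and conclude via Hilbert--Mumford and the $\NSING_{n,m}$-invariance of the symmetry group. Your Hall's-theorem/shrunk-subspace dichotomy is just the paper's ``permutation-free support'' criterion (Corollary~\ref{C-coordsubs}) in equivalent language, and your direct verification that the $\GL_m$-mixing preserves $\NSING_{n,m}$ substitutes harmlessly for the paper's appeal to Theorem~\ref{theo:ngos}.
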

 
Indeed, Theorem~\ref{theo:nullcone} follows from the above theorem as $n,m \geq 3$ is precisely the condition needed to ensure that $\NSING_{n,m}$ is a proper subset of $\SING_{n,m}$.
 
A crucial component in the proof of the above theorem is the computation of the group of symmetries for $\SING_{n,m}$. The importance of this computation is well beyond the context of this paper. For example, it should serve as the starting point for any approach to SDIT that aims at utilizing symmetry. Let us formally define the group of symmetries for a subvariety.

\begin{definition} [Group of symmetries]
For a subvariety $S \subseteq V$, we define its group of symmetries
$$
\mathcal{G}_S = \{g \in \GL(V) \ |\ gS = S\}.
$$
The group of symmetries $\mathcal{G}_S$ is always an algebraic subgroup of $\GL(V)$. We call its identity component (denoted $\mathcal{G}_S^\circ$) the connected group of symmetries.
\end{definition}

In order to compute the group of symmetries for $\SING_{n,m}$, we first compute the connected group of symmetries. Viewing $\Mat_{n}^m$ as $\C^m \otimes \C^n \otimes \C^n$ elucidates a natural linear action of $\GL_m \times \GL_n \times \GL_n$ on $\Mat_{n}^m$. Concretely, the action is given by the formula: 
$$
(P,Q,R) \cdot (X_1,\dots,X_m) = \left(\sum_{j=1}^m p_{1j} QX_jR^{-1}, \sum_{j=1}^m p_{2j} QX_j R^{-1},\dots, \sum_{j=1}^m p_{nj} QX_j R^{-1}\right),
$$
where $p_{ij}$ denotes the $(i,j)^{th}$ entry of $P$. A linear action is simply a representation, so we have a map $\GL_m \times \GL_n \times \GL_n \rightarrow \GL(\Mat_{n}^m)$. We will call the image of this map $G_{n,m}$. 

\begin{theorem} \label{theo:cgos}
Let $S = \SING_{n,m} \subseteq V =  \Mat_{n}^m$. Then the connected group of symmetries $\mathcal{G}_S^\circ$ is the subgroup $G_{n,m}$.
\end{theorem}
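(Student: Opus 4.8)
The easy half is the inclusion $G_{n,m}\subseteq\mathcal{G}_S^\circ$. If $Y=(P,Q,R)\cdot X$ then for every $s\in\C^m$ one has $\sum_k s_kY_k=Q\bigl(\sum_j(P^ts)_jX_j\bigr)R^{-1}$; since $P^t$ is invertible, $t\mapsto P^tt$ is an invertible change of the indeterminates, so $\sum_k t_kY_k$ is singular over $\C(t_1,\dots,t_m)$ precisely when $\sum_k t_kX_k$ is. Hence $G_{n,m}$ preserves $\SING_{n,m}$, and being the image of the connected group $\GL_m\times\GL_n\times\GL_n$ it lies in the identity component $\mathcal{G}_S^\circ$.

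For the reverse inclusion the plan is to linearize. Both groups are connected algebraic subgroups of $\GL(V)$, so it suffices to prove $\Lie(\mathcal{G}_S)=\Lie(G_{n,m})$, where $\Lie(G_{n,m})$ is the space of maps $\phi$ with $\phi(X)_k=\sum_j a_{kj}X_j+BX_k-X_kC$ for some $A=(a_{kj})\in\gl_m$, $B,C\in\gl_n$, and $\supseteq$ is already in hand. Now $\phi\in\gl(V)$ lies in $\Lie(\mathcal{G}_S)$ iff the linear vector field $X\mapsto\phi(X)$ is tangent to $S$, i.e.\ the associated derivation preserves $I(S)$; since $\det(\sum_i c_iX_i)\in I(S)$ for every $c\in\C^m$, this forces
\begin{equation*}
\tnr\!\Bigl(\operatorname{adj}\bigl(\textstyle\sum_i c_iX_i\bigr)\cdot\textstyle\sum_i c_i\,\phi(X)_i\Bigr)=0\qquad\text{for all }c\in\C^m,\ X\in\SING_{n,m}.\tag{$\star$}
\end{equation*}
I would then show that $(\star)$ \emph{by itself} forces $\phi\in\Lie(G_{n,m})$; together with $\Lie(G_{n,m})\subseteq\Lie(\mathcal{G}_S)\subseteq\{\phi:(\star)\text{ holds}\}$ this closes the loop. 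A bonus of this route is that it only ever evaluates $(\star)$ at compression-space tuples, so one never needs to understand the (for $n,m\ge3$, intricate) irreducible components of $\SING_{n,m}$.

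The first evaluation is at tuples supported on a single slot, $X=(0,\dots,X_j,\dots,0)$ with $X_j$ singular. Writing $\phi(X)_k=\sum_j\phi^{(j)}_k(X_j)$ with $\phi^{(j)}_k\colon\Mat_n\to\Mat_n$ linear and letting $c$ vary over $\C^m$, a factor $c_j^{n-1}$ pulls out and $(\star)$ reduces to $\tnr(\operatorname{adj}(X_j)\,\phi^{(j)}_k(X_j))=0$ on $\{\det=0\}$ for every $j,k$: that is, each $\phi^{(j)}_k$ is an infinitesimal symmetry of the determinant hypersurface, so by the classical ($m=1$) theorem of Frobenius $\phi^{(j)}_k(X_j)=P^{(j)}_kX_j+X_jQ^{(j)}_k$ for some $P^{(j)}_k,Q^{(j)}_k\in\gl_n$. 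Thus $\phi(X)_k=\sum_j\bigl(P^{(j)}_kX_j+X_jQ^{(j)}_k\bigr)$, and it remains to prove (i) $P^{(j)}_k,Q^{(j)}_k$ are scalar matrices whenever $j\ne k$, and (ii) $P^{(k)}_k-P^{(1)}_1$ and $Q^{(k)}_k-Q^{(1)}_1$ are scalar for all $k$. Granting (i) and (ii), setting $B:=P^{(1)}_1$ and $C:=-Q^{(1)}_1$ modulo scalars and absorbing the remaining scalars into the $a_{kj}$ puts $\phi$ in the required form.

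For (i) and (ii) I would feed $(\star)$ two-slot compression tuples $X_1=(0\mid Y_1)$, $X_2=(0\mid Y_2)$, $X_k=0$ for $k\ge3$, with $Y_1,Y_2$ generic $n\times(n-1)$ matrices: then $\sum_i c_iX_i=(0\mid c_1Y_1+c_2Y_2)$ has rank $n-1$, its adjugate has rank one, and $(\star)$ collapses to a scalar identity $u(c)^t\bigl(\sum_k c_k\phi(X)_k\bigr)e_1=0$ with $u(c)$ spanning the left kernel of $c_1Y_1+c_2Y_2$. Specializing $c$ (say to $e_1$, then to $e_1+e_2$) and varying $Y_1,Y_2$ gives, after short computations, that the first column of $Q^{(2)}_1$ lies in $\C e_1$ and that the first columns of $Q^{(1)}_1$ and $Q^{(2)}_2$ agree off their top entries. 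Since $\Lie(\mathcal{G}_S)$ is stable under conjugation by $G_{n,m}$ — in particular by the left and right copies of $\GL_n$ and by permutations of the $m$ coordinates — running the same computation in an arbitrary basis and for an arbitrary pair of slots upgrades these statements to: every nonzero vector is an eigenvector of the off-diagonal $P^{(j)}_k,Q^{(j)}_k$ and of the differences $P^{(k)}_k-P^{(1)}_1,Q^{(k)}_k-Q^{(1)}_1$, forcing all of these to be scalar, which is (i) and (ii). The step I expect to be the main obstacle is exactly this last one: one must select enough families of two-slot compression tuples and enough specializations of $c$ to pin down \emph{all} the matrices $P^{(j)}_k,Q^{(j)}_k$ simultaneously, while keeping track of the genericity used (that $c_1Y_1+c_2Y_2$ indeed has rank $n-1$, that $u(c)^tY_2$ sweeps out a dense set as $Y_2$ varies, and so on); it is exploiting conjugation-invariance under $G_{n,m}$ to normalize the configurations that keeps the bookkeeping manageable.
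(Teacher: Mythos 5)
Your proposal is correct, but it follows a genuinely different route from the paper's. The paper first determines the full degree-$n$ piece of the vanishing ideal, $(I_S)_n=\spa(\det(\sum_i c_iX_i))$ (Proposition~\ref{P-ideal}, whose proof in Appendix~\ref{App.rep} uses Kronecker coefficients and highest-weight theory), and then computes $\g_S=\{M\ |\ M\cdot (I_S)_n\subseteq (I_S)_n\}$ by decomposing $M=\sum_{p,q}E_{pq}\otimes M_{pq}$ according to the $\N^m$-multigrading and matching monomial coefficients of $M\cd f_e$ against the one-dimensional spaces $\C f_e$, with the diagonal blocks handled by the infinitesimal Frobenius theorem together with the Section~\ref{sec:intermediate} computation. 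You instead use only the unconditional necessary condition that the derivation carries each $\det(\sum_i c_iX_i)$ into $I_S$ (your $(\star)$), and close the sandwich $\Lie(G_{n,m})\subseteq\g_S\subseteq\{\phi:(\star)\}\subseteq\Lie(G_{n,m})$ by evaluating $(\star)$ at compression-space tuples. This genuinely bypasses the representation theory of Appendix~\ref{App.rep}: all you need is the elementary $m=1$ fact that a degree-$n$ form vanishing on $\{\det=0\}$ is a scalar multiple of $\det$ (irreducibility of $\det$) plus the infinitesimal Frobenius statement of Section~\ref{sec:frob}, which the paper proves anyway; the price is the genericity and sweeping arguments you flag. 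I checked your two model computations and they are right: for $c=e_1$ the condition collapses to $u_1^tX_2Q^{(2)}_1e_1=0$, and since $u_1^tX_2$ sweeps out the row vectors with vanishing first entry this gives $Q^{(2)}_1e_1\in\C e_1$; for $c=e_1+e_2$, using $u^t(X_1+X_2)=0$ and $X_ie_1=0$ one is left with $u^tX_1(Q^{(1)}_1-Q^{(2)}_2)e_1=0$ (the cross terms having already been made scalar), whence $(Q^{(1)}_1-Q^{(2)}_2)e_1\in\C e_1$. One point you should make explicit: the conjugation step is applied to the set $\{\phi:(\star)\}$, so you need that this set is itself stable under conjugation by $G_{n,m}$ --- which it is, since both $\spa(\det(\sum_i c_iX_i))$ and $I_S$ are $G_{n,m}$-stable --- or else run the whole argument inside $\g_S$, which is automatically stable under conjugation by $\mathcal{G}_S$. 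With that remark, and the routine transposed and slot-permuted variants needed to cover all the $P^{(j)}_k$ and all pairs of indices, the plan goes through.
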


We will discuss in the subsequent section, the strategy of proof in more detail for the above theorem. However, it is worth mentioning that it is essentially a linear algebraic computation on the level of Lie algebras, and is applicable in more generality. At this juncture, we note a classical result of Frobenius that addresses the special case of $m =1$ (see \cite{Frob,Dieudonne}), which deals with our simple running example earlier. This result is essential for our proof of the above theorem for any value of $m$. We will also give our own proof of this result as it allows us to illustrate our proof strategy in the simple case. 

\begin{theorem} [Frobenius] \label{theo:frob}
Let $S = \SING_{n,1} \subseteq V = \Mat_{n}$. The group of symmetries $\mathcal{G}_S$ consists of linear transformations of the form $X \mapsto PXQ$ or of the form $X \mapsto PX^tQ$ where $P,Q \in \SL_n$. 
\end{theorem}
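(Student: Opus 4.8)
The plan is to reduce the determination of $\mathcal{G}_S$ for $S = \SING_{n,1}$ to a statement about linear maps that preserve the determinant up to scalar, and then to invoke the classical description of such maps. First I would observe that $S = \SING_{n,1}$ is precisely the hypersurface $\{X : \det X = 0\}$, an irreducible degree-$n$ hypersurface in $V = \Mat_n$. A linear transformation $\varphi \in \GL(V)$ sends $S$ to itself if and only if it sends the vanishing locus of $\det$ to itself; since $\det$ is (up to scalar) the unique polynomial of minimal degree vanishing on the irreducible hypersurface $S$ and $\C[V]$ is a UFD, this forces $\det(\varphi(X)) = \lambda(\varphi)\cdot\det(X)$ for a nonzero scalar $\lambda(\varphi)$ depending only on $\varphi$. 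So $\mathcal{G}_S$ equals the group of linear maps multiplying the determinant by a nonzero constant. The next step is to normalize away the scalar: given such a $\varphi$ with multiplier $\lambda$, I can rescale $\varphi$ by an $n$-th root of $1/\lambda$ to obtain $\psi = \mu\varphi$ with $\det(\psi(X)) = \det(X)$ identically, i.e.\ $\psi$ is an honest linear determinant-preserver. Thus $\mathcal{G}_S = \C^* \cdot \{\text{linear determinant-preservers}\}$, and it suffices to show every linear determinant-preserver has the form $X \mapsto PXQ$ or $X \mapsto PX^tQ$ with $\det(P)\det(Q) = 1$, which on rescaling can be taken with $P, Q \in \SL_n$.

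The heart of the matter is therefore the classical statement (essentially the content of \cite{Frob,Dieudonne}): a linear bijection $\psi$ of $\Mat_n$ with $\det \circ \psi = \det$ is of the form $X \mapsto PXQ$ or $X \mapsto PX^tQ$. I would prove this by exploiting the geometry of the determinant hypersurface rather than by brute-force coordinate computation. Here is the line of attack. The singular locus of $S = \{\det = 0\}$ is exactly the set of matrices of rank $\le n-2$, so $\psi$ preserves rank $\le n-2$, hence by iterating this argument on the stratification by rank it preserves rank at each level; in particular $\psi$ preserves the rank-one matrices. The rank-one matrices form the affine cone over the Segre variety $\mathbb{P}^{n-1}\times\mathbb{P}^{n-1}$. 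A linear automorphism of $\Mat_n = \C^n\otimes\C^n$ preserving the Segre cone must, by the structure of automorphisms of $\mathbb{P}^{n-1}\times\mathbb{P}^{n-1}$, either preserve the two rulings or swap them; unwinding this gives $\psi(vw^t) = (Av)(Bw)^t$ in the non-swap case (and $\psi(vw^t) = (Aw)(Bv)^t$ in the swap case) for fixed $A, B \in \GL_n$, possibly after adjusting by scalars that can be absorbed. Composing $\psi$ with $X\mapsto A^{-1}X(B^t)^{-1}$ (and with transpose in the swap case), I reduce to the case where $\psi$ fixes every rank-one matrix. Since rank-one matrices span $\Mat_n$, such a $\psi$ is the identity. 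Tracking the $\det$-preservation condition through these reductions pins down the scalars and yields $\det(P)\det(Q)=1$, hence after rescaling $P,Q\in\SL_n$; and no transformation of the form $X\mapsto PXQ$ can equal one of the form $X\mapsto PX^tQ$ for $n\ge 2$ (compare images of a fixed rank-one matrix, or note one family is in the identity component and the other is not), so the two families are genuinely distinct cosets.

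I expect the main obstacle to be the step that goes from ``$\psi$ preserves the variety of rank-one matrices (the Segre cone)'' to ``$\psi$ acts as $A\otimes B$ or as $A\otimes B$ composed with the swap.'' This requires knowing that the only linear maps preserving the Segre cone in $\C^n\otimes\C^n$ are those induced by $\GL_n\times\GL_n$ together with the tensor-factor swap — essentially that $\mathrm{Aut}(\mathbb{P}^{n-1}\times\mathbb{P}^{n-1}) = (\mathrm{PGL}_n\times\mathrm{PGL}_n)\rtimes \Z/2\Z$ and that this lifts to the cone. One clean way to get this is: a rank-one matrix $vw^t$ lies in the image $\psi(vw^t)$, which is again rank one, so we get a well-defined map on $\mathbb{P}^{n-1}\times\mathbb{P}^{n-1}$; fixing the second factor and varying $v$, the set $\{\psi(vw^t): v\}$ is a linear subspace of rank-one matrices, and maximal linear subspaces of rank-one matrices are exactly the ``column spaces'' $\{uw_0^t : u\}$ and the ``row spaces'' $\{v_0u^t : u\}$ — so $\psi$ either carries column-type subspaces to column-type (giving $A\otimes B$) or to row-type (giving the swap). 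This maximal-linear-subspace dichotomy is the technical crux, but it is a short and standard linear-algebra fact, and once it is in hand the rest of the argument is the bookkeeping of scalars described above. As a warm-up and sanity check I would first run the whole argument in the smallest interesting case to make sure the scalar normalizations are consistent, since that is exactly the kind of place where an off-by-a-root-of-unity error can creep in.
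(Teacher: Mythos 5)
Your proposal is correct, but it takes a genuinely different route from the paper. The paper works infinitesimally: it characterizes the Lie algebra of symmetries as the set of $M\in\gl(\Mat_{n})$ for which $\det$ is an eigenvector under the (twisted) derivation action, solves that linear system by matching coefficients of monomials to obtain $\{A\otimes{\rm I}_n+{\rm I}_n\otimes B\}$, and then passes from the identity component $G_{n,1}$ to the full group by computing the normalizer of $G_{n,1}$ via the two simple ideals of $\sll_n\oplus\sll_n$. You instead argue globally: reduce to linear maps with $\det\circ\psi=\lambda\cdot\det$ (correct, via irreducibility of $\det$ and a degree count), note that such a map preserves the singular locus of each rank stratum and hence the cone of rank-one matrices, and classify linear preservers of that Segre cone through the two families of maximal linear subspaces of matrices of rank at most one. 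This is essentially the classical Frobenius--Dieudonn\'e argument; the steps you flag as the technical crux (the column-space/row-space dichotomy for maximal linear subspaces, and the constancy of the scalars in the final reduction to the identity) are standard and fillable exactly as you sketch. What the paper's method buys is generalizability --- the Lie-algebra computation is the template reused for $\SING_{n,m}$ with $m\ge 2$, where your rank-stratification/Segre argument has no evident analogue --- while your method obtains the whole group in one pass, with no Lie theory and no separate connected-component/normalizer steps. One point your careful scalar bookkeeping surfaces: you correctly find that $\mathcal{G}_S$ is $\C^*$ times the determinant-preservers, i.e.\ all maps $X\mapsto PXQ$ and $X\mapsto PX^tQ$ with $P,Q\in\GL_n$; the theorem as stated with $P,Q\in\SL_n$ omits the dilations (e.g.\ $X\mapsto 2X$) and should read $\GL_n$, which is precisely the group $F_n$ that the paper's own proof produces.
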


First, note that the above result computes the entire group of symmetries! In the general case, let us first note that apriori there could be an incredible number of groups whose identity component is $G_{n,m}$. However, it turns out that they are actually manageable, and with some fairly elementary results on semisimple Lie algebras, we can determine the entire group of symmetries for any $m$. 

\begin{theorem} \label{theo:gos}
Let $S = \SING_{n,m} \subseteq V =  \Mat_{n}^m$. Let $\tau$ denote the linear transformation that sends $X = (X_1,\dots,X_m) \mapsto (X_1^t,\dots,X_m^t)$. Then the group of symmetries $\mathcal{G}_S = G_{n,m} \cup G_{n,m} \cdot \tau = G_{n,m} \rtimes \Z/2$.
\end{theorem}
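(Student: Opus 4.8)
The plan is to bootstrap from the computation of the connected group of symmetries (Theorem~\ref{theo:cgos}) to the full group $\mathcal{G}_S$. First observe the two inclusions one can write down by hand: $G_{n,m} \subseteq \mathcal{G}_S$ is clear from Theorem~\ref{theo:cgos} (it is even the identity component), and $\tau \in \mathcal{G}_S$ because $\sum_i t_i X_i^t = (\sum_i t_i X_i)^t$ has the same determinant as $\sum_i t_i X_i$ over $\C(t_1,\dots,t_m)$, so $\tau$ preserves $\SING_{n,m}$. Since $\tau^2 = \mathrm{id}$ and $\tau \notin G_{n,m}$ (one should check, e.g., that $\tau$ does not lie in the image of $\GL_m \times \GL_n \times \GL_n$ — this is a rigidity statement for the tensor action, following because $X \mapsto X^t$ is not of the form $X \mapsto QXR^{-1}$ for fixed invertible $Q,R$ when $n \geq 2$), conjugation by $\tau$ normalizes $G_{n,m}$ (indeed $\tau (P,Q,R) \tau = (P, (R^{-1})^t, (Q^{-1})^t)$ up to the obvious identification), so $G_{n,m} \cup G_{n,m}\tau$ is a subgroup isomorphic to $G_{n,m} \rtimes \Z/2$. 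The whole content of the theorem is therefore the reverse inclusion $\mathcal{G}_S \subseteq G_{n,m} \cup G_{n,m}\tau$, i.e.\ that the component group of $\mathcal{G}_S$ has order exactly $2$.

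To prove this, I would argue as follows. Let $g \in \mathcal{G}_S$ be arbitrary. Conjugation by $g$ is an automorphism of the algebraic group $\mathcal{G}_S$, hence preserves its identity component $\mathcal{G}_S^\circ = G_{n,m}$; thus $g$ lies in the normalizer $N_{\GL(V)}(G_{n,m})$. So it suffices to understand $N_{\GL(V)}(G_{n,m})$ and intersect it with $\mathcal{G}_S$. Passing to Lie algebras, $\mathrm{Ad}(g)$ is an automorphism of the Lie algebra $\g := \Lie(G_{n,m})$. Now $\g$ is (a quotient of) $\gl_m \oplus \gl_n \oplus \gl_n$; its semisimple part is $\sll_m \oplus \sll_n \oplus \sll_n$ (with appropriate degeneracies at small $n,m$, which must be handled separately — note $m,n$ unrestricted here). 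The key structural input is that automorphisms of a semisimple Lie algebra, modulo inner ones, form the (finite) outer automorphism group, which here is generated by the diagram automorphisms of each $\sll$ factor together with the permutations of isomorphic factors. The diagram automorphism of $\sll_k$ is (conjugate to) $X \mapsto -X^t$, and the only factor-swap available is interchanging the two $\sll_n$'s (the $\sll_m$ factor can only be swapped with an $\sll_n$ when $m = n$, which forces a separate small-case check). Both of these outer operations are realized, at the level of $V = \C^m \otimes \C^n \otimes \C^n$, by the transpose map $\tau$ (possibly composed with an element of $G_{n,m}$): transposing all the $X_i$ simultaneously swaps the two $\C^n$ tensor factors and turns the $\GL_n$ actions into their inverse-transpose, which is exactly the composite of a diagram automorphism on each $\sll_n$ with the swap. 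Hence every element of $N_{\GL(V)}(G_{n,m})$, modulo $G_{n,m}$, acts on $\g$ the way $\tau$ or the identity does; composing $g$ with $\tau$ if necessary, we may assume $\mathrm{Ad}(g)$ acts on $\g$ as an inner automorphism, i.e.\ $\mathrm{Ad}(g) = \mathrm{Ad}(h)$ for some $h \in G_{n,m}$.

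It then remains to show that if $g \in \mathcal{G}_S$ centralizes $G_{n,m}$ (after replacing $g$ by $h^{-1}g$) then $g \in G_{n,m}$. Since the representation of $G_{n,m}$ on $V = \C^m \otimes \C^n \otimes \C^n$ is irreducible (it is an external tensor product of the standard representations, and $X \mapsto X^t$ already accounts for the only nontrivial possibility on each $\GL_n$ via duality — but as a $\GL_m\times\GL_n\times\GL_n$-module it is irreducible), Schur's lemma forces $g$ to be a scalar $\lambda \cdot \mathrm{id}$. Finally, a scalar $\lambda$ preserves $\SING_{n,m}$ automatically, but $\lambda \cdot \mathrm{id} \in G_{n,m}$ because scalars are already in the image of the torus of $\GL_m$ (or of either $\GL_n$), so $g \in G_{n,m}$ and we are done. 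The main obstacle I anticipate is the careful bookkeeping of outer automorphisms in the degenerate small-$m$ or small-$n$ cases (where factors coincide, where $\sll_2 \cong \mathfrak{so}_3$ has no diagram automorphism, or where $m=n$ permits extra factor swaps), and verifying precisely that each genuinely new outer automorphism either is realized by $\tau$ or else fails to preserve $\SING_{n,m}$ (ruling out, e.g., a lone diagram automorphism on the $\sll_m$ factor, which would correspond to replacing the $m$-tuple by a "dual" $m$-tuple and need not preserve singularity); the semisimple-case argument itself is standard Lie theory once that combinatorics is pinned down.
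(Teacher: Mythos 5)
Your overall route is the same as the paper's: use $\mathcal{G}_S^\circ = G_{n,m}$, deduce $\mathcal{G}_S \subseteq N_{\GL(V)}(G_{n,m})$, analyze the normalizer through automorphisms of the semisimple Lie algebra $\sll_m \oplus \sll_n \oplus \sll_n$, and then decide which components preserve $\SING_{n,m}$. Your endgame via Schur's lemma (a normalizing element inducing an inner automorphism must, after division by an element of $G_{n,m}$, centralize $G_{n,m}$ and hence be a scalar) is a clean alternative to the paper's explicit computation. However, the two steps you defer as ``the main obstacle I anticipate'' are exactly where the content lies, and as written they are genuine gaps. First, the assertion that every element of $N_{\GL(V)}(G_{n,m})$ acts on $\g$, modulo inner automorphisms, either trivially or as $\tau$ does is not a consequence of knowing $\mathrm{Out}(\g)$: you must determine which outer automorphisms are actually \emph{realized} by conjugation inside $\GL(V)$. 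Your stated dichotomy (``realized by $\tau$ or fails to preserve $\SING_{n,m}$'') omits the relevant third case — that a lone diagram automorphism on a single factor is realized by \emph{no} element of $\GL(V)$ at all (since it would turn $V$ into a non-isomorphic representation of $G_{n,m}$). The paper closes this by writing a normalizing element that fixes the simple ideals as $\sum_{i=1}^r P_i \otimes Q_i$ with each set linearly independent, and showing by a direct matrix argument (Lemma on the normalizer of $G_{n,1}$, extended factor by factor) that $r=1$ and the element is a pure tensor, hence in $G_{n,m}$; this simultaneously rules out the unrealized diagram automorphisms.

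Second, and more seriously, in the case $m=n$ the normalizer is strictly larger than $G_{n,m}\rtimes\langle\tau\rangle$: it is $G_{n,m}\rtimes S_3$, where $S_3$ permutes all three tensor factors of $\C^m\otimes\C^n\otimes\C^n$. The theorem has no hypothesis excluding $m=n$, so you must actually show that the extra permutations do not preserve $\SING_{n,n}$; saying this ``forces a separate small-case check'' without performing it leaves the theorem unproved in that case. The paper does this with a one-line witness: for $\sigma$ swapping the first two factors, $X=({\rm I}_n,0,\dots,0)\notin\SING_{n,n}$ while $\tau_\sigma(X)=(E_{11},E_{12},\dots,E_{1n})\in\SING_{n,n}$, so $\tau_\sigma\notin\mathcal{G}_S$; since $\{e,\tau\}$ is a maximal subgroup of $S_3$ this pins down the component group. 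You should supply both of these arguments (or equivalents) for the proof to be complete.
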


The key idea here is that the entire group of symmetries must normalize the connected group of symmetries, i.e., $G_{n,m}$. So, we compute the normalizer of $G_{n,m}$. To do so, we utilize heavily that the group $G_{n,m}$ is reductive, and use ad-hoc arguments that are particularly suited to this special case. A slightly more abstract approach via automorphisms of Dynkin diagrams such as the one in \cite{Guralnick} would work in this case (see also \cite{Lbook17}). We do not quite know a general strategy to bridge the gap between the connected group of symmetries and the entire group of symmetries. We also note that the same strategy yields the group of symmetries for $\NSING_{n,m}$

\begin{theorem} \label{theo:ngos}
Let $S = \NSING_{n,m} \subseteq \Mat_{n}$. Then the group of symmetries $\mathcal{G}_S =  G_{n,m} \rtimes \Z/2$ (as defined in the above theorem).
\end{theorem}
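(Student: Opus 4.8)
The plan is to follow the three-step pattern used for $\SING_{n,m}$ in Theorems~\ref{theo:cgos} and~\ref{theo:gos}: first show $G_{n,m}\rtimes\Z/2\subseteq\mathcal{G}_{\NSING_{n,m}}$; then prove the connected group of symmetries $\mathcal{G}_{\NSING_{n,m}}^\circ$ equals $G_{n,m}$; and finally promote this to the full group via a normalizer computation. The first and last steps are essentially free given what is already available, and the real content is the middle one.

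\emph{The easy inclusion.} The left--right $\GL_n\times\GL_n$ action multiplies $L=\sum_i t_iX_i$ by constant invertible matrices on either side, so it preserves the inner rank of $L$ over $\C\llangle t_1,\dots,t_m\rrangle$. The $\GL_m$ action performs an invertible $\C$-linear substitution of the noncommuting variables $t_1,\dots,t_m$, which by the universal property of the free skew field extends to a $\C$-algebra automorphism of $\C\llangle t_1,\dots,t_m\rrangle$ and hence preserves singularity of $L$. And $\tau$ sends $L$ to $L^t=\sum_i t_iX_i^t$; since the entries of $L$ are $\C$-linear in the single variables $t_i$, which the reversal anti-automorphism $\sigma$ of $\C\llangle t_1,\dots,t_m\rrangle$ fixes, $L^t$ is obtained from $L$ by applying $\sigma$ entrywise and then transposing the matrix, and this operation preserves inner rank (a width-$r$ factorization $M=AB$ yields the width-$r$ factorization $\sigma(M)^t=\sigma(B)^t\,\sigma(A)^t$). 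Alternatively, for the $\GL_n\times\GL_n$ and scalar parts one may simply cite that $\NSING_{n,m}$ is the null cone of the left--right $\SL_n\times\SL_n$ action. Either way $\tau$ and $G_{n,m}$ preserve $\NSING_{n,m}$, so $G_{n,m}\rtimes\Z/2\subseteq\mathcal{G}_{\NSING_{n,m}}$, and $G_{n,m}\subseteq\mathcal{G}_{\NSING_{n,m}}^\circ$ by connectedness.

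\emph{The connected group of symmetries, and the main obstacle.} We must prove the reverse inclusion $\mathcal{G}_{\NSING_{n,m}}^\circ\subseteq G_{n,m}$, and this is where I expect the real work. What makes ``the same strategy'' go through is that $\NSING_{n,m}$ and $\SING_{n,m}$ share their top-dimensional geometry. By the shrunk-subspace characterization of noncommutative rank, $\NSING_{n,m}=\bigcup_{k=1}^{n}C_k$, where $C_k$ is the locus of tuples admitting a $k$-dimensional subspace whose total image has dimension $<k$; $\dim C_k$ is a convex function of $k$ on the relevant range, so it is maximized at $k=1$ and $k=n$, and $C_1$ (a common kernel vector) and $C_n$ (a common cokernel vector) are irreducible components of $\NSING_{n,m}$, each of dimension $(n-1)(mn+1)$. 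These are also components of $\SING_{n,m}$ of the same dimension, and along a dense open subset $U$ of $C_1$ the reduced varieties $\NSING_{n,m}$, $\SING_{n,m}$ and $C_1$ all coincide; in particular $T_{X^0}\NSING_{n,m}=T_{X^0}\SING_{n,m}$ for $X^0\in U$. Now $\Lie(\mathcal{G}_{\NSING_{n,m}}^\circ)$ consists of the linear vector fields on $V$ tangent to $\NSING_{n,m}$, hence tangent to $\SING_{n,m}$ along $U$ (and along the corresponding open subset of $C_n$). These are exactly the tangency conditions exploited in the proof of Theorem~\ref{theo:cgos}, whose analysis --- reducing, via appropriate slices, to the case $m=1$ and invoking Frobenius (Theorem~\ref{theo:frob}), which is legitimate since $\NSING_{n,1}=\SING_{n,1}$ --- forces $\Lie(\mathcal{G}_{\NSING_{n,m}}^\circ)\subseteq\Lie(G_{n,m})$. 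Equality of Lie algebras together with connectedness gives $\mathcal{G}_{\NSING_{n,m}}^\circ=G_{n,m}$. If the actual write-up of Theorem~\ref{theo:cgos} is not phrased locally enough around generic points of $C_1$ for this quotation to go through verbatim, the fallback is to rerun that argument for $\NSING_{n,m}$ directly, the only external input being, again, Frobenius in the base case $m=1$.

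\emph{From the identity component to the full group.} Finally $\mathcal{G}_{\NSING_{n,m}}$ normalizes its identity component $G_{n,m}$, so $\mathcal{G}_{\NSING_{n,m}}\subseteq N_{\GL(V)}(G_{n,m})$; the normalizer $N_{\GL(V)}(G_{n,m})=G_{n,m}\rtimes\Z/2$ was already determined in the proof of Theorem~\ref{theo:gos} (that computation refers only to $G_{n,m}$ as a subgroup of $\GL(V)$, the nontrivial coset being induced by the swap of the two $\GL_n$-factors, which is realized by $\tau$). Combining with the easy inclusion gives $\mathcal{G}_{\NSING_{n,m}}=G_{n,m}\rtimes\Z/2$. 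No restriction on $n,m$ is needed: when $n\le 2$ or $m\le 2$ one has $\NSING_{n,m}=\SING_{n,m}$, and the statement is Theorem~\ref{theo:gos}.
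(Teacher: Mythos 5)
The first and third steps of your outline are essentially fine (modulo one slip noted below), but the middle step --- proving $\mathcal{G}_{\NSING_{n,m}}^\circ\subseteq G_{n,m}$ --- has a genuine gap, and it is exactly where the paper's one new ingredient lives. The proof of Theorem~\ref{theo:cgos} in the paper is not a pointwise tangency computation at generic points of components: it is the statement that $\g_S\subseteq\{M\in\gl(V)\mid M\cdot (I_S)_n\subseteq (I_S)_n\}$ together with an explicit determination of the right-hand side when $(I_S)_n=\spa(\det(\sum_i c_iX_i))$. Knowing that a linear vector field is tangent to $\SING_{n,m}$ only along a dense open subset of $C_1$ and $C_n$ does not feed into that computation, so your claim that these are ``exactly the tangency conditions exploited'' there is not correct; making the geometric route work would require a fresh linear-algebra argument (plus proofs that $C_1,C_n$ are components of \emph{both} varieties and that the three varieties coincide generically along them), none of which is supplied. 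Your fallback --- rerun the ideal argument for $\NSING_{n,m}$ directly --- is the right instinct, but it requires knowing the degree-$n$ part $J_n$ of the vanishing ideal of $\NSING_{n,m}$, and that is precisely the paper's Lemma~\ref{L-N-ideal}: $J_n=(I_{\SING_{n,m}})_n=\spa(\det(\sum_i c_iX_i))$, proved by the representation-theoretic analysis of Appendix~\ref{App.rep} (the key point being that the weight-vector argument uses only the subspace of tuples with vanishing last row/column, which lies in $\NSING_{n,m}$ as well). Once $J_n=I_n$ is in hand, the containment half of Proposition~\ref{P-Liealg-all} (which needs no zero-locus hypothesis, so it applies even though the zero locus of $J_n$ is $\SING_{n,m}$ and not $\NSING_{n,m}$) gives $\g_{\NSING}\subseteq\{M\mid M\cdot I_n\subseteq I_n\}=\Lie(G_{n,m})$, and the rest follows. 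You never address this identification of $J_n$, so the central containment is unproven.

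A smaller point: when $n=m$ the normalizer of $G_{n,m}$ in $\GL(V)$ is $G_{n,n}\rtimes\Sigma_3$ (six components), not $G_{n,n}\rtimes\Z/2$; the latter is the symmetry group, obtained only after checking that the extra permutations $\tau_\sigma$ fail to preserve the variety. That check does go through for $\NSING_{n,n}$ (the tuple $({\rm I}_n,0,\dots,0)$ is outside $\NSING_{n,n}$ while its image $(E_{11},\dots,E_{1n})$ has permutation-free union support and hence lies inside), but it is not ``essentially free'' and must be stated.
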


Once we compute the group of symmetries, the rest of the argument relies on an understanding of the Hilbert--Mumford criterion (see Theorem~\ref{theo:HM}) which tells us that the null cone is a union of $G$-orbits of coordinate subspaces (linear subspaces that are defined by the vanishing of a subset of coordinates, see Definition~\ref{D-coordsubspace}). In particular, we will show that the union of all the coordinate subspaces contained in $\SING_{n,m}$ moved around by the action of its group of symmetries does not cover all of $\SING_{n,m}$, which will give the contradiction. We explain this idea in more detail in Section~\ref{subs:proofidea}.

\begin{remark} [Positive characteristic]
Our choice in working with $\C$ as a ground field is essentially for simplicity of the exposition and proofs. All our results above (specifically Theorems~\ref{theo:nullcone}, \ref{theo:nullcone2}, \ref{theo:cgos}, \ref{theo:frob}, \ref{theo:gos} and \ref{theo:ngos}) hold for every algebraically closed  fields of every characteristic. In Appendix~\ref{app.pos.char}, we discuss the issues that arise in positive characteristic and the appropriate modifications needed to deal with them.
\end{remark}

The subvariety $\SING_{n,m}$ is  the zero locus of some very structured polynomials. Observe that for any $c_i \in \C$, the polynomial $\det(\sum_i c_iX_i)$  vanishes on $\SING_{n,m}$. It is easy to see that the zero locus of the collection of all $\det(\sum_i c_i X_i)$ (for all choices of $c_i$) is precisely $\SING_{n,m}$\footnote{It seems plausible that these polynomials generate the ideal of polynomials that vanish on $\SING_{n,m}$, but such questions can often be quite subtle to prove.}. We prove a negative result for Problem~\ref{prob.invring} for this collection of polynomials.

\begin{theorem} \label{theo:invring}
Suppose $n,m \geq 3$. Then the subring $R = \C[\{\det(\sum_i c_i X_i)  : c_i \in \C\}] \subseteq \C[\Mat_{n}^m]$ is not the invariant ring for any linear action of any group $G$ on $\Mat_{n}^m$.
\end{theorem}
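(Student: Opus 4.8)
The plan is to reduce the scheme-theoretic Problem~\ref{prob.invring} to the set-theoretic null cone question already settled by Theorem~\ref{theo:nullcone}, using the standard dictionary between invariant rings and null cones. Suppose, for contradiction, that there is a linear action of a group $G$ on $V = \Mat_n^m$ with invariant ring exactly $R = \C[\{\det(\sum_i c_i X_i) : c_i \in \C\}]$. The first step is to pass to a reductive group: since $R$ is a finitely generated $\C$-algebra (it is generated by the finitely many coefficients, in the $t_i$, of the single polynomial $\det(\sum_i t_i X_i)$, equivalently by finitely many of the $\det(\sum_i c_i X_i)$), and since the homogeneous elements of $R$ of positive degree cut out precisely $\SING_{n,m}$, I would invoke the fact that the Zariski closure $\overline{G}$ in $\GL(V)$ has the same invariant ring, and then argue that $\SING_{n,m}$, being stable under the scaling action of $\C^*$ on $V$, forces us into a situation where we may as well take a reductive group. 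Concretely: the group $G$ (or its image in $\GL(V)$) preserves $R$, hence preserves the common zero locus of the positive-degree part of $R$, which is $\SING_{n,m}$; therefore $G \subseteq \mathcal{G}_{\SING_{n,m}} = G_{n,m} \rtimes \Z/2$ by Theorem~\ref{theo:gos}, and this ambient group is reductive, so the Zariski closure of $G$ inside it is a reductive group $G'$ with $\C[V]^{G'} \supseteq \C[V]^{G} = R$.

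The second step is to show this containment is an equality, so that $R$ is genuinely the invariant ring of a reductive group, and then derive the contradiction. For the reverse containment, note every invariant of $G'$ is in particular $\C^*$-semiinvariant only if it is homogeneous-compatible; more carefully, I would use that $R$ already contains enough invariants to separate the relevant orbit closures: by Mumford's theorem the null cone of $G'$ is the common zero locus of the positive-degree homogeneous $G'$-invariants, this common zero locus is contained in the zero locus of the positive-degree part of $R$ (which is $\SING_{n,m}$), and conversely $\SING_{n,m}$ is contained in the null cone of $G'$ because every element of $R_{>0}$ vanishes on it. Hence the null cone of the reductive group $G'$ acting on $\Mat_n^m$ equals $\SING_{n,m}$ — but this directly contradicts Theorem~\ref{theo:nullcone}, which asserts that for $n,m \ge 3$ no reductive group action on $\Mat_n^m$ has $\SING_{n,m}$ as its null cone. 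This contradiction completes the proof.

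I expect the main obstacle to be the careful handling of the passage from an arbitrary (possibly non-reductive, possibly non-closed) group $G$ to the reductive group $G'$ while controlling the invariant ring. The subtle point is that Zariski closure can a priori enlarge the invariant ring ($\C[V]^{\overline G}$ may strictly contain $\C[V]^{G}$ only in pathological situations, but in fact $\C[V]^{\overline G} = \C[V]^{G}$ always holds because an invariant function is a closed condition on the acting element), so this is actually fine; the real care is in arguing that the $\C^*$-stability of $\SING_{n,m}$, together with the fact that null cones are automatically $\C^*$-stable, is consistent with our setup, and in confirming that the homogeneous positive-degree part of $R$ has zero locus exactly $\SING_{n,m}$ rather than something smaller — this last fact is precisely the elementary observation already recorded in the paper (the zero locus of all $\det(\sum_i c_i X_i)$ is $\SING_{n,m}$), so it poses no difficulty. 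Once these bookkeeping points are in place, the theorem follows immediately from Theorem~\ref{theo:nullcone} with no further work.
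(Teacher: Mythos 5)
Your overall plan---reduce to a reductive group and then invoke Theorem~\ref{theo:nullcone}---founders on one step that is genuinely false: you claim that because $\rho(G)$ lands inside the reductive group $\mathcal{G}_{\SING_{n,m}} = G_{n,m}\rtimes \Z/2$, its Zariski closure $G'$ there is reductive. Closed subgroups of reductive groups need not be reductive (a Borel, or any unipotent subgroup of $\SL_n$, is a closed subgroup of a reductive group), so nothing forces $G'$ to be reductive, and without that you cannot apply Mumford's theorem or Theorem~\ref{theo:nullcone}. This is exactly the difficulty the paper flags when it remarks that the reductive case would follow at once from Theorem~\ref{theo:nullcone} but that a different argument is needed for arbitrary $G$. (A smaller slip in the same passage: for $G\subseteq G'$ the containment of invariant rings goes the other way, $\C[V]^{G'}\subseteq\C[V]^{G}$; you are rescued only by the equality $\C[V]^{\overline{G}}=\C[V]^{G}$, which you do note.)

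The fix, and the route the paper takes, is to use a much stronger containment than $\rho(G)\subseteq\mathcal{G}_{\SING_{n,m}}$: if $\C[V]^G=R$ then every $g\in\rho(G)$ actually \emph{fixes} each polynomial $\det(\sum_i c_iX_i)$ (not merely preserves their zero locus or their span), so $\rho(G)$ lies in the stabilizer $G_D=\{g\in\GL(V)\mid g\cdot\det(\sum_i c_iX_i)=\det(\sum_i c_iX_i)\ \forall c_i\}$. The paper then computes $G_D$ explicitly, namely $G_D=\{{\rm I}_m\otimes C\mid C\in G_{\det}\}$, whose identity component is the image of the left--right $\SL_n\times\SL_n$ action; this \emph{is} reductive, its null cone is $\NSING_{n,m}\subsetneq\SING_{n,m}$, and hence $\C[V]^{G_D}\supsetneq R$ by comparing zero loci of positive-degree invariants. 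Since $\rho(G)\subseteq G_D$ gives $\C[V]^G\supseteq\C[V]^{G_D}\supsetneq R$, one gets the contradiction for arbitrary $G$, with reductivity used only for the explicitly identified group $G_D$ rather than for $G$ itself. Your second step (comparing the null cone of a reductive group with invariant ring $R$ to $\SING_{n,m}$) is sound as far as it goes, but the reduction that would make it applicable is the missing---and essential---content.
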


If we restrict to reductive groups, then the above theorem is a simple consequence of Theorem~\ref{theo:nullcone} and the alternate definition of null cone as the zero locus of non-constant homogenous invariants. However, we use a different argument that works for {\em any} group, irrespective of reductivity.

\subsection{Organization}
In Section~\ref{sec:invthry}, we recall the basic notions from invariant theory and null cones as well as the crucial Hilbert--Mumford criterion. It also contains a sketch of the proof strategy for proving Theorem~\ref{theo:nullcone}. In Section~\ref{sec:gos}, we present the theoretical statements that we will use in the computation of the group of symmetries, and in particular, we describe the role of Lie algebras. This is followed by Section~\ref{sec:explicit}, which contains an explicit description of the action of the Lie algebra on polynomials, which is vital for our computations. The ideal of polynomials vanishing on $\SING_{n,m}$ is discussed in Section~\ref{sec:van}, with proofs pushed into the appendix. The group of symmetries for $\SING_{n,1}$ (i.e., the important special case of $m = 1$) is computed in Section~\ref{sec:frob}. While the statement was already known (due to Frobenius), we present a different proof that serves to illustrate our strategy in the general case. Section~\ref{sec:multi} is a discussion of a particular multi-grading of the set of matrix-tuples, needed for computations. Section~\ref{sec:intermediate} tackles an intermediate problem, for a simpler group action. The group of symmetries for $\SING_{n,m}$ and $\NSING_{n,m}$   are computed in Section~\ref{sec:symsingnm}, proving Theorem~\ref{theo:gos} and Theorem~\ref{theo:ngos}. Section~\ref{sec:notnullcone} contains the proofs of Theorem~\ref{theo:nullcone}, our main negative result, and Theorem~\ref{theo:nullcone2} which implies it. In Section~\ref{sec:inv.conv}, we prove Theorem~\ref{theo:invring}. Finally, in Section~\ref{sec:disc}, we discuss some open problems and directions for future research. 

In Appendix~\ref{App.gos}, we recall the necessary algebraic geometry and Lie theory to prove the results in Section~\ref{sec:gos}. The results stated in Section~\ref{sec:van} are proved in Appendix~\ref{App.rep} with the help of representation theory. Finally in Appendix~\ref{app.pos.char}, we discuss the modifications needed to extend the results to positive characteristic.

\subsection*{Acknowledgements}
We would like to especially thank J. M. Landsberg for suggesting that we compute the group of symmetries, and Gurbir Dhillon for helping us with the Lie theoretic statements needed for the computation. In addition, we also thank Ronno Das, Harm Derksen, Ankit Garg, Robert Guralnick, Alexander Kleschev, Thomas Lam, Daniel Litt, Rafael Oliveira, Gopal Prasad, Akash Sengupta, Rahul Singh, Yuval Wigderson, John Wiltshire-Gordon and Jakub Witaszek for helpful discussions.

\section{Invariant theory and null cones} \label{sec:invthry}
We will now recall the basic notions in invariant theory that we need. In particular, we will need the notions of rational group actions (rational representations), their invariant polynomials, null cones and their basic properties. Most of this material is well known and can be found in a standard text such as \cite{DK}. We will try to remain as elementary as possible. We remind the reader again that our underlying field is $\C$. 

A {\em linear algebraic group} $G$ is a subgroup of $\GL_n$ (for some $n$) that is also an algebraic subvariety\footnote{An equivalent definition is that a linear algebraic group $G$ is an (affine) algebraic variety $G$ which is also a group such that the multiplication map $m: G \times G \rightarrow G$ and an inverse map $i:G \rightarrow G$ are morphisms of algebraic varieties. While this definition seems more general, it is a standard result that both definitions agree. For this reason, sometimes linear algebraic groups are also called affine algebraic groups.}. In this paper, we will drop the prefix linear and simply refer to these as algebraic groups for brevity. The connected component of an algebraic group $G$ containing the identity element is itself a connected algebraic group, and we call this the identity component of $G$, and denote it by $G^\circ$.

For a linear algebraic group $G \subseteq \GL_n$, an $m$-dimensional representation is simply a map $\rho:G \rightarrow \GL_m$. We want to consider ``algebraic representations'', so we want the map $\rho$ to be a morphism of algebraic varieties. So, for $X = (x_{ij}) \in G \subseteq \GL_n$, each coordinate of the $m \times m$ matrix $\rho(X) \in \GL_m$ is given as a rational function (ratio of polynomials) in the $x_{ij}$'s.\footnote{Note that morphism $\rho$ needs to be a regular morphism (and not a rational morphism) as it must be defined on all of $G$. In particular even though $\rho(X)$ is given by a matrix of rational functions, all these rational functions have to be defined on $G \subseteq \GL_n$, so their locus of indeterminacy must be away from $G$. A canonical example is the function $\frac{1}{\det}$ which is an honest ratio of polynomials that will be defined on $G$ (and indeed all of $\GL_n$). Also observe that $\frac{1}{\det}$ is a regular function on $\GL_n$ and hence on $G$ as well.} This is why such representations are called {\em rational} representations. The definition itself is of course quite straightforward.

\begin{definition} [Rational representation]
A rational representation $V$ of an algebraic group $G$ is a morphism of algebraic groups $G \rightarrow \GL(V)$ (where $V$ is a vector space over $\C$). By a morphism of algebraic groups, we simply mean a group homomorphism that is also a morphism of varieties. 
\end{definition}

A morphism $G \rightarrow \GL(V)$ can also be interpreted as a morphism $G \times V \rightarrow V$, and we will write $g \cdot v$ or simply $gv$ to denote the image of $(g,v)$ under this map. The orbit of a point $v \in V$ is $G \cdot v = \{gv\ |\ g \in G\}$. All representations considered in this paper will be rational. Subrepresentations, direct sums etc are defined in the standard way. A representation is called {\em irreducible} if it has no subrepresentations. 

\begin{remark}
For $V$ to be a rational representation of an algebraic group $G$ simply means that $G$ acts algebraically on $V$ by linear transformations. This is precisely the premise under which we define a null cone, and hence precisely the hypothesis in the main results (for example in Theorem~\ref{theo:nullcone}).
\end{remark}

For a vector space $V$, we denote by $\C[V]$ the ring of polynomial functions on $V$ (a.k.a. the coordinate ring of $V$). Concretely, if we have a basis $e_1,\dots,e_n$ for $V$, and $x_1,\dots,x_n$ denote the corresponding coordinate functions, then $\C[V] = \C[x_1,\dots,x_n]$ is the polynomial ring in $\dim V = n$ variables.

\begin{definition} [Invariant function]
For a representation $V$ of a group $G$, a function $f \in \C[V]$ is invariant (for the action of $G$) if it is constant along orbits, i.e., $f(gv) = f(v)$ for all $v \in V$ and $g \in G$.
\end{definition}

Invariant functions form a subring of the coordinate ring, which we will call the {\em invariant ring} or {\em ring of invariants}. 

\begin{definition} [Invariant ring]
For a representation $V$ of a group $G$, we denote by $\C[V]^G$, the ring of invariants, i.e.,
$$
\C[V]^G = \{f \in \C[V]\ |\ f(gv) = f(v)\ \forall g\in G,v\in V\}.
$$
\end{definition}

Invariant rings are graded subrings of the polynomial ring $\C[V]$, i.e., $\C[V]^G = \bigoplus_{d \in \N} \C[V]^G_d$.

There are several equivalent definitions of a reductive group, particularly in characteristic zero. We pick a definition that would resonate with anyone who has had experience with representations of finite groups. In particular, we want to point to the fundamental result called Maschke's theorem, which says that for a finite group $G$ (if characteristic is zero or doesn't divide $|G|$), any representation can be written as a direct sum of irreducible representations (a.k.a. complete reducibility). This property is very useful because in order to study any representation of $G$, one can often reduce it to the study of the irreducible representations. Algebraic groups with this property are called reductive groups.

\begin{definition} [Reductive group]
An algebraic group $G$ is called reductive if any rational representation $V$ of $G$ is completely reducible, i.e., it can be written as a direct sum of irreducible representations. 
\end{definition}

Examples of reductive groups include all finite groups, tori (i.e., $(\C^*)^n)$, and all classical groups such as $\GL_n$, $\SL_n$, ${\rm SO}_n$, ${\rm Sp}_n$ etc.

\begin{definition} [Null cone] \label{def:nullcone}
Let $V$ be a rational representation of a reductive group $G$. Then the null cone $\mathcal{N}_G(V)$ (or simply $\mathcal{N}_G$ or even $\mathcal{N}$ when there is no confusion) is defined as the set of points in $V$ whose orbit closure\footnote{The orbit closure can be taken in the Zariski topology or the analytic topology, since they are both the same.} contains zero, i.e., 
$$\mathcal{N} = \mathcal{N}_G(V) = \{v \in V\ |\ 0 \in \overline{G \cdot v}\},
$$
where $\overline{G \cdot v}$ denotes the closure of $G \cdot v$, the orbit of $v$.
\end{definition}

The above definition of the null cone is analytic in nature, and as defined seems to be a feature of the geometry of orbits and their closures. However, there is an equivalent algebraic description via invariant polynomials that we state below due to Mumford (and known already to Hilbert for $G = \SL_n$). This interplay between the analytic and algebraic viewpoints has already proved extremely valuable (see for e.g. \cite{GGOW16,BGFOWW,BGFOWW2}), and perhaps yet to be exploited to its full capacity.

\begin{theorem} [Mumford] \label{theo:alg.nullcone}
Let $V$ be a rational representation of a reductive group $G$. Then, the null cone 
$$
\mathcal{N} = \mathcal{N}_G(V) = \text{ zero locus of } \bigcup\limits_{d= 1}^{\infty} \C[V]^G_d.
$$
\end{theorem}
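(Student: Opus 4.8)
The plan is to prove the two inclusions separately; the forward one is elementary, and the reverse one is where reductivity does all the work. For the inclusion $\mathcal{N} \subseteq \{\text{zero locus of } \bigcup_{d\ge 1}\C[V]^G_d\}$: let $v \in \mathcal{N}$, so $0 \in \overline{G\cdot v}$. Any $f \in \C[V]^G$ is constant on the orbit $G\cdot v$, and being continuous (in the Zariski or the analytic topology — they agree here) it is constant on the closure $\overline{G\cdot v}$; in particular $f(v) = f(0)$. If in addition $f$ is homogeneous of degree $d \ge 1$, then $f(0)=0$, hence $f(v)=0$. So $v$ lies in the common zero locus of all non-constant homogeneous invariants.

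For the reverse inclusion I would argue by contrapositive: assuming $v \notin \mathcal{N}$, I produce a non-constant homogeneous invariant that does not vanish at $v$. Since $0 \notin \overline{G\cdot v}$, the two sets $Y_1 := \overline{G\cdot v}$ and $Y_2 := \{0\}$ are disjoint; both are Zariski-closed and $G$-stable (the closure of a $G$-stable set is $G$-stable, as each $g \in G$ acts as a homeomorphism of $V$). The crux is the separation lemma: two disjoint closed $G$-stable subsets of $V$ are separated by an invariant function. To prove it, the Nullstellensatz gives $I(Y_1) + I(Y_2) = \C[V]$, so $1 = g_1 + g_2$ with $g_i \in I(Y_i)$. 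Apply the Reynolds operator $\mathcal{R}\colon \C[V] \to \C[V]^G$ — the canonical $G$-equivariant projection onto the invariant subring, whose existence is guaranteed precisely by reductivity (complete reducibility of $\C[V]$ as a $G$-module). Since $g_1$ vanishes on the $G$-stable set $Y_1$, so does every $G$-translate of $g_1$, hence so does $\mathcal{R}(g_1)$ (it lies in the $G$-submodule of $\C[V]$ generated by $g_1$); therefore $f := \mathcal{R}(g_2) = 1 - \mathcal{R}(g_1) \in \C[V]^G$ equals $1$ on $Y_1$ and $0$ on $Y_2$. In particular $f(v)=1$ and $f(0)=0$.

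It remains to pass to homogeneous components. Write $f = \sum_{d\ge 0} f_d$ with $f_d$ homogeneous of degree $d$. Since $G$ acts on $V$ by \emph{linear} transformations, each graded piece $f_d$ is again invariant. From $f(0)=0$ we get $f_0 = 0$, and from $f(v)=1\ne 0$ we get $f_d(v)\ne 0$ for some $d\ge 1$. Hence $v$ is not in the zero locus of $\bigcup_{d\ge 1}\C[V]^G_d$, which completes the contrapositive, and with it the theorem.

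The only non-formal ingredient — and the step I expect to be the main obstacle — is the existence of the Reynolds operator, equivalently the linear reductivity of $G$; this is exactly why the hypothesis that $G$ be reductive cannot be dropped. An equivalent packaging routes the separation step through the categorical quotient $V \sslash G = \operatorname{Spec}\,\C[V]^G$ and the fact that it separates closed orbits, but this invokes reductivity at the same point, so it is not really a shortcut.
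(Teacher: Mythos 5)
Your proof is correct and is the standard argument: the paper itself does not prove this statement but defers to Derksen--Kemper \cite[Section~2.5]{DK}, and your route (continuity of invariants on orbit closures for one inclusion; Nullstellensatz plus the Reynolds operator to separate the disjoint closed $G$-stable sets $\overline{G\cdot v}$ and $\{0\}$, then taking homogeneous components, for the other) is exactly the proof found there. No gaps.
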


For a proof of the above result, we refer the reader to \cite[Section~2.5]{DK}.

\subsection{Null cones for tori} \label{sec:inv.thry.tori}
The group $\C^* = \GL_1(\C)$ is clearly an algebraic group, which is moreover abelian. A direct product $T = (\C^*)^n$ is called a (complex) torus. Any connected abelian reductive group must a torus! Needless to say (non-abelian) reductive groups can of course be far more complicated than tori. However, an understanding of the null cone for tori is key to understanding the null cones for more general reductive groups, and this is captured by the celebrated Hilbert--Mumford criterion that we will discuss in the next subsection. The null cone in the ``easy'' abelian case of the torus has a simple description as a union of linear subspaces of a specific form (this is related to the linear programming problem in complexity).

For this subsection, let $T = (\C^*)^n$ be a (complex) torus. Let $\mathcal{X}(T)$ denote all the characters of $T$, i.e., all algebraic group homomorphisms $T \rightarrow \C^*$. One can identify $\mathcal{X}(T) = \Z^n$ as follows. For $\lambda = (\lambda_1,\dots,\lambda_n) \in \Z^n$, we have the corresponding character (also denoted $\lambda$ by abuse of notation) $\lambda: T \rightarrow \C^*$ defined by $\lambda(t_1,\dots,t_n) = \prod_{i=1}^n t_i^{\lambda_i}$. It is a well known result that these are all the algebraic characters of $T$.

Suppose $V$ is a rational representation of $T$. Then there is a {\em weight space decomposition} 
$$
V = \bigoplus_{\lambda \in \mathcal{X}(T)} V_\lambda,
$$
where for any $\lambda \in \mathcal{X}(T)$, the weight space $V_{\lambda} = \{v \in V \ |\ t \cdot v = \lambda(t) v\}$. One should think of this as a simultaneous eigenspace decomposition for the action of $T$. Indeed, the weight space $V_\lambda$ consists of eigenvectors for the action of every $t \in T$, although each $t$ will act by a different eigenvalue, i.e., $\lambda(t)$. Elements of $V_{\lambda}$ are called weight vectors of weight $\lambda$. Let $e_1,\dots,e_m$ denote a basis of $V$ consisting of weight vectors (thus identifying $V$ with $\C^m$), and let the weight of $e_i$ be $w_i \in \Z^n$. Let $x_1,\dots,x_m$ denote the corresponding coordinates.

\begin{remark} \label{R-tori.invpol}
A monomial $\prod_{i=1}^m x_i^{a_i}$ is an invariant monomial if and only if $\sum_i a_i \cdot w_i = 0$ (note that $a_i \in \N$ and $w_i \in \Z^n$, so this is an equality in $\Z^n$). As a vector space over $\C$, the ring of invariants $\C[V]^T$ is spanned by such invariant monomials. In particular, the null cone is precisely the zero locus of such invariant monomials (excluding the trivial monomial $\prod_{i = 1}^m x_i^0$ which is the constant function $1$).
\end{remark}

\begin{definition} [Coordinate subspace] \label{D-coordsubspace}
For a subset $I \subseteq [m]$, we define $L_I$ to be the linear subspace of $\C^m$ that is defined as the zero locus of $\{x_j: j \notin I\}$. In other words, $L_I$ consists of all the vectors in $\C^m$ whose support (i.e., the set of non-zero coordinates) is a subset of $I$. We will call any subspace of the form $L_I$ a {\em coordinate subspace}.
\end{definition}

For a subset $I \subseteq [m]$, consider the set of points $W_I = \{w_i : i \in I\} \subseteq \Z^n \subseteq \Q^n \subseteq \R^n$. Let $\Delta_I$ denote the convex hull of $W_I$. The following description of the null cone is the main takeaway from this subsection. We provide a proof for completeness.

\begin{proposition}
Let $V$ be an $m$-dimensional rational representation of the torus $T$. Identify $V = \C^m$ using a basis of weight vectors. Using the notation above, the null cone
$$
\mathcal{N}_T(V) = \bigcup\limits_{I \subseteq [m], 0 \notin \Delta_I} L_I
$$
\end{proposition}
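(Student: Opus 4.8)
The plan is to characterize membership in the null cone $\mathcal{N}_T(V)$ via the monomial description of invariants in Remark~\ref{R-tori.invpol}, and then translate the resulting combinatorial condition into the convex-geometric condition $0 \notin \Delta_I$ using a separating-hyperplane (linear programming duality) argument. Recall that $v \in \mathcal{N}_T(V)$ iff every non-constant homogeneous invariant vanishes at $v$ (Theorem~\ref{theo:alg.nullcone}), and since invariants are spanned by invariant monomials $\prod_i x_i^{a_i}$ with $\sum_i a_i w_i = 0$, the point $v$ with support $I = \Supp(v) \subseteq [m]$ lies in the null cone iff \emph{no} such nontrivial monomial is supported inside $I$; equivalently, iff there is \emph{no} nonzero vector $(a_i)_{i \in I}$ with all $a_i \in \N$ satisfying $\sum_{i \in I} a_i w_i = 0$. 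So it suffices to prove: there exists a nonzero $(a_i)_{i\in I} \in \N^I$ with $\sum_{i\in I} a_i w_i = 0$ if and only if $0 \in \Delta_I$, the convex hull of $\{w_i : i \in I\}$.

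For that equivalence, the ``only if'' direction is immediate: given such $(a_i)$, not all zero, setting $\lambda_i = a_i / \sum_j a_j$ expresses $0$ as a convex combination of the $w_i$, so $0 \in \Delta_I$. For the ``if'' direction, suppose $0 \in \Delta_I$, so $0 = \sum_{i\in I} \lambda_i w_i$ for some $\lambda_i \geq 0$ with $\sum \lambda_i = 1$; the $\lambda_i$ are a priori real, but since the $w_i$ lie in $\Z^n$, the (nonempty) polytope of such convex-combination vectors is cut out by rational (indeed integer) linear equalities and inequalities, hence has a rational point, and clearing denominators yields nonzero $(a_i) \in \N^I$ with $\sum a_i w_i = 0$. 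Once this is established, the two descriptions of the null cone match up: $v \in \mathcal{N}_T(V)$ iff $0 \notin \Delta_{\Supp(v)}$, i.e., iff $v \in L_{\Supp(v)} \subseteq \bigcup_{0 \notin \Delta_I} L_I$; conversely if $v \in L_I$ with $0\notin\Delta_I$ then $\Supp(v)\subseteq I$, and since any subset $J$ of $I$ also has $0 \notin \Delta_J$ (a convex combination supported on $J$ is one supported on $I$), we get $0 \notin \Delta_{\Supp(v)}$, so $v$ is in the null cone.

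I should double-check the one subtle containment hidden in the last step — that $0 \notin \Delta_I$ implies $0 \notin \Delta_J$ for $J \subseteq I$ — but this is clear since $\Delta_J \subseteq \Delta_I$. The genuinely content-bearing step is the rationality argument in the ``if'' direction: we need that a rational polytope, if nonempty, contains a rational point. This is standard (it follows, e.g., from the fact that a nonempty rational polyhedron has a rational vertex, or simply from Gaussian elimination on the active constraints at any point), so I would state it in one line rather than belabor it. The main obstacle, if any, is purely expository: being careful that the weight-space decomposition genuinely lets us reduce to monomials (so that the action is diagonalized in the chosen coordinates), which is exactly what the setup preceding Remark~\ref{R-tori.invpol} guarantees. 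I would also remark that this is precisely the linear-programming feasibility dichotomy alluded to in the text — ``$0$ is in the convex hull'' versus ``$0$ is separated from $W_I$ by a hyperplane,'' the latter hyperplane being exactly the one-parameter subgroup driving $v$ to $0$.
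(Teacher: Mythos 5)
Your proposal is correct and follows essentially the same route as the paper: reduce to invariant monomials via Remark~\ref{R-tori.invpol} and Theorem~\ref{theo:alg.nullcone}, and translate ``no nontrivial invariant monomial supported in $I$'' into ``$0 \notin \Delta_I$.'' The only difference is in the real-to-integer step: where you invoke the standard fact that a nonempty polytope cut out by rational data contains a rational point, the paper proves this by hand (choosing a rational basis of the kernel of the weight matrix and perturbing the real coefficients to nearby rationals that stay positive) — your citation is a legitimate shortcut for the same content.
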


\begin{proof}
First, let us show that for each $I$ such that $0 \notin \Delta_I$, $L_I \subseteq \mathcal{N}_T(V)$. By Remark~\ref{R-tori.invpol} and Theorem~\ref{theo:alg.nullcone}, it suffices to show that every (non-constant) invariant monomial vanishes on $L_I$. Take such an invariant monomial $m = \prod_{i=1}^m x_i^{a_i}$. If $a_j > 0$ for some $j \notin I$, then clearly $m$ vanishes on $L_I$. Otherwise $m = \prod_{i \in I} x_i^{a_i}$, so for $m$ to be invariant, $\sum_{i \in I} a_i w_i = 0$, but this means that $0 \in \Delta_I$, which is a contradiction. Thus every non-constant invariant monomial vanishes on $L_I$. Thus, we have shown $\supseteq$.

For the reverse direction, it suffices to show that $v \notin {\rm R.H.S.}$ implies $v \notin \mathcal{N}_T(V)$. To this end, let $v \notin {\rm R.H.S.}$. Let $J$ be the support of $v$ (i.e., the set of all non-zero coordinates). Clearly $0 \in \Delta_J$. Thus, we have $0 = \sum_{i \in J} a_i w_i = 0$ for some $a_i > 0$ and $\sum_i a_i = 1$. If the $a_i$'s were (non-negative) integers, then $\prod_{i \in J} x_i^{a_i}$ would be an invariant monomial that doesn't vanish at $v$, and we would be done. Even if the $a_i$'s are rational numbers, by removing denominators, the argument would still go through. However, we only know that $a_i$'s are real numbers, and we will need a little bit of work to ensure that we can get a non-negative integer linear combination of the $w_i$'s to add to zero. 

W.l.o.g., we can assume that $a_i > 0$ for all $i \in J$ (else, replace $J$ with $\{i \in J\ | a_i >0\}$ and proceed with the argument). Let $K = \{(p_i)_{i \in J}\ | \ \sum_{i \in J} p_i w_i = 0\} \subseteq \R^J$. Then $K$ is the kernel of an $n \times |J|$ matrix whose columns are $w_i : i \in J$. Since this matrix has rational entries, there is a basis of $K$ with rational entries, i.e., $b_1,\dots,b_r \in K \cap \Q^J$ that span $K$ (as an $\R$-vector space). Now, since $(a_i)_{i \in J} \in K$, we can write $(a_i)_{i \in J} = \sum_{t = 1}^r \lambda_t b_t$ for some $\lambda_t \in \R$. Since $\sum_t \lambda_t b_t = (a_i)_{i \in J}  \in \R_{> 0}^J$, we deduce by continuity that there exists $\epsilon > 0$ such that $\sum_t \mu_t b_t \in \R_{>0}^J$ for all $\mu_i$ such that $|\mu_i - \lambda_i| < \epsilon$. Let $\mu_i$ be rational numbers such that $|\mu_i - \lambda_i| < \epsilon$. Then let $(c_i)_{i \in J} = \sum_i \mu_i b_i \in K \cap \R_{>0}^J$, but clearly $c_i$ are rational, so $(c_i)_{i \in J} \in K \cap \Q_{>0}^J$. Thus $\sum_i c_i w_i = 0$ and $c_i \in \Q$. For some $D \in \N$, we have $Dc_j \in \N$ for all $j$. Then $m = \prod_{i \in J} x_i^{Dc_j}$ is an invariant monomial that does not vanish on $v$, so $v \notin \mathcal{N}_T(V)$.
\end{proof}

\subsection{Null cones for reductive groups: Hilbert--Mumford criterion}
Let $G$ be a reductive group, and let $T$ be a maximal torus\footnote{All maximal tori are conjugate. Moreover, the union of all maximal tori is dense in the identity component of $G$.}, i.e., a subgroup of $G$ that is a torus, and not contained in a larger torus. The celebrated result called the Hilbert--Mumford
criterion that we state below essentially tells us that elements in the null cone for the action of $G$ are precisely those which can be moved (by applying an element of $G$) into the null cone for the torus $T$. In particular, this is one way to see that the null cone for the group $G$ is the same as the null cone for its identity component $G^\circ$.

The following statement can be found in \cite{Mumford} (see also \cite[Theorem~2.5.3]{DK}).

\begin{theorem} [Hilbert--Mumford criterion] \label{theo:HM}
Let $V$ be a rational representation of a reductive group $G$. Then 
$$
\mathcal{N}_G(V) = G \cdot \mathcal{N}_T(V).
$$
\end{theorem}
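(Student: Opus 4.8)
The plan is to prove both inclusions of $\mathcal{N}_G(V) = G \cdot \mathcal{N}_T(V)$. The easy direction is $\supseteq$: if $v \in \mathcal{N}_T(V)$, then $0 \in \overline{T \cdot v} \subseteq \overline{G \cdot v}$, so $v \in \mathcal{N}_G(V)$; since $\mathcal{N}_G(V)$ is clearly $G$-stable (if $0 \in \overline{G \cdot v}$ then $0 = g\cdot 0 \in \overline{G \cdot (gv)}$), we get $G \cdot \mathcal{N}_T(V) \subseteq \mathcal{N}_G(V)$. The content is entirely in the reverse inclusion: if $0 \in \overline{G\cdot v}$, I must produce $g \in G$ with $gv \in \mathcal{N}_T(V)$.

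The standard route I would take uses \emph{one-parameter subgroups} (cocharacters). First I would establish the numerical form of the criterion: $v$ lies in the null cone of $G$ if and only if there is a one-parameter subgroup $\lambda : \C^* \to G$ with $\lim_{s\to 0}\lambda(s)\cdot v = 0$. One direction is trivial (such a $\lambda$ exhibits $0$ in the orbit closure). The other — that membership in the null cone forces the existence of such a $\lambda$ — is the genuinely hard analytic/geometric input; I would invoke it as the Hilbert–Mumford lemma proper, citing \cite{Mumford} or \cite[Section~2.5]{DK} rather than reproving it, since it rests on properties of orbit closures (e.g., curve selection, or Iwahori's theorem on the structure of $\overline{G\cdot v}\setminus (G\cdot v)$, or the valuative criterion applied to $\GL_n/G$-type arguments). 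Given this, the second step is a conjugacy reduction: every one-parameter subgroup $\lambda$ of $G$ has image in some maximal torus of $G$, and all maximal tori are conjugate (the footnote in the excerpt records exactly this), so there is $g \in G$ with $g\lambda g^{-1}$ landing in our fixed $T$. Then $\lim_{s\to 0}(g\lambda(s)g^{-1})\cdot(g v) = g\cdot\lim_{s\to 0}\lambda(s)\cdot v = g\cdot 0 = 0$, and $g\lambda g^{-1}$ is a one-parameter subgroup of $T$. Finally I would translate this back: a one-parameter subgroup of $T$ driving $gv$ to $0$ means, in the weight-basis coordinates of the previous subsection, that all coordinates of $gv$ whose weights pair non-negatively with the cocharacter must vanish — equivalently $0 \notin \Delta_{J}$ for $J = \operatorname{Supp}(gv)$ — so $gv \in \mathcal{N}_T(V)$ by the Proposition characterizing torus null cones. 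Hence $v \in G\cdot\mathcal{N}_T(V)$.

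The main obstacle, as flagged above, is the existence half of the numerical criterion: passing from the bare statement $0 \in \overline{G\cdot v}$ (a closure taken in a possibly very large, non-compact orbit) to an algebraic \emph{curve} — in fact a one-parameter \emph{subgroup} — along which $v$ degenerates to $0$. This is where reductivity of $G$ is genuinely used and where all the real work of the Hilbert–Mumford theorem sits; the conjugacy-of-tori reduction and the torus bookkeeping afterward are comparatively routine. Since the excerpt explicitly cites \cite{Mumford} and \cite[Theorem~2.5.3]{DK} for the statement, I would present the argument modularly: quote the numerical criterion as a black box, then give the short conjugation-plus-weights argument in full, making clear that the novelty-free but necessary analytic core has been isolated and attributed.
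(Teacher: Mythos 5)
Your proposal matches what the paper does: the paper gives no proof of this theorem, instead citing \cite{Mumford} and \cite[Theorem~2.5.3]{DK}, and in the paragraph following the statement it sketches exactly your reduction — the $1$-parameter-subgroup form of the criterion is taken as the known input, and the stated form follows because every $1$-parameter subgroup lies in a maximal torus, all maximal tori are conjugate, and the torus null cone description of the previous subsection agrees with the cocharacter condition. Your modular presentation (black-boxing the hard existence half and writing out the conjugation-plus-weights bookkeeping) is correct and is essentially the paper's own route.
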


The Hilbert--Mumford criterion is sometimes stated in more general fashion, which says that $v \in V$ is in the null cone for $G$ if and only if there is a $1$-parameter subgroup of $G$ that drives it to zero. To see that this is equivalent to the version we state above, one needs to understand two things. The first is that any $1$-parameter subgroup is contained in some maximal torus, and all maximal tori are conjugate. The second is an understanding of the $1$-parameter subgroups of a torus, which will show that our description of the null cone for tori agrees with the criterion in terms of $1$-parameter subgroups (this is not hard).

\subsection{Proof idea of Theorem~\ref{theo:nullcone2}} \label{subs:proofidea}
Let us briefly give the idea behind the proof of Theorem~\ref{theo:nullcone2} (from which Theorem~\ref{theo:nullcone} follows easily). Indeed, suppose there is a reductive group $G$ (with maximal torus $T$) acting on $V = \Mat_{n}^m$ preserving $\SING_{n,m}$ such that the null cone is contained in $\SING_{n,m}$. Then the null cone for the torus $N_T(V)$ is also a subset of $\SING_{n,m}$. We know from the above discussion that $\mathcal{N}_T(V)$ is a union of coordinate subspaces. We will show that any coordinate subspace contained in $\SING_{n,m}$ must already be contained in $\NSING_{n,m}$ -- we will see this in Section~\ref{sec:notnullcone}. Thus, whatever $\mathcal{N}_T(V)$ may be, it must be contained in $\NSING_{n,m}$. 

This is the point where an understanding the group of symmetries is really needed. To be precise, the crucial result that drives the following argument is that the group of symmetries for $\SING_{n,m}$ is the {\em same} as the group of symmetries for $\NSING_{n,m}$! So, in particular, since $G$ preserves $\SING_{n,m}$, it also preserves $\NSING_{n,m}$. Thus, we have $G \cdot \mathcal{N}_T(V) \subseteq \NSING_{n,m}$. By the Hilbert--Mumford criterion (Theorem~\ref{theo:HM} above), we get that $\mathcal{N}_G(V) = G \cdot \mathcal{N}_T(V) \subseteq \NSING_{n,m}$ which is the required conclusion for Theorem~\ref{theo:nullcone2}.

\section{Computing the group of symmetries via polynomials} \label{sec:gos}
In this section, we will explain the important statements that go into the calculation of the group of symmetries. The proofs will be postponed to an appendix so as to not interrupt the flow of the paper. The main purpose of this section is however to highlight the fact one can determine the connected group of symmetries by a linear algebraic computation (by passing to Lie algebras), and this works in great generality. Later on, we discuss a technique to determine the entire group of symmetries, but this works only in a more limited setting (which of course includes $\SING_{n,m}$).

Given a subset $S \subseteq V$, we define 
$$
I_S = \{f \in \C[V]\ |\ f(s) =0\  \forall s \in S\},
$$
which is called the ideal of polynomials vanishing on $S$. about $S$ can almost always be reformulated in terms of questions on $I_S$. The first observation is that for a group $G$ acting (algebraically) on $V$, there is a (natural) induced action of $G$ on $\C[V]$. To understand this action , we need to describe for $g \in G$ and a polynomial function $f \in \C[V]$, what the resulting polynomial function $g \cdot f$ is. To describe a polynomial function, one can simply give its evaluation on all points of $V$. The polynomial function $g \cdot f$ is defined by
$$
(g \cdot f) (v)  = f(g^{-1}v) 
$$

There are other ways to describe this action, one of them being that we identify $\C[V]$ with the symmetric algebra over the dual space $V^*$. This point of view is not needed here, but will be helpful in a later technical section. For now, we note some key features. The most important feature is that $\deg(f) = \deg(g \cdot f)$ for any $g \in G$ and any (homogenous) $f \in \C[V]$. Hence, the linear subspace $\C[V]_a$ consisting of homogenous polynomials of degree $a$ is a $G$-stable subspace of $\C[V]$. 


When $S$ is a cone (i.e., $\lambda \in \C, s\in S \implies \lambda s \in S$), then $I_S$ is graded, i.e., $I_S = \oplus_{a \in \N} (I_S)_a$, where $(I_S)_a$ denotes the polynomials in $I$ that are homogenous of degree $a$.

\begin{lemma} \label{Lgos-graded}
Suppose $S \subseteq V$ is a cone, and let $a \in \N$. Then 
$$
\mathcal{G}_S \subseteq \{g \in \GL(V)\ |\ g (I_S)_a \subseteq (I_S)_a\}.
$$
Further, if the zero locus of $(I_S)_a$ is equal to $S$, then we have equality. 
\end{lemma}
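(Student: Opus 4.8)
The plan is to prove both inclusions by simply unwinding the definitions, using only two ingredients supplied above: the induced $G$-action on $\C[V]$ given by $(g\cdot f)(v)=f(g^{-1}v)$, and the fact that this action preserves degree, so that $\C[V]_a$ — and hence the finite-dimensional subspace $(I_S)_a = I_S \cap \C[V]_a$ — is stable under it. The hypothesis that $S$ is a cone is only used to make sense of $(I_S)_a$ as the degree-$a$ graded piece of $I_S$.

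First I would establish the inclusion $\mathcal{G}_S \subseteq \{g \in \GL(V) : g (I_S)_a \subseteq (I_S)_a\}$, which needs no extra hypothesis. Let $g \in \mathcal{G}_S$ and $f \in (I_S)_a$. Then $g\cdot f$ is again homogeneous of degree $a$, and for every $s \in S$ we have $(g\cdot f)(s) = f(g^{-1}s)$; since $gS=S$ forces $g^{-1}S = S$, the point $g^{-1}s$ lies in $S$, so $f(g^{-1}s)=0$. Hence $g\cdot f \in (I_S)_a$, which is exactly what is required.

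Next I would establish the reverse inclusion under the extra hypothesis that the zero locus $Z\big((I_S)_a\big)$ equals $S$. Take $g \in \GL(V)$ with $g(I_S)_a \subseteq (I_S)_a$. Because $(I_S)_a$ is finite-dimensional and $g$ acts injectively on it, this inclusion is automatically an equality $g(I_S)_a = (I_S)_a$. Then I would pass to zero loci, using the easily checked identity $Z(g\cdot W) = g\cdot Z(W)$ for any set $W$ of functions (a direct consequence of $(g\cdot f)(v)=f(g^{-1}v)$): this gives $S = Z\big((I_S)_a\big) = Z\big(g(I_S)_a\big) = g\cdot Z\big((I_S)_a\big) = gS$, so $g \in \mathcal{G}_S$, and the two sets coincide.

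There is no genuine obstacle here; this is a definitional lemma. The only points requiring a moment of care are (i) upgrading $g(I_S)_a \subseteq (I_S)_a$ to an equality via finite-dimensionality before one can legitimately take zero loci, and (ii) correctly tracking the inverse in $Z(g\cdot W) = g\cdot Z(W)$. This last step is also exactly where the hypothesis "the zero locus of $(I_S)_a$ equals $S$" is indispensable: without it one only learns that $g$ permutes $Z\big((I_S)_a\big)$, which could be a strictly larger cone than $S$, and the converse inclusion genuinely fails.
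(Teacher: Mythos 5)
Your proof is correct and follows essentially the same route as the paper's: the forward inclusion by unwinding the induced action, and the reverse inclusion by first upgrading $g(I_S)_a \subseteq (I_S)_a$ to an equality via finite-dimensionality (the paper's Lemma~\ref{repeat}) and then deducing $gS=S$ from the hypothesis that $S$ is the zero locus of $(I_S)_a$. Your phrasing via the identity $Z(g\cdot W)=g\cdot Z(W)$ is just a compact repackaging of the paper's two inclusions $gS\subseteq S$ and $gS^c\subseteq S^c$.
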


The key reason behind restricting ourselves to polynomials of a certain degree is to work with finite dimensional vector spaces rather than infinite dimensional ones.




Algebraic sets (in particular cones), are often described as the zero locus of a collection of (homogenous) polynomials $\{f_i: i \in I\}$. While it is difficult to compute a set of generators for the ideal $I_S$, the degrees of $\{f_i: i \in I\}$ can help us find a suitable $a$ to apply the above lemma (indeed the least common multiple of degrees of $f_i$ will suffice, but in specific cases, one can probably do much better). It is however another task to compute {\em all} the homogenous polynomials of a certain degree that vanish on $S$. In any specific case, this may be manageable, but we do not know of any general strategy. For the case of $\SING_{n,m}$, we will manage this (in a later section) with the help of representation theory of $\GL_m \times \GL_n \times \GL_n$.


Our technique to compute the group of symmetries $\mathcal{G}_S$ has two parts to it. The first is to determine the connected group of symmetries $\mathcal{G}_S^\circ$, i.e., the identity component of the group of symmetries -- this will be done by appealing to the theory of Lie algebras, which reduces the problem to linear algebra. The second is to determine the component group $\mathcal{G}_S/\mathcal{G}_S^\circ$, which is always a finite group.

\subsection{Connected group of symmetries via Lie algebras} 
In this section, we discuss the first part of our technique, i.e., how to determine the connected group of symmetries. The first observation (and easy to see) is that the group of symmetries of an algebraic subset $S \subseteq V$ is a Zariski closed subgroup of $\GL(V)$, and so is an algebraic subgroup of $\GL(V)$ (and hence a Lie group). Consequently, the connected group of symmetries is a connected algebraic group (and hence connected Lie subgroup). Connected Lie subgroups of $\GL(V)$ are in $1-1$ correspondence with Lie subalgebras of $\gl(V)$, the Lie algebra of $\GL(V)$\footnote{This is a famous result due to Chevalley, and is often called the subgroups-subalgebras correspondence}. Thus, to determine the connected group of symmetries $\mathcal{G}_S^\circ$, it suffices to determine its Lie algebra (denoted $\g_S$), which we will call the {\em Lie algebra of symmetries}. We should point out here that in general neither $\{M \in \gl(V)\ |\ M \cdot S = S\}$ nor $\{M \in \gl(V)\ |\ M \cdot S \subseteq S\}$ is equal to $\g_S$. However, we have the following result:

\begin{proposition} \label{P-Liealg-all}
Let $S \subseteq V$ be a cone. Then for any $a \in \N$, the Lie algebra of symmetries
$$
\g_S \subseteq \{M \in \gl(V)\ |\ M \cdot (I_S)_a \subseteq (I_S)_a\}.
$$
Finally, if the zero locus of $(I_S)_a$ is precisely the cone $S$, then we have equality.
\end{proposition}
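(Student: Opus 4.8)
\textbf{Proof proposal for Proposition~\ref{P-Liealg-all}.}

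The plan is to pass from the group statement (Lemma~\ref{Lgos-graded}) to the Lie algebra statement by differentiating, and then to use the equality case of Lemma~\ref{Lgos-graded} together with the fact that a connected algebraic group is recovered from its Lie algebra. First I would recall the setup: since $S$ is a cone, $I_S$ is graded, so each $(I_S)_a$ is a finite-dimensional subspace of the finite-dimensional space $\C[V]_a$, and the $\GL(V)$-action on $\C[V]_a$ is algebraic (it is the $a$-th symmetric power of the dual representation). Consider the closed subgroup
$$
H_a = \{g \in \GL(V)\ |\ g\cdot (I_S)_a = (I_S)_a\}
$$
of $\GL(V)$ (it is closed because it is the stabilizer of a point in a Grassmannian under an algebraic action, equivalently the stabilizer of the subspace $(I_S)_a$). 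Its Lie algebra is the infinitesimal stabilizer
$$
\Lie(H_a) = \{M \in \gl(V)\ |\ M\cdot (I_S)_a \subseteq (I_S)_a\},
$$
where $M$ acts on $\C[V]_a$ by the derived representation; the inclusion ``$\subseteq$'' here (rather than ``$=$'') is automatic because for a linear subspace $W$ of a finite-dimensional space, $MW \subseteq W$ is equivalent to $\exp(tM)W = W$ for all $t$. This is the standard identification of the Lie algebra of a stabilizer, and I would cite the algebraic-geometry/Lie-theory background that the excerpt says is collected in Appendix~\ref{App.gos}.

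Next, for the first (inclusion) assertion: by Lemma~\ref{Lgos-graded} we have $\mathcal{G}_S \subseteq H_a$, hence $\mathcal{G}_S^\circ \subseteq H_a^\circ$, and therefore at the level of Lie algebras $\g_S = \Lie(\mathcal{G}_S) = \Lie(\mathcal{G}_S^\circ) \subseteq \Lie(H_a^\circ) = \Lie(H_a)$. Combining with the computation of $\Lie(H_a)$ above gives exactly
$$
\g_S \subseteq \{M \in \gl(V)\ |\ M\cdot (I_S)_a \subseteq (I_S)_a\}.
$$
For the equality assertion, suppose the zero locus of $(I_S)_a$ is precisely $S$. Then the ``further'' clause of Lemma~\ref{Lgos-graded} gives $\mathcal{G}_S = H_a$ as groups, hence $\mathcal{G}_S^\circ = H_a^\circ$, hence $\g_S = \Lie(H_a) = \{M \in \gl(V)\ |\ M\cdot (I_S)_a \subseteq (I_S)_a\}$, which is the desired equality.

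The only genuinely non-routine point — the part I would be most careful about — is the identification $\Lie(H_a) = \{M : M\cdot(I_S)_a \subseteq (I_S)_a\}$, i.e.\ that the infinitesimal stabilizer of a subspace is the set of operators preserving it (with containment, not equality, and with no loss from the map $H_a \to$ Grassmannian not necessarily being smooth/separable-surjective onto its image). In characteristic zero this is standard: $H_a$ is the stabilizer of the point $[(I_S)_a]$ in the Grassmannian $\mathrm{Gr}(\dim (I_S)_a, \C[V]_a)$ under a morphic $\GL(V)$-action, and the Lie algebra of such a stabilizer is the kernel of the differential of the orbit map, which unwinds to $\{M : M\cdot(I_S)_a \subseteq (I_S)_a\}$ because the tangent space to the Grassmannian at $[W]$ is $\mathrm{Hom}(W, \C[V]_a/W)$ and $M$ acts there as the composite $W \hookrightarrow \C[V]_a \xrightarrow{M} \C[V]_a \twoheadrightarrow \C[V]_a/W$. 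I would state this as a lemma in Appendix~\ref{App.gos} (noting the characteristic-zero simplification, with the positive-characteristic variant deferred to Appendix~\ref{app.pos.char}) and then the proof above is just assembling Lemma~\ref{Lgos-graded} with it.
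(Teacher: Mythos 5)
Your proposal is correct and follows essentially the same route as the paper: the paper's Appendix~\ref{App.gos} proves exactly your key lemma, namely that the Lie algebra of the stabilizer $\{g \in \GL(W) \mid gU = U\}$ of a finite-dimensional subspace $U$ is $\{M \mid M\cdot U \subseteq U\}$ (via the same $\exp(tM)$ argument you sketch), and then combines it with Lemma~\ref{Lgos-graded} just as you do. Your Grassmannian/orbit-map justification and the remark on separability in positive characteristic are alternative packagings of the same point, which the paper handles by the exponential-series computation and by a citation to Milne in Appendix~\ref{app.pos.char}, respectively.
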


In the appendix, we give a gentle and quick introduction to Lie algebras, and prove the proposition. It is however imperative for the reader to understand the action of the Lie algebra $\gl(V)$ on polynomials in $\C[V]_a$ to be able to use the above proposition as a computational tool. For this purpose, in the next section, we describe this action explicitly.

\begin{notation}
 For algebraic groups $G,H,\dots$, we will denote their Lie algebras by $\Lie(G),\Lie(H),\dots$ or by the corresponding gothic letters $\g,\h,\dots$ (to avoid cumbersome notation).
\end{notation}



\subsection{Component group}
We have already discussed above how to compute the connected group of symmetries $\mathcal{G}_S^\circ$ for an algebraic subset $S \subseteq V$. To compute the entire group of symmetries $\mathcal{G}_S$, we observe that $\mathcal{G}_S$ is an algebraic subgroup of $\GL(V)$ whose identity component is $\mathcal{G}_S^\circ$. One deduces that $\mathcal{G}_S$ must be a subgroup of the normalizer of $\mathcal{G}_S^\circ$. In the event that $\mathcal{G}_S^\circ$ is a reductive group and acts irreducibly on $V$, its normalizer will be a finite extension (see for e.g. \cite{Guralnick})\footnote{One way to compute the normalizer in this case is by understanding the automorphisms of the Dynkin diagram, see \cite{Guralnick}. However, we will give more concrete arguments as many of our readers may not possess an in depth knowledge of the theory of semisimple algebras.}. In particular its normalizer is also an algebraic group whose identity component is $\mathcal{G}_S^\circ$. So, $\mathcal{G}_S$ is a union of some of the components of the normalizer of $\mathcal{G}_S^\circ$, and we just have to identity which ones.





\section{Explicit description of the Lie algebra action on polynomials} \label{sec:explicit}
For the technical aspects of the computations we do, it is absolutely essential to understand the action of the Lie algebras on polynomial functions. Let $V$ be a vector space with basis $e_1,\dots,e_n$, and let the corresponding coordinates functions be denoted $x_1,\dots,x_n$. Using the basis, we identify $V = \C^n$, $\C[V] = \C[x_1,\dots,x_n]$, $\GL(V) = \GL_n$  and $\gl(V) = \Mat_{n}$. 

Let $E_{ij}$ denote the elementary matrix with a $1$ in $(i,j)^{th}$ entry and $0$'s everywhere else. As a linear transformation, $E_{ij}$ maps $e_j$ to $e_i$ and kills $e_k$ for $k \neq j$. The matrices $\{E_{ij}\}_{1 \leq i,j \leq n}$ form a basis for $\Mat_{n}$. We will describe the action of $E_{ij}$'s, and then extend by linearity to understand the action of $\gl(V)$. The matrix $E_{ij}$ acts as the derivation $-x_j \partial_i$, where $\partial_i$ denotes the partial derivative with respect to $x_i$. In other words, for any $f \in \C[V]$, we have
$$
E_{ij} \cdot f = -x_j\partial_i f
$$

To write it all out explicitly, a matrix $M \in \gl(V) = \Mat_{n}$ acts on a polynomial $f \in \C[V]  = \C[x_1,\dots,x_n]$ by the following formula:

$$
M \cdot f = \left(- \sum_{1\leq i,j \leq n} m_{ij} x_j \partial_i \right) f,
$$

where $m_{ij}$ denotes the $(i,j)^{th}$ entry of the matrix $M$. Also note that we can write $M = \sum_{i,j} m_{ij} E_{ij}$, so another point of view is that $m_{ij}$ are the coordinates of the matrix $M$ with respect to the (standard) basis $\{E_{ij}\}$.

\subsection{Twisting by Cartan involution}
The action of the Lie algebra on polynomial functions is annoying due to the negative signs and these will be cumbersome to keep track of it in computation. To make the computations less confusing and more intuitive, we twist the action. This is done with the help of the Cartan involution.

\begin{definition} [Cartan involution]
The Cartan involution $\Theta: \gl_n \rightarrow \gl_n$ is the composition of negation and transpose, i.e., 
$$
\Theta(X) = - X^t.
$$
The Cartan involution is an automorphism of Lie algebras.
\end{definition}

The main thing to observe about the map $\Theta$ is that it is an involution, i.e., $\Theta \circ \Theta$ is the identity map. 

\begin{definition} [Twisted action]
For $M \in \gl_n$, we define an action of $M$ on $\C[V] = \C[x_1,\dots,x_n]$ by 
$$
M \cd f  = \Theta(M) \cdot f.
$$
In particular, we have
$$
E_{ij} \cd f = x_i \partial_j f,
$$
and hence
$$
M \cd f = (\sum_{i,j} m_{ij} x_i \partial_j ) f
$$
\end{definition}

The result on computing the Lie algebra of symmetries (i.e., Proposition~\ref{P-Liealg-all}) can be reformulated in terms of the twisted action:

\begin{lemma} \label{star-action}
Let $S \subseteq V$ be a cone. Then for any $a \in \N$, we have
$$
\g_S \subseteq \{\Theta(M)\ |\ M \cd (I_S)_a \subseteq (I_S)_a \}  = \Theta \{M\ |\ M \cd (I_S)_a \subseteq (I_S)_a \}.
$$
In the above, we have equality if the zero locus of $(I_S)_a$ is precisely $S$.
\end{lemma}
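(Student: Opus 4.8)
The plan is to derive Lemma~\ref{star-action} directly from Proposition~\ref{P-Liealg-all}, using only the definition of the twisted action and the fact that $\Theta$ is an involution. First I would recall that by definition $M \cd f = \Theta(M) \cdot f$ for every $M \in \gl(V)$ and $f \in \C[V]$. Hence, for a fixed $a \in \N$, the condition ``$M \cd (I_S)_a \subseteq (I_S)_a$'' is verbatim the same as ``$\Theta(M) \cdot (I_S)_a \subseteq (I_S)_a$'', so
$$
\{M \in \gl(V)\ |\ M \cd (I_S)_a \subseteq (I_S)_a\} = \{M \in \gl(V)\ |\ \Theta(M) \cdot (I_S)_a \subseteq (I_S)_a\}.
$$

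Next I would apply the map $\Theta$ to both sides of this equality. Since $\Theta \circ \Theta$ is the identity, $\Theta$ is a bijection of $\gl(V)$ onto itself, and the substitution $N = \Theta(M)$ gives
$$
\Theta\{M\ |\ \Theta(M) \cdot (I_S)_a \subseteq (I_S)_a\} = \{N \in \gl(V)\ |\ N \cdot (I_S)_a \subseteq (I_S)_a\}.
$$
By the definition of the image of a set, the left-hand side is exactly $\Theta\{M\ |\ M \cd (I_S)_a \subseteq (I_S)_a\} = \{\Theta(M)\ |\ M \cd (I_S)_a \subseteq (I_S)_a\}$, which establishes the displayed equality of the two sets in the statement of the lemma. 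Finally, Proposition~\ref{P-Liealg-all} asserts $\g_S \subseteq \{N \in \gl(V)\ |\ N \cdot (I_S)_a \subseteq (I_S)_a\}$, with equality when the zero locus of $(I_S)_a$ equals $S$; chaining this with the identifications above yields both the containment and the equality case asserted in the lemma.

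There is essentially no obstacle: all the mathematical content is contained in Proposition~\ref{P-Liealg-all}, and the only thing to be careful about is the bookkeeping that $\cd$ and $\cdot$ differ precisely by the involution $\Theta$, so that transporting the set of $\cd$-stabilizers of $(I_S)_a$ through $\Theta$ recovers the set of $\cdot$-stabilizers. It is worth noting explicitly why this cosmetic reformulation is worthwhile: as recorded in Section~\ref{sec:explicit}, the twisted action satisfies $E_{ij} \cd f = x_i \partial_j f$ with no sign, which is what makes the Lie-algebra computations in the subsequent sections manageable.
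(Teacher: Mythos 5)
Your proposal is correct and matches the paper's (implicit) argument: the paper states Lemma~\ref{star-action} as an immediate reformulation of Proposition~\ref{P-Liealg-all} via the definition $M \cd f = \Theta(M)\cdot f$ and the fact that $\Theta$ is an involution, which is precisely the bookkeeping you carry out. Nothing further is needed.
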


\section{Vanishing ideal of singular tuples of matrices} \label{sec:van}
In order to apply the ideas of Section~\ref{sec:gos} to computing the Lie algebra of symmetries for $S$, we need to understand the ideal $I_S$. We will focus on the case of $S = \SING_{n,m}$. For this, one needs the representation theory of the general linear group (highest weight vectors, Cauchy formulas, Schur functors) as well as an understanding of the invariants in the left-right action of $\SL_n \times \SL_n$. The proofs are ad-hoc, and suited precisely to the case of $\SING_{n,m}$. It is highly unlikely that these ideas can be generalized to give results for other choices of $S$. For all these reasons, we postpone the proofs to an appendix. 

In the case of $S = \SING_{n,m}$, we do not know how to determine the entire ideal $I_S$. However, we can determine it upto degree $n$, which turns out to suffice for our purposes.

\begin{proposition} \label{P-ideal}
Let $S = \SING_{n,m} \subseteq V = \Mat_{n}^m$, and let $I_S \subseteq \C[\Mat_{n}^m]$ be the ideal of polynomial functions that vanish on $S$. Then,
\begin{enumerate}
\item $I_S$ is graded;
\item $(I_S)_a$ is empty if $a < n$;
\item $(I_S)_n = \spa (\det(\sum_i c_iX_i): c_i \in \C).$
\end{enumerate}
\end{proposition}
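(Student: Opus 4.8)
The plan is to understand $I_S$ as a representation of $\GL_m \times \GL_n \times \GL_n$ and use highest-weight-vector theory together with the classical first fundamental theorem for $\SL_n \times \SL_n$. First I would observe that $\SING_{n,m}$ is stable under the action of the group $G_{n,m}$ (indeed even of $\GL_m \times \GL_n \times \GL_n$: scaling each $X_i$ by an invertible linear combination does not change whether $\sum_i t_i X_i$ is singular over $\C(t_1,\dots,t_m)$, and conjugating/multiplying by $Q,R$ on the two matrix factors preserves singularity). Consequently $I_S$ is a $\GL_m \times \GL_n \times \GL_n$-subrepresentation of $\C[\Mat_n^m]$, and since $\SING_{n,m}$ is a cone, $I_S$ is graded, which gives part (1). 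Each graded piece $(I_S)_a$ is then a finite-dimensional rational representation of $\GL_m \times \GL_n \times \GL_n$, so by the Cauchy formula it decomposes into pieces $S_\mu(\C^m) \otimes S_\lambda(\C^n) \otimes S_\nu(\C^n)$ (Schur functors), and to find $(I_S)_a$ it suffices to determine which irreducible constituents of $\C[\Mat_n^m]_a$ lie in $I_S$, and this can be checked on a single highest weight vector of each constituent.

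Next I would bound the degree below. A polynomial of degree $a < n$ cannot vanish on all of $\SING_{n,m}$: take $m \geq 1$ and consider tuples supported on a single coordinate, $X = (X_1,0,\dots,0)$ with $X_1$ singular — these lie in $\SING_{n,m}$, so any $f \in (I_S)_a$ restricts to a degree-$a$ polynomial vanishing on all singular $n\times n$ matrices; for $a<n$ such a polynomial is forced to be $0$ (the ideal of the determinantal hypersurface $\SING_{n,1}$ is generated by $\det$, which has degree $n$ — this is the $m=1$ input, and in any case a degree-$<n$ polynomial vanishing on all singular matrices must be identically zero since $\det$ is irreducible of degree $n$). Running over all coordinate directions, and more generally restricting to $X$ supported on a single one-dimensional subspace of $\C^m$, forces every coefficient of $f$ to vanish, giving part (2). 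This restriction argument is elementary and I would just record it cleanly.

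For part (3), the inclusion $\supseteq$ is immediate since each $\det(\sum_i c_i X_i)$ vanishes on $\SING_{n,m}$ by definition, and these polynomials have degree exactly $n$, so they span a subspace of $(I_S)_n$. For the reverse inclusion I would argue representation-theoretically: the span $W$ of $\{\det(\sum_i c_i X_i): c_i\in\C\}$ is a $\GL_m \times \GL_n \times \GL_n$-subrepresentation of $\C[\Mat_n^m]_n$ — under $\GL_m$ it is the degree-$n$ part of the polynomial ring in the $c_i$ (i.e.\ $S_{(n)}(\C^m)^* \cong S_{(n)}(\C^m)$, the symmetric power), and under each $\GL_n$ factor it transforms by $\det$, so $W \cong S_{(n)}(\C^m) \otimes (\det)^{\otimes ?}$, a single irreducible. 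So it suffices to show that no \emph{other} irreducible constituent of $\C[\Mat_n^m]_n$ vanishes on $\SING_{n,m}$. The key tool is the FFT/invariant-theoretic description of the left-right $\SL_n\times\SL_n$ action: I would take a highest weight vector $v$ of a hypothetical constituent $\sigma$ of $(I_S)_n$ not isomorphic to $W$, use that $v$ is $\SL_n \times \SL_n$-semi-invariant of the appropriate weight, and show that modulo $W$ any such multilinear-type invariant of degree $n$ in the $X_i$ that is $\SL_n$-semi-invariant on both sides must be a combination of determinants $\det(\sum c_i X_i)$ — essentially because the only $\SL_n\times\SL_n$ semi-invariants built from $n$ matrix factors are products/combinations of $n\times n$ determinants of linear combinations. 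Then exhibiting one tuple in $\SING_{n,m}$ on which $v$ does not vanish (a tuple whose symbolic pencil is singular but which is not annihilated by the extra constituent) gives the contradiction.

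I expect part (3), and specifically pinning down the full multiplicity-free structure of the $\GL_n\times\GL_n$ semi-invariant part of $\C[\Mat_n^m]_n$ and ruling out spurious constituents, to be the main obstacle — it is the step that genuinely needs the Cauchy formula, Schur functor combinatorics, and the first fundamental theorem for $\SL_n$, and is precisely why the authors defer it to the appendix. Parts (1) and (2) are short; the real content is the representation-theoretic identification of $(I_S)_n$.
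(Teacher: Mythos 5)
Parts (1) and (3) of your outline follow essentially the paper's route: (1) is immediate from $S$ being a cone, and for (3) the paper likewise realizes $\spa(\det(\sum_i c_iX_i))$ as the unique copy of $S_{(n)}(\C^m)\otimes S_{1^n}(\C^n)\otimes S_{1^n}(\C^n)$ inside $\C[\Mat_n^m]_n$ (via the Kronecker coefficient computation $a_{\lambda,1^n,1^n}=\delta_{\lambda,(n)}$), rules out every other constituent, and then quotes the Derksen--Weyman/Schofield--Van den Bergh/Domokos--Zubkov description of degree-$n$ $\SL_n\times\SL_n$ semi-invariants. The one step you leave vague --- ``exhibiting one tuple in $\SING_{n,m}$ on which $v$ does not vanish'' --- is made concrete in the paper as follows: a weight vector of weight $(\lambda,\mu,\nu)$ with $\mu\neq 1^n$ has $\mu_n=0$, hence involves no variable from the last rows of the matrices, hence restricts to a nonzero function on the subspace $U$ of tuples with vanishing last row; since $U\subseteq\SING_{n,m}$, such a vector cannot lie in $I_n$. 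You would need to supply this (or an equivalent) mechanism to complete (3).

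The genuine gap is in your part (2). You propose to restrict $f$ to tuples supported on a single line in $\C^m$, i.e.\ to $(c_1Y,\dots,c_mY)$ with $Y$ singular, conclude the pullback vanishes (correct, since a degree-$a<n$ polynomial vanishing on the irreducible hypersurface $\det=0$ is zero), and then assert that ``running over all one-dimensional subspaces forces every coefficient of $f$ to vanish.'' That last implication is false for $a\geq 2$: the pullback of a monomial $\prod_s x^{(i_s)}_{j_sk_s}$ is $\bigl(\prod_s c_{i_s}\bigr)\prod_s y_{j_sk_s}$, which forgets how the superscripts are matched to the positions, so only certain \emph{sums} of coefficients are constrained. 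Concretely, $f = x^{(1)}_{11}x^{(2)}_{12} - x^{(2)}_{11}x^{(1)}_{12}$ pulls back to $c_1c_2\,y_{11}y_{12}-c_2c_1\,y_{11}y_{12}=0$ on every such tuple, yet $f\neq 0$ (and indeed $f\notin I_S$, as one checks on $X=(E_{11},E_{12},0,\dots,0)\in\SING_{n,m}$). So your argument cannot distinguish such an $f$ from the zero polynomial. The paper's argument for (2) is different and works: given a monomial of degree $d<n$ appearing in $f$ with nonzero coefficient, evaluate $f$ at the $0$--$1$ tuple $X$ supported exactly on that monomial's support; every linear combination $\sum_i c_iX_i$ then has at most $d<n$ nonzero entries and is therefore singular, so $X\in\SING_{n,m}$, while $f(X)$ equals the coefficient of that monomial, which is nonzero. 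You should replace your restriction argument with this (or some other) support-based evaluation.
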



The significance of the above result is demonstrated by the following statement:

\begin{corollary} \label{C-loscom}
Let $S = \SING_{n,m} \subseteq V = \Mat_{n}^m$. Then
$$
\mathcal{G}_S = \{g \in \GL(V) \ |\ g \cdot (I_S)_n \subseteq (I_S)_n \}, 
$$
and hence
$$
\g_S = \{M \in \gl(V)\ | M \cdot (I_S)_n \subseteq (I_S)_n \}.
$$
\end{corollary}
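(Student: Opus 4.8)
\textbf{Proof plan for Corollary~\ref{C-loscom}.}

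The plan is to deduce both displayed equalities directly from the general machinery already set up, using Proposition~\ref{P-ideal} as the crucial input. First I would establish the equality for $\mathcal{G}_S$. Recall that $S = \SING_{n,m}$ is a cone (it is the zero locus of homogeneous polynomials, and more concretely is stable under simultaneous scaling of all the $X_i$), so Lemma~\ref{Lgos-graded} applies: for any degree $a \in \N$ one has the containment $\mathcal{G}_S \subseteq \{g \in \GL(V) \mid g(I_S)_a \subseteq (I_S)_a\}$, with equality whenever the zero locus of $(I_S)_a$ equals $S$. So the entire task reduces to verifying that the zero locus of $(I_S)_n$ is exactly $S$. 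By Proposition~\ref{P-ideal}(3), $(I_S)_n = \spa(\det(\sum_i c_i X_i) : c_i \in \C)$, and the zero locus of this span is the common zero locus of all the polynomials $\det(\sum_i c_i X_i)$. But this is precisely the defining description of $\SING_{n,m}$ given right after its definition in the introduction (a tuple lies in $\SING_{n,m}$ iff $\sum_i t_i X_i$ is singular over $\C(t_1,\dots,t_m)$ iff $\det(\sum_i t_i X_i) \equiv 0$ as a polynomial in the $t_i$ iff $\det(\sum_i c_i X_i) = 0$ for all $c_i \in \C$). Hence the zero locus of $(I_S)_n$ is $S$, and Lemma~\ref{Lgos-graded} gives the first equality.

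For the second equality, concerning the Lie algebra of symmetries $\g_S$, I would argue identically but invoke Proposition~\ref{P-Liealg-all} in place of Lemma~\ref{Lgos-graded}. That proposition states that for a cone $S$ and any $a \in \N$, $\g_S \subseteq \{M \in \gl(V) \mid M \cdot (I_S)_a \subseteq (I_S)_a\}$, with equality when the zero locus of $(I_S)_a$ is exactly $S$. Taking $a = n$ and using the same identification of the zero locus of $(I_S)_n$ with $S$ from the previous paragraph, we obtain $\g_S = \{M \in \gl(V) \mid M \cdot (I_S)_n \subseteq (I_S)_n\}$, as claimed. (One could alternatively note that $\g_S$ is by definition the Lie algebra of the connected group $\mathcal{G}_S^\circ$, hence of $\mathcal{G}_S$, and that the right-hand side is the Lie algebra of the group on the right-hand side of the first equality — but it is cleaner to just quote Proposition~\ref{P-Liealg-all} directly.)

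There is no real obstacle here: the corollary is a formal consequence of Proposition~\ref{P-ideal} together with the two general results Lemma~\ref{Lgos-graded} and Proposition~\ref{P-Liealg-all}. The only point that requires any care — and it is genuinely trivial — is confirming that "zero locus of $(I_S)_n$ equals $S$" holds, which is immediate once parts (2) and (3) of Proposition~\ref{P-ideal} are in hand: part (3) identifies $(I_S)_n$ with the span of the determinantal polynomials that cut out $S$ by definition, so their common zero locus is $S$ on the nose. All the substantive work has been offloaded into the (appendix) proof of Proposition~\ref{P-ideal}; the corollary merely packages it into the form needed for the Lie-algebra computations in the sections that follow.
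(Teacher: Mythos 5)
Your argument is correct and is precisely the paper's own proof: both cite Lemma~\ref{Lgos-graded} and Proposition~\ref{P-Liealg-all}, reducing everything to the observation that by Proposition~\ref{P-ideal}(3) the zero locus of $(I_S)_n$ is exactly $\SING_{n,m}$. Nothing to add.
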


\begin{proof}
This follows from Lemma~\ref{Lgos-graded}, Proposition~\ref{P-Liealg-all} and the above proposition since the zero locus of $(I_S)_n$ is precisely $\SING_{n,m}$
\end{proof}

The latter part of the corollary is the one that is extremely useful because now the computation of $\g_S$ is feasible. In principle, for a fixed $n$ and $m$, one could run an explicit computer algorithm to compute $\g_S$. However, we will actually be able to compute $\g_S$ for all $n,m$, and for this, we will need to do the linear algebra by hand. To do so, we will (repeatedly) exploit the numerous symmetries and multilinearity of the determinant polynomial.

\section{Symmetries of singular matrices} \label{sec:frob}
In this section we give a proof of Frobenius' result, i.e., Theorem~\ref{theo:frob} which is the important case of $m=1$ in Theorem~\ref{theo:gos}. The technique we use is different from the existing proofs in \cite{Frob,Dieudonne}. First, we use the the previous sections to write the computation of the Lie algebra of symmetries as a linear algebraic computation. Next, we define Kronecker product of matrices, and then describe the twisted action of $\gl(\Mat_{n})$. Then, we recall a few facts on the Symmetric group, and finally give the explicit computations to determine the Lie algebra of symmetries, which suffices to determine the connected group of symmetries. Finally, we compute the entire group of symmetries.

For this section, let $S = \SING_{n,1} = \{X \in \Mat_{n}\ | \det(X) = 0 \} \subseteq V = \Mat_{n}$. For $(P,Q) \in \GL_n \times \GL_n$, consider the linear transformation $X \mapsto PXQ^t$. The set of all such linear transformations is the group $G_{n,1}$ (as defined in Theorem~\ref{theo:gos}). We will write $\g_{n,1}$ for the Lie algebra of $G_{n,1}$. In this section, we will first prove:


\begin{proposition} \label{P1}
Let $S = \SING_{n,1} \subseteq V = \Mat_{n}$. Then, we have
$\mathcal{G}_S^\circ = G_{n,1}
$ and hence
$\g_S = \g_{n,1}.
$
\end{proposition}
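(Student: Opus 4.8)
The plan is to prove Proposition~\ref{P1} by computing the Lie algebra of symmetries $\g_S$ directly and showing it equals $\g_{n,1}$, then invoking the subgroups--subalgebras correspondence (together with the fact that $G_{n,1}$ is connected) to conclude $\mathcal{G}_S^\circ = G_{n,1}$. By Corollary~\ref{C-loscom}, since $(I_S)_n = \spa(\det X)$ is one-dimensional and its zero locus is exactly $\SING_{n,1}$, we have $\g_S = \{M \in \gl(\Mat_n) \mid M \cdot \det \in \spa(\det)\}$; equivalently, via Lemma~\ref{star-action}, $\g_S = \Theta\{M \mid M \cd \det \in \spa(\det)\}$. So the whole proposition reduces to the concrete linear-algebra task: determine all $M \in \gl(\Mat_n) = \End(\Mat_n)$ such that the twisted action $M \cd \det$ is a scalar multiple of $\det$.

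First I would set up coordinates. Write the coordinates on $V = \Mat_n$ as $x_{ij}$ (entries of the matrix variable $X$), so $\C[V] = \C[x_{ij}]$, and $\gl(V) = \End(\Mat_n)$ has the basis $E_{(ij),(kl)}$ sending the coordinate $x_{kl}$-direction to $x_{ij}$; under the twisted action of Section~\ref{sec:explicit} this basis element acts as the derivation $x_{ij}\partial_{x_{kl}}$. A general $M$ then acts as $\sum m_{(ij),(kl)}\, x_{ij}\partial_{x_{kl}}$. Now $\det X = \sum_{\sigma}\sgn(\sigma)\prod_k x_{k\sigma(k)}$, and the key computational input is that $\partial_{x_{kl}}\det X$ is the $(k,l)$ cofactor $C_{kl}$, and that $\sum_l x_{il}C_{kl} = \delta_{ik}\det X$ (Laplace expansion / adjugate identity) while $\sum_k x_{kj}C_{kl} = \delta_{jl}\det X$. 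These identities are exactly what make the ``diagonal'' part of $M$ (the pieces corresponding to left and right multiplication) land in $\spa(\det)$, giving $\g_{n,1} \subseteq \g_S$ — and in fact the left- and right-multiplication actions are visibly the Lie algebra of $G_{n,1} = \{X \mapsto PXQ^t\}$, so this inclusion can also just be quoted from the group action.

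The real content is the reverse inclusion $\g_S \subseteq \g_{n,1}$, i.e.\ showing no ``exotic'' $M$ survives. The main obstacle is organizing the computation: $M \cd \det = \sum_{(ij),(kl)} m_{(ij),(kl)}\, x_{ij}\, C_{kl}$, and I must show that requiring this to be proportional to $\det$ forces $m$ to have the special block structure $m_{(ij),(kl)} = a_{ik}\delta_{jl} + b_{jl}\delta_{ik}$ (the left$\oplus$right pattern), with $\tnr(a)+\tnr(b)$ accounting for the scalar — after applying $\Theta$ this is precisely $\g_{n,1}$, which has dimension $2n^2-1$ (the $-1$ because $(a,b)$ and $(a+cI,b-cI)$ act identically, matching $\sll_n \oplus \sll_n$ plus one central direction, consistent with $\GL_n\times\GL_n$ modulo the kernel of the action). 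To extract the structure I would exploit that the products $x_{ij}C_{kl}$ are far from linearly independent but their linear dependences are completely understood: $x_{ij}C_{kl}$ expands in the monomial basis of $\C[V]_n$, and the only way a combination $\sum m_{(ij),(kl)} x_{ij}C_{kl}$ can collapse to a multiple of the multilinear polynomial $\det$ is monomial-by-monomial. Concretely, I would test against monomials: pick a permutation monomial $\prod x_{k\pi(k)}$ and a ``near-permutation'' monomial (one obtained by changing $\pi$ in two positions, so it has a repeated row- or column-index) and read off the resulting linear constraints on the $m_{(ij),(kl)}$; the near-permutation monomials, which must have coefficient zero, kill all off-pattern entries, and the permutation monomials pin down the on-pattern entries up to the single scalar. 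This is the step that needs care but not cleverness. Finally, having shown $\g_S = \g_{n,1}$, I conclude $\mathcal{G}_S^\circ = G_{n,1}$ since both are connected subgroups of $\GL(V)$ with the same Lie algebra (note $G_{n,1}$, being the image of the connected group $\GL_n\times\GL_n$, is connected), and Chevalley's correspondence gives the equality.
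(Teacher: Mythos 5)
Your proposal is correct and follows essentially the same route as the paper: reduce via Corollary~\ref{C-loscom} and Lemma~\ref{star-action} to finding all $M$ with $M \cd \det \in \spa(\det)$, force the pattern $A \otimes {\rm I}_n + {\rm I}_n \otimes B$ by matching coefficients of permutation monomials and of monomials with a repeated row or column index, and finish with the subgroups--subalgebras correspondence. The only cosmetic difference is that the paper first splits $M$ into blocks $E_{ij}\otimes M_{ij}$ using a multi-grading by row index and treats off-diagonal and diagonal blocks in separate lemmas, whereas you work with the full expression $\sum m_{(ij),(kl)}\,x_{ij}C_{kl}$ at once; the test monomials and resulting linear constraints are the same.
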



Let $E_{ij}$ denote the $n \times n$ matrix with a $1$ in its $(i,j)^{th}$ spot and $0$'s everywhere else. Then $\{E_{ij}\}_{1 \leq i,j \leq n}$ form a basis for $V = \Mat_{n}$. The Lie algebra $\gl(V) = \Mat_{n^2}$ can be identified canonically with $\End(V)$, the space of linear transformations from $V$ to $V$. 

Let us give a description of $\g_{n,1}$. We have
$$
\g_{n,1} = \{ X \mapsto MX + XN\ |\ M,N \in \Mat_{n} \} \subseteq \gl(V) = \End(V)\}.
$$

The Lie algebra of symmetries consists of precisely those elements of $\gl(V)$ for whose action the determinant polynomial is an eigenvector. 

\begin{lemma} \label{L-los-frob}
The Lie algebra of symmetries
$$
\g_S = \Theta(\{M \in \gl(V) \ |\ M \cd \det = c \cdot \det \text{ for some } c\in \C\})
$$
\end{lemma}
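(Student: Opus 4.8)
\textbf{Proof plan for Lemma~\ref{L-los-frob}.}

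The plan is to specialize Lemma~\ref{star-action} to the case $S = \SING_{n,1}$ and then invoke Proposition~\ref{P-ideal} (the $m=1$ case of it) to pin down which polynomials need to be preserved. By Proposition~\ref{P-ideal} applied with $m=1$, the ideal $I_S$ is graded, has no elements in degree $<n$, and its degree-$n$ piece is $(I_S)_n = \spa(\det(c_1 X_1)\,:\,c_1 \in \C) = \C\cdot\det$, the one-dimensional span of the determinant polynomial. The zero locus of $\det$ is precisely $\SING_{n,1} = S$, so the equality case of Lemma~\ref{star-action} applies with $a = n$: we get
$$
\g_S = \Theta\{M \in \gl(V)\ |\ M \cd (I_S)_n \subseteq (I_S)_n\}.
$$

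Now I would simply unwind what the condition $M \cd (I_S)_n \subseteq (I_S)_n$ means once $(I_S)_n$ is the one-dimensional subspace $\C\cdot\det$. A linear endomorphism $M\,\cd\,(-)$ of $\C[V]_n$ maps the line $\C\cdot\det$ into itself if and only if $\det$ is an eigenvector of that endomorphism, i.e. if and only if $M \cd \det = c\cdot\det$ for some scalar $c \in \C$. (The scalar may be zero; that is allowed, since $0 \in \C\cdot\det$.) Substituting this back gives exactly
$$
\g_S = \Theta(\{M \in \gl(V)\ |\ M \cd \det = c\cdot\det \text{ for some } c \in \C\}),
$$
which is the claimed formula.

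There is essentially no obstacle here beyond correctly citing the inputs: the only subtlety is making sure the equality (rather than mere inclusion) half of Lemma~\ref{star-action} is legitimately available, which requires knowing that the zero locus of $(I_S)_n$ is all of $S$ — and this is precisely the content of part (2) and (3) of Proposition~\ref{P-ideal} together with the fact that $\det$ cuts out $\SING_{n,1}$. The translation from ``preserves the line $\C\cdot\det$'' to ``$\det$ is an eigenvector'' is immediate linear algebra for a one-dimensional space. So the lemma follows by a direct substitution once the vanishing ideal computation of Section~\ref{sec:van} is in hand.
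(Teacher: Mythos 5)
Your proposal is correct and follows exactly the paper's route: the paper proves this lemma by citing Proposition~\ref{P-ideal}, Corollary~\ref{C-loscom}, and Lemma~\ref{star-action}, which is precisely the chain you spell out (with the one-dimensionality of $(I_S)_n = \C\cdot\det$ for $m=1$ turning the invariance condition into the eigenvector condition). Nothing is missing.
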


\begin{proof}
This follows from Proposition~\ref{P-ideal}, Corollary~\ref{C-loscom} and Lemma~\ref{star-action}.
\end{proof}

To make the necessary computations, we need to understand the action (or rather the twisted action) of $\gl(\Mat_{n})$ on polynomials in $\C[\Mat_{n}]$ explicitly. Prior to that, we recall the notion of Kronecker product of matrices.

\subsection{Kronecker product of matrices}
For $A = (a_{ij}) \in \Mat_{n}$ and $B = (b_{ij}) \in \Mat_{n}$, we define the Kronecker (or tensor) product 
$$
A \otimes B = 
\begin{pmatrix}
a_{11}B & a_{12}B & \dots & a_{1n}B \\
a_{21}B & \ddots & \ddots & \vdots \\
\vdots & \ddots & \ddots & \vdots \\
a_{n1}B & \dots & \dots & a_{nn} B
\end{pmatrix} \in \Mat_{n^2}.
$$

Using the Kronecker product, we will identify $\Mat_{n^2} = \Mat_{n} \otimes \Mat_{n}$. To do so, we will first index the rows and columns by $[n]\times [n]$ in lexicographic order (rather than $[n^2]$). Thus a basis for $\Mat_{n^2}$ is given by $\{E_{ij,kl}\ |\ ij,kl \in [n] \times [n]\}$. Note that we have the intuitive equality
$$
E_{ij} \otimes E_{pq} = E_{ip,jq}.
$$

\subsection{Twisted action of $\gl(\Mat_{n})$ on $\C[\Mat_{n}]$}
Recall that the (standard) basis for $V = \Mat_{n}$ is $\{E_{ij}\}$, and let $\{x_{ij}\}$ denote the corresponding coordinate functions. Note that $V$ is $n^2$-dimensional. The Lie algebra $\gl(V) = \gl(\Mat_{n})$ can be identified with $\Mat_{n^2}$, to be viewed as $\Mat_{n} \otimes \Mat_{n}$, as described above. Observe that we can think of an $n^2 \times n^2$ matrix as an $n \times n$ block matrix whose blocks are also of size $n \times n$. Thus for any $M \in \gl(V)$, we have

$$
M = \begin{pmatrix} 
M_{11} & \dots & M_{1n} \\
\vdots & \ddots & \vdots \\
M_{n1} & \dots & M_{nn}
\end{pmatrix},
$$
where each $M_{ij}$ is an $n\times n$ matrix. Note that writing $M = \sum_{i,j} E_{ij} \otimes M_{ij}$ also defines $M_{ij}$. 

The matrix $E_{ij} \otimes E_{pq} = E_{ip,jq} \in \Mat_{n^2} = \gl(\Mat_{n})$ acts (twisted action) on polynomial functions via the derivation $x_{ip} \partial_{jq}$, i.e., for $f \in \C[\Mat_{n}]$, we have
\begin{equation}
E_{ip,jq} \cd f = x_{ip}\partial_{jq} f.
\end{equation}

There is an $\N^n$-grading on $\C[V]  = \C[x_{ij}]$ given by setting $\deg(x_{ij}) = \delta_i :=  (0,\dots,0,\underbrace{1}_i,0,\dots,0) \in \N^n$. We have the decomposition
$$
\C[V] = \bigoplus_{d \in \N^n} \C[V]_d,
$$
where $\C[V]_d$ denotes the (multi)-homogeneous polynomials of degree $d$. The polynomial $\det = \sum_{\sigma \in S_n} \prod_{i=1}^n x_{i \sigma(i)}$ has degree $(1,1,\dots,1)$. We make a crucial observation

\begin{remark}
Let us understand the action of the Lie algebra with respect to this multi-degree. The twisted action of the matrix $E_{ip,jq}$ is by the derivation $x_{ip}\partial_{jq}$. Thus, $E_{ip,jq} : \C[V]_d \rightarrow \C[V]_{d - \delta_j + \delta_i}$.
\end{remark}

\begin{definition} [Grading on $\gl(\Mat_{n})$]
We give a grading on the Lie algebra $\gl(\Mat_{n})$ by setting $\deg(E_{ip,jq}) = \delta_i - \delta_j$. We have 
$$
\gl(\Mat_{n}) =  \Mat_{n^2} = \gl(\Mat_{n})_0 \bigoplus_{i \neq j}\gl(\Mat_{n})_{\delta_i -\delta_j}.
$$

For any $M \in \Mat_{n^2}$, we write $M = \sum_{i,j} E_{ij} \otimes M_{ij}$ for matrices $M_{ij} \in \Mat_{n}$ as above. Then the degree $0$ part is $\sum_i E_{ii} \otimes M_{ii}$, and for $i \neq j$, $E_{ij} \otimes M_{ij}$ is the degree $\delta_i - \delta_j$ part. Thus, the decomposition of $M$ into homogeneous components is
$$
M = (\sum_i E_{ii} \otimes M_{ii}) \bigoplus_{i \neq j} E_{ij} \otimes M_{ij}
$$

\end{definition}

The following lemma is immediate from the preceding discussion.

\begin{lemma}
If $M \in \Mat_{n^2}$ is homogeneous of degree $d$, and $f \in \C[V]$ is homogeneous of degree $d'$, then $M \cd f$ is homogeneous of degree $d+ d'$
\end{lemma}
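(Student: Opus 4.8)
The plan is to deduce the statement directly from the two compatible gradings already in place: the $\N^n$-grading on $\C[V] = \C[x_{ij}]$ coming from $\deg(x_{ij}) = \delta_i$, and the grading on $\gl(\Mat_{n})$ from the preceding definition, in which $\deg(E_{ip,jq}) = \delta_i - \delta_j$. First I would reduce to the case where $M$ is a single basis element. By definition, the homogeneous component of degree $d$ in $\gl(\Mat_{n})$ is spanned by the elementary matrices $E_{ip,jq}$ with $\delta_i - \delta_j = d$: for $d \neq 0$ these are the summands $E_{ij}\otimes M_{ij}$ with $i \neq j$, and for $d = 0$ these are the summands $E_{ii}\otimes M_{ii}$, i.e. $i = j$, so that $\delta_i - \delta_j = 0$. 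Since the twisted action $M \mapsto M\cd f$ is linear in $M$, it suffices to show that each such $E_{ip,jq}$ of degree $d$ sends a homogeneous $f$ of degree $d'$ to a homogeneous polynomial of degree $d + d'$.

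Next I would use the explicit formula $E_{ip,jq}\cd f = x_{ip}\partial_{jq} f$. The differential operator $\partial_{jq}$ maps $\C[V]_{d'}$ into $\C[V]_{d' - \delta_j}$, since $\deg(x_{jq}) = \delta_j$; and multiplication by $x_{ip}$ maps $\C[V]_{d''}$ into $\C[V]_{d'' + \delta_i}$. Composing these, $E_{ip,jq}$ maps $\C[V]_{d'}$ into $\C[V]_{d' + \delta_i - \delta_j} = \C[V]_{d' + d}$, which is exactly the assertion for a basis element. Summing over a spanning set of $\gl(\Mat_{n})_d$ — every element of which induces the same degree shift by $d$ — yields the claim for a general homogeneous $M$ of degree $d$. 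The degenerate case $\partial_{jq} f = 0$, hence $E_{ip,jq}\cd f = 0$, causes no difficulty, since $0$ lies in every graded piece of $\C[V]$.

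There is essentially no obstacle here: the lemma is pure bookkeeping, and the only point requiring care is that the grading on $\gl(\Mat_{n})$ was defined precisely so that $\deg(E_{ip,jq}) = \delta_i - \delta_j$ matches the $\N^n$-degree shift induced by the derivation $x_{ip}\partial_{jq}$. Thus the proof amounts to unwinding that definition together with the explicit description of the twisted action recorded just above, and extending by linearity.
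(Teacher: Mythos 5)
Your proof is correct and is precisely the argument the paper intends: it declares the lemma ``immediate from the preceding discussion,'' namely the remark that $E_{ip,jq}$ acts by $x_{ip}\partial_{jq}$ and hence maps $\C[V]_{d'}$ to $\C[V]_{d'+\delta_i-\delta_j}$, combined with the definition $\deg(E_{ip,jq})=\delta_i-\delta_j$ and linearity. You have simply written out the bookkeeping the paper leaves implicit.
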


The following easy corollary of the above lemma will be useful to us:

\begin{corollary}
Suppose $M = \sum_{ij} E_{ij} \otimes M_{ij} \in \gl(V)$.  Then $M \cd \det = c \cdot \det$ for some $c \in \C$ if and only if the following conditions hold:
\begin{enumerate}
\item For $i \neq j$, $(E_{ij} \otimes M_{ij}) \cd \det = 0$;
\item $\left(\sum_{i=1}^n E_{ii} \otimes M_{ii}\right) \cd \det = c \cdot \det$.
\end{enumerate}
\end{corollary}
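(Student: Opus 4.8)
The plan is to decompose the condition $M\cd\det = c\cdot\det$ according to the $\N^n$-grading on $\gl(\Mat_n)$ introduced above, exactly as the corollary suggests. Recall that $\det$ is homogeneous of multidegree $(1,1,\dots,1)$, and that a homogeneous component $E_{ij}\otimes M_{ij}$ of $M$ has degree $\delta_i-\delta_j$; by the preceding lemma, applying $E_{ij}\otimes M_{ij}$ to $\det$ produces something homogeneous of multidegree $(1,\dots,1)+\delta_i-\delta_j$. When $i\neq j$ this multidegree has a $2$ in slot $i$ and a $0$ in slot $j$, so it lies in a graded piece $\C[V]_{(1,\dots,1)+\delta_i-\delta_j}$ that is \emph{distinct} from $\C[V]_{(1,\dots,1)}$; when $i=j$ the multidegree is unchanged and we land back in $\C[V]_{(1,\dots,1)}$, which contains $\det$.

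First I would write $M\cd\det = \sum_{i\neq j}(E_{ij}\otimes M_{ij})\cd\det + \big(\sum_i E_{ii}\otimes M_{ii}\big)\cd\det$, and observe that the terms on the right live in pairwise distinct graded components of $\C[V]$: the $(i,j)$-term with $i\neq j$ sits in $\C[V]_{(1,\dots,1)+\delta_i-\delta_j}$, and these multidegrees are all different from one another (the pair $(i,j)$ is recoverable from the multidegree, since it is the unique slot equal to $2$ and the unique slot equal to $0$), and all different from $(1,\dots,1)$ where the $i=j$ term sits. Since $c\cdot\det$ is entirely in the component $\C[V]_{(1,\dots,1)}$, equating graded pieces forces each $(E_{ij}\otimes M_{ij})\cd\det = 0$ for $i\neq j$ and forces $\big(\sum_i E_{ii}\otimes M_{ii}\big)\cd\det = c\cdot\det$. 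The converse is immediate by summing. This is genuinely a two-line argument once the grading bookkeeping is in place, so there is no real obstacle here; the only thing to be careful about is the claim that the multidegrees $(1,\dots,1)+\delta_i-\delta_j$ for distinct ordered pairs $(i,j)$ with $i\neq j$ are all distinct and distinct from $(1,\dots,1)$ — but this is clear from the shape of these vectors (one coordinate equal to $2$, one equal to $0$, the rest equal to $1$), which pins down $(i,j)$ uniquely.

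The mild subtlety worth spelling out, if anything, is why it suffices to compare components of $\C[V]$ rather than worrying that $M\cd\det$ might vanish for some other reason: the graded decomposition $\C[V]=\bigoplus_{d\in\N^n}\C[V]_d$ is a direct sum, so an identity in $\C[V]$ holds if and only if it holds in each graded summand, and the twisted action respects this grading by the lemma. Thus the decomposition of $M\cd\det$ into its $\N^n$-homogeneous parts is exactly the displayed sum, term by term, and the corollary follows.
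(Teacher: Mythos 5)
Your argument is correct and is exactly the paper's intended proof: the paper dismisses this corollary as "straightforward from the decomposition of $M$ into homogeneous components and the above lemma," and your write-up simply fills in the grading bookkeeping, including the one point that actually needs checking (that the multidegrees $(1,\dots,1)+\delta_i-\delta_j$ for distinct ordered pairs $i\neq j$ are pairwise distinct and distinct from $(1,\dots,1)$, so each off-diagonal term must vanish individually rather than merely summing to zero). Nothing further is needed.
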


\begin{proof}
This is straightforward from the decomposition of $M$ into homogenous components (given above) and the above lemma.
\end{proof}

Before we unravel the above condition to compute the Lie algebra of the symmetries, a few words on the symmetric group.

\subsection{Symmetric group}
 We denote by $S_n$, the symmetric group on $n$ letters. In other words, $S_n$ consists of all bijective maps $\sigma: [n] \rightarrow [n]$. The group operation is composition of maps. The pair $(i,j)$ with $i < j$ is called an inversion for $\sigma \in S_n$ if $\sigma(i) > \sigma(j)$. For $\sigma \in S_n$, we define it sign 
$$
\sgn(\sigma) = (-1)^{\text{ number of inversions in $\sigma$ }}.
$$

For $\sigma \in S_n$, we will define $\iota(\sigma) \in S_n$ by $\iota(\sigma)(1) = \sigma(2)$, $\iota(\sigma(2)) = \sigma(1)$ and $\iota(\sigma)(k) = \sigma(k)$ for all $k > 2$. Thus
$$
\iota:S_n \rightarrow S_n
$$ 
is an involution (without any fixed points!). Moreover, $\sgn(\iota(\sigma)) = - \sgn(\sigma)$ for any $\sigma \in S_n$.

\subsection{Computation of Lie algebra of symmetries}


To understand $\g_S$ (i.e., the elements of $\gl(V)$ for which $\det$ is an eigenvector by Lemma~\ref{L-los-frob}) it suffices to understand the two conditions in the previous corollary. For the rest of this subsection, let $M = \sum_{ij} E_{ij} \otimes M_{ij} \in \Mat_{n^2} = \gl(V)$.

\begin{lemma} \label{L1-offd}
For $i \neq j$, $(E_{ij} \otimes M_{ij}) \cd \det = 0$ if and only if $M_{ij} = \kappa {\rm I}_n$ for some $\kappa \in \C$, where ${\rm I}_n$ denotes the identity matrix  of size $n \times n$. 
\end{lemma}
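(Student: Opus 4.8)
The plan is to unravel the condition $(E_{ij}\otimes M_{ij})\cd \det = 0$ using the explicit twisted action. Fix $i\neq j$ and write $M_{ij}=(a_{pq})\in\Mat_n$, so that $E_{ij}\otimes M_{ij}=\sum_{p,q} a_{pq}\, E_{ij}\otimes E_{pq}=\sum_{p,q} a_{pq}\,E_{ip,jq}$, and by the displayed formula for the twisted action this operator acts on $\C[\Mat_n]$ as the derivation $\sum_{p,q} a_{pq}\, x_{ip}\,\partial_{jq}$. First I would apply this derivation to $\det=\sum_{\sigma\in S_n}\sgn(\sigma)\prod_{k} x_{k\sigma(k)}$. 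Since $i\neq j$, for a fixed monomial indexed by $\sigma$ the variable $x_{jq}$ divides it precisely when $q=\sigma(j)$, and differentiating removes exactly that factor; multiplying back by $x_{ip}$ then replaces the (existing) factor $x_{i\sigma(i)}$ by a product $x_{ip}x_{i\sigma(i)}$ in row $i$, while row $j$ now has no variable. So the result is a sum of degree-$n$ monomials, each multihomogeneous of degree $\delta$ with $\delta_i=2$, $\delta_j=0$, $\delta_k=1$ for $k\neq i,j$.

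The key step is to identify, for each such monomial, the coefficient and show the total vanishes iff $M_{ij}$ is scalar. Reindex: after the substitution the monomial is $x_{ip}\cdot\prod_{k\neq j} x_{k\sigma(k)}$ coming from $\sigma$ with $\sigma(j)=q$, carrying the weight $a_{pq}\sgn(\sigma)$. A given target monomial records two column indices sitting in row $i$ (namely $p$ and $\sigma(i)$), one column index missing (the one, say $r$, that would have been $\sigma(j)=q$), and the remaining columns matched bijectively to the remaining rows. Crucially, such a monomial arises in exactly two ways: either $p$ is the "inserted" index and $\sigma(i)$ the original, with $q=\sigma(j)$; or the roles of $p$ and $\sigma(i)$ are swapped, which corresponds to replacing $\sigma$ by $\sigma'$ agreeing with $\sigma$ except $\sigma'(i)=p$, $\sigma'(j)=\sigma(i)$ — wait, one must also adjust so the missing column is still $r=q$; the two contributing permutations differ by the transposition swapping the values in rows $i$ and $j$, i.e. by precomposing with $\iota$-type swap on those two coordinates, so their signs are opposite. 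I would carry out this bookkeeping carefully: the coefficient of each such monomial turns out to be $\pm(a_{pq}-a_{\sigma(i)q})$ for the appropriate indices (more precisely $a_{p\,\sigma(j)}$ from one term minus $a_{\sigma(i)\,\sigma(j)}$ from the other, after accounting for signs). Demanding all these coefficients vanish forces $a_{pq}=a_{p'q}$ for all $p,p'$ and all $q$, i.e. every column of $M_{ij}$ is constant; combined with the analogous relation obtained by varying the missing row index, one gets all entries equal on the diagonal pattern and zero off it — concretely $M_{ij}=\kappa\mathrm{I}_n$. Conversely, if $M_{ij}=\kappa\mathrm{I}_n$ the operator is $\kappa\sum_p x_{ip}\partial_{jp}$, and applied to $\det$ it produces, up to sign, $\kappa$ times the determinant of a matrix with two equal rows (rows $i$ and $j$ become identical), hence $0$.

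The main obstacle will be the sign-matching combinatorics in the coefficient computation: one must verify that the two permutations contributing to a fixed target monomial genuinely differ by an odd permutation (this is exactly what $\iota$ and the identity $\sgn(\iota(\sigma))=-\sgn(\sigma)$ from the Symmetric group subsection are set up to handle, after relabeling so that rows $i,j$ play the role of rows $1,2$), so that the two contributions subtract rather than add. Once the sign is pinned down, the rest is a clean linear-algebra deduction: a finite family of linear equations $a_{pq}=a_{p'q}$ (and the vanishing of the terms where the column structure is degenerate) whose solution set is exactly the scalar matrices. I would present the forward direction via this coefficient analysis and the converse via the repeated-row observation, which is short and conceptually transparent.
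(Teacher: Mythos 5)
Your overall strategy is exactly the paper's: expand the derivation $\sum_{p,q}a_{pq}x_{ip}\partial_{jq}$ applied to $\det$, match coefficients of the resulting monomials, and read off linear conditions on the $a_{pq}$. Your converse is correct and in fact cleaner than the paper's: observing that $\sum_p x_{ip}\partial_{jp}$ turns $\det$ into the determinant of the matrix with row $j$ replaced by row $i$ (hence two equal rows, hence $0$) avoids the paper's explicit pairwise cancellation via the involution $\iota$.

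However, the forward-direction bookkeeping as you describe it is wrong in a way that would derail the final deduction. For a target monomial $x_{ic_1}x_{ic_2}\prod_{k\neq i,j}x_{k\tau(k)}$, the two contributing permutations (when there are two) do \emph{not} share a common second index in the entries of $M_{ij}$: if $\{c_1,c_2\}$ are exactly the two columns missed by $\tau$, the contribution with $\sigma(i)=c_1$ forces $\sigma(j)=c_2$ and carries weight $a_{c_2,c_2}$, while the contribution with $\sigma'(i)=c_2$ forces $\sigma'(j)=c_1$ and carries weight $a_{c_1,c_1}$; with the opposite signs, the relation you extract is $a_{c_1c_1}=a_{c_2c_2}$ (all \emph{diagonal} entries equal), not $a_{pq}=a_{p'q}$ for a common $q$. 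Your claimed conclusion that ``every column of $M_{ij}$ is constant'' is therefore not what the computation yields, and it would not lead to $\kappa{\rm I}_n$ anyway (constant columns means $M_{ij}=\mathbf{1}c^t$, which is scalar only if $c=0$). The vanishing of the off-diagonal entries comes from a separate family of monomials, namely those in which row $i$ carries a repeated column index (including the square case $x_{ic}^2$, which is the one the paper uses): these admit exactly \emph{one} contribution, with weight $a_{c,v}$ for $v\neq c$, forcing $a_{cv}=0$. So the correct output of the coefficient matching is the pair of relations (off-diagonal entries vanish; diagonal entries all equal), obtained from two distinct families of monomials — precisely the two lemma-internal steps of the paper's proof. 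Once you separate these two cases your plan goes through.
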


\begin{proof}
 Let us first prove the forward direction. Suppose $E_{ij} \otimes M_{ij} \cd \det = 0$. We want to prove that $M_{ij} = \kappa {\rm I}_n$.

First, observe that without loss of generality, we can consider $i = 2$ and $j = 1$, and we will do so. Let us denote the $(p,q)^th$ entry of $M_{21}$ by $\alpha_{pq}$. Then $E_{21} \otimes M_{21}$ acts by the derivation $D = \sum_{p,q} \alpha_{p,q} x_{2p} \partial_{1q}$.

Recall that $$
\det = \sum_{\sigma \in S_n} \sgn(\sigma) x_{1\sigma(1)}x_{2\sigma(2)}\dots x_{n \sigma(n)}.
$$

Thus
\begin{align*}
(E_{21} \otimes M_{21}) \cd \det & = D \cdot \det \\
& = \sum_{\begin{array}{c} p,q,\sigma \\ \sigma(1)  = q\end{array}} \alpha_{pq} \sgn(\sigma) x_{2p}x_{2\sigma(2)}x_{3\sigma(3)}\dots x_{n\sigma(n)}.
\end{align*}





Consider the monomial $m = x_{22}x_{22}x_{33}\dots x_{nn}$. Let us compute the coefficient of $m$ in $D \cdot \det$. To do so, let us check the choices of $p,q$ and $\sigma$ in the above summation contribute to the coefficient of $m$. Surely, we need $2 = p = \sigma(2)$, and $\sigma(k) = k$ for $k > 2$. This means that $\sigma$ must be the identity permutation, $p = 2$ and $q = \sigma(1) = 1$. So, there is only one contributing term, and that contributes a coefficient of $\alpha_{21}$. Since $D \cdot \det = 0$, we must have that $\alpha_{21} = 0$. 

For any choice of $p \neq q$, a similar argument will show that $\alpha_{pq} = 0$ for $p \neq q$ (indeed consider instead of $m$, a monomial $x_{2q} x_{2q} x_{3\pi(3)}\dots x_{n\pi(n)}$ for some $\pi \in S_n$ such that $\pi(1) = p$ and $\pi(2) = q$). This proves that the off-diagonal entries of $M_{ij}$ are zero.

Now, consider the monomial $n = x_{21}x_{22}x_{33}\dots x_{nn}$ and let us compute its coefficient in $D \cdot \det$. Again, let us check the choices of $p,q$ and $\sigma$. Clearly need that $\sigma(k) = k$ for $k \geq 2$. Moreover, we need either $p = 1$ and $\sigma(2) = 2$ or $p = 2$ and $\sigma(2) = 1$. In the former case, we will have $\sigma$ to be the identity permutation and $q = \sigma(1) = 1$, so this contributes $\alpha_{11}$. Similarly the latter case contributes $- \alpha_{22}$.  Thus, the coefficient of the $n$ is $\alpha_{11} - \alpha_{22}$ which must be zero. Hence $\alpha_{11}  = \alpha_{22}$.

Again, a similar argument proves that $\alpha_{ii} = \alpha_{jj}$ for all $i,j$ (indeed, consider instead of $n$, the monomial $x_{2i}x_{2j}x_{3\pi(3)}\dots x_{n\pi(n)}$ for some $\pi \in S_n$ such that $\pi(1) = i$ and $\pi(2) = j$). Thus, $M_{ij} = \kappa {\rm I}_n$, where we take $\kappa = \alpha_{11}$. This shows that if $E_{ij} \otimes M_{ij} \cd \det = 0$, then $M_{ij} = \kappa {\rm I}_n$. 

For the converse direction, if $M_{ij} = \kappa {\rm I}_n$, then $E_{ij} \otimes M_{ij}$ acts by 
$$
D = \kappa \sum_{i=1}^n x_{2i} \partial_{1i}.
$$

Consider the action of $D$ on $t = \prod_{i=1}^n x_{i\sigma(i)}$. Unless $i = \sigma(1)$, the term $x_{2i} \partial_{1i}$ kills it. Thus, we get:
\begin{align*}
D \cdot t & = \kappa x_{2\sigma(1)} \partial_{1\sigma(1)} \cdot t \\
&= x_{2\sigma(1)} x_{2\sigma(2)} x_{3\sigma(3)} \dots x_{n \sigma(n)}. \\
\end{align*}

Thus, 
$$
D \cdot \det = \kappa \sum_{\sigma \in S_n} \sgn(\sigma) x_{2\sigma(1)} \partial_{1\sigma(1)} \cdot t = \kappa(\sum_{\sigma \in S_n} \sgn(\sigma) x_{2\sigma(1)} x_{2\sigma(2)} x_{3\sigma(3)} \dots x_{n \sigma(n)}).
$$

We claim that the sum is zero. To see this, notice that the terms corresponding to $\sigma$ and $\iota(\sigma)$ cancel. Hence, the whole sum cancels out as required.
\end{proof}

\begin{lemma} \label{L1-diag}
Suppose $\left(\sum_{i=1}^n E_{ii} \otimes M_{ii}\right) \cd \det = c \cdot \det$. Then for all $i,j$, we have $M_{ii} - M_{jj} = \mu_{i,j} {\rm I}_n$ for some scalar $\mu_{i,j} \in \C$.
\end{lemma}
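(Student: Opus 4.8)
The hypothesis says that $D := \sum_{i=1}^n E_{ii}\otimes M_{ii}$ acts on $\det$ by the scalar $c$. In coordinates, $E_{ii}\otimes M_{ii}$ acts by the derivation $\sum_{p,q}(M_{ii})_{pq}\,x_{ip}\partial_{iq}$, so $D = \sum_{i}\sum_{p,q}(M_{ii})_{pq}\,x_{ip}\partial_{iq}$. The idea is to test this identity against carefully chosen monomials appearing in $\det$ and read off linear relations among the entries $(M_{ii})_{pq}$. Concretely, I would fix two distinct column indices $a\neq b$ and a permutation $\pi\in S_n$, and look at the coefficient of a monomial obtained from $\prod_i x_{i\pi(i)}$ by replacing the variable in row $\ell$ (where $\pi(\ell)=a$) with $x_{\ell b}$ — i.e. a monomial with a ``repeated'' column $b$ and a ``missing'' column $a$. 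Just as in the proof of Lemma~\ref{L1-offd}, only very few terms of $D\cdot\det$ contribute such a monomial, and matching coefficients forces $(M_{\ell\ell})_{ab}$ to equal the analogous off-diagonal entry coming from the other rows.

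\textbf{Key steps.} First I would show that the off-diagonal entries of the $M_{ii}$ are all equal to each other across $i$: for $p\neq q$, the quantity $(M_{ii})_{pq}$ is independent of $i$. To see this, compute the coefficient in $D\cdot\det$ of the monomial $x_{1q}x_{1q}x_{2\pi(2)}\cdots$ type monomials (more precisely, for each pair of rows $i\neq i'$ pick a permutation sending row $i$ to column $p$ and row $i'$ to column $q$, form the monomial where row $i$'s variable is changed to $x_{iq}$ so that column $q$ is repeated and column $p$ is missing); the contributing terms are: take the term $E_{ii}\otimes M_{ii}$ acting via $x_{iq}\partial_{ip}$, and the term $E_{i'i'}\otimes M_{i'i'}$ acting via $x_{i'q}\partial_{i'p}$ on the permutation with $i\leftrightarrow i'$ swapped in the relevant positions. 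Since $D\cdot\det$ has no such monomial (it is a multiple of $\det$, which is multilinear in the columns), these two contributions must cancel, yielding $(M_{ii})_{pq} = (M_{i'i'})_{pq}$ up to a sign bookkeeping that works out. Second, handle the diagonal entries: by testing against a monomial like $x_{ip}x_{i'q}\prod_{k\neq i,i'}x_{k\pi(k)}$ with the roles of two columns $p,q$ swapped between rows $i,i'$, one gets a relation forcing $(M_{ii})_{pp} - (M_{i'i'})_{pp}$ to be independent of $p$. Combining: $M_{ii}-M_{jj}$ has all off-diagonal entries zero and constant diagonal, i.e. equals $\mu_{i,j}{\rm I}_n$.

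\textbf{Main obstacle.} The real work is the bookkeeping of which permutations and which monomials contribute, and tracking the signs $\sgn(\sigma)$ versus $\sgn(\iota(\sigma))$-type cancellations — essentially the same mechanism as in Lemma~\ref{L1-offd} but now with two ``active'' rows instead of one. I expect the subtle point to be choosing the test monomials so that exactly the desired handful of $(p,q,\sigma)$ triples contribute, and verifying that the sign with which $(M_{ii})_{pq}$ enters matches (up to an overall sign) the sign with which $(M_{jj})_{pq}$ enters, so that the vanishing of that coefficient gives a clean equality rather than a vacuous statement. A clean way to organize this, which I would adopt, is to note that $D$ must map the span of $\det$ to itself, hence in particular $D\cdot\det$ is again multilinear in the $n$ columns; then ``repeated column'' and ``missing column'' monomials have coefficient zero, and extracting those coefficients directly gives all the needed relations without separately analyzing the eigenvalue $c$ (which only shows up when one extracts the coefficient of a genuine $\det$-monomial).
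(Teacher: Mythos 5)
Your proposal is correct and follows essentially the same route as the paper's proof: extract the coefficient of a "repeated column / missing column" monomial (which must vanish since $c\cdot\det$ is multilinear in the columns) to force the off-diagonal entries of the $M_{ii}$ to agree across $i$, then compare the coefficients of two genuine determinant monomials related by a transposition to force the diagonal of $M_{ii}-M_{jj}$ to be constant. The sign bookkeeping you flag does work out exactly as you anticipate, via the $\sgn(\pi)$ versus $\sgn(\iota(\pi))=-\sgn(\pi)$ cancellation used throughout that section.
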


\begin{proof}
Let the $(p,q)^{th}$ entry of $M_{ii}$ be $\alpha^{i}_{p,q}$. Without loss of generality, take $i = 1$ and $j=2$, i.e., we will prove $M_{11} - M_{22} = \mu_{1,2} {\rm I}_n$ for some $\mu_{1,2} \in \C$.
Note that $\sum_i E_{ii} \otimes M_{ii}$ acts by the derivation $D = \sum_{i,p,q} \alpha^{i}_{p,q} x_{ip} \partial_{iq}$. So

\begin{equation} \label{Stupidity}
D \cdot \det = \sum_{\begin{array}{c} i,p,q,\sigma \\ \sigma(i) = q \end{array}} \alpha_{p,q}^i \cdot \sgn(\sigma) \cdot x_{1\sigma(1)}\dots x_{ip} \dots x_{n\sigma(n)}.
\end{equation}

Fix $\pi \in S_n$, and consider the monomial $m = x_{1\pi(1)} x_{2\pi(1)} x_{3\pi(3)} \dots x_{n\pi(n)}$. The coefficient of $m$ in $c \cdot \det$ is $0$. So, the coefficient of $m$ in $D \cdot \det$ is also zero. We leave it to the reader to check from the above expression that the coefficient of $m$ in $D \cdot \det$ is $\sgn(\pi) (\alpha_{\pi(1),\pi(2)}^2 - \alpha_{\pi(1),\pi(2)}^1)$. Thus, we have $\alpha_{\pi(1),\pi(2)}^2 = \alpha_{\pi(1),\pi(2)}^1$. Running over all choices of $\pi$, we get that $\alpha_{p,q}^1 = \alpha_{p,q}^2$ for all $p \neq q$. This means that the off-diagonal entries of $M_{11}$ and $M_{22}$ are the same.

Again fix $\pi \in S_n$. Consider the monomial $n = \prod_{i=1}^n x_{i\pi(i)}$. Its coefficient in $c \cdot \det$ is $c \cdot \sgn(pi)$. So, its coefficient in $D \cdot \det$ should also be $c \cdot \sgn(\pi)$. From Equation~\ref{Stupidity}, one can check again that the coefficient of $n$ in $D \cdot \det$ is $\sgn(\sigma) (\sum_i \alpha_{\pi(i),\pi(i)}^i).$ Thus, we must have
$$
\sum_i \alpha^i_{\pi(i),\pi(i)} = c.
$$
This holds for all permutations, in particular, if we replace $\pi$ with $\iota(\pi)$. Thus, we have

$$
\sum_i \alpha^i_{\pi(i),\pi(i)} = \sum_i \alpha^i_{(\iota\pi)(i),(\iota\pi)(i)}.
$$

Hence, this means

$$
\alpha^1_{\pi(1),\pi(1)} + \alpha^2_{\pi(2),\pi(2)} = \alpha^1_{\pi(2),\pi(2)} + \alpha^2_{\pi(1),\pi(1)}.
$$

Again, this holds for all $\pi$, so we have

$$
\alpha^1_{p,p} - \alpha^2_{p,p} = \alpha^1_{q,q} - \alpha^2_{q,q}.
$$
for all $p,q$. Thus, if we set $\mu_{1,2} = \alpha^1_{p,p} - \alpha^2_{p,p}$, then the diagonal entries of $M_{11}$ and $M_{22}$ differ precisely by $\mu_{1,2}$. Since the off-diagonal entries of $M_{11}$ and $M_{22}$ agree (shown above), we have that $M_{11} - M_{22} = \mu_{1,2} {\rm I}_n$.

\end{proof}

\begin{corollary}
Suppose $M \in \Mat_{n^2} = \gl(\Mat_{n})$. Then $M \cd \det = c \cdot \det$ for some $c\in \C$ if and only if $M$ is of the form $A \otimes {\rm I}_n + {\rm I}_n \otimes B$. 
\end{corollary}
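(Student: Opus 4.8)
The plan is to assemble the corollary from the two structural lemmas just proved, Lemma~\ref{L1-offd} and Lemma~\ref{L1-diag}, together with the block-diagonal/off-diagonal decomposition corollary that precedes them. Write $M = \sum_{i,j} E_{ij} \otimes M_{ij}$ with each $M_{ij} \in \Mat_{n}$. By that earlier corollary, the condition $M \cd \det = c \cdot \det$ is equivalent to the conjunction of: (a) $(E_{ij} \otimes M_{ij}) \cd \det = 0$ for all $i \neq j$, and (b) $\left(\sum_i E_{ii} \otimes M_{ii}\right) \cd \det = c \cdot \det$. So I will analyze what (a) and (b) say about the blocks $M_{ij}$, and then repackage the resulting constraints in the Kronecker form $A \otimes {\rm I}_n + {\rm I}_n \otimes B$.

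First I would handle the forward direction. By Lemma~\ref{L1-offd}, condition (a) forces $M_{ij} = \kappa_{ij} {\rm I}_n$ for each $i \neq j$, for some scalars $\kappa_{ij} \in \C$. By Lemma~\ref{L1-diag}, condition (b) forces $M_{ii} - M_{jj} = \mu_{i,j} {\rm I}_n$ for scalars $\mu_{i,j}$; in particular, fixing a reference index, say $M_{11} =: B$, we get $M_{ii} = B + \mu_{i,1} {\rm I}_n$ with $\mu_{1,1} = 0$. Now define $A = (a_{ij}) \in \Mat_{n}$ by $a_{ij} = \kappa_{ij}$ for $i \neq j$ and $a_{ii} = \mu_{i,1}$. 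Then $E_{ij} \otimes M_{ij} = a_{ij}\, E_{ij} \otimes {\rm I}_n$ for $i \neq j$, while $E_{ii} \otimes M_{ii} = E_{ii}\otimes B + a_{ii}\, E_{ii}\otimes {\rm I}_n$; summing over all $i,j$ gives $M = A \otimes {\rm I}_n + {\rm I}_n \otimes B$, using $\sum_{i,j} a_{ij} E_{ij} = A$ and $\sum_i E_{ii} \otimes B = {\rm I}_n \otimes B$. This gives one implication.

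For the converse, I need to check that every $M$ of the form $A \otimes {\rm I}_n + {\rm I}_n \otimes B$ actually satisfies $M \cd \det = c \cdot \det$ for a suitable $c$. By linearity of the $\cd$-action it suffices to treat $A \otimes {\rm I}_n$ and ${\rm I}_n \otimes B$ separately; and by further linearity in the entries of $A$ (resp.\ $B$) it suffices to treat $E_{ij}\otimes {\rm I}_n$ and ${\rm I}_n \otimes E_{ij}$. The off-diagonal cases $E_{ij}\otimes {\rm I}_n$ with $i\neq j$ were shown to annihilate $\det$ in the converse direction of Lemma~\ref{L1-offd} (and, by the evident left-right symmetry of the determinant, so do the ${\rm I}_n \otimes E_{ij}$ with $i \neq j$; alternatively one runs the identical cancellation argument using the involution $\iota$ applied to the column index). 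For the diagonal generators, $E_{ii}\otimes {\rm I}_n$ acts by the derivation $\sum_q x_{iq}\partial_{iq}$, which is the Euler operator in the $i$-th row variables; since $\det$ is multihomogeneous of degree $1$ in the $i$-th row, this returns $\det$, and summing over $i$ gives $n\cdot \det$. The same holds for $\sum_i {\rm I}_n \otimes E_{ii}$ using column-homogeneity. Hence $M \cd \det = c\cdot \det$ with $c = \tnr(A) + \tnr(B)$ (or simply: $c$ is whatever scalar the diagonal part produces).

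I do not expect a genuine obstacle here — the content is entirely in the two preceding lemmas, and what remains is bookkeeping to convert the block description into the Kronecker description and to verify the (easy, degree-counting) converse. The one mild subtlety worth stating carefully is the matching-up of scalars: one must be careful that the decomposition $M = A\otimes{\rm I}_n + {\rm I}_n\otimes B$ is consistent, i.e.\ that the diagonal shifts $\mu_{i,1}$ from Lemma~\ref{L1-diag} can be absorbed uniformly into $A$ while the common block $B = M_{11}$ accounts for the ${\rm I}_n\otimes B$ part; this is exactly why Lemma~\ref{L1-diag} is phrased in terms of \emph{differences} $M_{ii}-M_{jj}$ rather than the blocks themselves, and no further work is needed.
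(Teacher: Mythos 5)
Your proof is correct and follows essentially the same route as the paper: decompose $M$ into its graded blocks, apply Lemma~\ref{L1-offd} to the off-diagonal blocks and Lemma~\ref{L1-diag} to the diagonal ones, and assemble $A$ from the scalars $\kappa_{ij}$ and $\mu_{i,1}$ with $B = M_{11}$. The only difference is that you also spell out the converse (via the Euler-operator and equal-columns arguments), which the paper leaves implicit; that addition is correct and harmless.
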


\begin{proof}
This follows from the previous two lemmas. Suppose $M$ is such that $M \cd \det = c \cdot \det$ for some $c \in \C$. Observe that $M_{ij} = \kappa_{ij} {\rm I}_n$ by Lemma~\ref{L1-offd} for some scalars $\kappa_{ij} \in \C$. Then, by Lemma~\ref{L1-diag}, we know that $M_{jj} = M_{11} + \mu_{j,1} {\rm I}_n$ for all $j$ (where $\mu_{j,1}$ is as defined in Lemma~\ref{L1-diag}).

Now, set $A$ to be the $n \times n$ matrix whose $(i,j)^{th}$ entry is $\kappa_{ij}$ if $i \neq j$, and $(i,i)^{th}$ entry is $\mu_{j,1}$. (Note that $\mu_{1,1} = 0$). Also, set $B = M_{11}$. This just means that $M = A \otimes {\rm I}_n + {\rm I}_n \otimes B$.
\end{proof}

\begin{corollary}
The Lie algebra of symmetries $\g_S = \Theta \{A \otimes {\rm I}_n + {\rm I}_n \otimes B\ | A,B \in \Mat_{n}\} = \{A \otimes {\rm I}_n + {\rm I}_n \otimes B\ | A,B \in \Mat_{n}\}$.
\end{corollary}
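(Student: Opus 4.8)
The statement to be proved is
\[
\g_S = \Theta\bigl(\{A\otimes \mathrm{I}_n + \mathrm{I}_n\otimes B \mid A,B\in\Mat_n\}\bigr) = \{A\otimes \mathrm{I}_n + \mathrm{I}_n\otimes B \mid A,B\in\Mat_n\},
\]
so the plan has two independent halves. The first is purely a restatement: by Lemma~\ref{L-los-frob}, $\g_S = \Theta(\{M\in\gl(V)\mid M\cd\det = c\cdot\det \text{ for some } c\in\C\})$, and the immediately preceding corollary identifies that inner set as exactly $\{A\otimes\mathrm{I}_n + \mathrm{I}_n\otimes B\}$. So the first equality is a direct substitution, requiring no new work.

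The substance is the second equality: that $\Theta$ fixes the space $W \triangleq \{A\otimes\mathrm{I}_n + \mathrm{I}_n\otimes B \mid A,B\in\Mat_n\}$ setwise. Recall $\Theta(M) = -M^t$, so I need $-(A\otimes\mathrm{I}_n + \mathrm{I}_n\otimes B)^t \in W$ for all $A,B$. The key computation is that transpose distributes over Kronecker product: $(A\otimes\mathrm{I}_n)^t = A^t\otimes\mathrm{I}_n^t = A^t\otimes\mathrm{I}_n$, and similarly $(\mathrm{I}_n\otimes B)^t = \mathrm{I}_n\otimes B^t$. Hence
\[
\Theta(A\otimes\mathrm{I}_n + \mathrm{I}_n\otimes B) = -(A^t\otimes\mathrm{I}_n + \mathrm{I}_n\otimes B^t) = (-A^t)\otimes\mathrm{I}_n + \mathrm{I}_n\otimes(-B^t),
\]
which is again an element of $W$ with $A$ replaced by $-A^t$ and $B$ by $-B^t$. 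Since $A\mapsto -A^t$ is a bijection of $\Mat_n$, this shows $\Theta(W)=W$, giving the second equality. (Conceptually: $W$ is the image in $\gl(V)$ of $\sll_n\oplus\sll_n\oplus\C$ — the Lie algebra $\g_{n,1}$ of $G_{n,1}$ — and $\Theta$ is the Cartan involution, which stabilizes this image because both matrix factors are $\Theta$-stable; the scalar identity matrices go to scalars.)

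There is essentially no obstacle here — the only thing to be careful about is the bookkeeping with the Kronecker-product identification $\Mat_{n^2} = \Mat_n\otimes\Mat_n$: one must confirm that the matrix transpose on $\Mat_{n^2}$, under the chosen lexicographic indexing of rows/columns by $[n]\times[n]$, really does correspond to $(U\otimes V)^t = U^t\otimes V^t$ on the tensor factors. This is immediate from the formula $E_{ij}\otimes E_{pq} = E_{ip,jq}$ recorded earlier, since transposing $E_{ip,jq}$ gives $E_{jq,ip} = E_{ji}\otimes E_{qp} = E_{ij}^t\otimes E_{pq}^t$, and the general case follows by bilinearity. Once that is noted, the corollary follows in a line, and I would present it simply by combining the displayed corollary above with Lemma~\ref{L-los-frob} and the transpose-of-Kronecker-product identity.
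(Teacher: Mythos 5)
Your proof is correct and follows the same route as the paper: the first equality is immediate from Lemma~\ref{L-los-frob} together with the preceding corollary, and the second equality follows because $\Theta(A\otimes \mathrm{I}_n + \mathrm{I}_n\otimes B) = (-A^t)\otimes\mathrm{I}_n + \mathrm{I}_n\otimes(-B^t)$, which lies in the same set. Your explicit check that matrix transpose on $\Mat_{n^2}$ agrees with $(U\otimes V)^t = U^t\otimes V^t$ under the lexicographic indexing is a welcome bit of extra care, but it is not a different argument.
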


\begin{proof}
It follows from the above corollary and Lemma~\ref{L-los-frob} that $\g_S = \Theta \{A \otimes {\rm I}_n + {\rm I}_n \otimes B\ | A,B \in \Mat_{n}\}$. Observe that $\Theta( A \otimes {\rm I}_n + {\rm I}_n \otimes B) = -(A^t \otimes {\rm I}_n + {\rm I}_n \otimes B^t)$. Thus $\Theta \{A \otimes {\rm I}_n + {\rm I}_n \otimes B\ | A,B \in \Mat_{n}\} = \{A \otimes {\rm I}_n + {\rm I}_n \otimes B\ | A,B \in \Mat_{n}\}$.

\end{proof}

\begin{lemma}
The lie algebra $\g_{n,1} = \{ A \otimes {\rm I}_n + {\rm I}_n \otimes B\ | A,B \in \Mat_{n}\} \subseteq \Mat_{n^2} = \gl(\Mat_{n})$.
\end{lemma}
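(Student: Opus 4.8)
The plan is to unwind the definition of the action of $G_{n,1} \subseteq \GL(V)$ and compute the corresponding Lie algebra directly. Recall that $G_{n,1}$ is the image of the map $\GL_n \times \GL_n \to \GL(\Mat_n)$ sending $(P,Q)$ to the linear transformation $X \mapsto PXQ^t$. Since $\g_{n,1} = \Lie(G_{n,1})$ is the image of the induced map on Lie algebras, it suffices to differentiate this one-parameter family: take curves $P(t) = {\rm I}_n + tM + O(t^2)$ and $Q(t)^t = {\rm I}_n + tN + O(t^2)$ through the identity in $\GL_n$, and compute $\frac{d}{dt}\big|_{t=0} P(t) X Q(t)^t = MX + XN$. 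Thus $\g_{n,1}$ consists exactly of the linear maps $X \mapsto MX + XN$ for $M, N \in \Mat_n$, which is precisely the description already recorded in this section.

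The remaining step is to translate the operator description $X \mapsto MX + XN$ into the Kronecker-product coordinates on $\gl(V) = \Mat_{n^2} = \Mat_n \otimes \Mat_n$ fixed earlier in the section. Under the identification of $V = \Mat_n$ with $\C^n \otimes \C^n$ (row index $\otimes$ column index), left multiplication by $M$ corresponds to the operator $M \otimes {\rm I}_n$, and right multiplication $X \mapsto XN$ corresponds to ${\rm I}_n \otimes N^t$ — the transpose appearing because right multiplication by $N$ acts on the column index from the right, i.e.\ as left multiplication by $N^t$ on the second tensor factor. Hence the operator $X \mapsto MX + XN$ is the matrix $M \otimes {\rm I}_n + {\rm I}_n \otimes N^t \in \Mat_{n^2}$. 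As $M$ ranges over $\Mat_n$ and $N$ (equivalently $N^t$) ranges over $\Mat_n$, we obtain exactly the set $\{A \otimes {\rm I}_n + {\rm I}_n \otimes B \mid A, B \in \Mat_n\}$, which is the claimed description of $\g_{n,1}$. One should also note this set is visibly a linear subspace of $\Mat_{n^2}$ closed under the bracket (a short direct check, or observe it is the image of a Lie algebra homomorphism), consistent with it being a Lie subalgebra.

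The main (and really only) subtlety is bookkeeping: getting the tensor-factor conventions and the transpose in the right-multiplication term consistent with the indexing $E_{ij} \otimes E_{pq} = E_{ip,jq}$ used in this section, so that the formula matches the twisted-action computations performed above. Concretely, one checks on the basis $E_{jq}$ of $V$ that $(M \otimes {\rm I}_n) E_{jq} = \sum_i m_{ij} E_{iq} = M E_{jq}$ and $({\rm I}_n \otimes B) E_{jq} = \sum_p b_{pq} E_{jp} = E_{jq} B^t$, which confirms the correspondence and pins down that it is $B = N^t$ rather than $B = N$. Everything else is a routine comparison of coefficients, so I would present this as a two-line computation followed by the identification, and then conclude $\g_{n,1} = \{A \otimes {\rm I}_n + {\rm I}_n \otimes B \mid A, B \in \Mat_n\}$, completing the proof of Proposition~\ref{P1} when combined with the preceding corollaries.
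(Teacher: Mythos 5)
Your proof is correct and follows essentially the same route as the paper: both identify $\g_{n,1}$ as the image of the derivative of the defining map $\GL_n\times\GL_n\to\GL(\Mat_n)$, using that the Lie algebra of the image of a morphism of algebraic groups is the image of the differential. Your version is somewhat more careful about the Kronecker bookkeeping (checking that $X\mapsto XN$ corresponds to ${\rm I}_n\otimes N^t$), which the paper elides; since transposition is a bijection of $\Mat_n$, the resulting set is the same.
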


\begin{proof}
We have the map $\phi: \GL_n \times \GL_n \rightarrow \GL_{n^2}$ given by $(P,Q) \mapsto P \otimes Q$. The image of $\phi$ is $G_{n,1}$ (by definition). The derivative of $\phi$, i.e., $d\phi: \gl_n \times \gl_n \rightarrow \gl_{n^2} = \gl(\Mat_{n})$ is given by the formula $d\phi(A,B) = A \otimes {\rm I}_n + {\rm I}_n \otimes (-B)$. The lemma follows from the fact that the image of $d\phi$ is the Lie algebra of the image of $\phi$ (see Appendix~\ref{App.gos}), i.e., $\Lie(G_{n,1}) = \g_{n,1}$. 
\end{proof}

From the above lemma and the corollary preceding it, we deduce Proposition~\ref{P1}.

\begin{proof} [Proof of Proposition~\ref{P1}]
It follows from the above lemma and the preceding corollary that $\g_S = \g_{n,1}$. Hence, it follows that $G_S^\circ = G_{n,1}$.
\end{proof}

\subsection{The entire group of symmetries}
Much of the work has gone into computing the connected group of symmetries $\mathcal{G}_S^\circ$. Now, we want to determine the entire group of symmetries $\mathcal{G}_S$. To do so, we will need some results from the theory of semisimple Lie algebras, and we will only recall those facts that we need.

\begin{lemma}
Let $G$ be a linear algebraic group and let $G^\circ$ denote its identity component. Then $G$ normalizes $G^\circ$, i.e., for all $g \in G$, we have $g G^\circ g^{-1} = G^\circ$.
\end{lemma}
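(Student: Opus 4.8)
The statement to prove is that for a linear algebraic group $G$ with identity component $G^\circ$, every $g \in G$ satisfies $g G^\circ g^{-1} = G^\circ$. The plan is to exploit the fact that conjugation by a fixed element is an automorphism of $G$ as an algebraic group (in particular a homeomorphism), and that $G^\circ$ is intrinsically characterized as the connected component of $G$ containing the identity.

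First I would fix $g \in G$ and consider the inner automorphism $c_g : G \to G$, $x \mapsto g x g^{-1}$. This is a composition of left translation by $g$, right translation by $g^{-1}$, and it is a morphism of algebraic varieties with inverse $c_{g^{-1}}$; hence it is an isomorphism of $G$ onto itself, and in particular a homeomorphism in the Zariski topology. Second, I would use the standard topological fact that a homeomorphism of a topological space permutes its connected components: since $c_g$ is a homeomorphism of $G$, it maps the connected component of any point $x$ bijectively onto the connected component of $c_g(x)$. Applying this with $x = e$ the identity element: $c_g(e) = g e g^{-1} = e$, so $c_g$ maps the connected component of $e$ onto the connected component of $e$. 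Since $G^\circ$ is by definition the connected component of $G$ containing $e$, this gives $c_g(G^\circ) = g G^\circ g^{-1} = G^\circ$, as desired.

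There is essentially no serious obstacle here — the only thing to be a little careful about is to justify that $c_g$ is genuinely continuous for the Zariski topology (which follows since multiplication $m : G \times G \to G$ and inversion $i : G \to G$ are morphisms in the definition of a linear/affine algebraic group, and fixing one coordinate of a morphism yields a morphism) and to invoke the elementary point-set fact that homeomorphisms respect the decomposition into connected components. One could alternatively phrase the argument without topology: $gG^\circ g^{-1}$ is a subgroup of $G$, it is connected (being the continuous image of the connected set $G^\circ$), and it contains $e$, so it is contained in $G^\circ$; applying the same to $g^{-1}$ gives $g^{-1} G^\circ g \subseteq G^\circ$, i.e. $G^\circ \subseteq g G^\circ g^{-1}$, and the two inclusions give equality. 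I would likely present this second, inclusion-based version since it avoids even needing the "homeomorphisms permute components" lemma and only uses that a continuous image of a connected set is connected.
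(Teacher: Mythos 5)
Your proposal is correct, and your second (inclusion-based) version is exactly the paper's argument: $gG^\circ g^{-1}$ is a connected subset containing $e$, hence lies in $G^\circ$, and applying the same to $g^{-1}$ yields the reverse inclusion. No further comment is needed.
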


\begin{proof}
Consider the map $\phi:G^\circ \rightarrow G$ given by $h \mapsto ghg^{-1}$. The image is a connected because $G^\circ$ is connected, and contains the identity $e \in G$ because $\phi(e) = e$. So, we must have $g G^\circ g^{-1} \subseteq G^\circ$. Since this holds for any $g \in G$, we must have $g^{-1} G^\circ g \subseteq G^\circ$, which implies that $G^\circ \subseteq g G^\circ g^{-1}$. Thus, we have $g G^\circ g^{-1} = G^\circ$.
\end{proof}

\begin{definition}
Let $H \subseteq G$ be a subgroup. The normalizer of $H$ in $G$ is defined as 
$$
N_G(H) = \{g \in G \ |\ gHg^{-1} = H\}.
$$
\end{definition}

Let $F_n \subseteq \GL(\Mat_{n}) = \GL_{n^2}$ denote the subgroup of all linear transformations of the form $X \mapsto PXQ$ and $X \mapsto PX^tQ$ for some $P,Q \in \GL_n$. The proof of the following lemma is from \cite{Dokovic-Li}, but we recall it as we will need to generalize it. 

\begin{lemma} \label{L-normfrob}
The normalizer of $H = G_{n,1}$ in $G = \GL(\Mat_{n})$ is $F_n$.
\end{lemma}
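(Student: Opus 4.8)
The plan is to show containment in both directions. The inclusion $F_n \subseteq N_G(H)$ is the easy direction: given $g \in F_n$, say $g$ is $X \mapsto PXQ$, and an arbitrary element $h \in H = G_{n,1}$ of the form $X \mapsto AXB$, one checks directly that $g h g^{-1}$ sends $X \mapsto PA P^{-1} \cdot X \cdot Q^{-1} B Q$, which is again in $G_{n,1}$; if instead $g$ is the transpose-twisted map $X \mapsto PX^tQ$, then $ghg^{-1}$ sends $X \mapsto PB^t P^{-1} \cdot X \cdot Q^{-1} A^t Q$ (using $(AXB)^t = B^t X^t A^t$ and that transpose is an involution), again in $G_{n,1}$. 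So conjugation by $F_n$ preserves $H$, and since $F_n$ is a group, $F_n \subseteq N_G(H)$.

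For the reverse inclusion $N_G(H) \subseteq F_n$, the key observation is that conjugation by $g \in N_G(H)$ induces an automorphism of the algebraic group $H = G_{n,1}$, hence (by differentiating) an automorphism of its Lie algebra $\g_{n,1} = \{A \otimes I_n + I_n \otimes B\} \subseteq \gl(\Mat_n)$. First I would pin down the structure of $\g_{n,1}$: it is the image of $\gl_n \oplus \gl_n$, and its semisimple part is $\sll_n \oplus \sll_n$ (the two copies being $\{A \otimes I_n : \tnr(A) = 0\}$ and $\{I_n \otimes B : \tnr(B) = 0\}$), with a two-dimensional center spanned by $I_n \otimes I_n$ and — wait, the scalars $I_{n^2}$ collapse, so the center is really one-dimensional inside $\gl(\Mat_n)$; the point is that $\g_{n,1}$ has exactly two simple ideals. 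Any automorphism of $\g_{n,1}$ permutes these two simple ideals $\sll_n \oplus 0$ and $0 \oplus \sll_n$, so it either preserves each or swaps them. In the preserving case, the induced map on each $\sll_n$ factor is an automorphism of $\sll_n$, which is inner up to the diagram automorphism $A \mapsto -A^t$; in the swapping case we additionally compose with the transpose map $\tau$ on $\Mat_n$, which exactly realizes the swap $A \otimes I_n \leftrightarrow I_n \otimes A$ on the Lie algebra level (since $(A \otimes I_n)$ acting as $X \mapsto AX$ becomes, after conjugating by $\tau$, $X \mapsto (AX^t)^t = XA^t$, i.e. $I_n \otimes A^t$).

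Now I would convert this Lie-algebra statement back to the group level. Having identified the induced automorphism of $\g_{n,1}$, I multiply $g$ on the left by an appropriate element of $F_n$ (an inner conjugation $X \mapsto PXP^{-1}$ type element built from the $\GL_n \times \GL_n$ action, possibly composed with $\tau$) so that the resulting element $g'$ centralizes $H$, i.e. $g' h = h g'$ for all $h \in H$. It then remains to show that the centralizer of $H = G_{n,1}$ in $\GL(\Mat_n)$ consists only of scalars: a linear map $T: \Mat_n \to \Mat_n$ commuting with all $X \mapsto AXB$ must in particular commute with all left multiplications and all right multiplications, and by a Schur's-lemma / double-centralizer argument (the representation $\Mat_n = \C^n \otimes (\C^n)^*$ of $\GL_n \times \GL_n$ is irreducible) $T$ is a scalar. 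Scalars are already in $F_n$, so $g' \in F_n$, hence $g \in F_n$.

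The main obstacle I expect is the bookkeeping in the middle step: carefully matching up the abstract automorphism of $\g_{n,1}$ (classified via its two simple ideals and the outer automorphism $-(\cdot)^t$) with an honest element of $F_n$, and in particular verifying that every automorphism that could arise is actually realized by conjugation by something in $F_n$ rather than by some exotic automorphism not coming from $\GL_n \times \GL_n$ — this is where one uses that $\g_{n,1}$ is reductive acting irreducibly on $V = \Mat_n$, so that its normalizer in $\GL(V)$ is a finite extension of $G_{n,1}$ whose components are detected precisely by the symmetries $-(\cdot)^t$ and the ideal-swap, i.e. exactly by $\tau$. An alternative, cleaner route to this same conclusion is via automorphisms of the Dynkin diagram of $\sll_n \oplus \sll_n$ (of type $A_{n-1} \times A_{n-1}$), whose automorphism group is $(\Z/2) \wr (\Z/2)$ for $n \geq 3$, matching $F_n / G_{n,1} \cong \Z/2$ after accounting for which diagram automorphisms are realized by $\GL(V)$; I would cite \cite{Dokovic-Li} for the concrete argument and \cite{Guralnick} for the structural one.
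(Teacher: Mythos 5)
Your argument agrees with the paper's up to a point: both directions start the same way, and for the hard inclusion both pass to the Lie algebra $\g_{n,1}$, observe that conjugation by $g$ must permute its two simple ideals $L_1=\{A\otimes {\rm I}_n\}$ and $L_2=\{{\rm I}_n\otimes B\}$ (each $\cong\sll_n$), and compose with $\tau$ to reduce to the case where $g$ preserves each ideal. After that the routes genuinely diverge. The paper never classifies $\mathrm{Aut}(\sll_n)$: it writes $g=\sum_{i=1}^r P_i\otimes Q_i$ with $\{P_i\}$ and $\{Q_i\}$ each linearly independent, extracts from $gL_1=L_1g$ the relations $P_iA=\widetilde{A}P_i$ for all traceless $A$, normalizes $P_1$ to an idempotent, shows $P_1={\rm I}_n$, and concludes that $r=1$ and $g=P_1\otimes Q_1$ is a pure (invertible) tensor, hence lies in $H$. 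This is entirely elementary linear algebra and handles all possible automorphisms at once. You instead classify the induced automorphism of each $\sll_n$ factor as inner up to $A\mapsto -A^t$, try to realize it by an element of $F_n$, and finish by a Schur's lemma computation of the centralizer. Your centralizer step is fine ($\Mat_{n}$ is irreducible for $G_{n,1}$, and scalars lie in $F_n$).

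The obstacle you flag at the end is, however, the actual mathematical content of the lemma in your formulation, and it is not merely bookkeeping. The automorphism of $\g_{n,1}$ that applies $A\mapsto -A^t$ to $L_1$ alone (identity on $L_2$) preserves both simple ideals but is realized by \emph{no} element of $F_n$; if it could arise from conjugation by some $g\in\GL(\Mat_{n})$, your plan of ``multiply by an element of $F_n$ until $g'$ centralizes $H$'' would stall. Ruling it out requires an extra argument — e.g.\ that conjugation preserves the isomorphism class of $\Mat_{n}=\C^n\otimes\C^n$ as a module over the twisted algebra, and twisting one factor by the diagram automorphism replaces $\C^n$ by its dual, a non-isomorphic $\sll_n$-module for $n\geq 3$ (the case $n=2$, where the diagram automorphism is inner, needs separate mention). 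You defer this to \cite{Guralnick}/\cite{Dokovic-Li}, which is legitimate — the paper itself remarks that the Dynkin-diagram route works — but as written the step is asserted rather than proved, whereas the paper's tensor-decomposition argument (which is in fact the \cite{Dokovic-Li} argument you cite for the ``concrete'' version) closes exactly this hole without ever invoking the classification of automorphisms.
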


\begin{proof}
Let $\tau:\Mat_{n} \rightarrow \Mat_{n}$ denote the transpose, i.e., $\tau(A) = A^t$. Clearly $\tau \in N_G(H)$, so $F_n \subseteq N_G(H)$.

Let $g \in N_G(H)$. Then $g$ normalizes $H$, and hence normalizes its derived group $[H,H] = G_{n,1}$. Therefore, it normalizes its Lie algebra $\g_{n,1} = \Lie(G_{n,1}) = \{A \otimes {\rm I}_n + {\rm I}_n \otimes B\ |\ {\rm Tr}(A) = {\rm Tr}(B) = 0\}$. Abstractly, $\g_{n,1}$ is equal to $\sll_n \oplus \sll_n$. Let $L_1 = \{A \otimes {\rm I}_n \ |\  {\rm Tr}(A) = 0\}$ and let $L_2 = \{{\rm I}_n \otimes B\ |\ {\rm Tr}(B) = 0\}$. Then $\g_{n,1} = L_1 \oplus L_2$ is a semisimple Lie algebra, and each $L_i$ is a simple Lie algebra isomorphic to $\sll_n$. Thus if $g$ normalizes $\g_{n,1}$, then conjugation by $g$ is an automorphism of the Lie algebra. $L_1$ and $L_2$ are the only simple ideals of $\g_{n,1}$, so such an automorphism must either preserve each $L_i$ or switch the two.
Also, observe that conjugation by $\tau$ switches $L_1$ and $L_2$. Thus, by composing with $\tau$ if necessary, we assume w.l.o.g that conjugation by $g$ preserves each $L_i$.

Now, write $g = \sum_{i=1}^r P_i \otimes Q_i$ with $\{P_i\}$ a linearly independent subset of $\Mat_{n}$  and $\{Q_i\}$ also a linearly independent subset of $\Mat_{n}$. Since $gL_1g^{-1}  = L_1$ (equivalently $gL_1 = L_1 g$), we have that for any $A \in \Mat_{n}$ (with trace zero), there exists $\widetilde{A} \in \Mat_{n}$ (with trace zero) such that
$$
\sum_i P_i A \otimes Q_i = \sum_i \widetilde{A} P_i \otimes Q_i.
$$

Since $Q_i$ are linearly independent, we deduce that $P_i A = \widetilde{A} P_i$ for all $i$. Mutliplying by an appropriate $U \otimes {\rm I}_n$ on the left and $V \otimes {\rm I}_n$ on the right (both of which are in $H$), we can assume that $P_1 = \begin{pmatrix} {\rm I}_k & 0 \\ 0 &  0 \end{pmatrix}$ for some $k \leq n$. We claim that $k = n$. Otherwise, take $A$ to be $\begin{pmatrix} 0  & E \\ 0 & 0 \end{pmatrix}$ for some non-zero $E$, and observe that there is no $\widetilde{A}$ which can satisfy $P_1A = \widetilde{A} P_1$. Thus $k = n$, i.e., $P_1 = {\rm I}_n$. Hence $A = P_1 A = \widetilde{A} P_1 = \widetilde{A}$, i.e., $\widetilde{A} = A$ for all $A$. Thus we must have $P_i A = A P_i$ for all $i \geq 2$. This means that $P_i$ are scalar matrices, but we chose $P_i$ to be linearly independent. So, we must have $i = 1$. 

This means that $g = P_1 \otimes Q_1$, with $P_1$ invertible. A similar argument shows that $Q_1$ is also invertible. In other words, $g \in H$.

To summarize, we have that either $g$ or $g \tau$ is in $H$, i.e, $g \in F_n$. This shows that $N_G(H) \subseteq F_n$. Thus $N_G(H) = F_n$ as required.
\end{proof}

\begin{proof} [Proof of Theorem~\ref{theo:frob}]
We know that $\mathcal{G}_S$ is an algebraic subgroup of $\GL(\Mat_{n})$ whose identity component is $G_{n,1}$ by Proposition~\ref{P1}. Thus, we must have $\mathcal{G}_S \subseteq N_{\GL(\Mat_{n})}(G_{n,1}) = F_n$ by the above discussion. On the other hand, it is clear that $F_n \subseteq \mathcal{G}_S$, so $\mathcal{G}_S = F_n$ as required.
\end{proof}

\begin{remark}
A very short proof (that hides much of the details) is to observe that $F_n \subseteq \mathcal{G}_S \subsetneq \GL(\Mat_{n})$. Next, $F_n$ is a maximal proper subgroup of $\GL(\Mat_{n})$ (see for example \cite{Dynkin, Dokovic-Li, Guralnick}), so $\mathcal{G}_S = F_n$. However, in the general case, such an argument will not suffice, but the argument we give here will be generalized.
\end{remark}

\section{Multi-grading on $\C[\Mat_{n}^m]$} \label{sec:multi}
For any vector space $V$, the ring of polynomial functions $\C[V]$ has a natural grading given by (total) degree. In the case when $V = \Mat_{n}^m$, we have a finer multi-grading which we will now describe.

We will denote the coordinate functions on $\Mat_{n}^m$ by $x^{(i)}_{jk}$. More precisely, let $x^{(i)}_{jk}$ denote the $(j,k)^{th}$ coordinate of the $i^{th}$ matrix. Thus, 
$$
\C[\Mat_{n}^m] = \C[x^{(i)}_{jk} : 1 \leq i \leq m, 1\leq j,k \leq n].
$$
We can define an $\N^m$-grading on $\C[\Mat_{n}]$ by setting
$$
\deg(x^{(i)}_{jk}) = \delta_i = (0,\dots,0,\underbrace{1}_i,0 \dots 0) \in \N^m.
$$
So, we have
$$
\C[\Mat_{n}^m] = \bigoplus_{e = (e_1,\dots,e_m) \in \N^m} \C[\Mat_{n}^m]_{e}.
$$
where $\C[\Mat_{n}^m]_{e}$ is the linear span of all monomials $m = \prod (x^{(i)}_{jk})^{a^{(i)}_{jk}}$ such that $\deg(m) = \sum a^{(i)_{jk}} \delta_i  = e$.

We will now give another description of $(I_S)_n$ for the case $S = \SING_{n,m}$ that incorporates this multi-grading. First, recall that $(I_S)_n = \spa(\det(\sum_i c_i X_i): c_i \in \C)$. Let $t_1,\dots,t_m$ denote indeterminates. For $e \in \N^m$, let $t^e := t_1^{e_1} t_2^{e_2} \dots t_m^{e_m}$. Consider $\det(\sum_i t_i X_i) \in \C[\Mat_{n}]$. Write
$$
\det(\sum_i t_i X_i) = \sum_{e \in \N^m, \sum_i e_i = n} t^e f_e.
$$

\begin{lemma} \label{L-multi-I}
Let $S = \SING_{n,m} \subseteq V = \Mat_{n}$. Then we have
\begin{enumerate}
\item $(I_S)_n = \bigoplus\limits_{e \in \N^m, \sum_i e_i = n} (I_S)_e$ ;
\item $(I_S)_e = \spa(f_e)$, and hence $1$-dimensional.
\end{enumerate}
\end{lemma}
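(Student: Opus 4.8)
The plan is to prove the two statements together, using the multi-homogeneity of the determinant and the already-established fact (Proposition~\ref{P-ideal}(3)) that $(I_S)_n = \spa(\det(\sum_i c_i X_i): c_i \in \C)$. First I would record the basic compatibility: the group $(\C^*)^m$ acting on $\Mat_{n}^m$ by rescaling each matrix independently, i.e.\ $(\lambda_1,\dots,\lambda_m)\cdot(X_1,\dots,X_m) = (\lambda_1 X_1,\dots,\lambda_m X_m)$, preserves $S = \SING_{n,m}$ (since scaling $X_i$ by $\lambda_i$ just reparametrizes the pencil $\sum t_i X_i$ by $t_i \mapsto \lambda_i^{-1} t_i$, which does not affect singularity over $\C(t_1,\dots,t_m)$). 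Hence $I_S$ is stable under this torus action, and therefore decomposes into multi-graded pieces: $(I_S)_n = \bigoplus_{e : \sum_i e_i = n} (I_S)_e$, where $(I_S)_e = (I_S)_n \cap \C[\Mat_{n}^m]_e$. This gives part (1), modulo knowing that each $f_e$ genuinely lies in $(I_S)_e$, which I address next.

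The next step is to connect the $f_e$ to $(I_S)_n$. Since $\det(\sum_i t_i X_i)$ is multi-homogeneous of degree $n$ overall and homogeneous of degree $e$ in the $X$-variables precisely on the monomial $t^e$, the polynomial $f_e \in \C[\Mat_{n}^m]_e$ is, up to the scalar $t^e$, the degree-$e$ component of $\det(\sum_i t_i X_i)$. Specializing $t_i \mapsto c_i \in \C$ shows $\det(\sum_i c_i X_i) = \sum_e c^e f_e$; since each $\det(\sum_i c_i X_i)$ vanishes on $S$ and $S$ is torus-stable, extracting multi-graded components (e.g.\ by integrating against characters of $(\C^*)^m$, or simply by a Vandermonde-type argument varying the $c_i$) shows each $f_e$ vanishes on $S$, i.e.\ $f_e \in (I_S)_e$. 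Conversely, the span of $\{\det(\sum_i c_i X_i)\}$ is contained in $\spa(f_e : \sum e_i = n)$, so $(I_S)_n = \bigoplus_e \spa(f_e)$, proving that $(I_S)_e \supseteq \spa(f_e)$ and that these exhaust $(I_S)_n$.

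It remains to show $(I_S)_e = \spa(f_e)$ exactly, i.e.\ that $(I_S)_e$ is at most one-dimensional (it is nonzero as long as $f_e \neq 0$, and $f_e \neq 0$ for all $e$ with $\sum e_i = n$, e.g.\ because specializing all but the relevant variables recovers a nonzero coefficient of the generic determinant). This is immediate from part (1): since $(I_S)_n = \bigoplus_e \spa(f_e)$ is a direct sum decomposition into the multi-graded pieces, and $(I_S)_e$ is by definition the degree-$e$ piece of $(I_S)_n$, we get $(I_S)_e = \spa(f_e)$, which is $1$-dimensional whenever $f_e \neq 0$.

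\textbf{Main obstacle.} The genuinely substantive input — that $(I_S)_n$ is spanned by the $\det(\sum_i c_i X_i)$ and contains nothing in lower degree — is Proposition~\ref{P-ideal}, which the excerpt defers to the appendix; here that is cited as a black box. Within the present lemma the only mild subtlety is justifying that multi-graded components of polynomials vanishing on $S$ again vanish on $S$ (the torus-stability argument), and confirming $f_e \neq 0$ for every multi-index $e$ with $\sum_i e_i = n$; both are routine, so I expect no real difficulty beyond bookkeeping.
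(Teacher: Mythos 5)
Your argument is correct and follows essentially the same route as the paper's proof: invoke Proposition~\ref{P-ideal}(3) for $(I_S)_n = \spa(\det(\sum_i c_i X_i))$, use an interpolation/Vandermonde argument to identify this span with $\spa(f_e : \sum_i e_i = n)$, and conclude from the fact that each $f_e$ is multi-homogeneous of degree $e$. The additional torus-stability discussion is a harmless (and valid) alternative justification of the graded decomposition, but it is not a different method.
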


\begin{proof}
A standard interpolation argument tells us that $\spa(f_e: e \in \N^m \text{ such that } \sum_i e_i = n) = \spa(\det(\sum_i c_i X_i) : c_i \in \C)$. It is also easy to see that the multi-degree $\deg(f_e) = e$. The lemma now follows. 
\end{proof}

Let us describe more explicitly the polynomials $f_e$. We call $p = (p_1,\dots,p_n) \in [m]^n$ $e$-compatible if $|\{i \ |\  p_i = j\}| = e_j$ for $1 \leq j \leq m$.

We have
$$
f_e = \sum_{p \text{ is } e\text{-compatible}} \sum_{\sigma \in S_n} \sgn(\sigma) x^{(p_1)}_{1\sigma(1)} x^{(p_2)}_{2\sigma(2)}\dots x^{(p_n)}_{n\sigma(n)}.
$$


\section{An intermediate problem: action of $(\GL_n \times \GL_n)^{\times m}$} \label{sec:intermediate} 
In this section, we consider the action of $(\GL_n \times \GL_n)^{\times m}$ on $\Mat_{n}^m$ given by 
$$
((P_1,Q_1),\dots,(P_m,Q_m)) \cdot (X_1,\dots,X_m) = (P_1X_1Q_1^{t},\dots,P_mX_mQ_m^{t}).
$$

The main goal of this section is to compute the subgroup of $(\GL_n \times \GL_n)^{\times m}$ which fixes $\SING_{n,m}$. Our approach here will be slightly different because we do not resort to a Lie algebra computation. Consider the homomorphism $\GL_n \times \GL_n \times (\C^*)^m \rightarrow (\GL_n \times \GL_n)^{\times m}$ given by 
$$
(P,Q,(\lambda_1,\dots,\lambda_m),(\mu_1,\dots,\mu_m)) \mapsto ((\lambda_1P,\mu_1Q), (\lambda_2 P, \mu_2Q),\dots, (\lambda_m P, \mu_mQ)).
$$
Let the image of this homomorphism be denoted $H$.

\begin{proposition} \label{P-inter}
The subgroup of $(\GL_n \times \GL_n)^{\times m}$ that preserves $S = \SING_{n,m} \subseteq V = \Mat_{n}$ is $H$, i.e., 
$$
\{g \in (\GL_n \times \GL_n)^{\times m} \ |\ g S = S\} = H
$$
\end{proposition}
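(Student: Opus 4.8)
The plan is to prove the two inclusions separately. The inclusion $H \subseteq \{g : gS = S\}$ is the easy direction: it suffices to check that each generator of $H$ preserves $\SING_{n,m}$. An element of $H$ sends $(X_1,\dots,X_m)$ to $(\lambda_1 P X_1 Q^t \mu_1, \dots, \lambda_m P X_m Q^t \mu_m)$, so $\sum_i t_i (\text{new } X_i) = P\big(\sum_i (\lambda_i\mu_i t_i) X_i\big)Q^t$; since $P,Q \in \GL_n$ this is singular over $\C(t_1,\dots,t_m)$ iff $\sum_i (\lambda_i\mu_i t_i) X_i$ is, and rescaling the indeterminates $t_i \mapsto \lambda_i\mu_i t_i$ is a field automorphism of $\C(t_1,\dots,t_m)$, so singularity is preserved. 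Hence $H$ preserves $S$.

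For the reverse inclusion, suppose $g = ((P_1,Q_1),\dots,(P_m,Q_m)) \in (\GL_n\times\GL_n)^{\times m}$ preserves $S$. The key tool is Lemma~\ref{L-multi-I}: $(I_S)_n = \bigoplus_e (I_S)_e$ and each $(I_S)_e = \spa(f_e)$ is one-dimensional, where $f_e$ is the multihomogeneous component of $\det(\sum_i t_i X_i)$ of multidegree $e$. In particular, taking $e = n\delta_i$ (all weight on the $i$-th block), we get $f_{n\delta_i} = \det(X_i)$. Since $g$ preserves $S$, it preserves $(I_S)_n$ (by Lemma~\ref{Lgos-graded}, as the zero locus of $(I_S)_n$ is $S$), and since $g$ is block-diagonal in the $m$ matrix slots it preserves the multigrading, hence preserves each line $\spa(f_e)$. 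Applying this to $e = n\delta_i$: the transformation $X_i \mapsto P_i X_i Q_i^t$ must send $\det(X_i)$ to a scalar multiple of itself. By Frobenius (Theorem~\ref{theo:frob}), the linear maps on $\Mat_n$ scaling $\det$ are exactly $X \mapsto a P X Q$ or $X\mapsto a P X^t Q$ with $P,Q\in\SL_n$, $a\in\C^*$; but the map $X_i \mapsto P_i X_i Q_i^t$ is not of transpose type (it's visibly not, e.g. it isn't an involution composed appropriately — more carefully, a map of the form $X\mapsto AXB$ equals a map of transpose type only in degenerate low-dimensional cases, which fail for $n\ge 2$; one argues this directly). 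So each $P_i = \lambda_i P_i'$, $Q_i = \mu_i Q_i'$ with $P_i', Q_i' \in \SL_n$ — wait, more precisely: $X\mapsto P_iXQ_i^t$ scales $\det$ automatically for \emph{any} invertible $P_i,Q_i$, so this alone gives no constraint. The real constraint comes from the \emph{mixed} components $f_e$ with $e$ supported on two or more indices.

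So the heart of the argument is: use $f_e$ for $e = \delta_i + \delta_j$ type weights (or more generally $e$ with support of size $\ge 2$) to force the pairs $(P_i,Q_i)$ to agree up to scalars across different $i$. Concretely, fix $i\ne j$ and consider weights concentrated on $\{i,j\}$; the polynomials $\{f_e : e = a\delta_i + b\delta_j,\ a+b=n\}$ assemble into $\det(t_i X_i + t_j X_j)$, and $g$ acting via $(P_i,Q_i)$ on slot $i$ and $(P_j,Q_j)$ on slot $j$ must send the \emph{pencil} $t_i X_i + t_j X_j$ to another pencil whose determinant, as a two-variable polynomial, is proportional to the original (coefficient of each $t_i^a t_j^b$ scaled by the same constant, since the lines $\spa(f_e)$ are each preserved with possibly different scalars — but actually a single $g\in\GL(V)$ induces one linear map, and we must track that the scalars are consistent). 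This is essentially the statement that the only symmetries of $\det(t_iX_i+t_jX_j)$ as a "pencil determinant" are $X_i\mapsto \lambda_i P X_i Q^t$, $X_j \mapsto \lambda_j P X_j Q^t$ with a \emph{common} $P,Q$ — which is exactly the $m=2$ case of the Proposition, and one can reduce the general $m$ to it by applying the two-index argument for all pairs $(1,j)$, $j=2,\dots,m$, then checking the resulting common $(P,Q)$ and scalars $\lambda_i,\mu_i$ exhibit $g\in H$.

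\textbf{Main obstacle.} The hard part is the two-index core: showing that if $X\mapsto AXB$ and $X\mapsto CXD$ are such that $\det(t_1 AX_1B + t_2 CX_2 D)$ is (up to an overall constant independent of $t_1,t_2$) equal to $\det(t_1X_1 + t_2X_2)$ for all $X_1,X_2$, then $C = \lambda A$ and $D = \mu B$ for scalars $\lambda,\mu$. I would approach this by specialization: setting $X_2 = X_1 = X$ generic gives $\det((t_1 + t_2)$-type combinations$)$... more usefully, set $X_1 = I$ and vary $X_2$: then $\det(t_1 AB + t_2 C X_2 D) \propto \det(t_1 I + t_2 X_2)$ for all $X_2$, i.e. $\det(t_1 (AB)(CD)^{-1}\cdot CD^{-1}\cdots)$ — rearranging, $X\mapsto (AB)^{-1}CXD$ preserves $\{t_1 I + t_2 X : \det\text{-proportionality}\}$, forcing (by comparing spectra, since $\det(t_1I + t_2 Y) = \det(t_1 I + t_2 X)$ for all $X$ with $Y$ depending linearly on $X$ forces $Y = X$ up to conjugation-like rigidity) $C, D$ to be scalar multiples of $A, B$. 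Handling the proportionality constants carefully and making the "vary one matrix" specializations rigorous is the technical crux; everything else is bookkeeping with the multigrading and Frobenius.
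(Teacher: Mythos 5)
Your reduction is the same as the paper's: pass to $(I_S)_n$, use the multigrading and the one\-/dimensionality of each $(I_S)_e$ (Lemma~\ref{L-multi-I}), observe that the diagonal components $e=n\delta_i$ give no information, and locate all the content in the mixed components, reducing to a pairwise statement for indices $(1,j)$. That skeleton is correct. The gap is exactly where you flag it: the ``two-index core'' is not proved, and the attack you sketch does not close it. Specializing $X_1={\rm I}_n$ and invoking a characteristic-polynomial preserver result only yields $CXD = AB\,UXU^{-1}$ for some $U\in\GL_n$ (equivalently $C=sABU$, $D=s^{-1}U^{-1}$), with no constraint tying $U$ to $B$; this is strictly weaker than $C=\lambda A$, $D=\mu B$, and adding the symmetric specialization $X_2={\rm I}_n$ still does not pin $U$ down. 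Moreover, since each line $\spa(f_e)$ may be scaled by a \emph{different} constant $c_e$, the map $X\mapsto (AB)^{-1}CXD$ only preserves the characteristic polynomial up to independent rescaling of each coefficient, so the preserver theorem does not apply as stated.

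What actually closes the argument is to keep \emph{both} matrices generic in a single mixed component. The paper uses only $f_{(n-1,1,0,\dots,0)}$ (the coefficient of $t_1^{n-1}t_2$, which on the invertible locus is $\det(X_1)\tnr(X_1^{-1}X_2)$) and matches monomial coefficients directly: writing $L_j=P_1^{-1}P_j$, $R_j=Q_jQ_1^{-1}$, the eigenvector condition on this one polynomial forces each $L_j,R_j$ to be first diagonal and then scalar, whence $g\in H$. Equivalently, in your language the condition $\tnr\bigl(X_1^{-1}A^{-1}CX_2DB^{-1}\bigr)=c'\,\tnr\bigl(X_1^{-1}X_2\bigr)$ for all $X_1,X_2$ plus nondegeneracy of the trace pairing gives $A^{-1}CX_2DB^{-1}=c'X_2$ for all $X_2$, hence $C=\lambda A$, $D=\mu B$. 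Either way, the two-variable (bilinear-in-$(X_1,X_2)$) information is indispensable; the one-variable specializations you propose are provably insufficient. As written, the heart of the proof is missing.
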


We will derive the above proposition from the following lemma.

\begin{lemma} \label{L-inter}
Let $S = \SING_{n,m} \subseteq V = \Mat_{n}$. The subgroup $\{ g \in (\GL_n \times \GL_n)^{\times m} \ |\ g (I_S)_n \subseteq (I_S)_n \} = H$.
\end{lemma}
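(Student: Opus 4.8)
To prove Lemma~\ref{L-inter} I would use the refined $\N^m$-grading of Section~\ref{sec:multi} together with Lemma~\ref{L-multi-I}, which says $(I_S)_n = \bigoplus_{e}(I_S)_e$ over the tuples $e \in \N^m$ with $\sum_i e_i = n$, and that each $(I_S)_e = \spa(f_e)$ is one-dimensional. The inclusion $H \subseteq \{g \in (\GL_n\times\GL_n)^{\times m}\ |\ g(I_S)_n \subseteq (I_S)_n\}$ is immediate: if $g = ((\lambda_1 P,\mu_1 Q),\dots,(\lambda_m P,\mu_m Q)) \in H$, then for a generator $\det(\sum_i c_i X_i)$ of $(I_S)_n$ one computes
$$
\bigl(g\cdot\det(\textstyle\sum_i c_iX_i)\bigr)(X) \;=\; \det\bigl(\textstyle\sum_i c_i\lambda_i^{-1}\mu_i^{-1}\,P^{-1}X_i(Q^{-1})^t\bigr) \;=\; \det(P)^{-1}\det(Q)^{-1}\det\bigl(\textstyle\sum_i (c_i\lambda_i^{-1}\mu_i^{-1})X_i\bigr),
$$
which lies in $(I_S)_n$; since such polynomials span $(I_S)_n$, we get $g(I_S)_n\subseteq(I_S)_n$.

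For the reverse inclusion, the key point is that the $(\GL_n\times\GL_n)^{\times m}$-action on $\C[\Mat_{n}^m]$ respects the $\N^m$-grading, because $(P_i,Q_i)$ mixes only the coordinate functions $x^{(i)}_{jk}$ belonging to the $i$-th block. Hence if $g \in (\GL_n\times\GL_n)^{\times m}$ preserves the subspace $(I_S)_n$, it preserves $(I_S)_n\cap\C[\Mat_{n}^m]_e = (I_S)_e = \spa(f_e)$ for every $e$; as $g$ is invertible and $f_e\neq 0$, this forces $g\cdot f_e = c_e f_e$ for scalars $c_e\in\C^\ast$.

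Now I would feed in the ``mixed'' weights $e^{(j)} := (n-1)\delta_1 + \delta_j$ for $j=2,\dots,m$. Here $f_{e^{(j)}}$ is the polarized determinant, i.e.\ the coefficient of $t_1^{n-1}t_j$ in $\det(t_1X_1+t_jX_j)$, which equals $\tnr(\operatorname{adj}(X_1)X_j)$ (Jacobi's formula for the directional derivative of $\det$, $\operatorname{adj}$ being the adjugate). Writing $g = ((P_i,Q_i))_i$ and using $(g\cdot f)(X)=f(g^{-1}X)$ together with $\operatorname{adj}(MN)=\operatorname{adj}(N)\operatorname{adj}(M)$ and $\operatorname{adj}(P^{-1})=\det(P)^{-1}P$, a short computation gives
$$
\bigl(g\cdot f_{e^{(j)}}\bigr)(X) \;=\; \det(P_1Q_1)^{-1}\,\tnr\!\bigl(\operatorname{adj}(X_1)\,(P_1P_j^{-1})\,X_j\,(Q_1Q_j^{-1})^t\bigr).
$$
Equating this with $c_{e^{(j)}}\tnr(\operatorname{adj}(X_1)X_j)$, restricting to invertible $X_1$ so that $\operatorname{adj}(X_1)=\det(X_1)X_1^{-1}$, cancelling $\det(X_1)$, and letting $Y:=X_1^{-1}$ range over all of $\Mat_{n}$ by density, we obtain $\tnr\!\bigl(Y\bigl[(P_1P_j^{-1})X_j(Q_1Q_j^{-1})^t - \nu X_j\bigr]\bigr)=0$ for all $Y,X_j$, where $\nu := c_{e^{(j)}}\det(P_1Q_1)$; non-degeneracy of the trace pairing then forces the operator $X\mapsto (P_1P_j^{-1})\,X\,(Q_1Q_j^{-1})^t$ on $\Mat_{n}$ to equal $\nu\cdot\mathrm{Id}$, and $\nu\neq 0$ since that operator is invertible.

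To finish, note that if an operator $X\mapsto AXB^t$ with $A,B$ invertible is a scalar $\nu$, then $A$ and $B$ are themselves scalar matrices: comparing entries on $X=E_{pq}$ gives $A_{ap}B_{bq}=\nu\,\delta_{ap}\delta_{bq}$, so all diagonal entries of $A$ (resp.\ of $B$) coincide, and since $\nu\neq 0$ every off-diagonal entry vanishes. Applying this with $A=P_1P_j^{-1}$, $B=Q_1Q_j^{-1}$ yields $P_j=\alpha_j^{-1}P_1$ and $Q_j=\beta_j^{-1}Q_1$ for each $j=2,\dots,m$; setting $P:=P_1$, $Q:=Q_1$, $\lambda_1=\mu_1=1$ and $\lambda_j=\alpha_j^{-1}$, $\mu_j=\beta_j^{-1}$ exhibits $g=((\lambda_j P,\mu_j Q))_j\in H$. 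This proves Lemma~\ref{L-inter}, and Proposition~\ref{P-inter} follows at once via Corollary~\ref{C-loscom} (which identifies preserving $S$ with preserving $(I_S)_n$). The main obstacle is the third step: squeezing out of the single scalar identity $g\cdot f_{e^{(j)}}=c_{e^{(j)}}f_{e^{(j)}}$ the rigid conclusion that $P_1P_j^{-1}$ and $Q_1Q_j^{-1}$ are scalar matrices; once the multigrading reduces us to the one-dimensional lines $\spa(f_e)$, everything else is routine bookkeeping.
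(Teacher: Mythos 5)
Your proof is correct. The structural skeleton is the same as the paper's: both use the $\N^m$-grading and Lemma~\ref{L-multi-I} to reduce to the statement that $g$ must act on each one-dimensional line $\spa(f_e)$ by a scalar, and both extract everything from the mixed-degree polarized determinants $f_{(n-1)\delta_1+\delta_j}$. Where you diverge is in the computational core. The paper writes $g^{-1}\cdot f_{(n-1,1,0,\dots,0)}$ out as an explicit sum over permutations and positions, and compares coefficients of carefully chosen monomials, first to show the matrices $L_j=P_1^{-1}P_j$ and $R_j=Q_jQ_1^{-1}$ are diagonal, then that they are scalar. You instead use the closed form $f_{(n-1)\delta_1+\delta_j}=\tnr(\operatorname{adj}(X_1)X_j)$, push the group action through the adjugate, and reduce the eigenvector condition to the operator identity $X\mapsto (P_1P_j^{-1})X(Q_1Q_j^{-1})^t=\nu\,\mathrm{Id}$ via non-degeneracy of the trace pairing; the final ``$AXB^t=\nu X$ for all $X$ forces $A,B$ scalar'' step is the one place you still compare entries, and it is done correctly. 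Your route is coordinate-free and shorter; the paper's monomial bookkeeping is more elementary and is the style reused verbatim in the Lie-algebra computations of Sections~\ref{sec:frob} and~\ref{sec:symsingnm}, which is presumably why the authors chose it. One cosmetic remark: the paper deduces Proposition~\ref{P-inter} from the lemma via Lemma~\ref{Lgos-graded} pulled back along $\rho$ rather than via Corollary~\ref{C-loscom}, but this does not affect your proof of the lemma itself.
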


We will prove Proposition~\ref{P-inter} assuming Lemma~\ref{L-inter} above, and then we will prove Lemma~\ref{L-inter}.

\begin{proof} [Proof of Proposition~\ref{P-inter}]
Let $\rho:(\GL_n \times \GL_n)^{\times m} \rightarrow \GL(\Mat_{n}^m)$ be the group homomorphism that defines the representation above. Then, observe that
$\{g \in (\GL_n \times \GL_n)^{\times m} \ |\ g S = S\}  = \rho^{-1} (\mathcal{G}_S)$.

Similarly, $\{ g \in (\GL_n \times \GL_n)^{\times m} \ |\ g (I_S)_n \subseteq (I_S)_n \} = \rho^{-1} (\mathcal{G}_S)$ follows from Lemma~\ref{Lgos-graded} because the zero locus of $(I_S)_n$ is precisely $S$.

Thus, we have an equality
$$
\{g \in (\GL_n \times \GL_n)^{\times m} \ |\ g \SING_{n,m} = \SING_{n,m}\} =\{ g \in (\GL_n \times \GL_n)^{\times m} \ |\ g (I_S)_n \subseteq (I_S)_n \} = \rho^{-1}(\mathcal{G}_S).
$$

Hence, Lemma~\ref{L-inter} implies Proposition~\ref{P-inter}
\end{proof}

Now, all that is left to prove is Lemma~\ref{L-inter}. 

\begin{proof} [Proof of Lemma~\ref{L-inter}]
Let $g \in (\GL_n \times \GL_n)^{\times m}$ be such that $g (I_S)_n \subseteq (I_S)_n$. Write $g = ((P_1,Q_1^{t}),\dots,(P_m,Q_m^{t}))$ (We put transposes on the $Q$'s for convenience). For any $f \in (I_S)_n$, we have
$$
g^{-1} \cdot f (X) = f( gX) = f( P_1X_1Q_1,\dots, P_m X_m Q_m) = c \cdot f(X_1, L_2X_2R_2,\dots, L_mX_mR_m), 
$$

where $L_i = P_1^{-1}P_i$ and $R_i = Q_i Q_1^{-1}$, and $c = \det(P_1^{-1}Q_1^{-1})$. The last equality follows because $(I_S)_n$ is spanned by $\det(\sum_i c_i X_i)$, and $ \det(\sum_i c_i P_1^{-1} X_i Q_1^{-1}) = \det(P_1^{-1}Q_1^{-1}) \det(\sum_i c_i X_i) $

Recall the multi-degree defined in the previous section. Observe that for any $g \in (\GL_n \times \GL_n)^{\times m}$ and any (multi)-homogenous polynomial $f$ of degree $e$, the polynomial $g^{-1} \cdot f$ is also (multi)-homogenous of degree $e$.

From now on, let
$$
f = f_{(n-1,1,0,\dots,0)} = \sum_{\sigma \in S_n} \sum_{1 \leq r \leq n} x^{(1)}_{1\sigma(1)} \dots x^{(1)}_{(r-1),\sigma(r-1)} x^{(2)}_{r \sigma(r)} x^{(1)}_{(r+1),\sigma(r+1)} \dots x^{(1)}_{n\sigma(n)}.
$$

Then we have 
$$g^{-1} \cdot f (X) = f(gX) = c f(X) \text{ for some } c \in \C^*.
$$
 This is because $g^{-1} \cdot f$ is a non-zero polynomial that is homogeneous of degree $(n-1,1,0,\dots,0)$ and must be in $(I_S)_n$, and so is in $(I_S)_{(n-1,1,0,\dots,0)}$ which is spanned by $f = f_{(n-1,1,0,\dots,0)}$ by Lemma~\ref{L-multi-I}.

\begin{claim} 
The matrix $L_2$ is diagonal.
\end{claim}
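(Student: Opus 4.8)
The plan is to feed carefully chosen values of $X_1$ into the functional equation $f(X_1, L_2X_2R_2) = \lambda f(X_1, X_2)$, valid for all $X_1, X_2 \in \Mat_n$ with a fixed $\lambda \in \C^*$; this equation follows from the displays in the paragraph above together with Lemma~\ref{L-multi-I}, using that $f = f_{(n-1,1,0,\dots,0)}$ involves only the coordinates of the first two matrices.

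First I would rewrite $f$ in closed form. From the explicit expression for $f_{(n-1,1,0,\dots,0)}$ displayed above, $f(X_1,X_2) = \sum_{r=1}^{n}\det(X_1^{[r]})$, where $X_1^{[r]}$ is the matrix obtained from $X_1$ by replacing its $r$-th row with the $r$-th row of $X_2$ (the classical polarization of the determinant). Next I would specialize $X_1$ to a diagonal matrix $D$ with diagonal entries $d_1,\dots,d_n$, all nonzero. Expanding $\det(D^{[r]})$ along the rows $k\neq r$ forces the only surviving permutation to be the identity, so $\det(D^{[r]}) = \big(\prod_{k\neq r} d_k\big)(X_2)_{rr}$, whence $f(D,X_2) = \sum_{r=1}^{n}\big(\prod_{k\neq r} d_k\big)(X_2)_{rr}$.

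Substituting $X_1 = D$ into the functional equation and rearranging gives
\[
\sum_{r=1}^{n}\Big(\prod_{k\neq r} d_k\Big)\Big[(L_2 X_2 R_2)_{rr} - \lambda\,(X_2)_{rr}\Big] = 0 .
\]
This holds for all nonzero $d_1,\dots,d_n$, hence identically in the $d_i$; since the monomials $\prod_{k\neq r} d_k$ ($r=1,\dots,n$) are pairwise distinct and thus linearly independent, each bracket vanishes: $(L_2 X_2 R_2)_{rr} = \lambda\,(X_2)_{rr}$ for every $r$ and every $X_2$. Writing the left side as $\sum_{s,t}(L_2)_{rs}(X_2)_{st}(R_2)_{tr}$ and comparing coefficients of the free variable $(X_2)_{st}$ yields $(L_2)_{rs}(R_2)_{tr} = \lambda\,\delta_{sr}\delta_{tr}$ for all $r,s,t$ (Kronecker delta). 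Fixing $s\neq r$, this says $(L_2)_{rs}(R_2)_{tr}=0$ for all $t$; but $R_2$ is invertible, so its $r$-th column is nonzero and some $(R_2)_{tr}\neq 0$, forcing $(L_2)_{rs}=0$. Hence $L_2$ is diagonal, as claimed. (Swapping the roles of the two indices in the last step shows $R_2$ is diagonal as well, and $(L_2)_{rr}(R_2)_{rr}=\lambda$ for all $r$ — presumably the next step.)

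I do not expect a genuine obstacle here: the only computation requiring care is $\det(D^{[r]}) = \big(\prod_{k\neq r} d_k\big)(X_2)_{rr}$ together with the bookkeeping that upgrades the resulting polynomial identity in the $d_i$ to a coefficientwise statement. The crucial structural input — that $(I_S)_{(n-1,1,0,\dots,0)}$ is one-dimensional and spanned by the mixed determinant $f$ — has already been furnished by Lemma~\ref{L-multi-I}, and it is exactly what pins down the scalar $\lambda$ and makes the whole argument run.
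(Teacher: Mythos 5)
Your proof is correct, and the functional equation you start from, $f(X_1,L_2X_2R_2)=\lambda f(X_1,X_2)$ for all $X_1,X_2$, is exactly what the paper's preceding displays plus Lemma~\ref{L-multi-I} furnish. The execution, however, differs from the paper's in a way worth noting. The paper works directly with the Leibniz expansion of $g^{-1}\cdot f$ and extracts the coefficient of a single well-chosen monomial (one whose $x^{(2)}$-variable sits in a row $j$ different from the row $i$ deleted from the $x^{(1)}$-block, so that the monomial cannot occur in $f$ at all); the hypothesis $(L_2)_{ij}\neq 0$ then produces a nonzero coefficient where $c\cdot f$ has none. You instead specialize $X_1$ to a diagonal matrix, which via the polarization identity $f(X_1,X_2)=\sum_r\det(X_1^{[r]})$ collapses the permutation sum to the identity and reduces the eigen-equation to the transparent bilinear statement $(L_2X_2R_2)_{rr}=\lambda(X_2)_{rr}$; linear independence of the monomials $\prod_{k\neq r}d_k$ and invertibility of $R_2$ then finish it. Your route is cleaner (no permutation bookkeeping) and yields the full relation $(L_2)_{rs}(R_2)_{tr}=\lambda\,\delta_{sr}\delta_{tr}$ in one stroke, which simultaneously shows $R_2$ is diagonal with $(L_2)_{rr}(R_2)_{rr}=\lambda$. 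One forward-looking caveat: for the paper's \emph{subsequent} claim that $L_2$ and $R_2$ are scalar, the diagonal specialization alone only gives $(L_2)_{rr}(R_2)_{rr}=\lambda$, whereas the paper derives $(L_2)_{rr}(R_2)_{\sigma(r)\sigma(r)}=c$ for arbitrary $\sigma$; to recover that by your method you would need to also specialize $X_1$ to matrices supported on non-identity permutations. That does not affect the claim under review, for which your argument is complete.
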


\begin{proof} [Proof of Claim]
Suppose $(L_2)_{ij} \neq 0$ for some $i \neq j$ (Here $(L_2)_{ij}$ denotes the $(i,j)^{th}$ entry of $L_2$). Let $p$ and $q$ be such that $(R_2)_{pq} \neq 0$. This means that $(L_2)_{ij} (R_2)_{pq} \neq 0$. We have 
\begin{align*}
g^{-1}\cdot f &= f(gX) \\
& = f(X_1,L_2X_2R_2,\dots,L_mX_mR_m) \\
& = \sum_{\sigma \in S_n} \sgn(\sigma) \sum_r x^{(1)}_{1\sigma(1)} \dots x^{(1)}_{(r-1),\sigma(r-1)} \left(\sum_{a,b} (L_2)_{ra} \cdot x^{(2)}_{ab} \cdot (R_2)_{b\sigma(r)} \right) x^{(1)}_{(r+1),\sigma(r+1)} \dots x^{(1)}_{n\sigma(n)}. \\
& = \sum_{\sigma,r,a,b} \sgn(\sigma) \cdot (L_2)_{ra} \cdot (R_2)_{b\sigma(r)} \cdot x^{(1)}_{1\sigma(1)} \dots x^{(1)}_{(r-1),\sigma(r-1)}  x^{(2)}_{ab}  x^{(1)}_{(r+1),\sigma(r+1)} \dots x^{(1)}_{n\sigma(n)}.  
\end{align*}

Let $\pi \in S_n$ be such that $\pi(i) = q$. Let us compute the coefficient of the monomial $m = x^{(1)}_{1\pi(1)} \dots x^{(1)}_{(i-1),\pi(i-1)} x^{(2)}_{jp} x^{(1)}_{(i+1),\pi(i+1)} \dots x^{(1)}_{n\pi(n)}$ in $g^{-1} \cdot f$. In the expansion of $g^{-1} \cdot f$ above, let us see for which choices of $\sigma,r,a,b$ do we get the monomial $m$. Indeed we must have $ r = i$, and so we must have $\sigma(k) = \pi(k)$ for all $k \neq i$, which forces $\sigma =\pi$. Further, we must have $a = j$ and $b = q$. Hence, we conclude that the monomial $m$ appears in the above expansion exactly once and with a coefficient of $\sgn(\pi) (L_2)_{ij} (R_2)_{pq}$ (which is nonzero as noted above). However, the coefficient of the monomial $m$ in $c \dot f_{(n-1,1,0,\dots,0)}$ is zero. This is a contradiction. Therefore $(L_2)_{ij} = 0$ for all $i \neq j$. This means that $(L_2)$ is a diagonal matrix. 
\end{proof}

By a similar argument, all the $(L_i)$'s and $(R_i)$'s are all diagonal matrices. Now, let us write out $g^{-1} \cdot f$ again. Since all the $L_i$'s and $R_i$'s are diagonal, we have
$$
g^{-1}\cdot f = \sum_{\sigma \in S_n}  \sum_r x^{(1)}_{1\sigma(1)} \dots x^{(1)}_{(r-1),\sigma(r-1)} \left( (L_2)_{rr} x^{(2)}_{r \sigma(r)} (R_2)_{\sigma(r),\sigma(r)} \right) x^{(1)}_{(r+1),\sigma(r+1)} \dots x^{(1)}_{n\sigma(n)}.
$$ 

\begin{claim}
The matrices $L_i$ and $R_i$ are scalar matrices.
\end{claim}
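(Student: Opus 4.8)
The plan is to upgrade what we already know: at this point each $L_i$ and each $R_i$ is diagonal, and in addition $L_1 = P_1^{-1}P_1 = {\rm I}_n$ and $R_1 = Q_1Q_1^{-1} = {\rm I}_n$ are already scalar, so the claim really only concerns the indices $i \geq 2$. I would run the argument for $i = 2$ using the polynomial $f = f_{(n-1,1,0,\dots,0)}$, and then observe that a general index $i \geq 2$ is handled verbatim after replacing $f$ by $f_e$, where $e \in \N^m$ has an $n-1$ in its first coordinate and a $1$ in its $i$-th coordinate. The only thing to check for this substitution is that $g^{-1}\cdot f_e$ is again a nonzero element of $(I_S)_n$ (true since $g(I_S)_n \subseteq (I_S)_n$) which is multi-homogeneous of multidegree $e$; by Lemma~\ref{L-multi-I} the space $(I_S)_e$ is one-dimensional and spanned by $f_e$, so $g^{-1}\cdot f_e = c_i\, f_e$ for some $c_i \in \C^*$.

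The computation is then short. Because $L_2,R_2$ are diagonal and $L_1 = R_1 = {\rm I}_n$, in the expansion of $g^{-1}\cdot f$ the $X_2$-contribution of the term indexed by $\sigma \in S_n$ and $r \in [n]$ collapses from $\sum_{a,b}(L_2)_{ra}\,x^{(2)}_{ab}\,(R_2)_{b\sigma(r)}$ to $(L_2)_{rr}\,x^{(2)}_{r\sigma(r)}\,(R_2)_{\sigma(r)\sigma(r)}$. The key point is that the monomial $m_{\sigma,r} := x^{(1)}_{1\sigma(1)}\cdots x^{(1)}_{(r-1)\sigma(r-1)}\,x^{(2)}_{r\sigma(r)}\,x^{(1)}_{(r+1)\sigma(r+1)}\cdots x^{(1)}_{n\sigma(n)}$ occurs in this expansion exactly once: its $X_2$-factor determines both $r$ and $\sigma(r)$, while its $X_1$-factors determine $\sigma(k)$ for every $k \neq r$, so distinct pairs $(\sigma,r)$ index distinct monomials. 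Hence the coefficient of $m_{\sigma,r}$ in $g^{-1}\cdot f$ equals $\sgn(\sigma)\,(L_2)_{rr}(R_2)_{\sigma(r)\sigma(r)}$, whereas in $c\,f = c\,f_{(n-1,1,0,\dots,0)}$ it equals $c\,\sgn(\sigma)$. Comparing these and using $g^{-1}\cdot f = c\,f$ yields $(L_2)_{rr}(R_2)_{\sigma(r)\sigma(r)} = c$ for all $r \in [n]$ and all $\sigma \in S_n$.

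Finally I would read off the conclusion. Fixing $r$ and letting $\sigma$ range over $S_n$, the value $\sigma(r)$ attains every element of $[n]$, so every diagonal entry of $R_2$ equals $c/(L_2)_{rr}$; since $R_2$ is invertible this common value is some $\beta_2 \in \C^*$, i.e.\ $R_2 = \beta_2{\rm I}_n$. Symmetrically, varying $r$ forces $L_2 = \alpha_2{\rm I}_n$ for some $\alpha_2 \in \C^*$ with $\alpha_2\beta_2 = c$. Repeating this for $f_e$ with each $i \geq 2$ finishes the claim. I do not anticipate a genuine obstacle; the two points needing a little care are remembering that $L_1,R_1$ are automatically the identity (so that the claim is only about $i \geq 2$), and verifying that each test monomial $m_{\sigma,r}$ is hit exactly once, which is what makes the coefficient comparison legitimate.
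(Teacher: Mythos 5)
Your proposal is correct and follows essentially the same route as the paper: compare coefficients of the monomials $x^{(1)}_{1\sigma(1)}\cdots x^{(2)}_{r\sigma(r)}\cdots x^{(1)}_{n\sigma(n)}$ in $g^{-1}\cdot f_{(n-1,1,0,\dots,0)}$ and $c\cdot f_{(n-1,1,0,\dots,0)}$ to get $(L_2)_{rr}(R_2)_{\sigma(r)\sigma(r)}=c$ for all $r,\sigma$, and conclude. Your added remarks (that $L_1=R_1={\rm I}_n$, that each test monomial is hit exactly once, and that the general index $i$ uses $f_e$ with $e=(n-1,0,\dots,1,\dots,0)$) just make explicit what the paper leaves to "similarly."
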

\begin{proof} [Proof of Claim]
The coefficient of $n = x^{(1)}_{1\sigma(1)} \dots x^{(1)}_{(r-1),\sigma(r-1)} x^{(2)}_{r \sigma(r)} x^{(1)}_{(r+1),\sigma(r+1)} \dots x^{(1)}_{n\sigma(n)}$ in $g^{-1} \cdot f$ is $\sgn(\sigma)(L_2)_{rr} (R_2)_{\sigma(r),\sigma(r)}$. The coefficient of $n$ in $c \cdot f$ is $\sgn(\sigma) \cdot c$. Thus, if we are to have $g^{-1} \cdot f = c \cdot f$, then we must have $(L_2)_{rr} (R_2)_{\sigma(r),\sigma(r)} = c$. This must hold for all choices of $r$ and $\sigma$, so we have $(L_2)_{ii} (R_2)_{jj} = c$ for all $i,j$. This means that $(L_2)_{ii} = (L_2)_{kk}$ for all $i,k$, i.e., $L_2$ is a scalar matrix, and so is $R_2$. Similarly all the $L_i$ and $R_i$ are scalar matrices.
\end{proof}

Since the $L_i$'s and $R_i$'s are scalar matrices, we can write $L_i = \lambda_i {\rm I}_n$ and $R_i = \mu_i {\rm I}_n$ for scalars $\lambda_i,\mu_i \in \C^*$. Thus we have $P_i = \lambda_i P_1$ and $Q_i = \mu_i  Q_1$ for $i \geq 2$. Thus, we have
$$
g = ((P_1,Q_1),(\lambda_2P_1,\mu_2Q_2),\dots,(\lambda_mP_1,\mu_mQ_1)) \in H.
$$

To summarize, we have shown that $\{ g \in (\GL_n \times \GL_n)^{\times m} \ |\ g (I_S)_n \subseteq (I_S)_n \} \subseteq H$. The other inclusion is clear.
\end{proof}

\section{Symmetries of singular tuples of matrices} \label{sec:symsingnm}
In this section, we will compute the group of symmetries for $S = \SING_{n,m}$, i.e., Theorem~\ref{theo:gos}. While the high-level strategy resembles that of Section~\ref{sec:frob} (which deals with the $m =1 $ case), we need to work a little harder in the computations. Below, we will recall the setup again for the convenience of the reader. Then, we will describe the action of $\gl(\Mat_{n}^m)$ on polynomials explicitly. We then present the explicit computation of the Lie algebra of symmetries. The main features that we utilize in the computation are the multi-grading (as defined in Section~\ref{sec:multi}), the description of the ideal $I_S$ in Lemma~\ref{L-multi-I} and the intermediate case resolved in the previous section. This is followed by the computation of the entire group of symmetries which parallels the computation in Section~\ref{sec:frob}. Finally we indicate how the same arguments also compute the group of symmetries for $\NSING_{n,m}$.

First, let us recall the setup again. We have $S = \SING_{n,m} \subseteq V = \Mat_{n}^m$, and we want to compute $\mathcal{G}_S,\mathcal{G}_S^\circ$ and $\g_S$. We will first focus on $\g_S$. From Corollary~\ref{C-loscom}, we have
$$
\g_S = \{M \in \gl(\Mat_{n}^m) \ |\ M \cdot (I_S)_n \subseteq (I_S)_n \}
$$
Moreover, we have by Proposition~\ref{P-ideal} that
$$
(I_S)_n = \spa(\det(\sum_i c_i X_i) : c_i \in \C).
$$
From Lemma~\ref{L-multi-I}, we know that $(I_S)_n$ is multi-graded, the explicit decomposition being $(I_S)_n = \bigoplus\limits_{e \in \N^m, \sum_i e_i = n} (I_S)_e$, where $(I_S)_e$ is $1$-dimensional and spanned by $f_e$ (as defined in Lemma~\ref{L-multi-I}). Moreover, we have an explicit formula for $f_e$, i.e.,
\begin{equation} \label{fe}
f_e = \sum_{p \text{ is } e\text{-compatible}} \sum_{\sigma \in S_n} \sgn(\sigma) x^{(p_1)}_{1\sigma(1)}x^{(p_2)}_{2\sigma(2)} \dots x^{(p_n)}_{n\sigma(n)},
\end{equation}
where we call $p = (p_1,\dots,p_n) \in [m]^n$ $e$-compatible if $|\{i \ |\  p_i = j\}| = e_j$ for $1 \leq j \leq m$. Of particular interest are the cases of $e = (n,0,\dots,0)$ and $e = (n-1,1,0,\dots,0)$. We have:
\begin{equation}
f_{(n,0,\dots,0)} = \sum_{\sigma \in S_n} \sgn(\sigma) x^{(1)}_{1\sigma(1)}\dots x^{(1)}_{n\sigma(n)} = \det(X_1) 
\end{equation}
Similarly, we have:
\begin{equation}
f_{(n-1,1,0,\dots,0)} = \sum_{j \in [n]} \sum_{\sigma \in S_n} \sgn(\sigma) x^{(1)}_{1\sigma(1)} \dots x^{(1)}_{j-1, \sigma(j-1)} x^{(2)}_{j\sigma(j)}x^{(1)}_{j+1,\sigma(j+1)} \cdots x^{(1)}_{n \sigma(n)}.
\end{equation}

In the next subsection, we will write out the (twisted) action of $\gl(\Mat_{n}^m)$ explicitly, so that we can make the computations we need.

\subsection{Action of $\gl(\Mat_{n}^m)$ on $\C[\Mat_{n}^m]$}
Let $V = \Mat_{n}^m$ be the space of $m$-tuples of $n \times n$ matrices. Let $E^{(i)}_{jk}$ denote the tuple of matrices with a $1$ in the $(j,k)^{th}$ entry of the $i^{th}$ matrix, and $0$'s everywhere else. Let $x^{(i)}_{jk}$ denote the coordinate function corresponding to the $(j,k)^{th}$ entry of the $i^{th}$ matrix. Then $\C[V] = \C[x^{(i)}_{jk} : 1\leq i \leq m, 1 \leq j,k \leq n]$.

Since $V$ is $mn^2$ dimensional, we can identify $\gl(V) = \gl(\Mat_{n}^m)$ with $\Mat_{mn^2}$, but we will do so in a very specific way. We will think of $\gl(\Mat_{n}^m) = \gl(\C^m \otimes \Mat_{n}) = \Mat_{m} \otimes \Mat_{n^2}$. We have already been explicit in the earlier sections about how we view $\Mat_{n^2}$ as $\gl(\Mat_{n})$. 

We index the rows and columns of $mn^2 \times mn^2$ matrices by $\{(i,j,k): 1 \leq i \leq m, 1 \leq j,k \leq n\}$ in lexicographic order. Thus, we can write $M \in \gl(\Mat_{n}^m) = \Mat_{mn^2}$ as

$$
M = \begin{pmatrix} M_{11} & \dots & M_{1m} \\
\vdots & \ddots & \vdots \\
M_{m1} & \dots & M_{mm}
\end{pmatrix},
$$
where each $M_{pq}$ is an $n^2 \times n^2$ matrix. Equivalently, we can write $M = \sum_{1 \leq p,q \leq m} E_{pq} \otimes M_{pq}$. With this indexing, we have the intuitive formula
$$
E_{pq} \otimes E_{ab,cd} = E_{pab, qcd}.
$$

The twisted action of the matrix $E_{pab,qcd} \in \gl(\Mat_{n}^m)$ is via the derivation $x^{(p)}_{ab} \partial^{(q)}_{cd}$ where $\partial^{(q)}_{cd}$ denotes the partial derivative with respect to the coordinate $x^{(q)}_{cd}$.

Recall the $\N^m$-grading on $\C[\Mat_{n}^m]$. Observe that the twisted action of $E_{pab,qcd}$ maps  $\C[V]_e$ to $\C[V]_{e + \delta_p - \delta_q}$, where $\delta_i = (0,\dots,0,\underbrace{1}_i,0\dots,0) \in \N^m$. Thus, we give an $\N^m$-grading on $\gl(\Mat_{n}) = \Mat_{mn^2}$.

\begin{definition} [Grading on $\gl(\Mat_{n}^m)$]
We give a grading on the Lie algebra $\gl(\Mat_{n}^m)$ by setting $\deg(E_{pab,qcd}) = \delta_p - \delta_q$.
We have
$$
\gl(\Mat_{n}^m) = \gl(\Mat_{n}^m)_0 \bigoplus_{p \neq q} \gl(\Mat_{n}^m)_{\delta_p - \delta_q}.
$$
For any $M  \in \Mat_{mn^2} = \gl(\Mat_{n}^m)$, we write $M = \sum_{1 \leq p,q \leq m} E_{pq} \otimes M_{pq}$, with $M_{pq} \in \Mat_{n^2}$. Then, the degree $0$ part is $\sum_{p = 1}^m E_{pp} \otimes M_{pp}$, and for $p \neq q$, $E_{pq} \otimes M_{pq}$ is the degree $\delta_p - \delta_q$ part. Thus the decomposition of $M$ into homogenous components is 
$$
M = (\sum_{p=1}^m E_{pp} \otimes M_{pp} ) \bigoplus_{p \neq q} E_{pq} \otimes M_{pq}.
$$
\end{definition}

The following lemma is immediate from the preceding discussion:

\begin{lemma} \label{L-grade-nm}
Let $M \in \Mat_{mn^2}$ be homogenous of degree $e$, and $f \in \C[\Mat_{n}^m]$ be homogenous of degree $e'$. Then $M \cd f$ is homogenous of degree $e + e'$.
\end{lemma}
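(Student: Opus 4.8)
The plan is to reduce the statement, by bilinearity of the twisted action $\cd$, to the case of a single basis matrix acting on a single monomial, and then simply to read off multidegrees from the explicit formula for the action.

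First I would invoke the two gradings introduced above. A homogeneous $M \in \gl(\Mat_n^m)$ of degree $e$ is, by definition of the grading on $\gl(\Mat_n^m)$, a linear combination of the basis matrices $E_{pab,qcd}$ with $\delta_p - \delta_q = e$; and a homogeneous $f \in \C[\Mat_n^m]$ of degree $e'$ is a linear combination of monomials in the $x^{(i)}_{jk}$ of multidegree $e'$. Since $(M,f)\mapsto M\cd f$ is bilinear, and since a sum of polynomials all homogeneous of multidegree $e+e'$ is again homogeneous of that multidegree, it suffices to treat the case $M = E_{pab,qcd}$ with $\delta_p-\delta_q = e$ and $f$ a monomial of multidegree $e'$.

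In that case the explicit description of the twisted action gives $M\cd f = x^{(p)}_{ab}\,\partial^{(q)}_{cd} f$. If $x^{(q)}_{cd}$ does not divide $f$ then $\partial^{(q)}_{cd}f = 0$ and there is nothing to check. Otherwise $\partial^{(q)}_{cd} f$ is again a monomial: differentiation in $x^{(q)}_{cd}$ drops the $q$-th coordinate of the multidegree by one, so $\partial^{(q)}_{cd}f$ has multidegree $e' - \delta_q$, and then multiplying by $x^{(p)}_{ab}$ raises the $p$-th coordinate by one, so that $M\cd f$ has multidegree $e'-\delta_q+\delta_p = e+e'$, as claimed.

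I do not expect any genuine obstacle here: the argument is a direct bookkeeping check against the $\N^m$-grading on $\C[\Mat_n^m]$ and the matching grading on $\gl(\Mat_n^m)$. The only points to keep in mind are that it is the twisted action $\cd$ (and not the untwisted one) that is relevant, and that the zero polynomial should be regarded as homogeneous of every degree so that the degenerate case is harmless.
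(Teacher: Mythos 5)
Your proof is correct and follows exactly the route the paper intends: the paper declares the lemma ``immediate from the preceding discussion,'' that discussion being precisely the observation that $E_{pab,qcd}$ maps $\C[V]_{e'}$ to $\C[V]_{e'+\delta_p-\delta_q}$ together with the definition of the grading on $\gl(\Mat_{n}^m)$, and your bilinearity reduction plus monomial bookkeeping is just the explicit version of that. No issues.
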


\subsection{Computing the Lie algebra of symmetries}
For this subsection, let $M \in \gl(\Mat_{n}^m) = \Mat_{mn^2}$ be such that $M \cd (I_S)_n \subseteq (I_S)_n$. Further, write $M = \sum_{p,q} E_{pq} \otimes M_{pq}$, where $M_{pq} \in \Mat_{n^2}$, i.e.,
$$
M = \begin{pmatrix} M_{11} & \dots & M_{1m} \\
\vdots & \ddots & \vdots\\
M_{m1} & \dots & M_{mm}
\end{pmatrix}.
$$

\begin{lemma}
For $p \neq q$, we have $M_{pq} = \lambda_{pq} {\rm I}_{n^2}$ for some scalar $\lambda_{pq} \in \C$.
\end{lemma}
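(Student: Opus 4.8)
The plan is to peel off the single off-diagonal block $M_{pq}$ using the $\N^m$-grading. Fix $p\neq q$; note that the degree-$(\delta_p-\delta_q)$ homogeneous component of $M$ is exactly $E_{pq}\otimes M_{pq}$, and by Lemma~\ref{L-grade-nm} it carries $\C[V]_e$ into $\C[V]_{e+\delta_p-\delta_q}$. I would test it on $f_{n\delta_q}=\det(X_q)$, i.e.\ the polynomial of multidegree $e=(0,\dots,0,n,0,\dots,0)$ with the $n$ in slot $q$. Since $M\cd f_{n\delta_q}\in(I_S)_n$ and $(I_S)_n$ is $\N^m$-multigraded with one-dimensional graded pieces spanned by the $f_{e'}$ (Lemma~\ref{L-multi-I}), I can project onto the multidegree $e'=(n-1)\delta_q+\delta_p$ part; that part receives a contribution only from the $E_{pq}\otimes M_{pq}$ summand of $M$ (the diagonal summands preserve multidegree $n\delta_q$, and any $E_{p'q'}\otimes M_{p'q'}$ with $\{p',q'\}\neq\{p,q\}$, $p'\neq q'$ lands in a different multidegree, since the $\delta_i$ are distinct basis vectors). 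As $e'\in\N^m$, its graded piece of $(I_S)_n$ is genuinely $\spa(f_{e'})$, so
\[
(E_{pq}\otimes M_{pq})\cd\det(X_q)=\mu\, f_{(n-1)\delta_q+\delta_p}
\]
for some scalar $\mu\in\C$.

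Next I would compute both sides explicitly from the description of the twisted action. Writing $M_{pq}=\sum_{a,b,c,d}(M_{pq})_{ab,cd}E_{ab,cd}$, the left side becomes $\sum_{a,b,c,d}(M_{pq})_{ab,cd}\,x^{(p)}_{ab}\,C_{cd}(X_q)$, where $C_{cd}(X_q):=\partial^{(q)}_{cd}\det(X_q)$ is the $(c,d)$-cofactor of $X_q$, a polynomial in the entries of $X_q$ only. On the right side, the formula \eqref{fe} for $f_{(n-1)\delta_q+\delta_p}$ regroups as $\sum_{j,k}x^{(p)}_{jk}\,C_{jk}(X_q)$ (collect the single factor $x^{(p)}_{j\,\sigma(j)}$ coming from the $p$-th matrix, and recognize the remaining signed sum over $\sigma$ with $\sigma(j)=k$ as $C_{jk}(X_q)$). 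Comparing coefficients of the distinct variables $x^{(p)}_{ab}$, which do not occur in any $C_{cd}(X_q)$, gives for every $(a,b)$:
\[
\sum_{c,d}(M_{pq})_{ab,cd}\,C_{cd}(X_q)=\mu\,C_{ab}(X_q).
\]

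To finish I would use that the $n^2$ cofactors $\{C_{cd}(X_q)\}$ are linearly independent as polynomials: if $\sum_{c,d}\lambda_{cd}C_{cd}(X)\equiv 0$, then writing this as $\tnr(\Lambda\operatorname{adj}(X))$ with $\Lambda=(\lambda_{cd})$ and specializing to invertible $X$ (where $\operatorname{adj}(X)=\det(X)X^{-1}$) forces $\tnr(\Lambda Y)\equiv 0$, hence $\Lambda=0$. Applying this to the displayed identity yields $(M_{pq})_{ab,cd}=\mu$ when $(a,b)=(c,d)$ and $0$ otherwise, i.e.\ $M_{pq}=\mu\,{\rm I}_{n^2}$; set $\lambda_{pq}:=\mu$. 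Since $p\neq q$ was arbitrary, this proves the lemma. (Alternatively one could prove it for $(p,q)=(2,1)$ only and deduce the rest from the $S_m$-symmetry permuting the $m$ matrix slots, which preserves $S=\SING_{n,m}$ and hence $(I_S)_n$, conjugating $M_{pq}$ to $M_{\pi^{-1}(p)\pi^{-1}(q)}$.)

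The only real work is bookkeeping: verifying that $f_{(n-1)\delta_q+\delta_p}$ equals $\sum_{j,k}x^{(p)}_{jk}C_{jk}(X_q)$ straight from \eqref{fe}, and confirming that extracting the multidegree-$e'$ component is legitimate — which is precisely the content of Lemma~\ref{L-multi-I} (multigradedness of $(I_S)_n$ with $1$-dimensional pieces). The linear independence of cofactors is standard and short, so I do not expect a genuine obstacle here.
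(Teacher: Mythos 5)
Your proof is correct and follows essentially the same route as the paper's: isolate the multidegree-$((n-1)\delta_q+\delta_p)$ component of $M\cd\det(X_q)$ to reduce, via Lemma~\ref{L-grade-nm} and Lemma~\ref{L-multi-I}, to the single identity $(E_{pq}\otimes M_{pq})\cd\det(X_q)=\mu\, f_{(n-1)\delta_q+\delta_p}$, and then compare the two sides. The only difference is in the final bookkeeping: you recognize both sides as $\sum_{a,b}x^{(p)}_{ab}$ times cofactors of $X_q$ and invoke linear independence of the $n^2$ cofactors, whereas the paper matches coefficients of individual monomials directly; your packaging is slightly cleaner but amounts to the same computation.
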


\begin{proof}
Without loss of generality, let us assume that $p = 2, q = 1$. We know that $M \cd f_{(n,0,\dots,0)} \in (I_S)_n$. Consider the degree $(n-1,1,0,\dots,0)$ homogenous part of $M \cd f_{(n,0,\dots,0)}$. By Lemma~\ref{L-grade-nm} and the description of the grading on $\gl(\Mat_{n}^m)$, we know that the degree $(n-1,1,0,\dots,0)$ homogenous part of $M \cd f_{(n,0,\dots,0)}$ is $(E_{21} \otimes M_{21}) \cd f_{(n,0,\dots,0)}$. Since, this must be in $(I_S)_{(n-1,1,0,\dots,0)}$ which is spanned by $f_{(n-1,1,0,\dots,0)}$, we must have
$$
(E_{21} \otimes M_{21}) \cd f_{(n,0,\dots,0)} = c \cdot f_{(n-1,1,0,\dots,0)},
$$
for some $c \in \C$. $M_{21} \in \Mat_{n^2}$ (and recall that we index the rows and columns of $\Mat_{n^2}$ by $[n] \times [n]$ in lexicographic order). Let the $(ab,cd)^{th}$ entry of $M_{21}$ be $\mu_{ab,cd}$.

Thus, 
\begin{align} \label{M21}
(E_{21} \otimes M_{21}) \cd f_{(n,0,\dots,0)}  &= \left(\sum_{a,b,c,d} \mu_{ab,cd} \cdot x^{(2)}_{ab} \partial^{(1)}_{cd} \right) \left(\sum_{\sigma \in S_n} \sgn(\sigma) \cdot x^{(1)}_{1 \sigma(1)} \dots x^{(1)}_{n\sigma(n)} \right) \\
\label{M22}
& = \sum_{\begin{array}{c} a,b,c,d,\sigma \\ \sigma(c) = d \end{array}} \mu_{ab,cd} \cdot \sgn(\sigma) \cdot x^{(1)}_{1 \sigma(1)} \dots x^{(1)}_{(c-1),\sigma(c-1)} x^{(2)}_{ab} x^{(1)}_{(c+1)\sigma(c+1)} \dots x^{(1)}_{n\sigma(n)}.
\end{align}

Since $(E_{21} \otimes M_{21}) \cd f_{(n,0,\dots,0)} = c \cdot f_{(n-1,1,0,\dots,0)}$, we will match the coefficients of monomials on both sides to get conditions on the entries of $M_{21}$.

First, fix $\pi \in S_n$, $i \in [n]$ and let $\alpha \beta \neq i \pi(i)$. Now, consider the coefficient of the monomial 
$$
m = x^{(1)}_{1 \pi(1)} \dots x^{(1)}_{(i-1),\sigma(i-1)} x^{(2)}_{\alpha,\beta} x^{(1)}_{(i+1)\sigma(i+1)} \dots x^{(1)}_{n\sigma(n)}.
$$

In the expression Equation~\ref{M22}, let us see what choices of $a,b,c,d,\sigma$ lead to this monomial. Clearly, we need $c = i$, $ d = \sigma(i)$. Moreover, we need $\sigma(k) = \pi(k)$ for all $k \neq i$, so $\sigma = \pi$ (and hence $\sigma(i) = \pi(i) = d$). Finally also observe that we also need $a = \alpha$ and $b = \beta$. Thus, there is precisely one choice for which can lead to the monomial $m$, and this means that the coefficient of the monomial $m$ is $ \mu_{\alpha\beta,i\pi(i)}\cdot \sgn(\pi)$. The coefficient of $m$ in $c \cdot f_{(n-1,1,0,\dots,0)}$ is zero, so we must have $\mu_{\alpha\beta,i\pi(i)} = 0$. Observe that as long as $i_1j_1 \neq i_2j_2$, we can choose $\alpha = i_1, \beta = j_1$, $i  = i_2$ and $\pi$ such that $\pi(i) = j_2$ and satisfy the condition $\alpha\beta \neq i\pi(i)$. Thus, all the off-diagonal terms of $M_{21}$ are zero. In other words $M_{21}$ is a diagonal matrix.

Now that $M_{21}$ is a diagonal matrix, we have
\begin{align*}
(E_{21} \otimes M_{21}) \cd f_{(n,0,\dots,0)} & = 
\sum_{\begin{array}{c} a,b,\sigma \\ \sigma(a) = b \end{array}} \mu_{ab,ab} \cdot \sgn(\sigma)  \cdot x^{(1)}_{1 \sigma(1)} \dots x^{(1)}_{(a-1),\sigma(a-1)} x^{(2)}_{ab} x^{(1)}_{(a+1)\sigma(a+1)} \dots x^{(1)}_{n\sigma(n)} \\
\label{M21-diag}
& = \sum_{a,\sigma} \mu_{a\sigma(a),a\sigma(a)} \cdot \sgn(\sigma) \cdot x^{(1)}_{1 \sigma(1)} \dots x^{(1)}_{(a-1),\sigma(a-1)} x^{(2)}_{a\sigma(a)} x^{(1)}_{(a+1)\sigma(a+1)} \dots x^{(1)}_{n\sigma(n)}.
\end{align*}

On the other hand 
\begin{align*}
c \cdot f_{(n-1,1,0,\dots,0)} = \sum_{a,\sigma} c \cdot  \sgn(\sigma) \cdot x^{(1)}_{1 \sigma(1)} \dots x^{(1)}_{(a-1),\sigma(a-1)} x^{(2)}_{a\sigma(a)} x^{(1)}_{(a+1)\sigma(a+1)} \dots x^{(1)}_{n\sigma(n)}.
\end{align*}

Thus, by matching coefficients of monomials, we get that $\mu_{a\sigma(a),a\sigma(a)} = c$. Since this is true for all choices of $a$ and $\sigma$, we have that $\mu_{ab,ab} = c$ for all $ab \in [n] \times [n]$. This means that $M_{21} = c {\rm I}_{n^2}$. Thus, with $\lambda_{21} = c$, we have $M_{21} = \lambda_{21} \cdot {\rm I}_{n^2}$ as required.
\end{proof}

Recall the action of $(\GL_n \times \GL_n)^{\times m}$ on $\Mat_{n}^m$ in Section~\ref{sec:intermediate}. This gives a homomorphism $\rho: (\GL_n \times \GL_n)^{\times m} \rightarrow \GL(\Mat_{n}^m) = \GL_{mn^2}$. In coordinates the map is given explicitly by the formula
$$
((P_1,Q_1),(P_2,Q_2),\dots,(P_m,Q_m)) \mapsto \sum_i E_{ii} \otimes P_i \otimes Q_i.
$$

Differentiating gives a Lie algebra homomorphism $d\rho: (\gl_n \times \gl_n)^{\times m} \rightarrow \gl(\Mat_{n}^m) = \Mat_{mn^2}$. Explicitly in coordinates, this is given by the formula
$$
((A_1,B_1),\dots,(A_m,B_m)) \mapsto \sum_i E_{ii} \otimes (A_i \otimes {\rm I}_n + {\rm I}_n \otimes B_i).
$$

Recall the group $H$ defined in Section~\ref{sec:intermediate}. By Lemma~\ref{L-inter}, we have $H = \{g \in (\GL_n \times \GL_n)^{\times m}\ |\ g (I_S)_n \subseteq (I_S)_n\} = \{g \in (\GL_n \times \GL_n)^{\times m}\ | \rho(g) \in \mathcal{G}_S\}$. It follows that 
$$
\Lie(H) = \{N \in (\gl_n \times \gl_n)^{\times m} \ |\ d\rho(N) \cdot (I_S)_n \subseteq (I_S)_n\} = \{N \in (\gl_n \times \gl_n)^{\times m} \ |\ d\rho(N) \in \g_S\}.
$$
The first equality essentially follows from the same argument in Proposition~\ref{P-Liealg-all}, and the second equality is clear from Proposition~\ref{P-Liealg-all}. From the description of $H$ in Section~\ref{sec:intermediate}, a straightforward computation gives
$$
d\rho (\Lie(H)) = \{{\rm I}_m \otimes A \otimes {\rm I}_n + {\rm I}_m \otimes {\rm I}_n \otimes B + D \otimes {\rm I}_{n} \otimes {\rm I}_n \ |\ A,B \in \Mat_{n}, D \in \Mat_{n} \text{ diagonal matrix}\}.
$$

\begin{lemma}
Consider the degree $0$ part of $M$, i.e., $M_0 = \sum_i E_{ii} \otimes M_{ii}$. Then $M_0 \in d\rho (\Lie(H))$
\end{lemma}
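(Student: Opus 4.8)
The plan is to identify $M_0$ with $d\rho(N)$ for a suitable $N\in\Lie(H)$, exploiting the $\N^m$-grading together with the $m=1$ computation of Section~\ref{sec:frob}.

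\emph{Step 1: $M_0$ by itself preserves $(I_S)_n$.} I would write $M = M_0 + \sum_{p\neq q} E_{pq}\otimes M_{pq}$ for the decomposition of $M$ into $\N^m$-homogeneous components, so that the component of degree $d$ sends a form of multidegree $e$ to a form of multidegree $e+d$ (Lemma~\ref{L-grade-nm}). Fixing $e$ with $\sum_i e_i = n$, the summands of $M\cd f_e = \sum_d (M_d\cd f_e)$ are multihomogeneous of pairwise distinct degrees $e+d$; since $M\cd f_e\in(I_S)_n$ by hypothesis and $(I_S)_n = \bigoplus_{e'}(I_S)_{e'}$ is $\N^m$-graded (Lemma~\ref{L-multi-I}), each summand must already lie in $(I_S)_n$. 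In particular $M_0\cd f_e\in(I_S)_e$, and hence $M_0\cd(I_S)_n\subseteq(I_S)_n$.

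\emph{Step 2: each diagonal block is a Frobenius symmetry.} For $i\in[m]$, recall that $f_{n\delta_i} = \det(X_i)$, which involves only the variables $x^{(i)}_{jk}$. The block $E_{jj}\otimes M_{jj}$ with $j\neq i$ acts through derivations $x^{(j)}_{ab}\partial^{(j)}_{cd}$, which annihilate $\det(X_i)$; hence $M_0\cd\det(X_i) = (E_{ii}\otimes M_{ii})\cd\det(X_i)$, a form of multidegree $n\delta_i$. By Step 1 this lies in $(I_S)_{n\delta_i} = \spa(\det(X_i))$, so $(E_{ii}\otimes M_{ii})\cd\det(X_i) = c_i\det(X_i)$ for some $c_i\in\C$. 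Under the identification of the subring $\C[x^{(i)}_{jk}]$ with $\C[\Mat_n]$, this reads $M_{ii}\cd\det = c_i\det$, so the corollary of Section~\ref{sec:frob} characterizing those $M\in\gl(\Mat_n)$ with $M\cd\det = c\cdot\det$ gives $M_{ii} = A_i\otimes{\rm I}_n + {\rm I}_n\otimes B_i$ for some $A_i,B_i\in\Mat_n$.

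\emph{Step 3: conclude.} Then $M_0 = \sum_i E_{ii}\otimes(A_i\otimes{\rm I}_n + {\rm I}_n\otimes B_i) = d\rho(N)$ with $N = ((A_1,B_1),\dots,(A_m,B_m))\in(\gl_n\times\gl_n)^{\times m}$. By Step 1, $d\rho(N)\cd(I_S)_n = M_0\cd(I_S)_n\subseteq(I_S)_n$, and the description of $\Lie(H)$ recalled just above the lemma (namely $\Lie(H) = \{N : d\rho(N)\cd(I_S)_n\subseteq(I_S)_n\}$, combining Lemma~\ref{L-inter} with the argument of Proposition~\ref{P-Liealg-all}) yields $N\in\Lie(H)$; hence $M_0\in d\rho(\Lie(H))$, as claimed. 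The only place requiring any care is Step 1 — that the degree-$0$ component of $M$ individually preserves $(I_S)_n$ — but this is forced purely by matching $\N^m$-multidegrees; with it in hand, the lemma reduces entirely to the already-established $m=1$ Frobenius corollary and the identification of $H$ from Section~\ref{sec:intermediate}.
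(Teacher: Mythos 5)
Your proposal is correct and follows essentially the same route as the paper: the $\N^m$-grading forces each multihomogeneous component of $M$ to preserve $(I_S)_n$ separately, so $M_0\cd f_{n\delta_i}=(E_{ii}\otimes M_{ii})\cd\det(X_i)$ must be a multiple of $\det(X_i)$, the Frobenius corollary of Section~\ref{sec:frob} then gives $M_{ii}=A_i\otimes{\rm I}_n+{\rm I}_n\otimes B_i$, and $M_0=d\rho(N)$ preserves $(I_S)_n$, hence lies in $d\rho(\Lie(H))$. The only cosmetic difference is that the paper states the membership criterion for $\Lie(H)$ via the untwisted action and then invokes the fact that $d\rho(\Lie(H))$ is closed under the Cartan involution $\Theta$, whereas your restatement of that criterion with $\cd$ silently absorbs this step --- harmless, precisely because $d\rho(\Lie(H))$ is $\Theta$-stable.
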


\begin{proof}
Let $M = \bigoplus_e M_e$ be its graded decomposition. We know that $M_{\delta_i - \delta_j} = E_{ij} \otimes M_{ij}$ for $i \neq j$, and $M_0 = \sum_i E_{ii} \otimes M_{ii}$. For all other $e$, $M_e = 0$. In particular, $M \cd (I_S)_n \subseteq (I_S)_n$ implies that for all $e$ with $\sum_i e_i = n$, $M_0 \cd f_e = \gamma_e f_e$ for some $\gamma_e \in \C$.

First, observe that $M_0 \cd f_{(n,0,\dots,0)} = M \cd \det(X_1) = \sum_i (E_{ii} \otimes M_{ii}) \cd (\det(X_1)) = (E_{11} \otimes M_{11}) \cd \det(X_1)$. This means that $(E_{11} \otimes M_{11}) \cd \det(X_1) = \gamma_{(n,0,\dots,0)} \det(X_1)$. By Theorem~\ref{theo:frob}, and Corollary~\ref{C-loscom}, we get that $M_{11}$ is of the form $A_1 \otimes {\rm I}_n + {\rm I}_n \otimes B$ for some $A,B \in \Mat_{n}$. Similarly, each $M_{ii}$ is of the form $A_i \otimes {\rm I}_n + {\rm I}_n \otimes B_i$. Thus $M_0$ is in the image of $d\rho$, since $M_0 = \sum_i E_{ii} \otimes (A_i \otimes {\rm I}_n + {\rm I}_n \otimes B_i)$ (see the explicit formula for $d\rho$ above).
 

This means that $M_0$ is in the image of $d\rho$ such that $(M_0) \cd (I_S)_n \subseteq (I_S)_n$. From the description of $\Lie(H)$ above, we get that $\Theta(M_0) \in d\rho (\Lie(H))$. But since $d\rho(\Lie(H))$ is closed under $\Theta$, we get that $M_0 \in d \rho (\Lie(H))$. 
\end{proof}

Thus, putting the above two lemmas together, we get that $M$ is of the form $C \otimes {\rm I}_n \otimes {\rm I}_n + {\rm I}_m \otimes A \otimes {\rm I}_n + {\rm I}_m \otimes {\rm I}_n \otimes B$, i.e., $M \in \Lie(G_{n,m})$.

Thus, we conclude that
$$
\g_S = \Theta \{M \ |\ M \cd (I_S)_n \subseteq (I_S)_n\} \subseteq  \Theta(\Lie(G_{n,m})) = \Lie(G_{n,m}).
$$
since $\Lie(G_{n,m})$ is closed under $\Theta$. 

The reverse inclusion is clear since $G_{n,m} \subseteq \mathcal{G}_S$ implies that $\Lie(G_{n,m}) \subseteq \Lie(\mathcal{G}_S) = \g_S$. So, we conclude that
$$
\g_S = \Lie(G_{n,m}).
$$
Further, this implies (by the Lie subgroups -- Lie subalgebras correspondence) that 
$$
\mathcal{G}_S^\circ = G_{n,m}.
$$

Let us record this result. 

\begin{corollary}
Let $S = \SING_{n,m} \subseteq V = \Mat_{n}$. Then the connected group of symmetries
$$
\mathcal{G}_S^\circ = G_{n,m}.
$$
\end{corollary}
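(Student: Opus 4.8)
The plan is to reduce the statement to a linear-algebra computation at the level of Lie algebras and then invoke the subgroup--subalgebra correspondence. By Corollary~\ref{C-loscom} together with Lemma~\ref{star-action}, it suffices to prove that
$$
\{M \in \gl(\Mat_{n}^m)\ |\ M \cd (I_S)_n \subseteq (I_S)_n\} = \Lie(G_{n,m}),
$$
since $\Lie(G_{n,m})$ is stable under the Cartan involution $\Theta$, and both $G_{n,m}$ and $\mathcal{G}_S^\circ$ are connected algebraic subgroups of $\GL(V)$, hence are equal once their Lie algebras agree. The main bookkeeping device is the $\N^m$-grading on $\gl(\Mat_{n}^m) = \Mat_m \otimes \Mat_{n^2}$, where $E_{pab,qcd}$ has degree $\delta_p - \delta_q$, matched against the multi-grading $(I_S)_n = \bigoplus_e (I_S)_e$ of Lemma~\ref{L-multi-I}, each piece being the line $\spa(f_e)$. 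Writing $M = \sum_{p,q} E_{pq} \otimes M_{pq}$ with $M_{pq} \in \Mat_{n^2}$, this decomposes the problem into the off-diagonal blocks ($p \neq q$) and the degree-zero part $M_0 = \sum_p E_{pp} \otimes M_{pp}$.

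First I would treat the off-diagonal blocks. For $p \neq q$ the block $E_{pq} \otimes M_{pq}$ raises multi-degree by $\delta_p - \delta_q$, so applying it to the generator $\det(X_q) = f_{n\delta_q}$ must land in the one-dimensional space $(I_S)_{(n-1)\delta_q + \delta_p} = \spa(f_{(n-1)\delta_q+\delta_p})$. Expanding the twisted derivation $\sum \mu_{ab,cd}\, x^{(p)}_{ab}\partial^{(q)}_{cd}$ against the explicit formula for $f_e$ and comparing coefficients of well-chosen monomials forces first all off-diagonal entries of $M_{pq}$ to vanish, and then all of its diagonal entries to be equal; hence $M_{pq} = \lambda_{pq}\,{\rm I}_{n^2}$ for a scalar $\lambda_{pq}$. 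This is the same coefficient-chasing that appears in the $m=1$ case of Section~\ref{sec:frob}, carried out one $\Mat_m$-slot at a time.

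Next I would handle $M_0$. Since $M_0$ preserves multi-degree and $M_0 \cd (I_S)_n \subseteq (I_S)_n$, each $E_{ii} \otimes M_{ii}$ scales the generator $\det(X_i)$; by Frobenius (Theorem~\ref{theo:frob}, in its Lie-algebra form from Section~\ref{sec:frob}) this forces $M_{ii} = A_i \otimes {\rm I}_n + {\rm I}_n \otimes B_i$. Consequently $M_0$ lies in the image of $d\rho$, where $\rho$ is the $(\GL_n \times \GL_n)^{\times m}$-action of Section~\ref{sec:intermediate}, and $d\rho^{-1}(M_0)$ preserves $(I_S)_n$; Lemma~\ref{L-inter} then pins $\Theta(M_0)$ — hence $M_0$, as $d\rho(\Lie(H))$ is $\Theta$-stable — to the shape ${\rm I}_m \otimes A \otimes {\rm I}_n + {\rm I}_m \otimes {\rm I}_n \otimes B + D \otimes {\rm I}_n \otimes {\rm I}_n$ with $D$ diagonal. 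Assembling, $M$ equals $D \otimes {\rm I}_n \otimes {\rm I}_n + \sum_{p \neq q} \lambda_{pq} E_{pq} \otimes {\rm I}_{n^2} + {\rm I}_m \otimes A \otimes {\rm I}_n + {\rm I}_m \otimes {\rm I}_n \otimes B$; the first two terms combine into $C \otimes {\rm I}_n \otimes {\rm I}_n$ for an arbitrary $C \in \Mat_m$, so $M \in \Lie(G_{n,m})$.

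This gives one inclusion; the reverse $\Lie(G_{n,m}) \subseteq \g_S$ is immediate from $G_{n,m} \subseteq \mathcal{G}_S$, since the $\GL_m \times \GL_n \times \GL_n$ action visibly preserves $\SING_{n,m}$. Hence $\g_S = \Lie(G_{n,m})$, and the subgroup--subalgebra correspondence yields $\mathcal{G}_S^\circ = G_{n,m}$. I expect the main obstacle to be the off-diagonal coefficient computation — arranging the monomial comparisons so that scalarity of each $M_{pq}$ emerges cleanly — with a secondary point being the careful setup of the reduction through $d\rho$ so that Lemma~\ref{L-inter} applies to the degree-zero part without modification.
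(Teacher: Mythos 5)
Your proposal is correct and follows essentially the same route as the paper: the reduction via Corollary~\ref{C-loscom} and the multi-grading of Lemma~\ref{L-multi-I}, the block decomposition $M = \sum_{p,q} E_{pq}\otimes M_{pq}$ with coefficient-chasing to show the off-diagonal blocks are scalar, the use of the $m=1$ Frobenius computation on each diagonal block, and the identification of the degree-zero part inside $d\rho(\Lie(H))$ via Lemma~\ref{L-inter}, followed by the subgroup--subalgebra correspondence. No gaps.
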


In the next subsection, we will determine the entire group of symmetries. The argument is very similar to the one in Section~\ref{sec:frob}

\subsection{The group of symmetries}
From the above discussion, we know that $G_{n,m}$ is the identity component of $\mathcal{G}_S$. The component group $\mathcal{G}_S / G_{n,m}$ is a finite group\footnote{Component groups are always finite for linear algebraic groups.}. In any case the fact that $G_{n,m}$ is the identity component of $\mathcal{G}_S$ means that $\mathcal{G}_S$ normalizes $G_{n,m}$. Thus $\mathcal{G}_S \subseteq N_{\GL(\Mat_{n}^m)} (G_{n,m})$. Let us therefore compute this normalizer. 

Consider the transpose map $\tau: \Mat_{n}^m \rightarrow \Mat_{n}^m$ given by $(X_1,\dots,X_m) \mapsto (X_1^t,\dots,X_m^t)$. Viewing $\Mat_{n}^m$ as $\C^m \otimes \C^n \otimes \C^n$, the $\tau$ is simply the linear map that switches the second and third tensor factors. When $m = n$, then all three tensor factors are $\C^n$, and there are linear transformations that permute them in any way. For a permutation $\sigma \in S_3$, let us denote by $\tau_{\sigma}$ the corresponding linear map. Precisely, we have
\begin{align*}
\tau_\sigma: \C^n \otimes \C^n \otimes \C^n &\longrightarrow \C^n \otimes \C^n \otimes \C^n \\
\sum_i v_{i,1} \otimes v_{i,2} \otimes v_{i,3} &\longmapsto \sum_i v_{i,\sigma(1)} \otimes v_{i,\sigma(2)} \otimes v_{i,\sigma(3)}.
\end{align*}

In particular, the transpose morphism $\tau = \tau_{\sigma}$ for $\sigma$ defined as $\sigma(1) = 1, \sigma(2) = 3, \sigma(3) = 2$.

Let us define 
$$
\Sigma_{n,m} = \begin{cases} \{1,\tau\} & \text{ if $n \neq m$} \\
\{\tau_{\sigma}: \sigma \in S_3\} & \text{ if $n = m$}. \end{cases}
$$

Observe that $\Sigma_{n,m}$ is a subset of linear transformations of $\C^m \otimes \C^n \otimes \C^n$. When $m = n$, $\Sigma_{n,m}$ consists of six linear transformations, and when $m \neq n$, it consists of two linear transformations.

\begin{lemma}
The normalizer
$$
N_{\GL(\Mat_{n}^m)}(G_{n,m}) = \{h_1h_2  \ | h_1 \in G_{n,m}, h_2 \in \Sigma_{n,m} \} = G_{n,m} \rtimes \Sigma_{n,m}.
$$
\end{lemma}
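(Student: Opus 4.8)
The plan is to compute $N = N_{\GL(\Mat_n^m)}(G_{n,m})$ by the same strategy used in Lemma~\ref{L-normfrob}, namely: pass to the level of Lie algebras, identify the simple ideals, use that conjugation by a normalizing element permutes those ideals, and then reduce to the case where each ideal is preserved and extract the explicit form of $g$ via a linear-independence argument. First I would note the easy inclusion: every $\tau_\sigma \in \Sigma_{n,m}$ normalizes $G_{n,m}$, since conjugating the action formula $(P,Q,R)\cdot X$ by a permutation of the tensor factors $\C^m\otimes\C^n\otimes\C^n$ just permutes the roles of the three acting groups $\GL_m,\GL_n,\GL_n$ — and when $m\neq n$ only the swap of the two $\C^n$-factors (i.e.\ $\tau$) is legal, while when $m=n$ all of $S_3$ is. Together with $G_{n,m}\subseteq N$ this gives $G_{n,m}\rtimes\Sigma_{n,m}\subseteq N$; the semidirect-product structure is immediate because $\Sigma_{n,m}$ normalizes $G_{n,m}$ and $\Sigma_{n,m}\cap G_{n,m}=\{1\}$ (an element of $G_{n,m}$ is connected to the identity, whereas the nontrivial $\tau_\sigma$ lie in other components — this can be checked by looking at the action on $(I_S)_n$, or on the Lie algebra as below).

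For the reverse inclusion, take $g\in N$. Then $g$ normalizes $G_{n,m}$, hence its derived group, hence the Lie algebra $\Lie(G_{n,m})=\{C\otimes{\rm I}_n\otimes{\rm I}_n+{\rm I}_m\otimes A\otimes{\rm I}_n+{\rm I}_m\otimes{\rm I}_n\otimes B\ :\ \tnr(C)=\tnr(A)=\tnr(B)=0\}$, which as an abstract Lie algebra is $\sll_m\oplus\sll_n\oplus\sll_n$. Conjugation by $g$ is thus an automorphism of this semisimple Lie algebra, and must permute its simple ideals $L_1=\sll_m\otimes{\rm I}\otimes{\rm I}$, $L_2={\rm I}\otimes\sll_n\otimes{\rm I}$, $L_3={\rm I}\otimes{\rm I}\otimes\sll_n$. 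When $m\neq n$, $L_1$ is not isomorphic to $L_2\cong L_3$, so $g$ must fix $L_1$ and either fix or swap $L_2,L_3$; when $m=n$ any permutation in $S_3$ is a priori possible. Since each $\tau_\sigma$ realizes the corresponding permutation of the $L_i$, by composing $g$ with a suitable element of $\Sigma_{n,m}$ we may assume conjugation by $g$ preserves each $L_i$ individually. Now I would run the linear-independence argument: write $g=\sum_{i=1}^r R_i\otimes P_i\otimes Q_i$ with $\{R_i\}$, $\{P_i\}$, $\{Q_i\}$ each linearly independent families; from $gL_2=L_2g$ deduce (since the $R_i\otimes(\cdot)\otimes Q_i$ pieces are independent across $i$ after a suitable regrouping) that $P_i A=\widetilde A P_i$ for all trace-zero $A$, force $r=1$ and $P_1$ a scalar multiple of the identity exactly as in Lemma~\ref{L-normfrob}; symmetrically using $L_3$ and $L_1$ get $Q_1$ and $R_1$ invertible (and the scalars absorbed). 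This yields $g=R_1\otimes P_1\otimes Q_1\in G_{n,m}$, so the original $g$ lay in $G_{n,m}\cdot\Sigma_{n,m}$.

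The main obstacle I anticipate is the bookkeeping in the $m=n$ case: there the three ideals are mutually isomorphic, so one must be careful that \emph{every} element of $S_3$ is actually realized by a genuine linear automorphism of $\Mat_n^n$ normalizing $G_{n,n}$ (this is where $\tau_\sigma$ is used, and one should double-check the action formula really is equivariant under relabeling), and conversely that no \emph{outer} automorphism of $\sll_n$ (the graph automorphism $X\mapsto -X^t$) sneaks in as an extra component — it does not, because that automorphism would have to come from an honest linear map on the tensor space intertwining the three factors, and once the permutation of factors is removed the rigidity argument above pins $g$ down to $G_{n,m}$ with no room for a transpose twist on an individual factor. A secondary subtlety is making the regrouping step in the linear-independence argument precise when $r>1$: one wants to choose the $R_i$ (resp.\ $P_i$, $Q_i$) to be a basis of the span of the relevant tensor slots so that the identity $\sum_i R_i\otimes P_iA\otimes Q_i=\sum_i R_i\otimes \widetilde A_iP_i\otimes Q_i$ forces a slotwise equality; this is routine multilinear algebra but should be spelled out. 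Everything else is a direct transcription of the $m=1$ argument with an extra tensor factor.
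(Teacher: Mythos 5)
Your proposal follows essentially the same route as the paper: pass to the Lie algebra $\sll_m\oplus\sll_n\oplus\sll_n$ of the derived group, note that conjugation permutes its simple ideals, correct by an element of $\Sigma_{n,m}$ so that each ideal is preserved, and then run the linear-independence argument of Lemma~\ref{L-normfrob} to force $g$ into $G_{n,m}$.

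The one step that does not work as literally written is the decomposition $g=\sum_{i=1}^r R_i\otimes P_i\otimes Q_i$ with all three families $\{R_i\},\{P_i\},\{Q_i\}$ simultaneously linearly independent: a general element of a triple tensor product admits no such expression (this is asking for a simultaneous ``diagonal'' rank decomposition, which generic tensors do not have). You half-acknowledge this under ``regrouping,'' but the clean fix is the paper's: flatten to two factors, writing $g=\sum_i P_i\otimes Q_i$ with $P_i\in\Mat_m$ and $Q_i\in\Mat_{n^2}$ (such a decomposition with both families independent always exists), conclude $r=1$ from the condition $gL_1=L_1g$, and only then repeat the argument on the remaining factors to split $Q_1$ further. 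With that adjustment your argument is the paper's argument.
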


\begin{proof}
The argument is very similar to the one for the $m=1$ case. Let $g \in \GL(\Mat_{n}^m)$ be such that $g$ normalizes $G_{n,m}$. Thus, it normalizes its derived group which is isomorphic to $\SL_m \times \SL_n \times \SL_n$, and hence the Lie algebra of its derived group. This Lie algebra is $\sll_m \times \sll_n \times \sll_n$ which embeds in $\gl(\Mat_{n}^m)$ as $$
\{C \otimes {\rm I}_n \otimes {\rm I}_n + {\rm I}_m \otimes A \otimes {\rm I}_n + {\rm I}_m \otimes {\rm I}_n \otimes B\ | \  C \in \sll_m, A,B, \in \sll_n\}.
$$
For simplicity we will continue to refer to this Lie subalgebra as $\sll_m \times \sll_n \times \sll_n$. $g$ normalizes this Lie subalgebra. This Lie subalgebra has exactly $3$ simple ideals, so the (conjugation) action of $g$ has to permute them. There are two cases, when $m = n$, then there are 6 possible permutations, and when $m \neq n$, the $\sll_m$ must remain fixed and the two $\sll_n$'s can be permuted. Now, one observes (in both cases) that for some $h_2 \in \Sigma_{n,m}$, the action of $g' = gh_2$ (by conjugation) fixes the three simple ideals. 

Thus, we have $g' \in \GL(\Mat_{n}^m) = \subseteq \Mat_{mn^2} = \Mat_{m} \otimes \Mat_{n^2}$. Observe that in this decomposition, we identify $\Mat_{m}$ with linear transformations on $\C^m$ (the first tensor factor) and $\Mat_{n^2}$ with linear transformations on $\C^n \otimes \C^n$ (the second and third tensor factors). Note that $g'$ fixes $\sll_m$, and write $g' = \sum_{i=1}^r P_i \otimes Q_i$ where $P_i \in \Mat_{m} = \gl(\C^m)$ and $Q_i \in \gl(\C^n \otimes \C^n) = \Mat_{n^2}$ such that $\{P_i\}$ is a linearly independent subset of $\Mat_{m}$ and $\{Q_i\}$ is a linearly independent subset of $\Mat_{n^2}$. 

The same argument as in  the proof of Lemma~\ref{L-normfrob} proves that $r = 1$ and $P_1 \in \GL_m$. Repeating the argument for the other tensor factors, we get that $g' = P_1 \otimes P_2 \otimes P_3$, where $P_1 \in \GL_m$, $P_2 \in \GL_n$ and $P_3 \in \GL_n$. In other words, $g' \in G_{n,m}$. 

This proves that $g = g'h_2^{-1} \in G_{n,m} \rtimes \Sigma_{n,m}$. This proves that $N_{\GL(\Mat_{n}^m)}(G_{n,m}) \subseteq G_{n,m} \rtimes \Sigma_{n,m}$. The reverse inclusion is clear.
\end{proof}

\begin{proof} [Proof of Theorem~\ref{theo:gos}]
It is clear that $G_{n,m} \subseteq \mathcal{G}_S \subseteq G_{n,m} \rtimes \Sigma_{n,m}$. Any algebraic group sandwiched between $G_{n,m}$ and $G_{n,m} \times \Sigma_{n,m}$ must be a union of components, i.e., $\mathcal{G}_S = \cup_{h \in I} G_{n,m} \cdot h$ for some subgroup $I \subseteq \Sigma_{n,m}$. But this subgroup is easy to determine. Clearly the transpose morphism $\tau$ is in $I$, so $I = \Sigma_{n,m}$ when $m \neq n$. 

Now, consider the case $m = n$. We still claim that $I = \{e,\tau\}$. Observe that $\{e,\tau\}$ is a proper maximal subgroup of $\Sigma_3$, and we have seen that $\{e,\tau\} \subseteq I$. Thus, it suffices to show that $I \subsetneq \Sigma_3$. 

To see this, let $\sigma \in S_3$ be the permutation $\sigma(1) = 2$, $\sigma(2) = 1$ and $\sigma(3) = 3$. Let us take $X = ({\rm I}_n,0,\dots,0)$. Then observe that $\tau_\sigma (X) = (E_{11},E_{12},\dots,E_{1n})$. Observe that $X \notin \SING_{n,m}$ whereas $\tau_\sigma(X) \in \SING_{n,m}$. Thus $\tau_\sigma \notin I$. This forces $\{e,\tau\} \subseteq I \subsetneq \Sigma_{n,m}$. Thus $I = \{e,\tau\}$. 

Thus irrespective of whether $m$ and $n$ are equal or not, we have $\mathcal{G}_S = G_{n,m} \rtimes \Z/2$ as required.
\end{proof}

\subsection{Symmetries of $\NSING_{n,m}$}
All the work for computing the symmetries of $\NSING_{n,m}$ has already been done, and we just need to put it together.

\begin{proof} [Proof of Theorem~\ref{theo:ngos}]
Let us denote by $I \subseteq \C[\Mat_{n}^m]$ the vanishing ideal of $\SING_{n,m}$ and by $J\subseteq \C[\Mat_{n}^m]$ the vanishing ideal of $\NSING_{n,m}$. Our first claim is that $I_n = J_n$ (see Lemma~\ref{L-N-ideal}). 

Thus the lie algebra of symmetries for $\NSING_{n,m}$ is a subalgebra of $\{M \in \gl(\Mat_{n}^m)\ |\ M I_n \subseteq I_n\} = \g_{n,m}$. Thus, the connected group of symmetries for $\NSING_{n,m}$ is a subgroup of $G_{n,m}$. But clearly $G_{n,m}$ preserves $\NSING_{n,m}$. Thus, the connected group of symmetries for $\NSING_{n,m}$ is also $G_{n,m}$. To determine the component group, the same analysis as in the previous subsection works. Thus the group of symmetries for $\NSING_{n,m}$ is exactly the same as the group of symmetries for $\SING_{n,m}$.
\end{proof}

\section{Singular tuples of matrices cannot be a null cone} \label{sec:notnullcone}
In this section, we will prove our main theorem, i.e., Theorem~\ref{theo:nullcone} as well as Theorem~\ref{theo:nullcone2}. To do so, we need to understand the coordinate subspaces (see Definition~\ref{D-coordsubspace}) of $\NSING_{n,m}$ and $\SING_{n,m}$. The main point is that both $\NSING_{n,m}$ and $\SING_{n,m}$ have exactly the same coordinate subspaces. First a few definitions. 

\begin{definition} [Support of a matrix]
For a matrix $M$, it support $\Supp(M) \subseteq [n] \times [n]$ is defined as the subset of positions with non-zero entries. In other words, $(j,k) \in \Supp(M)$ if and only if the $(j,k)^{th}$ entry of $M$ is non-zero.
\end{definition}

\begin{definition} [Support and union support of a tuple of matrices]
For $X = (X_1,\dots,X_n) \in \Mat_{n}^m$, we define its support $\Supp(X) \subseteq [m] \times [n] \times [n]$ as the subset of positions with non-zero entries. More precisely $\Supp(X)$ consists of all $(i,j,k)$ such that the $(j,k)^{th}$ coordinate of $X_i$ is non-zero.

We also define its union support $\USupp(X) \subseteq [n] \times [n]$ to be $\cup_i \Supp(X_i)$. In other words, $(j,k) \in \USupp(X)$ if and only if the $(j,k)^{th}$ entry of some $X_i$ is non-zero. 
\end{definition}

Let us define a map 
\begin{align*}
\pi_{2,3} : [m] \times [n] \times [n] & \longrightarrow [n] \times [n] \\
(i,j,k) & \longmapsto (j,k)
\end{align*}

\begin{remark}
For $X = (X_1,\dots,X_m) \in \Mat_{n}^m$, the union support $\USupp(X)$ can also be seen in the following equivalent ways
\begin{enumerate}
\item $\pi_{2,3} (\Supp(X))$;
\item  $\Supp(\sum_i t_i X_i)$ for indeterminates $t_1,\dots, t_m$; 
\item $\Supp(\sum_i c_i X_i)$ for generic $c_i \in \C$.
\end{enumerate}
\end{remark}

Recall that on $V = \Mat_{n}^m$, we denote by $x^{(i)}_{j,k}$ the $(j,k)^{th}$ coordinate of the $i^{th}$ matrix. 

\begin{definition}
For $I \subseteq [m] \times [n] \times [n]$, we define the linear subspace of $\Mat_{n}^m$ 
$$
L_I = \{X \in \Mat_{n}^m \ |\ \Supp(X) \subseteq I\}.
$$
Equivalently, it can be seen as the zero locus of $\{x^{(i)}_{j,k}\ |\ (i,j,k) \notin I\}$.
\end{definition}

\subsection{Coordinate subspaces of $\NSING_{n,m}$ and $\SING_{n,m}$}
The following result will be derived easily from well known characterizations of $\NSING_{n,m}$. We say a subsset $J \subseteq [n] \times [n]$ contains a permutation $\sigma \in S_n$ if $\{(i,\sigma(i)) \ |\ 1 \leq i \leq n\} \subseteq J$. We say $J \subseteq [n] \times [n]$ is {\em permutation free} if it does not contain any permutation.

\begin{proposition}
For $I \subseteq [m] \times [n] \times [n]$, $L_I \subseteq \NSING_{n,m}$ if and only if $\pi_{2,3}(I) \subseteq [n] \times [n]$ is permutation free.
\end{proposition}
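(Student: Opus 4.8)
The plan is to reduce everything to two standard facts. The first is the well-known characterization of $\NSING_{n,m}$ by \emph{shrunk subspaces} (see e.g. \cite{GGOW16, IQS2} and Cohn's work on the free skew field): a tuple $X=(X_1,\dots,X_m)\in\Mat_{n}^m$ lies in $\NSING_{n,m}$ — equivalently $\sum_i t_iX_i$ fails to be invertible over the division ring $\C\llangle t_1,\dots,t_m\rrangle$ — if and only if there is a subspace $W\subseteq\C^n$ with $\dim\bigl(X_1W+\dots+X_mW\bigr)<\dim W$. The second is the K\"onig--Egerv\'ary theorem: a subset $J\subseteq[n]\times[n]$ is permutation free if and only if it admits a vertex cover of size $<n$, i.e. there are $R,C\subseteq[n]$ with $|R|+|C|<n$ such that every $(j,k)\in J$ satisfies $j\in R$ or $k\in C$. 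Throughout, write $J=\pi_{2,3}(I)$, and recall that every $X\in L_I$ has $\USupp(X)\subseteq J$.

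\textbf{The ``if'' direction.} Suppose $J$ is permutation free and fix a K\"onig cover $(R,C)$ with $|R|+|C|<n$; note $|C|<n$, so $W:=\spa\{e_k:k\notin C\}$ is nonzero with $\dim W=n-|C|$. Let $X\in L_I$ be arbitrary. For any column index $k\notin C$ and any $\ell\in[m]$, the support of the $k$-th column of $X_\ell$ is contained in $\{j:(j,k)\in J\}\subseteq R$, using $\USupp(X)\subseteq J$ and the covering property. Hence $X_\ell W\subseteq\spa\{e_j:j\in R\}$ for every $\ell$, so $\dim\bigl(\sum_\ell X_\ell W\bigr)\le|R|<n-|C|=\dim W$. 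Thus $W$ is a shrunk subspace for $X$, giving $X\in\NSING_{n,m}$; since $X\in L_I$ was arbitrary, $L_I\subseteq\NSING_{n,m}$.

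\textbf{The ``only if'' direction.} I prove the contrapositive. Suppose $J$ contains a permutation $\sigma\in S_n$, i.e. $(r,\sigma(r))\in J$ for all $r\in[n]$. By definition of $\pi_{2,3}$, for each $r$ there is $p_r\in[m]$ with $(p_r,r,\sigma(r))\in I$. Define $X=(X_1,\dots,X_m)$ by letting the $(r,\sigma(r))$-entry of $X_{p_r}$ equal $1$ for each $r\in[n]$, and all other entries of all matrices equal $0$. Then $\Supp(X)\subseteq I$, so $X\in L_I$, while $\sum_\ell X_\ell=P_\sigma$, the permutation matrix of $\sigma$, which is invertible; consequently, for every nonzero subspace $W$ we have $\sum_\ell X_\ell W\supseteq P_\sigma W$, so $\dim\bigl(\sum_\ell X_\ell W\bigr)\ge\dim W$. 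Hence $X$ has no shrunk subspace and $X\notin\NSING_{n,m}$. (Equivalently, $\sum_\ell t_\ell X_\ell$ is the monomial matrix whose $(r,\sigma(r))$-entry is $t_{p_r}$, which is invertible over the skew field since each $t_{p_r}$ is.) This contradicts $L_I\subseteq\NSING_{n,m}$, completing the proof.

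\textbf{Main obstacle.} The only real content is invoking the correct non-commutative criterion: one must use the shrunk-subspace characterization (valid over the \emph{free skew field}, not merely over $\C(t_1,\dots,t_m)$) rather than anything determinantal. Once that characterization is in hand, the ``if'' direction is a one-line translation of K\"onig's theorem and the ``only if'' direction is the one-line monomial-matrix construction above; no further computation is needed.
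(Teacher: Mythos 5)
Your proof is correct, but the ``if'' direction takes a genuinely different route from the paper's. The paper proves $L_I\subseteq\NSING_{n,m}$ by invariant theory: it identifies $\NSING_{n,m}$ as the null cone of the left-right $\SL_n\times\SL_n$ action, applies the Hilbert--Mumford criterion with the diagonal maximal torus, and observes that the torus null cone is exactly the union of the $L_I$ with $\pi_{2,3}(I)$ permutation free (equivalently, that the generating invariant monomials, whose supports project to permutations, all vanish on such $L_I$). You instead invoke Cohn's shrunk-subspace characterization of singularity over the free skew field and translate ``permutation free'' into a K\"onig--Egerv\'ary vertex cover $(R,C)$ with $|R|+|C|<n$, from which $W=\spa\{e_k:k\notin C\}$ is an explicit shrunk subspace for every $X\in L_I$. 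Both are valid; the paper's version fits the machinery it has already built and is what the rest of the argument (Theorem~\ref{theo:nullcone2}) needs, while yours is more combinatorial and self-contained modulo the skew-field criterion, and it produces an explicit witness $W$ rather than appealing to the structure of the invariant ring. For the ``only if'' direction your construction (the monomial matrix supported on a permutation contained in $\pi_{2,3}(I)$) is exactly the paper's; the only cosmetic difference is that the paper concludes $X\notin\SING_{n,m}\supseteq\NSING_{n,m}$ from invertibility of $\sum_\ell X_\ell$, whereas you argue directly that $X$ admits no shrunk subspace.
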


\begin{proof}
Let us recall that $\NSING_{n,m}$ is the null cone for the action of $\SL_n \times \SL_n$. Let $T = \ST_n \times \ST_n$ denote the (standard) maximal torus, i.e., $T$ consists of pairs of diagonal matrices with determinant $1$. Recall that $\NSING_{n,m} = (\SL_n \times \SL_n) \cdot \mathcal{N}_T(\Mat_{n}^m)$ by Theorem~\ref{theo:HM}. Further, from the description of the null cone for tori in Section~\ref{sec:inv.thry.tori}, it can be deduced that
$$
\mathcal{N}_T(\Mat_{n}^m) = \bigcup_{\pi_{2,3}(I) \text{ permutation free}} L_I.
$$
Another simple way to see this is to understand that the invariant ring is generated by monomials of the form $\prod_{(i,j,k) \in J} x^{(i)}_{j,k}$ where $|J| = n$ and $\pi_{2,3}(J)$ is a permutation. Thus, we conclude that $L_I \subseteq \NSING_{n,m}$ if $\pi_{2,3}(I)$ is permutation free. Alternately, one can see from the description of $\SL_n \times \SL_n$ invariants (say for example \cite[Theorem~1.4]{DM}) that all non-constant homogenous invariants vanish on $L_I$.

Conversely, suppose $\pi_{2,3}(I)$ is not permutation free. So, $\pi_{2,3}(I)$ must contain some permutation, say $\sigma$. Thus for all $1 \leq i \leq n$, there exists  $p_i \in [m]$, such that $(p_i,i,\sigma(i)) \in I$. Let $X = (X_1,\dots,X_m) \in \Mat_{n}^m$ be such that that $(i,\sigma(i))^{th}$ entry of $X_{p_i}$ is $1$ and all other entries are zero. Clearly $X \in L_I$ and further $\sum_i X_i$ is a permutation matrix (the one associated to $\sigma$), and hence non-singular. But this means that $X \notin \SING_{n,m}$. Hence $L_I \nsubseteq \SING_{n,m}$, and so $L_I \nsubseteq \NSING_{n,m}$ (because $\NSING_{n,m} \subseteq \SING_{n,m}$).
\end{proof}

Indeed, observe that the proof of above also gives the following:

\begin{proposition}
For $I \subseteq [m] \times [n] \times [n]$, $L_I \subseteq \SING_{n,m}$ if and only if $\pi_{2,3}(I) \subseteq [n] \times [n]$ is permutation free.
\end{proposition}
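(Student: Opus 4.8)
The plan is to observe that this proposition is, for all practical purposes, already proved inside the proof of the preceding one: the dichotomy ``$\pi_{2,3}(I)$ permutation free'' versus ``not'' was set up precisely so that it controls both $\NSING_{n,m}$ and $\SING_{n,m}$ simultaneously. So I would simply spell out the two directions.

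For the ``if'' direction, suppose $\pi_{2,3}(I)$ is permutation free. The cleanest argument is to invoke the previous proposition to get $L_I \subseteq \NSING_{n,m}$ and then use $\NSING_{n,m} \subseteq \SING_{n,m}$ (a matrix invertible over $\C(t_1,\dots,t_m)$ is in particular invertible over the free skew field, so singularity over the skew field implies singularity over the function field). Alternatively, and without invoking $\NSING_{n,m}$ at all: take any $X = (X_1,\dots,X_m) \in L_I$, so that $\Supp(\sum_i t_iX_i) = \USupp(X) = \pi_{2,3}(\Supp(X)) \subseteq \pi_{2,3}(I)$; then in the Leibniz expansion $\det(\sum_i t_iX_i) = \sum_{\sigma \in S_n} \sgn(\sigma)\prod_{j=1}^n (\sum_i t_iX_i)_{j,\sigma(j)}$ every term contains a factor $(\sum_i t_iX_i)_{j,\sigma(j)}$ with $(j,\sigma(j)) \notin \pi_{2,3}(I)$, which is the zero polynomial, so $\det(\sum_i t_iX_i)$ vanishes identically and $X \in \SING_{n,m}$.

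For the ``only if'' direction, suppose $\pi_{2,3}(I)$ is not permutation free, so it contains $\{(i,\sigma(i)) : 1 \le i \le n\}$ for some $\sigma \in S_n$. Exactly as in the proof of the previous proposition, for each $i$ choose $p_i \in [m]$ with $(p_i,i,\sigma(i)) \in I$ and let $X = (X_1,\dots,X_m)$ where $X_{p_i}$ has a $1$ in position $(i,\sigma(i))$ and all other entries of all the matrices are $0$; then $X \in L_I$, and $\sum_i X_i$ is the permutation matrix of $\sigma$, which is invertible. Hence $\det(\sum_i t_iX_i)$ evaluated at $t_1 = \dots = t_m = 1$ equals $\pm 1 \ne 0$, so this polynomial is not identically zero and $X \notin \SING_{n,m}$, i.e.\ $L_I \nsubseteq \SING_{n,m}$. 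There is no real obstacle here; the only thing worth flagging is the containment $\NSING_{n,m} \subseteq \SING_{n,m}$ used in the first direction, but this is already implicit in the preceding proof and follows from the basic properties of the free skew field, and in any case the direct Leibniz argument sidesteps it entirely.
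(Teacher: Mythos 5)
Your proof is correct and follows essentially the same route as the paper, which likewise derives the ``if'' direction from the $\NSING_{n,m}$ case together with $\NSING_{n,m} \subseteq \SING_{n,m}$ and reuses the same explicit permutation-matrix construction for the ``only if'' direction. The self-contained Leibniz-expansion argument you offer as an alternative for the ``if'' direction is a nice bonus that avoids any appeal to the free skew field, but it does not change the substance of the proof.
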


Thus we get the following corollary that is crucial for our purposes.

\begin{corollary} \label{C-coordsubs}
For $I \subseteq [m] \times [n] \times [n]$, $L_I \subseteq \NSING_{n,m}$ if and only if $L_I \subseteq \SING_{n,m}$.
\end{corollary}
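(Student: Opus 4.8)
The statement follows at once from the two propositions just proved: each of the conditions $L_I \subseteq \NSING_{n,m}$ and $L_I \subseteq \SING_{n,m}$ has been shown to be equivalent to the single combinatorial condition that $\pi_{2,3}(I) \subseteq [n]\times[n]$ be permutation free, and hence the two are equivalent to one another. So the plan is simply to invoke those two results and chain the equivalences.

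If one prefers an argument that does not quote both propositions verbatim, here is the route I would take. One implication is free: since $\NSING_{n,m} \subseteq \SING_{n,m}$, the inclusion $L_I \subseteq \NSING_{n,m}$ immediately forces $L_I \subseteq \SING_{n,m}$. For the converse I would argue by contraposition. Assume $L_I \not\subseteq \NSING_{n,m}$. Using the description of $\NSING_{n,m}$ as the null cone of the left--right $\SL_n\times\SL_n$-action, the Hilbert--Mumford criterion (Theorem~\ref{theo:HM}) and the torus computation of Section~\ref{sec:inv.thry.tori} (equivalently, the fact that the nonconstant $\SL_n\times\SL_n$-invariants are spanned by monomials $\prod_{(i,j,k)\in J} x^{(i)}_{j,k}$ with $|J|=n$ and $\pi_{2,3}(J)$ a permutation), one sees that $\pi_{2,3}(I)$ must contain some permutation $\sigma\in S_n$. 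Then for each $i\in[n]$ choose $p_i\in[m]$ with $(p_i,i,\sigma(i))\in I$, and let $X=(X_1,\dots,X_m)$ be the tuple whose only nonzero entry is a $1$ in position $(i,\sigma(i))$ of $X_{p_i}$ for each $i$. Then $X\in L_I$, while $\sum_i X_i$ is the permutation matrix of $\sigma$, which is invertible; hence $X\notin \SING_{n,m}$ and therefore $L_I\not\subseteq \SING_{n,m}$.

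There is essentially no obstacle here: the only point worth isolating is why the two characterizations are forced to coincide. The ``positive'' direction ($L_I\subseteq \NSING_{n,m}$ when $\pi_{2,3}(I)$ is permutation free) rests only on the structure of the invariant monomials, so it applies equally to $\SING_{n,m}$; and the ``negative'' direction is witnessed by a numeric nonsingular matrix $\sum_i X_i$, which a fortiori rules out membership in the larger variety $\SING_{n,m}$. This is precisely the symmetry already recorded in the remark preceding the corollary, and it is all that the proof needs.
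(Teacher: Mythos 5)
Your proposal is correct and matches the paper exactly: the corollary is an immediate consequence of the two preceding propositions, both of which characterize the respective inclusions by the same permutation-free condition on $\pi_{2,3}(I)$. Your alternative sketch merely re-derives those propositions' arguments, so nothing new is needed.
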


\subsection{Proof of main result}
First, let us prove Theorem~\ref{theo:nullcone2}.

\begin{proof}
Let $G$ be a reductive group acting on $V = \Mat_{n}^m$ preserving $S = \SING_{n,m}$ such that $\mathcal{N}_G(V) \subseteq S$. This action is given by a map $\rho: G \rightarrow \GL(V)$. The fact that $G$ preserves $\SING_{n,m}$ means that the image $\rho(G)$ is contained in the group of symmetries $\mathcal{G}_S = G_{n,m} \times \Z/2$. Now, consider a maximal torus $T$ of $G$. Then $\rho(T)$ is a subtorus of $\rho(G)$ and hence a subtorus of $G_{n,m}$. Thus, $\rho(T)$ is contained in a maximal torus of $G_{n,m}$ and all maximal tori are conjugate under the action of $G_{n,m}$. Thus, for some $g \in G_{n,m}$, we have that $g \rho(T) g^{-1}$ is a subtorus of the standard maximal torus $T_{n,m}$. The standard maximal torus 
$$
T_{n,m} = \{D_1 \otimes D_2 \otimes D_3\ |\ D_1 \in T_m, D_2 \in T_n, D_3 \in T_n\},
$$
where $T_k$ denotes the (standard) diagonal torus of $\GL_k$. Let $\widetilde{\rho}: G \rightarrow \GL(V)$ be defined by $\widetilde{\rho}(h) = g \rho(h) g^{-1}$. This is also an action that satisfies the hypothesis, in particular, $\mathcal{N}_{G,\widetilde{\rho}}(V) = g \cdot \mathcal{N}_{G,\rho}(V)$, and has the added feature that $\widetilde{\rho}(T) \subseteq T_{n,m}$. The point of the above discussion was to establish the fact that the standard basis $\{E_{ijk}\}$ of $V$ is a weight basis for the action defined by $\widetilde{\rho}$ (since it is a weight basis for $T_{n,m}$). Thus, the null cone for the torus $\mathcal{N}_{T,\widetilde{\rho}}(V)$ is a union of certain coordinate subspaces of $\SING_{n,m}$, and hence contained in $\NSING_{n,m}$ by Corollary~\ref{C-coordsubs}. Thus, the null cone $\mathcal{N}_{G,\widetilde{\rho}}(V) = \widetilde{\rho}(G) \cdot \mathcal{N}_{T,\widetilde{\rho}}(V) \subseteq \NSING_{n,m}$ because $\widetilde{\rho}(G) = g \rho(G) g^{-1} \subseteq G_{n,m} \rtimes \Z/2$, which is the group of symmetries of $\NSING_{n,m}$.

Now, we simply observe that $\mathcal{N}_{G,\rho}(V) = g^{-1} \cdot \mathcal{N}_{G,\widetilde{\rho}}(V)  \subseteq g^{-1} \NSING_{n,m} = \NSING_{n,m}$, which is the required conclusion.
\end{proof}

Before proving Theorem~\ref{theo:nullcone}, let us quickly recollect the fact that $\NSING_{n,m}$ is a proper subset of $\SING_{n,m}$ precisely when $n,m \geq 3$. To begin, we refer the reader to \cite{GGOW16,IQS} for many equivalent characterizations of the $\NSING_{n,m}$ (we will not recall them here). First, if $n= 1$ or $m = 1$, it is obvious. For $n = 2$, $\NSING_{2,m} = \SING_{2,m}$ follows from the fact that for $2 \times 2$ linear matrices, their commutative rank and non-commutative rank are the same (this follows from \cite[Remark~1]{FR} or \cite[Lemma~2.9]{DM}). For $m = 2$, it follows from the fact that polynomials of the form $\det(c_1 X_1 + c_2X_2)$ generates the invariant ring for the action of $\SL_n \times \SL_n$, see \cite{Happel1,Happel2}. Thus $\NSING_{n,2}$ is the zero locus of $\{\det(c_1 X_1 + c_2 X_2) : c_i \in \C\}$, which is precisely $\SING_{n,2}$.

On the other hand, for $n = m = 3$, the $3$-tuple 
$$
X = \left( \begin{pmatrix} 0 & 1 & 0 \\ -1 & 0 & 0 \\ 0 & 0 & 0 \end{pmatrix}, \begin{pmatrix} 0 & 0 & 1 \\ 0 & 0 & 0 \\ -1 & 0 & 0  \end{pmatrix}, \begin{pmatrix} 0 &  0 & 0 \\ 0 &0 & 1 \\ 0& -1 & 0 \end{pmatrix} \right) \in \Mat_{3}^3
$$
is in $\SING_{3,3}$ but not in $\NSING_{3,3}$ (see for example \cite{FR} or \cite[Example~1.1]{DM-ncrk}). For larger $n$ and $m$, this example can be modified in straightforward ways to show that $\NSING_{n,m}$ is a proper subset of $\SING_{n,m}$.

\begin{proof} [Proof of Theorem~\ref{theo:nullcone}]
Since $n,m \geq 3$, we know that $\NSING_{n,m} \subsetneq \SING_{n,m}$ by the above discussion. Suppose there was a group $G$ acting on $V = \Mat_{n}^m$ such that the null cone is $\SING_{n,m}$. This means in particular that $G$ must preserve $\SING_{n,m}$. Thus, we can apply Theorem~\ref{theo:nullcone2} to deduce that the null cone is contained in $\NSING_{n,m}$, which is a contradiction. 
\end{proof}

Finally, let us reiterate that if $n$ or $m$ is less than $3$, then $\SING_{n,m} = \NSING_{n,m}$ is the null cone for the left-right action of $\SL_n \times \SL_n$.

\section{The ring generated by determinantal polynomials is not invariant  for {\em any} group action} \label{sec:inv.conv}
Let $D(n,m)$ denote the ring $\C[\{\det(\sum_i c_i X_i) : c_i \in \C\}] \subseteq \C[\Mat_{n}^m]$. We want to show that this is not the invariant ring for the linear action of {\em any} group on $V = \Mat_{n}$. First, it suffices to restrict ourselves to subgroups of $V$. Indeed, if there was such a group $G$ with an action, i.e., a map $\rho:G \rightarrow \GL(V)$, then the ring of invariants for the action of $G$ is the same as the ring of invariants for the action of $\rho(G)$ which is a subgroup of $\GL(V)$.

Let us look at the subgroup $G_D \subseteq \GL(V)$ consisting of all linear transformations that leave $\det(\sum_i c_i X_i)$ invariant for all choices of $c_i \in \C$, i.e.,  
$$
G_D = \{g \in \GL(V) = \GL_{mn^2} \ |\ g \cdot \det(\sum_i c_i X_i) = \det(\sum_i c_i X_i) \ \forall\ c_i \in C\}.
$$

Let us also define
$$
G_{\rm det} = \{g \in \GL(\Mat_{n}) = \GL_{n^2} \ |\ g \cdot \det = \det\}.
$$

Recall from Frobenius that $G_{n,1} \rtimes \Z/2$ is the group of symmetries of $\SING_{n,1}$. If we define $\SL_{n,1} = \{A \otimes B\ |\ A,B \in \SL_n\} \subseteq G_{n,1}$, then it follows easily that
$$
G_{\rm det} = \SL_{n,1} \rtimes \Z/2.
$$

We will prove the following proposition:

\begin{proposition}
The group $G_D = \{ {\rm I}_m \otimes C \ | C \in G_{\rm det} \}$.
\end{proposition}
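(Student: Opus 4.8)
The plan is to prove the two inclusions $\{{\rm I}_m \otimes C : C \in G_{\det}\} \subseteq G_D$ and $G_D \subseteq \{{\rm I}_m \otimes C : C \in G_{\det}\}$ separately. For brevity write $f_{\vec c} := \det(\sum_i c_i X_i) \in \C[\Mat_{n}^m]$ for $\vec c = (c_1,\dots,c_m) \in \C^m$, so that $G_D$ is exactly the common stabilizer of all the $f_{\vec c}$, and recall from Proposition~\ref{P-ideal} that $(I_S)_n = \spa\{f_{\vec c} : \vec c \in \C^m\}$ for $S = \SING_{n,m}$. The first inclusion is a one-line check: ${\rm I}_m \otimes C$ sends a tuple $(X_1,\dots,X_m)$ to $(CX_1,\dots,CX_m)$, so $\big(({\rm I}_m \otimes C)\cdot f_{\vec c}\big)(X) = \det\!\big(C^{-1}(\sum_i c_i X_i)\big) = \det(\sum_i c_i X_i)$ by linearity of $C^{-1}$ and the fact that $C \in G_{\det}$ makes $\det$ invariant under $C$, hence under $C^{-1}$. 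Thus ${\rm I}_m \otimes C \in G_D$.

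For the reverse inclusion, take $g \in G_D$. Since $g$ fixes each $f_{\vec c}$ it preserves their common zero locus, which is $\SING_{n,m}$, so $g \in \mathcal{G}_S$, which by Theorem~\ref{theo:gos} equals $G_{n,m} \rtimes \Z/2 = G_{n,m} \,\sqcup\, G_{n,m}\tau$ with $\tau$ the transpose $(X_1,\dots,X_m)\mapsto(X_1^t,\dots,X_m^t)$. Two remarks dispose of the $\tau$-coset: $\tau \in G_D$, because $\det((\sum_i c_i X_i)^t) = \det(\sum_i c_i X_i)$; and $\tau = {\rm I}_m \otimes \tau_0$, where $\tau_0$ is the transpose on $\Mat_{n}$ and $\tau_0 \in G_{\det}$. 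Hence if $g = g'\tau$ with $g' \in G_{n,m}$, then (using $\tau^{-1} = \tau$ and that $G_D$ is a group) $g' = g\tau \in G_D \cap G_{n,m}$, and once $g' = {\rm I}_m \otimes C'$ with $C' \in G_{\det}$ is established, it follows that $g = g'\tau = {\rm I}_m \otimes (C'\tau_0)$ with $C'\tau_0 \in G_{\det}$. So it suffices to analyze $g \in G_D \cap G_{n,m}$.

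Write such a $g$ as the image of $(P,Q,R) \in \GL_m \times \GL_n \times \GL_n$. Unwinding the definition of the $G_{n,m}$-action on polynomials, I would compute
$$
g \cdot f_{\vec c} = \frac{\det R}{\det Q}\, f_{(P^t)^{-1}\vec c}.
$$
The crucial input is the multi-graded description of $(I_S)_n$ from Lemma~\ref{L-multi-I}: writing $f_{\vec c} = \sum_{|e| = n} c^e f_e$ with the $f_e$ linearly independent shows that $f_{\vec c} \neq 0$ for $\vec c \neq 0$ and that $f_{\vec c} \parallel f_{\vec d}$ only when $\vec c \parallel \vec d$ (the coefficient tuple of $f_{\vec c}$ is the degree-$n$ Veronese image of $\vec c$, which is injective on lines). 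Consequently $g \cdot f_{\vec c} = f_{\vec c}$ for all $\vec c$ forces $(P^t)^{-1}$ to fix every line of $\C^m$, hence $(P^t)^{-1} = \mu {\rm I}_m$ for some $\mu \in \C^*$, i.e.\ $P = \mu^{-1}{\rm I}_m$; substituting back, it also forces $\mu^n \det R/\det Q = 1$. With $P$ scalar, the transformation $g$ is exactly ${\rm I}_m \otimes C$ with $C : X \mapsto \mu^{-1} Q X R^{-1}$, and the scalar identity just obtained is precisely the condition $\det \circ C = \det$, i.e.\ $C \in G_{\det}$. This closes the inclusion and hence the proof.

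The only step that is more than bookkeeping is the computation of $g \cdot f_{\vec c}$ for $g \in G_{n,m}$ followed by the passage from "$g$ fixes all the $f_{\vec c}$" to "$P$ is scalar"; the one genuinely non-formal ingredient there is the non-proportionality of $f_{\vec c}$ for non-proportional $\vec c$, which is exactly what the multi-grading of Section~\ref{sec:multi} (Lemma~\ref{L-multi-I}) provides. Everything else reduces to Theorem~\ref{theo:gos}, Proposition~\ref{P-ideal}, and elementary linear algebra.
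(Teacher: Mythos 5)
Your proof is correct, but it takes a genuinely different route from the paper's. The paper proves the hard inclusion by a direct block-matrix computation on an arbitrary $g \in \GL_{mn^2}$: writing $g$ in $m\times m$ blocks of size $n^2\times n^2$, it uses invariance of $\det(X_1)$ to show $g_{11}\in G_{\rm det}$, then a monomial-cancellation argument in the expansion of $\det(X_1+L)$ to kill the off-diagonal blocks, and finally a comparison of slices $\Mat_n\otimes w$ to force all diagonal blocks to coincide. You instead observe that $G_D\subseteq \mathcal{G}_{\SING_{n,m}}$ and invoke Theorem~\ref{theo:gos} to restrict attention to $G_{n,m}\rtimes\Z/2$, after which the computation $g\cdot f_{\vec c}=\tfrac{\det R}{\det Q}f_{(P^t)^{-1}\vec c}$ (which I verified) together with the linear independence of the $f_e$ from Lemma~\ref{L-multi-I} and the injectivity of the Veronese on lines forces $P$ to be scalar and pins down the determinant condition on $C$. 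Both arguments are sound; yours is shorter and cleaner given that Theorem~\ref{theo:gos} is already available at this point in the paper (so there is no circularity), while the paper's is self-contained elementary linear algebra that does not lean on the symmetry-group computation at all. Your handling of the $\tau$-coset ($\tau={\rm I}_m\otimes\tau_0$ with $\tau_0\in G_{\rm det}$, and $G_D$ a group) is also correct and is a step the paper's approach sidesteps entirely.
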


\begin{proof}
Let $g \in G \subseteq \GL_{mn^2}$. Then write 
$$
g = \begin{pmatrix} 
g_{11} & g_{12} & \dots & g_{1m} \\
\vdots & \ddots & \ddots & \vdots\\
g_{m1} & \dots & \dots &  g_{mm}\\
\end{pmatrix},
$$
where each block $g_{ij}$ is an $n^2 \times n^2$ matrix (which describes the map from the $j^{th}$ copy of $\Mat_{n}$ to the $i^{th}$ copy of $\Mat_{n}$). We will denote the action of $g$ by $\ast$ to avoid confusion with matrix multiplication.

Now, $g$ (and $g^{-1}$) fixes $\det(X_1)$, so we have $\det(X_1) = \det(g \ast X_1)$. Observe that $g \ast (X_1,0,\dots,0) = (g_{11} \ast X_1,\dots, g_{m1} \ast X_1)$. In particular, we must have $\det(X_1) = \det(g_{11} \ast X_1)$. Thus $g_{11} \in G_{\rm det}$. 

In particular, let 
$$
h = \begin{pmatrix} 
g_{11}^{-1} &  \dots & 0 \\
\vdots & \ddots &  \vdots\\
0 & \dots & g_{11}^{-1}\\
\end{pmatrix},
$$

Then since $g$ and $h$ preserve $\det(X_1)$, so does $gh$. Observe that we have 
$$
(gh \ast X)_1 = X_1 + g_{12}g_{11}^{-1} \ast X_2 + \dots + g_{1m}g_{11}^{-1} \ast X_n.
$$

Let us write 
$$
L = g_{12}g_{11}^{-1} \ast X_2 + \dots + g_{1m}g_{11}^{-1} \ast X_n.
$$

We now have $\det(X_1 + L) = \det(X_1)$, where $L$ is a matrix whose entries are linear functions in $(X_i)_{j,k}$ with $i \geq 2$. Suppose $L \neq 0$, then w.l.o.g, let us assume $L_{1,1} \neq 0$. When we expand out $\det(X_1 + L)$ with the definition as sum over all permutations, the term $L_{1,1}\cdot (X_1)_{2,2}  \cdot (X_1)_{3,3}\cdot \dots \cdot (X_1)_{n,n}$ occurs, and cannot be cancelled. This is because in no other permutation can we get the subterm $(X_1)_{2,2} \cdot (X_1)_{3,3} \cdot \dots \cdot (X_1)_{n,n}$. But then, this means that $\det(X_1) \neq \det(X_1 + L)$, which is a contradiction. Hence, $L = 0$. Since $g_{11} \in \GL_{n^2}$ is invertible, this means that $g_{12}, \dots, g_{1m} = 0$.

The argument above generalizes in the following way:
Consider the identification $\Mat_{n}^m = \Mat_{n} \otimes K^m$. Let the standard basis for this $K^m$ be $\{w_1,\dots,w_m\}$. Then the above argument simply says that $g$ preserves the ``slice'' $\Mat_{n} \otimes w_1$. Clearly, the same argument will show that $g$ preserves $\Mat_{n} \otimes w$ for all $w \in K^m$. Specializing to each $w_i$, we get that $g_{ij} = 0$ whenever $i \neq j$. So, we have 

$$
g =  \begin{pmatrix} 
g_{11} &  \dots & 0 \\
\vdots & \ddots &  \vdots\\
0 & \dots & g_{mm}\\
\end{pmatrix}.
$$

Now, suppose $g_{11} \neq g_{22}$. Again, w.l.o.g, we can assume column 1 of $g_{11} \neq$ column 1 of $g_{22}$. Then, $g \ast (E_{1,1} \otimes (w_1 + w_2)) \notin \Mat_{n} \otimes (w_1 + w_2)$. Note that when viewing an $n^2 \times n^2$ matrix (say $N$) as linear transformations on $\Mat_{n}$, the first column tells us the image of $E_{1,1}$ under $N$. Thus $g_{11} = g_{22}$. By a similar argument $g_{ii} = g_{11}$ for all $i$. To summarize, we have $g = {\rm I}_m \otimes g_{11}$ with $g_{11} \in G_{\rm det}$. Thus $G_D \subseteq \{ {\rm I}_m \otimes C \ | C \in G_{\rm det} \}$. The other inclusion is obvious. 
\end{proof}

\begin{proof} [Proof of Theorem~\ref{theo:invring}]
Clearly, $D(n,m) \subseteq \C[V]^{G_D}$. Recall the left-right action of $\SL_n \times \SL_n$ on $V$ given by $(A,B) \cdot (X_1,\dots,X_m) = (AX_1B^t,\dots, AX_mB^t)$. This action is given by a map $\rho: \SL_n \times \SL_n \rightarrow \GL(\Mat_{n})$. The image of a reductive group under a morphism of algebraic groups is reductive, so $\rho(\SL_n \times \SL_n)$ is a connected reductive subgroup of $\GL(V)$. We observe that in fact this is precisely the identity component of $G_D$. Thus $G_D$ is a reductive group. Hence, the null cone for $G_D$ is the same as the null cone for its identity component $\rho(\SL_n \times \SL_n)$, and hence equal to the null cone for $\SL_n \times \SL_n$, which we know is $\NSING_{n,m}$. Thus to summarize, the zero locus of all non-constant homogenous elements of $\C[V]^{G_D}$ is $\NSING_{n,m}$. On the other hand the zero locus of all non-constant homogenous elements of $D(n,m)$ is precisely $\SING_{n,m}$.  Thus, this means that we have a proper inclusion $D(n,m) \subsetneq \C[V]^{G_D}$.

Now, suppose there was {\em any} group $G$ such that $\C[V]^G = D(n,m)$. Suppose the action is given by $\rho:G \rightarrow \GL(V)$. Then $\rho(G) \subseteq G_D$ by the previous proposition, so $\C[V]^G = \C[V]^{\rho(G)} \supseteq \C[V]^{G_D}$. But this is a contradiction because $\C[V]^G = D(n,m) \subsetneq \C[V]^{G_D} \subseteq \C[V]^G$. Thus, there is no such group.
\end{proof}

\section{Discussion and open questions} \label{sec:disc}
This paper demonstrates another collaboration of different fields in mathematics. Expanding on ongoing work cited in the introduction, here too fundamental problems in computational complexity have given rise to a new flavor of problems that are purely algebraic in nature, some of which arise from analyzing analytic (rather than symbolic) algorithms. We feel that it is important to introduce these problems to representation theorists, algebraic geometers and commutative algebraists. The results of this paper open the door for several further avenues of research, inviting a further collaboration between theoretical computer scientists and mathematicians to resolve them. 

Let us begin with the stating that $\SING_{n,m}$ is a very important variety to study due to its connection to circuit lower bounds (\cite{KI}) that we mentioned earlier. Insights from any field of mathematics may be helpful! The major open problem is of course:

\begin{problem}
Is there a deterministic polynomial time algorithm for SDIT?
\end{problem}

Various subclasses of SDIT (and PIT) have polynomial time algorithms. For example, we say an $m$-tuple of $n \times n$ matrices $X = (X_1,\dots,X_m)$ satisfies the property $(R1)$ if the linear subspace in $\Mat_{n}^m$ spanned by $X_1,\dots,X_m$ has a basis consisting of rank $1$ matrices. It turns out that if $X$ satisfies $(R1)$, then $X \in \SING_{n,m}$ if and only if $X \in \NSING_{n,m}$. Thus, SDIT restricted to tuples with the $(R1)$ property can be solved via a null cone membership algorithm! (this is implicit in \cite{Gur04}). One direction of future research is to consider the following natural generalization of the $(R1)$ property. 

For fixed $k\in \Z_{\geq 1}$ We say $X = (X_1,\dots,X_m)$ satisfies the property $(Rk)$ if the linear subspace in $\Mat_{n}^m$ spanned by $X_1,\dots,X_m$ has a basis consisting of rank $\leq k$ matrices.

\begin{problem}
Is there a deterministic polynomial time algorithm for SDIT for tuples satisfying $(Rk)$? How about $(R2)$?
\end{problem}

Next, we turn to the symmetry group of an algebraic subvariety.

\begin{problem}
What algorithms can one use to determine the group of symmetries of a subvariety? How efficient are these algorithms?
\end{problem} 

In this paper, we explicitly determined the group of symmetries of one family of variety. It is however very clear that most steps are algorithmic. Roughly speaking, if the generators for the ideal of polynomials vanishing on the subvariety are given as an input, then determining the Lie algebra of symmetries reduces to solving a system of linear equations. So, in terms of the input size of such generating polynomials given by their coefficients, this Lie algebra part is efficient. It is not clear to us how to obtain the group itself efficiently from the Lie algebra. Moreover, if we are given the generating polynomials that describes the subvariety (set-theoretically) in an implicit, concise way (as in SING) it seems that more work is needed even to define the computational task. It is possible that when the generators themselves have some symmetries, or rich relations (as in SING), one can do more.

Another general problem to be pursued is to get a better understanding of null cones (and orbit closure equivalence classes)
\begin{problem}
Can one classify null cones? What features must a subvariety satisfy in order to possibly be a null cone?
\end{problem}

In this paper, we used mainly the fact that the null cone must be the translation (by a group element) of a union of coordinate subspaces (i.e., the Hilbert--Mumford criterion). It will be interesting to find other properties of null cones which distinguish them from arbitrary subvarieties.

A different direction to pursue is the following. The main result of this paper is that $\SING_{n,m}$ is not a null cone for any {\em reductive} group action. Natural as this condition is mathematically (and we use it and consequences of it here), it is not important algorithmically, and one can potentially implement and analyze null cone membership algorithms using non-reductive groups.\footnote{Clearly, for such groups the definition of a null cone must be take to be the analytic one.} So, what if we drop the reductivity assumption?
\begin{problem}
Can $\SING_{n,m}$ be the null cone for the action of a non-reductive group?
\end{problem}

Now, we mention a few more problems which are a little bit more technical, and of interest to commutative algebraists and algebraic geometers.

\begin{problem}
Let $I$ be the ideal of polynomials vanishing on $\SING_{n,m}$. Determine the ideal generators of $I$. Do the determinantal polynomials $\det(\sum_i c_i X_i)$ generate the ideal?
\end{problem}

\begin{problem}
Consider the ring $\C[\{\det(\sum_i c_i X_i) : c_i \in \C\}] \subseteq \C[\Mat_{n,n}^m]$. Is it Cohen--Macaulay? What is its regularity, etc?
\end{problem}

\bibliographystyle{alpha}
\bibliography{refs}

\appendix
\section{Missing proofs for Section~\ref{sec:gos}} \label{App.gos}
In this appendix, we will give the complete details of the theoretical ideas that go into Lemma~\ref{Lgos-graded} and Proposition~\ref{P-Liealg-all}. First, we note a lemma that will find repeated use.

\begin{lemma} \label{repeat}
Let $W$ be a (finite-dimensional) linear subspace and $U$ be a linear subspace of $W$. Then 
$$
\{g \in \GL(W)\ |\ gU \subseteq U\} = \{g \in \GL(W)\ |\ gU = U\}.
$$
\end{lemma}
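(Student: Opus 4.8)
\textbf{Proof plan for Lemma~\ref{repeat}.} The plan is to prove the two inclusions separately, the inclusion $\supseteq$ being immediate and the inclusion $\subseteq$ being the only substantive point. For $\supseteq$: if $g \in \GL(W)$ satisfies $gU = U$, then in particular $gU \subseteq U$, so this set is contained in the left-hand side. This direction requires no hypothesis on finite-dimensionality.

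For the reverse inclusion $\subseteq$, I would take $g \in \GL(W)$ with $gU \subseteq U$ and argue that the restriction $g|_U \colon U \to W$ actually lands in $U$ (by hypothesis) and is injective (since $g$ is injective on all of $W$, hence on the subspace $U$). Thus $g|_U$ is an injective linear endomorphism of the finite-dimensional vector space $U$. By the rank--nullity theorem, an injective linear map from a finite-dimensional vector space to itself is automatically surjective, so $g|_U \colon U \to U$ is a bijection, i.e. $gU = U$. Hence $g$ lies in the right-hand side.

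The only place the hypothesis ``finite-dimensional'' is used is in this last step: injectivity forces surjectivity only in finite dimensions (the shift operator on an infinite-dimensional space is the standard counterexample, and indeed one should note $W$ being finite-dimensional suffices since then $U$ is too). There is essentially no obstacle here; the lemma is elementary linear algebra, and the one subtlety worth flagging explicitly in the writeup is that injectivity of $g|_U$ is inherited from invertibility of $g$ on $W$, together with the finite-dimensionality of $U$ to upgrade injectivity to bijectivity.
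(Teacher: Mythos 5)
Your proof is correct and is exactly the standard argument (injectivity of $g|_U$ plus rank--nullity on the finite-dimensional subspace $U$ forces surjectivity); the paper itself omits the proof as ``straightforward and left to the reader,'' and this is the intended argument.
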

\begin{proof}
The proof is straightforward and left to the reader.
\end{proof}

Let $S \subseteq V$ be a subvariety, and let $I_S$ denote the ideal of all polynomials that vanish on $S$. Recall that the action of $\GL(V)$ on $V$ gives an induced action on the polynomial ring $\C[V]$. Further, this action preserves the degree of the polynomials, so $\C[V]_{\leq d}$ (subspace of polynomials of degree $\leq d$) and $\C[V]_a$ (subspace of homogeneous polynomials of degree $a$) are subrepresentations (for any non-negative integers $d$ and $a$). Recall that $\mathcal{G}_S = \{g \in \GL(V)\ |\ g S = S\}$ is the group of symmetries.

\begin{lemma} \label{L-ode}
The group of symmetries
$$
\mathcal{G}_S = \{g \in \GL(V)\ | g \cdot I_S \subseteq I_S\} = \{g \in \GL(V)\ |\ g \cdot I_S = I_S\}.
$$
\end{lemma}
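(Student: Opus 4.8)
The plan is to reduce everything to the single identity
$$
I_{g\cdot S} \;=\; g\cdot I_S \qquad \text{for every } g\in\GL(V).
$$
First I would verify this by unwinding the definition of the induced action on $\C[V]$. A polynomial $f$ vanishes on $g\cdot S=\{gs:s\in S\}$ iff $f(gs)=0$ for all $s\in S$; since $(g^{-1}\cdot f)(v)=f(gv)$, this says $(g^{-1}\cdot f)(s)=0$ for all $s\in S$, i.e.\ $g^{-1}\cdot f\in I_S$, i.e.\ $f\in g\cdot I_S$. Hence $I_{g\cdot S}=g\cdot I_S$. This step is pure bookkeeping with the action convention.

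Next I would exploit that $g$ acts on $V$ as a linear automorphism, hence as a homeomorphism for the Zariski topology; therefore $g\cdot S$ is again a subvariety (closed) whenever $S$ is, and $V(I_T)=T$ for every closed set $T$ (indeed $T\subseteq V(I_T)$ always, and if $T=V(J)$ then $J\subseteq I_T$ gives $V(I_T)\subseteq V(J)=T$). Applying $V(-)$ and $I(-)$ alternately, and using the identity above, one gets the chain of equivalences
$$
g\cdot S=S \;\Longleftrightarrow\; I_{g\cdot S}=I_S \;\Longleftrightarrow\; g\cdot I_S=I_S,
$$
which is precisely $\mathcal{G}_S=\{g\in\GL(V)\ |\ g\cdot I_S=I_S\}$.

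It then remains to show the two containment-defined sets agree, i.e.\ that $g\cdot I_S\subseteq I_S$ already forces $g\cdot I_S=I_S$; one inclusion is trivial. For the other I would pass to finite-dimensional pieces: the $\GL(V)$-action preserves degree, so each $\C[V]_{\le d}$ is $\GL(V)$-stable, and if $g\cdot I_S\subseteq I_S$ then $g\cdot\big(I_S\cap\C[V]_{\le d}\big)\subseteq I_S\cap\C[V]_{\le d}$, a self-containment of a finite-dimensional subspace. Lemma~\ref{repeat} then upgrades this to equality for every $d$, and taking the union over all $d\in\N$ gives $g\cdot I_S=I_S$. (Alternatively: $g\cdot I_S\subseteq I_S$ yields a descending chain of ideals $I_S\supseteq g\cdot I_S\supseteq g^2\cdot I_S\supseteq\cdots$, which stabilizes by Noetherianity of $\C[V]$, and applying a suitable power of $g^{-1}$ gives equality.)

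The whole argument is essentially formal; the only point requiring any care is this last passage from $\subseteq$ to $=$, where the infinite-dimensionality of $I_S$ blocks a direct application of Lemma~\ref{repeat} and one must either filter $\C[V]$ by degree or invoke the Noetherian property. I expect that to be the (minor) main obstacle, with everything else being routine.
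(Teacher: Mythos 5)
Your argument is correct and follows essentially the same route as the paper: the passage from $g\cdot I_S\subseteq I_S$ to equality via the degree filtration and Lemma~\ref{repeat} is exactly the paper's step, and your identity $I_{g\cdot S}=g\cdot I_S$ together with $V(I_T)=T$ for closed $T$ is just a cleaner packaging of the paper's two-inclusion argument (the paper instead checks $gS\subseteq S$ and $gS^c\subseteq S^c$ directly). One caveat: your parenthetical alternative is wrong as stated, since $I_S\supseteq g\cdot I_S\supseteq g^2\cdot I_S\supseteq\cdots$ is a \emph{descending} chain and Noetherianity of $\C[V]$ only forces ascending chains to stabilize; to salvage it, apply $g^{-1}$ to get the ascending chain $I_S\subseteq g^{-1}\cdot I_S\subseteq g^{-2}\cdot I_S\subseteq\cdots$, which does stabilize and yields $g\cdot I_S=I_S$.
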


\begin{proof}
Let us first prove the second equality. If we denote by $(I_S)_{\leq d}$ the subspace of $I_S$ consisting of polynomials of degree $\leq d$. Indeed, applying the above lemma to $U = (I_S)_{\leq d}$ and $W = \C[V]_{\leq d}$, it follows that 
$$
\{g \in \GL(V)\ | g \cdot (I_S)_{\leq d} \subseteq (I_S)_{\leq d}\} = \{g \in \GL(V)\ |\ g \cdot(I_S)_{\leq d} = (I_S)_{\leq d}
$$

Since $I_S = \cup_{d} (I_S)_{\leq d}$, the second equality follows. Note that we did not directly apply to $I_S \subseteq \C[V]$ because they are not finite-dimensional.

Now, let us prove the first equality. For $g \in \mathcal{G}_S$ and $f \in I_S$, we observe that for $s \in S$, $(g \cdot f) (s) = f(g^{-1} \cdot s) = 0$.  Thus $g \cdot f \in I_S$. This shows that $g \cdot I_S \subseteq I_S$. This shows $\subseteq$. 

For the reverse inclusion. Suppose $g \in \GL(V)$ is such that $g \cdot I_S \subseteq I_S$. Then, by the second equality, we know that $g \cdot I_S = I_S$ and hence $g^{-1} \cdot I_S = I_S$. Now, suppose $s \in S$. We want to show that $g \cdot s \in S$. For any $f \in I_S$, we have $f(g \cdot s) = (g^{-1} \cdot f) (s) = 0$ since $g^{-1} \cdot f \in I_S$. This means that $gS \subseteq S$. Moreover, suppose $v \notin S$. Then for some $f \in I_S$, we have $f(v) \neq 0$. Thus $(g\cdot f) (gv) = f(v) \neq 0$. Since $g \cdot f \in I_S$, we get that $gv \notin S$. Thus $gS^c \subseteq S^c$, where $S^c$ denotes the complement of $S$ in $V$. 

Since $gS \subseteq S$ and $g S^c \subseteq S^c$, we have $gS = S$ (because $g$ is invertible). Thus $g \in \mathcal{G}_S$, and this concludes the proof.
\end{proof}

The same proof gives the following statement.

\begin{lemma} \label{asdfs}
Suppose the zero locus of $(I_S)_{\leq d}$ is precisely $S$. Then the group of symmetries
$$
\mathcal{G}_S = \{g \in \GL(V)\ | g \cdot (I_S)_{\leq d} \subseteq (I_S)_{\leq d}\}
$$
\end{lemma}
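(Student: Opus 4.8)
The plan is to mimic the proof of Lemma~\ref{L-ode}, but to carry the whole argument inside the finite-dimensional space $\C[V]_{\leq d}$, which is legitimate because $\GL(V)$ acts on $\C[V]$ preserving degree, so $\C[V]_{\leq d}$ is a (finite-dimensional) subrepresentation and $(I_S)_{\leq d}$ is a subspace of it. The one genuinely new input, compared with Lemma~\ref{L-ode}, is the hypothesis that the zero locus of $(I_S)_{\leq d}$ is exactly $S$; this is what lets us detect membership in $S$ using only polynomials of degree $\leq d$.

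First I would prove the inclusion $\mathcal{G}_S \subseteq \{g \in \GL(V)\ |\ g\cdot (I_S)_{\leq d} \subseteq (I_S)_{\leq d}\}$. By Lemma~\ref{L-ode}, any $g \in \mathcal{G}_S$ satisfies $g\cdot I_S = I_S$; since the $\GL(V)$-action preserves the degree of a polynomial, applying $g$ to a polynomial of degree $\leq d$ gives one of degree $\leq d$, so $g\cdot (I_S)_{\leq d} \subseteq (I_S)_{\leq d}$ as desired.

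For the reverse inclusion, suppose $g \in \GL(V)$ satisfies $g\cdot (I_S)_{\leq d} \subseteq (I_S)_{\leq d}$. Applying Lemma~\ref{repeat} with $W = \C[V]_{\leq d}$ and $U = (I_S)_{\leq d}$ (both finite-dimensional), we upgrade this to $g\cdot (I_S)_{\leq d} = (I_S)_{\leq d}$, and hence also $g^{-1}\cdot (I_S)_{\leq d} = (I_S)_{\leq d}$. Now take $s \in S$; for every $f \in (I_S)_{\leq d}$ we have $f(g\cdot s) = (g^{-1}\cdot f)(s) = 0$ since $g^{-1}\cdot f \in (I_S)_{\leq d}$ vanishes on $S$. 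Because the zero locus of $(I_S)_{\leq d}$ is precisely $S$, this forces $g\cdot s \in S$; thus $gS \subseteq S$. Conversely, if $v \notin S$, then again since the zero locus of $(I_S)_{\leq d}$ is exactly $S$ there is some $f \in (I_S)_{\leq d}$ with $f(v) \neq 0$, whence $(g\cdot f)(gv) = f(v) \neq 0$ with $g\cdot f \in (I_S)_{\leq d}$, so $gv \notin S$; thus $gS^c \subseteq S^c$. Since $g$ is invertible, combining the two inclusions gives $gS = S$, i.e.\ $g \in \mathcal{G}_S$.

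There is no serious obstacle here: the proof is a routine localization of the argument of Lemma~\ref{L-ode} to degree $\leq d$. The only points that require a moment of care are invoking Lemma~\ref{repeat} (which needs finite-dimensionality, hence the restriction to $\C[V]_{\leq d}$) and using the hypothesis on the zero locus in \emph{both} directions — to push points of $S$ into $S$ and to keep points outside $S$ outside — since without it the degree-$\leq d$ part of $I_S$ might not cut out $S$ set-theoretically.
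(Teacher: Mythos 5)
Your proof is correct and is exactly the argument the paper intends: the paper's own proof of this lemma simply says to rerun the argument of Lemma~\ref{L-ode} in the finite-dimensional space $\C[V]_{\leq d}$, which is precisely what you have written out, correctly identifying where the zero-locus hypothesis is needed in both directions. No issues.
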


\begin{proof}
Run the same argument as above, but even easier because the infinite dimensional issue doesn't arise.
\end{proof}

\begin{proof} [Proof of Lemma~\ref{Lgos-graded}]
Again run the same argument as above. The hypothesis of $S$ being a cone can be ignored. However, unless $S$ is a cone, the zero locus of $(I_S)_a$ cannot possibly equal $S$.
\end{proof}

Let us note that to invoke Lemma~\ref{asdfs} (or Lemma~\ref{Lgos-graded}) in any explicit situation, one has to find an appropriate $d$ (or $a$), which is not always an easy task.

Next, we will carry over these results to the setting of Lie algebras, culminating in a proof of Proposition~\ref{P-Liealg-all}. First, a focused introduction to Lie theory.

\subsubsection{Lightning introduction to Lie theory}
Let us start with an example. 
\begin{example}
Let $V$ be a complex vector space with basis $e_1,\dots,e_n$, and let the corresponding coordinate functions be denoted $x_1,\dots,x_n$. The group $\GL(V)$ consists of all invertible linear transformations from $V$ to itself, and can be identified with invertible $n \times n$ matrices using the chosen basis. Its Lie algebra $\gl(V)$ consists of all linear transformations from $V$ to itself, and so can be identified with $\Mat_{n}$. There is an {\em exponential map} ${\rm exp}: \gl(V) = \Mat_{n} \rightarrow \GL(V) = \GL_n$ that sends $M \mapsto {\rm exp}(M) = I + M + \frac{M^2}{2!} + \dots + \frac{M^n}{n!} + \dots$. 
\end{example}

With this example in mind, let us give some definitions. A Lie group $G$ is a smooth manifold (over the real numbers $\R$) which is also a group such that the multiplication map and inverse map are smooth. To a Lie group $G$, one associates a Lie algebra denoted $\Lie(G)$ or $\g$ (in general, we may use the corresponding fraktur letter to make notation less cumbersome). The Lie algebra $\Lie(G)$ is the space of all {\em left-invariant vector fields}, equipped with a bilinear operation called the Lie bracket. 

A vector field on $G$ is the assignment of a tangent vector to each point of $G$. Left multiplication by group elements allows us to identify the tangent space at any point with the tangent space at the identity element $e \in G$. A vector field is called left-invariant if the assigned tangent vectors at all the points are the same (with the identification mentioned above). Thus one can identify the space of left-invariant vector fields with the tangent space at identity. A curve on $G$ is called an integral curve for a vector field if the tangent vector of the curve at every point agrees with the vector field. For a left-invariant vector field $M$, the curve ${\rm exp}(tM)$ is the unique integral curve for $M$ that passes through $e \in G$ at $t = 0$. In particular, $\frac{d}{dt} {\rm exp}(tM)|_{t=0} = M$ and this will be useful to us.

In the case of $\GL(V)$, let us reconcile the abstract definitions with the concrete ones in the above definition. Note that $\GL(V) = \GL_n$ is an open subset of $\Mat_{n}$. Hence, the tangent space at the identity can be identified with $\Mat_{n}$. Thus $\gl(V)$, the Lie algebra of $\GL(V)$ can be identified with $\Mat_{n}$. The abstract exponential map coincides with the concrete description given in the example above. 

For any Lie subgroup $H$ of any Lie group $G$, its Lie algebra is
$$
\Lie(H) = \{M \in \Lie(G)\ |\ {\rm exp}(tM) \in H\  \forall t\}.
$$

Many Lie groups occur as subgroups of $\GL(V)$, and these are often called matrix groups, and one can work extremely concretely in the setting of matrix groups. However, not every Lie group is a matrix group. For our purposes, the abstract point of view is elegant and helps us in the theoretical results, and the concrete description is more conducive for computations which is our main goal.


For any smooth action of an Lie group $G$ on a (finite-dimensional) vector space $W$ by linear transformations, we get a smooth morphism of Lie groups $\rho: G \rightarrow \GL(W)$. On differentiating, we get a morphism of Lie algebras $d\rho: \Lie(G) \rightarrow \gl(W)$. In other words, we get an induced action of $\Lie(G)$ on $W$. Note that algebraic groups are Lie groups and algebraic actions of algebraic groups are smooth.

The exponential map commutes with this, i.e., for $M \in \Lie(G) \subseteq \gl(V)$, we have ${\rm exp}(d\rho \cdot M) = \rho ({\rm exp} (M))$. We will simply write ${\rm exp}(M)$ for $\rho({\rm exp}(M))$ whenever there is no possibility of confusion. 

The action of $\GL(V)$ on $V$ (by left multiplication) gives an action of $\GL(V)$ on $\C[V]$, $\C[V]_{\leq d}$ (polynomials of degree $\leq d$) and $\C[V]_a$ (homogeneous polynomials of degree $a$) by the formula $(g \cdot f) (v) = f(g^{-1} v)$. When we take $W = \C[V]_{\leq d}$ or $\C[V]_a$, the above discussion gives an action of the Lie algebra $\gl(V)$. Thus, we have an action of $\gl(V)$ on $\C[V]_a$ and hence on $\C[V] = \oplus_{a \in \N} \C[V]_a$. The Lie algebra $\gl(V)$ acts on $\C[V]$ by derivations, and this we described explicitly in Section~\ref{sec:explicit}

\subsubsection{Computing the Lie algebra of symmetries}
First a lemma. Suppose we have a Lie group $G$ acting on a vector space $W$ by linear transformations. Let $U$ be a linear subspace of $W$. Then let $\mathcal{G}_U := \{g \in G\ |\ g U = U\} = \{g \in G\ |\ gU \subseteq U\}$. The latter equality follows from Lemma~\ref{repeat}. We have:

\begin{lemma}
The Lie algebra $\Lie(\mathcal{G}_U) = \{M \in \Lie(G)\ |\ M \cdot U \subseteq U\}$.
\end{lemma}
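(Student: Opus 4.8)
The plan is to reduce the identity to the standard description $\Lie(H) = \{M \in \Lie(G) : \exp(tM) \in H \text{ for all } t\}$ recalled just above, applied to $H = \mathcal{G}_U$, and then to move between the one-parameter group $t \mapsto \exp(tM)$ and its infinitesimal generator by differentiating at $t=0$ in one direction and by summing the exponential power series in the other. First I would check that $\mathcal{G}_U$ is a Lie subgroup of $G$ at all: by Lemma~\ref{repeat} we may write $\mathcal{G}_U = \{g \in G : gU \subseteq U\}$, and (taking $W$ finite-dimensional, as it is in all our applications) this is the common zero set inside $G$ of finitely many continuous functions, namely the entries of the composite of $\rho(g)|_U$ with the projection $W \to W/U$, where $\rho : G \to \GL(W)$ is the action. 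Hence $\mathcal{G}_U$ is closed, so by the closed-subgroup theorem it is an embedded Lie subgroup, and $M \in \Lie(\mathcal{G}_U)$ if and only if $\rho(\exp(tM))\,U = U$ for every $t \in \R$. Using $\rho(\exp(tM)) = \exp\!\big(t\,d\rho(M)\big)$ from the excerpt, and writing $M\cdot u$ for $d\rho(M)(u)$, this says precisely that $\exp(t\,d\rho(M))\,U \subseteq U$ for all $t$.

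For the inclusion $\subseteq$, I would assume $\exp(t\,d\rho(M))\,U \subseteq U$ for all $t$, fix $u \in U$, and consider the smooth curve $\gamma(t) = \exp(t\,d\rho(M))\,u$, which lies entirely in $U$. Its velocity $\gamma'(0)$ therefore lies in the tangent space $T_u U$, and since $U$ is a linear subspace of the vector space $W$ this tangent space is just $U$. As $\gamma'(0) = d\rho(M)(u) = M\cdot u$, we conclude $M\cdot U \subseteq U$.

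For the inclusion $\supseteq$, I would assume $M\cdot U \subseteq U$, i.e. $d\rho(M)(U)\subseteq U$, whence $d\rho(M)^k(U) \subseteq U$ for all $k \ge 0$. For $u \in U$, each partial sum $\sum_{k=0}^{N} \tfrac{t^k}{k!}\, d\rho(M)^k(u)$ lies in $U$, and $U$ is closed (a finite-dimensional subspace of $W$), so the limit $\exp(t\,d\rho(M))(u) = \sum_{k\ge 0} \tfrac{t^k}{k!}\,d\rho(M)^k(u)$ lies in $U$ as well. Thus $\exp(t\,d\rho(M))\,U \subseteq U$ for every $t$, which by the reformulation above means $M \in \Lie(\mathcal{G}_U)$, completing the proof.

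The argument is almost entirely bookkeeping; the two points that need a little attention are (i) having a genuine Lie subgroup structure on $\mathcal{G}_U$, so that the $\exp$-characterization of its Lie algebra is available — handled by the closed-subgroup theorem together with Lemma~\ref{repeat} — and (ii) keeping the distinction between $M \in \Lie(G)$ and the operator $d\rho(M) \in \gl(W)$, which the identity $\rho(\exp M) = \exp(d\rho(M))$ lets us navigate cleanly. I do not anticipate any genuine obstacle here; the only alternative worth noting is that one could bypass the closed-subgroup theorem by checking directly that $\{M \in \Lie(G) : M\cdot U \subseteq U\}$ is a Lie subalgebra and that its connected subgroup stabilizes $U$, but invoking the closed-subgroup theorem is shorter.
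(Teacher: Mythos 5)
Your proof is correct and follows essentially the same route as the paper's: both directions are handled by differentiating the curve $\exp(tM)\cdot u$ inside the subspace $U$ at $t=0$, and conversely by noting that the partial sums of the exponential series stay in the closed subspace $U$. The extra care you take in justifying that $\mathcal{G}_U$ is a closed (hence embedded Lie) subgroup, and in distinguishing $M$ from $d\rho(M)$, is a welcome tightening of what the paper leaves implicit, but it is not a different argument.
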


\begin{proof}
Suppose $M \in \Lie(\mathcal{G}_U)$. Let $u \in U$. Then ${\rm exp}(tM) \cdot u \in U$ for all $t$ because ${\rm exp}(tM) \in \mathcal{G}_U$. Thus ${\rm exp}(tM) \cdot u$ is a smooth curve in $U$. For the vector space $W$, the tangent space at any point is $W$. For any smooth curve completely contained in $U$, it is clear that the tangent vectors at any point of the curve is also in $U$. Thus, we have $M \cdot u = \frac{d}{dt} ({\rm exp}(tM) \cdot u)|_{t = 0} \in U$. So, we conclude that $M \cdot U \subseteq U$.

Conversely, suppose $M \cdot U \subseteq U$. Then for $u \in U$, we have ${\rm exp}(tM) \cdot u = \lim_{n \rightarrow \infty} (\sum_{i=0}^n \frac{M^i}{i!}) \cdot u$. Since each $(\sum_{i=0}^n \frac{M^i}{i!}) \cdot u \in U$, the limit is also in $U$. Thus ${\rm exp}(tM) \cdot U \subseteq U$, which means that ${\rm exp}(tM) \in \mathcal{G}_U$. Hence, $M \in \Lie(\mathcal{G}_U)$.
\end{proof}

Applying the above lemma, we can formulate the Lie algebra versions of the results at the beginning of this appendix (i.e., Lemma~\ref{L-ode}, Lemma~\ref{asdfs} and Lemma~\ref{Lgos-graded}).

\begin{proposition} \label{P-Liealg-all2}
Let $S  \subseteq V$ be an algebraic subset, and let $\mathcal{G}_S$ denote its group of symmetries, and $\g_S$ its Lie algebra of symmetries. Let $I_S$ denote the ideal of polynomial functions in $\C[V]$ that vanish on $S$. Then we have
$$
\g_S = \{M \in \gl(V)\ |\ M \cdot I_S \subseteq I_S\}.
$$
Further, if $I_S$ is generated in degree $\leq d$, then we have 
$$
\g_S = \{M \in \gl(V)\ |\ M \cdot (I_S)_{\leq d} \subseteq (I_S)_{\leq d}\}.
$$
Moreover, if $S$ is a cone, then $I_S$ is graded, and for any $a \in \N$, we have
$$
\g_S \subseteq \{M \in \gl(V)\ |\ M \cdot (I_S)_a \subseteq (I_S)_a\}.
$$
Finally, if the zero locus of $(I_S)_a$ is precisely the cone $S$, then we have equality.

\end{proposition}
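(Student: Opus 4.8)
The plan is to bootstrap from the group-level statements already in hand (Lemma~\ref{L-ode}, Lemma~\ref{asdfs}, Lemma~\ref{Lgos-graded}) together with the preceding lemma on Lie algebras of subspace stabilizers, which says that for a Lie group $G$ acting linearly on a \emph{finite-dimensional} space $W$ and a linear subspace $U \subseteq W$ one has $\Lie(\mathcal{G}_U) = \{M \in \Lie(G)\ |\ M \cdot U \subseteq U\}$. The one genuinely new issue, compared to the group-level statements, is that $I_S$ is infinite-dimensional, so that lemma cannot be applied to $I_S$ directly; the fix is to use that $\gl(V)$ acts on $\C[V]$ by degree-preserving derivations (Section~\ref{sec:explicit}), which lets us pass back and forth between $I_S$ and its finite-dimensional pieces $(I_S)_{\leq d}$ and $(I_S)_a$.

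First I would prove $\g_S = \{M \in \gl(V)\ |\ M \cdot I_S \subseteq I_S\}$ by hand. For ``$\subseteq$'': if $M \in \g_S$ then $\exp(tM) \in \mathcal{G}_S$ for all $t$, hence $\exp(tM) \cdot I_S \subseteq I_S$ by Lemma~\ref{L-ode}; fixing $f \in (I_S)_{\leq d}$, the curve $t \mapsto \exp(tM)\cdot f$ lies in the finite-dimensional (hence closed) subspace $(I_S)_{\leq d}$, so its derivative $M \cdot f$ at $t=0$ lies there too, giving $M \cdot I_S \subseteq I_S$. For ``$\supseteq$'': since $M$ acts as a degree-$0$ derivation, $M \cdot I_S \subseteq I_S$ forces $M \cdot (I_S)_{\leq d} \subseteq (I_S)_{\leq d}$ for every $d$; then each partial sum of $\exp(tM)\cdot f = \sum_k \frac{t^k}{k!} M^k f$ lies in $(I_S)_{\leq d}$, so the limit does too, whence $\exp(tM) \in \mathcal{G}_S$ and $M \in \g_S$.

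For the second assertion, suppose $I_S$ is generated in degree $\leq d$. One inclusion is immediate from the first part, since a degree-$0$ derivation preserving $I_S$ preserves $(I_S)_{\leq d}$. For the converse, if $M \cdot (I_S)_{\leq d} \subseteq (I_S)_{\leq d}$, write an arbitrary element of $I_S$ as $\sum_i h_i f_i$ with $f_i \in (I_S)_{\leq d}$ and $h_i \in \C[V]$; by the Leibniz rule $M \cdot (h_i f_i) = (M \cdot h_i) f_i + h_i (M \cdot f_i)$, and both terms lie in $I_S$ (the first because $f_i \in I_S$ and $I_S$ is an ideal, the second because $M \cdot f_i \in (I_S)_{\leq d} \subseteq I_S$), so $M \cdot I_S \subseteq I_S$ and $M \in \g_S$ by the first part. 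Alternatively one may observe that Lemma~\ref{asdfs} gives $\mathcal{G}_S = \mathcal{G}_{(I_S)_{\leq d}}$ and apply the subspace-stabilizer lemma with $W = \C[V]_{\leq d}$, $U = (I_S)_{\leq d}$.

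For the cone case, the standard Vandermonde argument (if $f = \sum_a f_a$ with $f_a$ homogeneous vanishes on the cone $S$, then $\sum_a \lambda^a f_a(s) = 0$ for all $\lambda \in \C$ and $s \in S$, forcing each $f_a \in I_S$) shows $I_S$ is graded. The inclusion $\g_S \subseteq \{M\ |\ M \cdot (I_S)_a \subseteq (I_S)_a\}$ then follows from the first part, since a degree-$0$ derivation sends $(I_S)_a$ into $\C[V]_a \cap I_S = (I_S)_a$. Finally, if the zero locus of $(I_S)_a$ is exactly $S$, then Lemma~\ref{Lgos-graded} identifies $\mathcal{G}_S$ with the stabilizer $\mathcal{G}_{(I_S)_a}$ of the finite-dimensional subspace $(I_S)_a \subseteq \C[V]_a$, so the subspace-stabilizer lemma applied with $W = \C[V]_a$, $U = (I_S)_a$ yields $\g_S = \Lie(\mathcal{G}_S) = \{M \in \gl(V)\ |\ M \cdot (I_S)_a \subseteq (I_S)_a\}$, giving the equality. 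The only step requiring any care is the passage through the exponential series in the first part, needed to circumvent the infinite-dimensionality of $I_S$; everything else is bookkeeping with the group-level results and the derivation property.
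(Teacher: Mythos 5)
Your proof is correct and follows essentially the same route as the paper, which simply combines the finite-dimensional subspace-stabilizer lemma ($\Lie(\mathcal{G}_U) = \{M \mid M \cdot U \subseteq U\}$) with the group-level Lemmas~\ref{L-ode}, \ref{asdfs} and \ref{Lgos-graded}. Your degree-by-degree treatment of the first (infinite-dimensional) assertion via the truncations $(I_S)_{\leq d}$ and the exponential series is exactly the detail the paper leaves implicit, and it is handled correctly.
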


The last part of the above proposition is precisely Proposition~\ref{P-Liealg-all}.


\section{Missing proofs for Section~\ref{sec:van}} \label{App.rep}
We will need the representation theory of $\GL_m \times \GL_n \times \GL_n$. In particular, an understanding of weights and highest weight vectors will be needed. We will recall the necessary background. 

For this section, let $I \subseteq \C[\Mat_{n}]$ denote the ideal of polynomial functions that vanish on $\SING_{n,m}$. The first observation is that since $\SING_{n,m}$ is stable under the action of $\GL_m \times \GL_n \times \GL_n$, so is $I$. Let us now make this more precise.

We have an action of $\GL_m \times \GL_n \times \GL_n$ on $\Mat_{n}^m$ given by 
\begin{equation}
(A,B,C) \cdot (X_1,\dots,X_m) = (\sum_{j=1}^m a_{1j}BX_j C^{t}, \sum_j a_{2j} BX_j C^{t}, \dots, \sum_j a_{mj} BX_jC^t),
\end{equation}
where $a_{ij}$ denotes the $(i,j)^{th}$ entry of $A$. While this is the most natural action, we will use a slight variant of this action which will make easier some later arguments. Consider the Cartan involution\footnote{Differentiating this gives the Cartan involution on Lie algebras described in Section~\ref{sec:explicit}.} $\theta: \GL_k \rightarrow \GL_k$ given by $\theta(A) = (A^{-1})^t$. We will twist the above action with the Cartan involution of each of the $\GL$'s. In the below formula, we will write $A' = \theta(A)$, $B' = \theta(B)$ and $C' = \theta(C)$. Moreover, we will write $a'_{ij}$ to denote the $(i,j)^{th}$ entry of $A'$. The action of $\GL_m \times \GL_n \times \GL_n$ on $\Mat_{n}^m$ we will use is given by
\begin{equation} \label{dual-action}
(A,B,C) \cdot (X_1,\dots,X_m) = (\sum_{j=1}^m a'_{1j}B'X_j (C^{t})', \sum_j a'_{2j} B'X_j (C^{t})', \dots, \sum_j a'_{mj} B'X_j (C^t)'),
\end{equation}

Let us now justify briefly why we use the second action instead of the first one. For $k \in \N$, there is a natural action of $\GL_k$ on $\C^k$ (viewed as column vectors) by left multiplication. This gives the contragredient action of $\GL_k$ on $(\C^k)^*$ (for $g \in \GL_k$ and $\zeta \in (\C^k)^*$, the element $g \cdot \zeta \in (\C^k)^*$ is defined by $g \cdot \zeta (v) = \zeta(g^{-1} \cdot v)$ for $v \in \C^k$). This is the canonical action of $\GL_k$ on $(\C^k)^*$. Thus, we have an action of $\GL_m \times \GL_n \times \GL_n$ on $(\C^m)^* \otimes (\C^n)^* \otimes (\C^n)^*$ where each $\GL$ acts on the corresponding tensor factor. 

Let $e_1,\dots,e_k$ denote the standard basis for $\C^k$ and let $e_1^*,\dots,e_k^*$ denote the corresponding dual basis of $(\C^k)^*$. We identify $\Mat_{n}^m$ with $(\C^m)^* \otimes (\C^n)^* \otimes (\C^n)^*$ as follows. $X = (X_1,\dots,X_m) \leftrightarrow \sum_{i,j,k} (X_i)_{j,k} e_i^* \otimes e_j^* \otimes e_k^*$. With this identification, the action of $\GL_m \times \GL_n \times \GL_n$ on $(\C^m)^* \otimes (\C^n)^* \otimes (\C^n)^*$ agrees with the one in Equation~\ref{dual-action} above. 

The advantage of this action is that the induced action on the coordinate ring $\C[\Mat_{n}^m]$ is a ``polynomial'' representation. This is notationally advantageous for many reasons -- in particular that polynomial representations of $\GL_m \times \GL_n \times \GL_n$ are indexed by triples of partitions (more details later). The action on polynomial functions is as follows. For $g \in \GL_m \times \GL_n \times \GL_n$, and $f \in \C[\Mat_{n}^m]$, we have $g \cdot f$ is the polynomial defined by the formula $g \cdot f (X) = f(g^{-1} \cdot X)$ for $X \in \Mat_{n}^m$.


\begin{lemma}
If $X = (X_1,\dots,X_m) \in \SING_{n,m}$, and $g \in \GL_m \times \GL_n \times \GL_n$, then $g \cdot X \in \SING_{n,m}$.
\end{lemma}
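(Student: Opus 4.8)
The plan is to reduce everything to the characterization of $\SING_{n,m}$ as the set of tuples $X$ for which $\sum_i t_i X_i$ is singular over $\C(t_1,\dots,t_m)$ (equivalently, for which $\det(\sum_i t_i X_i)$ is the zero polynomial), and then to observe that applying an element of $\GL_m\times\GL_n\times\GL_n$ amounts to (i) an invertible $\C$-linear substitution of the variables $t_i$ and (ii) left/right multiplication by invertible matrices, neither of which can turn a singular matrix over a field into a nonsingular one.

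Concretely, I would first note that it suffices to treat the untwisted action, since $\theta$ is a bijection of each $\GL_k$, so as $g$ ranges over the group so does its $\theta$-twist (alternatively, one simply carries the primes through the computation below unchanged). Write $Y=(Y_1,\dots,Y_m)=g\cdot X$, so $Y_i=\sum_{j} a_{ij}BX_jC^{t}$. The key identity is
\[
\sum_{i=1}^m t_i Y_i \;=\; B\Big(\sum_{j=1}^m s_j X_j\Big)C^{t},\qquad\text{where } s_j:=\sum_{i=1}^m a_{ij}t_i .
\]
Since $A\in\GL_m$, the map $(t_1,\dots,t_m)\mapsto(s_1,\dots,s_m)$ is an invertible $\C$-linear change of variables; hence $s_1,\dots,s_m$ are again algebraically independent and $\C(s_1,\dots,s_m)=\C(t_1,\dots,t_m)$. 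Because $X\in\SING_{n,m}$, the polynomial $\det(\sum_j u_j X_j)$ in fresh indeterminates $u_j$ is identically zero, so substituting $u_j\mapsto s_j$ still gives zero; thus $\sum_j s_j X_j$ is singular over $\C(t_1,\dots,t_m)$. Finally, $\det(\sum_i t_i Y_i)=\det(B)\det(C)\det(\sum_j s_j X_j)=0$ with $\det(B),\det(C)\neq 0$ — equivalently, left/right multiplication by the invertible matrices $B$ and $C^t$ preserves rank over any field — so $\sum_i t_i Y_i$ is singular over $\C(t_1,\dots,t_m)$, i.e.\ $Y\in\SING_{n,m}$.

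There is essentially no obstacle here: the statement is a direct unwinding of definitions, and the only points requiring care are the bookkeeping with the twisted action and making explicit where the invertibility hypotheses are used — invertibility of $A$ guarantees $t\mapsto s$ is an invertible substitution, and invertibility of $B,C$ guarantees rank is preserved. I would remark that the identical argument proves the analogous invariance statement for $\NSING_{n,m}$, using that an invertible $\C$-linear substitution of the (noncommuting) generators extends to an automorphism of the free skew field $\C\llangle t_1,\dots,t_m\rrangle$.
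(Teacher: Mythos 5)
Your proof is correct and is essentially the paper's argument: the paper phrases it in terms of spans (it checks that $\spa(g\cdot X) = B'\,\spa(X)\,(C^t)'$ and notes that invertible left/right multiplication preserves whether the span contains a nonsingular matrix), while you phrase the same computation via the symbolic determinant over $\C(t_1,\dots,t_m)$ — two equivalent characterizations of $\SING_{n,m}$, with the invertibility of $A$ and of $B,C$ playing identical roles in both. Your handling of the Cartan twist and the closing remark about $\NSING_{n,m}$ are also fine.
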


\begin{proof}
Let $g = (A,B,C)$ as above. We leave it to the reader to check that $\spa(g \cdot X) = B' (\spa(X)) (C^t)'$. Since $B$ and $C$ are invertible, $\spa(X)$ contains a non-singular matrix if and only if $\spa(g \cdot X)$ contains a non-singular matrix. 
\end{proof}

\begin{corollary}
The ideal $I \subseteq \C[\Mat_{n}^m]$ is $\GL_m \times \GL_n \times \GL_n$ stable. 
\end{corollary}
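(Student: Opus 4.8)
The plan is to obtain $\GL_m \times \GL_n \times \GL_n$-stability of $I$ as an immediate consequence of the preceding lemma together with the defining formula for the induced action on the coordinate ring, namely $(g\cdot f)(X) = f(g^{-1}\cdot X)$ for $f \in \C[\Mat_n^m]$, $g \in \GL_m \times \GL_n \times \GL_n$, and $X \in \Mat_n^m$.

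First I would fix $f \in I$ and $g \in \GL_m \times \GL_n \times \GL_n$ and verify that $g\cdot f$ still vanishes on $\SING_{n,m}$: for any $X \in \SING_{n,m}$, the preceding lemma applied to $g^{-1}$ yields $g^{-1}\cdot X \in \SING_{n,m}$, and therefore $(g\cdot f)(X) = f(g^{-1}\cdot X) = 0$. Since $X$ was arbitrary, $g\cdot f \in I$, so $g\cdot I \subseteq I$ for every $g$. Applying this inclusion with $g^{-1}$ in place of $g$ gives $g^{-1}\cdot I \subseteq I$, equivalently $I \subseteq g\cdot I$, and hence $g\cdot I = I$; this is exactly the assertion that $I$ is stable. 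This last passage from $\subseteq$ to $=$ is the same bookkeeping as in Lemma~\ref{repeat} and Lemma~\ref{L-ode}, and may alternatively be carried out degree by degree using that the action preserves degree and $I = \bigcup_d I_{\leq d}$ with each $I_{\leq d}$ finite-dimensional.

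I do not anticipate any genuine obstacle: the content is entirely in the preceding lemma. The only point deserving a moment's care is to use the variant ``dualized'' action of Equation~\ref{dual-action} consistently — both in the formula $(g\cdot f)(X) = f(g^{-1}\cdot X)$ and when invoking the lemma — and to observe that replacing $g$ by $g^{-1}$ keeps us inside $\GL_m \times \GL_n \times \GL_n$, so the lemma applies verbatim. Everything else is a direct unwinding of definitions.
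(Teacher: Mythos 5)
Your proposal is correct and is essentially identical to the paper's proof: both fix $f\in I$ and $g$, use the preceding lemma to get $g^{-1}\cdot X\in \SING_{n,m}$, and conclude $(g\cdot f)(X)=f(g^{-1}\cdot X)=0$. The extra step upgrading $g\cdot I\subseteq I$ to $g\cdot I=I$ is fine (and follows anyway since the inclusion holds for all $g$, in particular $g^{-1}$), though the paper stops at the inclusion.
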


\begin{proof}
Suppose $f \in I$, and $g \in \GL_m \times \GL_n \times \GL_n$. Then for any $X \in \SING_{n,m}$, we have $(g \cdot f) (X) = f(g^{-1} \cdot X) = 0$ since $g^{-1} \cdot X \in \SING_{n,m}$ by the above lemma. Thus, $g \cdot f$ vanishes on $\SING_{n,m}$ and hence $g \cdot f \in I$ as required.
\end{proof}

We want to prove Proposition~\ref{P-ideal}. The first part of the proposition follows since $S = \SING_{n,m}$ is a cone. For, the second part, it is simple to see that $I_d = 0$. Let $x^{(k)}_{i,j}$ denote the coordinate function corresponding to the $(i,j)^{th}$ entry of the $k^{th}$ matrix, so $\C[\Mat_{n}^m] = \C[x^{(k)}_{i,j}\ |\ 1 \leq i,j \leq n, 1 \leq k \leq m]$.

\begin{proof} [Proof of Proposition~\ref{P-ideal}, part (2)]
Take $f \in \C[\Mat_{n}^m]_d$. Write $f$ as a sum of monomials. Suppose a monomial $m = \prod (x^{(k)}_{ij})^{a^{(k)}_{ij}}$ occurs in $f$ with non-zero coefficient (In particular, $\sum a^{(k)}_{ij} = d$). Consider the support of $m$, i.e., ${\rm Supp}(m) = \{(k,i,j)\ |\ a^{(k)}_{ij} >0\}$. The cardinality of ${\rm Supp}(m)$ is at most $d < n$. Let $X = (X_1,\dots,X_m)$ be such that $(X_k)_{ij} = 1$ if $(k,i,j) \in {\rm Supp}(m)$ and $0$ otherwise. Then the number of non-zero entries in any linear combination $\sum_i c_i X_i$ is at most $d$. Any matrix with at most $d$ non-zero entries is singular, so this means that $X \in \SING_{n,m}$\footnote{In fact $X \in \NSING_{n,m}$ -- this is not hard (for example it follows from the shrunk subspace criterion, see \cite{GGOW16,IQS}).}. Moreover observe that by construction $f(X) = $ coefficient of $m$ in $f$ which is non-zero, so $f \notin I_d$.  
\end{proof}

The action of $\GL_m \times \GL_n \times \GL_n$ on polynomials preserve degree, so the homogeneous polynomials of degree $d$, i.e., $\C[\Mat_{n}^m]_d$ is a subrepresentation. From the above corollary, we get that $I_n \subseteq \C[\Mat_{n}^m]_n$ is a subrepresentation. The group $\GL_m \times \GL_n \times \GL_n$ is reductive, so its representations can be decomposed as a direct sum of irreducible representations. Thus to understand $I_n$, we will have to understand the irreducibles that make up $I_n$ and their multiplicities.We will need some representation theoretic results, and we will be very brief, picking up only those results that are necessary. 

Irreducible polynomial representations of $\GL_m \times \GL_n \times \GL_n$ are indexed by triples of partitions $(\lambda,\mu,\nu)$ (where $\lambda$ has at most $m$ parts and $\mu,\nu$ have at most $n$ parts). A partition $\lambda$ is a (finite) decreasing sequence of positive numbers $(\lambda_1,\lambda_2,\dots,\lambda_k)$. We write $\lambda \vdash d$ if $\sum_i \lambda_i = d$. We will denote the irreducible representation corresponding to $(\lambda,\mu,\nu)$ by $S_{\lambda,\mu,\nu}$. An explicit description is given by Schur functors. For any partition $\pi$, denote by $S_{\pi}$ the Schur functor corresponding to $\pi$ as defined in \cite{Fulton}. $S_{\pi}$ is a functor from the category of vector spaces to itself. We refer to \cite{Fulton, Weyman} for an extensive introduction. For us, it suffices to remark that
$$
S_{\lambda,\mu,\nu} = S_{\lambda}(\C^m) \otimes S_{\mu}(\C^n) \otimes S_{\nu}(\C^n).
$$


\begin{remark}
For a reductive group $G$, let $\{V_i:i \in I\}$ denote the irreducible representations. Then for any representation $V$, it can be decomposed as a direct sum of irreducibles. Such a decomposition is not always unique. Let $E_i$ denote the isotypic component w.r.t $V_i$, i.e., the sum of all subrepresentations of $V$ that are isomorphic to $V_i$. Then the isotypic decomposition $V = \oplus_{i \in I} E_i$ is unique. Further, each $E_i = V_i^{\oplus m_i}$, and $m_i$ is called the multiplicity of $V_i$ in $V$.
\end{remark}

Consider the decomposition of $\C[\Mat_{n}^m]_d$ into isotypic components 
$$
\C[\Mat_{n}^m]_d = \bigoplus_{\lambda,\mu,\nu \vdash d} E_{\lambda,\mu,\nu}
$$
where $E_{\lambda,\mu,\nu}$ is the isotypic component corresponding to $S_{\lambda,\mu,\nu}$. We have $E_{\lambda,\mu,\nu} = S_{\lambda,\mu,\nu}^{a_{\lambda,\mu,\nu}}$ where $a_{\lambda,\mu,\nu} \in \N$ are the celebrated Kronecker coefficients. This is actually one of many equivalent ways to define Kronecker coefficients. We now focus on degree $n$ polynomials.

\begin{lemma} \label{L-kron1}
We have
$$
a_{\lambda, 1^n,1^n} = \begin{cases} 1 & \text{ if $\lambda = (n)$}; \\
0 & \text{ otherwise.}
\end{cases}
$$
\end{lemma}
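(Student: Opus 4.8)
The plan is to identify the Kronecker coefficient $a_{\lambda,1^n,1^n}$ with an inner product of symmetric-group characters, at which point the answer is immediate.

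First I would recall that, under the Cartan-twisted (hence polynomial) action defined above, the degree-$n$ part of the coordinate ring is $\C[\Mat_{n}^m]_n \cong \Sym^n(\C^m \otimes \C^n \otimes \C^n)$ with its natural action. Schur--Weyl duality gives $(\C^k)^{\otimes n} = \bigoplus_{\pi \vdash n} S_\pi(\C^k) \otimes \chi^\pi$ as a $\GL_k \times S_n$-module, where $\chi^\pi$ is the irreducible $S_n$-character indexed by $\pi$, and $\Sym^n(U) = (U^{\otimes n})^{S_n}$ for the diagonal $S_n$-action; applying this with $U = \C^m \otimes \C^n \otimes \C^n$ yields the Cauchy-type decomposition
\begin{equation*}
\Sym^n(\C^m \otimes \C^n \otimes \C^n) = \bigoplus_{\lambda,\mu,\nu \vdash n} S_\lambda(\C^m) \otimes S_\mu(\C^n) \otimes S_\nu(\C^n) \otimes \bigl(\chi^\lambda \otimes \chi^\mu \otimes \chi^\nu\bigr)^{S_n}.
\end{equation*}
Comparing with the isotypic decomposition $\C[\Mat_{n}^m]_n = \bigoplus E_{\lambda,\mu,\nu}$ identifies
\begin{equation*}
a_{\lambda,\mu,\nu} = \dim\bigl(\chi^\lambda \otimes \chi^\mu \otimes \chi^\nu\bigr)^{S_n} = \langle \chi^\lambda \otimes \chi^\mu \otimes \chi^\nu,\ \mathbf{1}\rangle_{S_n},
\end{equation*}
which is one of the standard equivalent definitions of the Kronecker coefficients alluded to above, so this step may simply be invoked rather than re-derived.

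Next I would specialize $\mu = \nu = 1^n$. Writing $\epsilon := \chi^{1^n}$ for the sign character of $S_n$, which is one-dimensional with $\epsilon(\sigma)^2 = 1$ for every $\sigma$, we have $\epsilon \otimes \epsilon \cong \mathbf{1}$, so
\begin{equation*}
a_{\lambda,1^n,1^n} = \langle \chi^\lambda \otimes \epsilon \otimes \epsilon,\ \mathbf{1}\rangle_{S_n} = \langle \chi^\lambda,\ \epsilon \otimes \epsilon\rangle_{S_n} = \langle \chi^\lambda,\ \mathbf{1}\rangle_{S_n} = \langle \chi^\lambda,\ \chi^{(n)}\rangle_{S_n},
\end{equation*}
since $\chi^{(n)} = \mathbf{1}$. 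By orthonormality of the irreducible characters of $S_n$ this equals $1$ if $\lambda = (n)$ and $0$ otherwise, which is the claim.

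There is essentially no obstacle; the only point deserving a touch of care is bookkeeping the Cartan twist, so that $\C[\Mat_{n}^m]$ really is the symmetric algebra on $\C^m \otimes \C^n \otimes \C^n$ (a polynomial, not a rational, representation) --- but this is precisely what the setup preceding the lemma arranges. As an alternative route that avoids symmetric-group characters, one can combine the symmetric Cauchy formula $\Sym^n(A \otimes B) = \bigoplus_{\pi \vdash n} S_\pi(A) \otimes S_\pi(B)$ with the skew Cauchy formula $\Lambda^n(A \otimes B) = \bigoplus_{\pi \vdash n} S_\pi(A) \otimes S_{\pi'}(B)$ ($\pi'$ the conjugate partition): these show directly that the $\bigl(S_{1^n}(\C^n) \otimes S_{1^n}(\C^n)\bigr)$-isotypic component of $\Sym^n(\C^m \otimes \C^n \otimes \C^n)$, viewed as a $\GL_m$-module, is $S_{(n)}(\C^m)$, giving the same conclusion.
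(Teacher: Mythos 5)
Your proof is correct and follows essentially the same route as the paper: both pass to the symmetric-group interpretation of the Kronecker coefficient and exploit the sign representation, the only cosmetic difference being that you use $\epsilon\otimes\epsilon\cong\mathbf{1}$ together with orthonormality of characters, while the paper uses the conjugation rule $T_\lambda\otimes T_{1^n}=T_{\lambda^\dagger}$ --- two trivially equivalent facts. Your extra derivation of the character-theoretic formula from Schur--Weyl duality, and the alternative via the symmetric/skew Cauchy formulas, are both fine but not needed beyond what the paper already invokes.
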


\begin{proof}
To see this, we have to understand Kronecker coefficients from the symmetric groups perspective. For a partition $\lambda \vdash n$, denote by $T_{\lambda}$ the corresponding representation of $S_n$. Then the decomposition of the tensor product $T_{\lambda} \otimes T_{\mu}$ into irreducibles is described by Kronecker coefficients, i.e.,

$$
T_{\lambda} \otimes T_{\mu} = \bigoplus_{\nu} T_{\nu}^{a_{\lambda,\mu,\nu}}.
$$

Note that for $\lambda = 1^n$, $T_{\lambda}$ corresponds to a $1$-dimensional representation which is called the sign representation. With this explicit description, one can deduce that $T_{\lambda} \otimes T_{1^n} = T_{\lambda^{\dag}}$ where $\lambda^{\dag}$ denotes the conjugate partition of $\lambda$. Thus, $a_{\lambda,1^n,1^n} = 0$ unless $\lambda = (1^n)^\dag = (n)$ and in the latter case, we have $a_{(n),1^n,1^n} = 1$. 
\end{proof}

\begin{lemma}
For partitions $\lambda,\mu,\nu \vdash n$, the isotypic component $E_{\lambda,\mu,\nu} \cap I_n = \phi$ if $\mu \neq (1,1,\dots,1)$.
\end{lemma}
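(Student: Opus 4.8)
The plan is to combine an elementary observation about coordinate subspaces lying inside $\SING_{n,m}$ with the existence of highest weight vectors in any nonzero subrepresentation.

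First I would constrain the monomials that can occur in elements of $I_n$. For each $r\in[n]$, let $W_r\subseteq\Mat_n^m$ be the linear subspace of tuples $X=(X_1,\dots,X_m)$ all of whose matrices have zero $r$-th row; then every matrix in $\spa(X)$ has a zero row, hence is singular, so $W_r\subseteq\SING_{n,m}$. Likewise the subspace $W'_c$ obtained by forcing the $c$-th column to vanish lies in $\SING_{n,m}$. Since $W_r$ is a coordinate subspace, the ideal of polynomials vanishing on it is the (radical) ideal generated by $\{x^{(k)}_{r,l}:k,l\in[n]\}$; as $I_n$ consists of polynomials vanishing on $\SING_{n,m}\supseteq W_r$, every monomial occurring in any $f\in I_n$ must carry a factor with row index $r$, and, by the column version, a factor with column index $c$, for every $r$ and $c$. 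Because $f$ is homogeneous of degree $n$, each monomial of $f$ then has exactly one factor in each of the $n$ rows and exactly one in each of the $n$ columns, i.e. it has the form $\prod_{j=1}^n x^{(p_j)}_{j,\sigma(j)}$ for some $\sigma\in S_n$ and $p\in[m]^n$. Hence, for the twisted (polynomial) action of $\GL_m\times\GL_n\times\GL_n$ used in this section, every monomial of $f$, and therefore $f$ itself, is a weight vector of weight $(1,1,\dots,1)$ for the maximal torus of each $\GL_n$-factor — note that this uses that $\sigma$ is a permutation.

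Next I would extract a highest weight vector. Suppose, for contradiction, that $E_{\lambda,\mu,\nu}\cap I_n\neq 0$. Both $E_{\lambda,\mu,\nu}$ and $I_n$ are $\GL_m\times\GL_n\times\GL_n$-subrepresentations of $\C[\Mat_n^m]_n$, so by complete reducibility their intersection is a nonzero subrepresentation, necessarily of the form $S_{\lambda,\mu,\nu}^{\oplus b}$ with $b\geq 1$; in particular it contains a highest weight vector $f$ of weight $(\lambda,\mu,\nu)$. Then $f$ is a weight vector of weight $\mu$ for the $\GL_n$-factor labelled by $\mu$. Comparing with the previous step, where $f$ was shown to have weight $(1,\dots,1)$ for that factor, we conclude $\mu=(1,1,\dots,1)$, which is the contrapositive of the claim. (The same reasoning forces $\nu=(1,1,\dots,1)$, giving the transpose-symmetric companion statement.)

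The argument itself is very short; the one place demanding care is the bookkeeping of conventions for the twisted action of the appendix: one must check that a monomial squarefree in rows and in columns genuinely has torus weight $(1,\dots,1)$ for the relevant $\GL_n$-factor (not its dual, nor a nontrivial permutation of it), and that the partition labelling an irreducible is precisely the dominant weight read off from its highest weight vector. With that in hand, this lemma together with its $\nu$-analogue and Lemma~\ref{L-kron1} pins $I_n$ down inside the single isotypic component $E_{(n),1^n,1^n}$, which is what the rest of Section~\ref{sec:van} requires in order to identify $(I_S)_n$ with $\spa(\det(\sum_i c_iX_i))$.
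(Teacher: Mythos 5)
Your proof is correct and rests on the same two ingredients as the paper's: coordinate subspaces of tuples with a vanishing row (or column) lie in $\SING_{n,m}$, and a nonzero intersection with the isotypic component must contain a highest weight vector whose torus weight can be read off from the monomials it contains. You simply run the implication in the opposite direction — constraining the monomial support of \emph{every} element of $I_n$ and deducing that its $\GL_n\times\GL_n$-weight is $(1^n,1^n)$, rather than showing that weight vectors of weight $(\lambda,\mu,\nu)$ with $\mu_n=0$ fail to vanish on the last-row-zero subspace — which as a small bonus handles the $\nu\neq(1,\dots,1)$ case simultaneously instead of by the transpose symmetry invoked in the subsequent corollary.
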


\begin{proof}
First, let $T_k \subseteq \GL_k$ denote the standard torus, i.e., the all invertible diagonal matrices. Then $T_k$ is a maximal torus for $\GL_k$. For our purposes, $T = T_m \times T_n \times T_n$ is a maximal torus of $\GL_m \times \GL_n \times \GL_n$.


A weight vector for the action of $\GL_m \times \GL_n \times \GL_n$ is simply a weight vector for $T$, which is a torus. We have already discussed weight vectors for actions of tori. Further, the characters of $T = T_m \times T_n \times T_n$ can be identified with $\Z^m \times \Z^n \times \Z^n$, and so a triple of partitions $(\lambda,\mu,\nu)$ (where $\lambda$ has at most $m$ parts and $\mu,\nu$ have at most $n$ parts) can be identified with a character. 

If $E_{\lambda,\mu,\nu} \cap I_n \neq \phi$, then $I_n$ has a subrepresentation isomorphic to $S_{\lambda,\mu,\nu}$, then it has a highest weight vector of weight $(\lambda,\mu,\nu)$ (since $S_{\lambda,\mu,\nu}$ is generated by such a highest weight vector -- a basic fact). Thus, to show that $E_{\lambda,\mu,\nu} \cap I_n = \phi$, it suffices to show that {\em all} weight vectors of weight $(\lambda,\mu,\nu)$ are not in $I_n$.

Let us consider all weight vectors of weight $(\lambda,\mu,\nu)$ in $\C[\Mat_{n}^m]_n$. Let $x^{(k)}_{ij}$ denote the coordinate function corresponding to the $(i,j)^{th}$ entry of the $k^{th}$ matrix $X_k$. Then its weight is $(\delta_k,\delta_i,\delta_j)$, where $\delta_a = (0,\dots,0,\underbrace{1}_a,0,\dots,0)$. Thus for a monomial $\prod (x^{(k)}_{ij})^{n^{(k)}_{ij}}$, its weight is $\sum n^{(k)}_{ij} (\delta_k,\delta_i,\delta_j)$.

The collection of all weight vectors of weight $(\lambda,\mu,\nu)$ in $\C[\Mat_{n}^m]$ is a linear subspace spanned by monomials of weight $(\lambda,\mu,\nu)$. Observe that since $\mu \vdash n$ and $\mu \neq (1,1,\dots,1)$, we have that $\mu_n = 0$. Thus, any monomial of degree $n$ whose weight is $(\lambda,\mu,\nu)$ does not depend on the last rows of the matrices. 

Thus, any weight vector $f$ of weight $(\lambda,\mu,\nu)$ is a linear combination of monomials all of which do not involve the last rows of the matrices. Let $U \subseteq \Mat_{n}^m$ be the subspace of tuples of matrices whose last row is zero. Then the weight vector $f$ is a nonzero polynomial on $U$, and $U \subseteq \SING_{n,m}$ (in fact $U \subseteq \NSING_{n,m}$). Thus, the weight vectors of weight $(\lambda,\mu,\nu)$ are not in $I_n$. 
\end{proof}

\begin{corollary}
The multiplicity of $S_{(n),1^n,1^n}$ in $\C[\Mat_{n}^m]_n$ is one, and this subrepresentation is equal to $I_n$.
\end{corollary}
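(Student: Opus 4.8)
The multiplicity claim is immediate from the isotypic decomposition recalled above: writing $\C[\Mat_{n}^m]_n = \bigoplus_{\lambda,\mu,\nu \vdash n} E_{\lambda,\mu,\nu}$ with $E_{\lambda,\mu,\nu} \cong S_{\lambda,\mu,\nu}^{\oplus a_{\lambda,\mu,\nu}}$, the irreducible $S_{(n),1^n,1^n}$ occurs only in the single summand $E_{(n),1^n,1^n}$, and there with multiplicity the Kronecker coefficient $a_{(n),1^n,1^n}$, which equals $1$ by Lemma~\ref{L-kron1}. So the remaining content of the statement is the identification $I_n = E_{(n),1^n,1^n}$, and the plan is to pin this down in three steps.

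First I would record the general semisimplicity fact: since $\GL_m \times \GL_n \times \GL_n$ is reductive and $I_n$ is a subrepresentation of $\C[\Mat_{n}^m]_n$, decomposing $I_n$ into irreducibles and grouping them by isomorphism type gives the canonical refinement $I_n = \bigoplus_{\lambda,\mu,\nu \vdash n} \bigl(I_n \cap E_{\lambda,\mu,\nu}\bigr)$, where each block lands in the corresponding isotypic component and exhausts $I_n \cap E_{\lambda,\mu,\nu}$. Second, I would cut this sum down. The preceding lemma shows $I_n \cap E_{\lambda,\mu,\nu} = 0$ whenever $\mu \neq 1^n$; the same argument, verbatim with ``last rows of the matrices'' replaced by ``last columns'' — equivalently, using that $\SING_{n,m}$, hence $I_n$, is stable under the transpose $\tau(X_1,\dots,X_m) = (X_1^t,\dots,X_m^t)$, which under the present identification of $\Mat_{n}^m$ intertwines the action of $(A,B,C)$ with that of $(A,C,B)$ and therefore carries $E_{\lambda,\mu,\nu}$ to $E_{\lambda,\nu,\mu}$ — shows $I_n \cap E_{\lambda,\mu,\nu} = 0$ whenever $\nu \neq 1^n$. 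Among the surviving triples, those with $\mu = \nu = 1^n$, Lemma~\ref{L-kron1} gives $E_{\lambda,1^n,1^n} = 0$ unless $\lambda = (n)$. Hence $I_n \subseteq E_{(n),1^n,1^n} = S_{(n),1^n,1^n}$.

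Third, it remains only to verify $I_n \neq 0$; then, $S_{(n),1^n,1^n}$ being irreducible, this inclusion is an equality, i.e.\ $I_n = E_{(n),1^n,1^n}$, which is exactly the assertion (the multiplicity-one part coming from Lemma~\ref{L-kron1}). For non-vanishing I would exhibit one element: $\det(X_1) \in \C[\Mat_{n}^m]_n$ is a nonzero homogeneous polynomial of degree $n$, and it vanishes on $\SING_{n,m}$ because for any $X = (X_1,\dots,X_m) \in \SING_{n,m}$ the polynomial $\det(\sum_i t_i X_i)$ is identically zero in the $t_i$, so its value $\det(X_1)$ at $t = (1,0,\dots,0)$ is zero; thus $\det(X_1) \in I_n$. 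As a consistency check one may note directly that $\det(X_1) = \sum_{\sigma} \sgn(\sigma)\prod_i x^{(1)}_{i\sigma(i)}$ is a highest weight vector of weight $((n),1^n,1^n)$, so it lies in the expected isotypic piece.

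The only genuinely delicate point, and the one I would be most careful about, is the reduction to $\mu = \nu = 1^n$: the preceding lemma is stated only for the first $\GL_n$-partition $\mu$, so one must actually supply the symmetric statement for $\nu$, and in doing so verify that the transpose $\tau$ really does conjugate the $(A,B,C)$-action into the $(A,C,B)$-action for the twisted action of Equation~\ref{dual-action} (equivalently, just re-run the lemma's argument with ``columns'' in place of ``rows''). Once that symmetry is in hand, everything else is routine bookkeeping with the isotypic decomposition and the Kronecker coefficients already computed in Lemma~\ref{L-kron1}.
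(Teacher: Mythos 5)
Your proposal is correct and follows essentially the same route as the paper: decompose $I_n$ into isotypic pieces, kill every component with $\mu \neq 1^n$ via the preceding lemma and its column (or transpose-symmetry) analogue for $\nu \neq 1^n$, use Lemma~\ref{L-kron1} to force $\lambda = (n)$ and multiplicity one, and conclude by irreducibility together with $\det(X_1) \in I_n$. The paper likewise handles the $\nu \neq 1^n$ case by rerunning the lemma's argument with the last column in place of the last row, so your transpose observation is just a clean packaging of the same step.
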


\begin{proof}
First, we note that $I_n$ is a direct sum of irreducible subrepresentations of $\C[\Mat_{n}^m]$. Second, we note that all the isotypic components other than $E_{(n),1^n,1^n}$ do not intersect $I_n$. This is because, for any other choice of $(\lambda, \mu, \nu)$ for which $a_{\lambda,\mu,\nu} >0$, we have either $\mu \neq 1^n$ or $\nu \neq 1^n$ by Lemma~\ref{L-kron1}. If $\mu \neq 1^n$, then the above lemma tells us that the isotypic component does not intersect $I_n$. If $\nu \neq 1^n$, the argument is similar (In the proof of the above lemma, one would use the last column being zero rather than the last row). Thus $I_n \subseteq E_{(n),1^n,1^n}.$ Now, since $a_{(n),1^n,1^n} = 1$ by Lemma~\ref{L-kron1}, we know that $E_{(n),1^n,1^n}$ is irreducible and has no proper subrepresentations. Clearly $I_n \neq \{0\}$ since $\det(X_1) \in I_n$, so $I_n = E_{(n),1^n,1^n}$, which comprises of the unique copy of $S_{(n),1^n,1^n}$ in $\C[\Mat_{n}^m]_n$.
\end{proof}

\begin{proof} [Proof of Proposition~\ref{P-ideal}, part (3)]
This can be seen in many ways, some very explicit. However, we will take a short route out by making the following observation. Consider the action of $\SL_n \times \SL_n \subseteq \GL_n \times \GL_n \subseteq \GL_m \times \GL_n \times \GL_n$ by left-right multiplication on $\Mat_{n}^m$. Then, the invariant polynomials of degree $n$ are precisely the isotypic component corresponding to $S_{(n),1^n,1^n}$ (see for example \cite[Proposition~4.1]{Visu}), which by the above corollary is precisely $I_n$.

There has been much work on the ring of invariants for the $\SL_n \times \SL_n$ action. It is a special case of a semi-invariant ring of quivers, in particular for the generalized Kronecker quiver. Such semi-invariants have explicit determinantal descriptions, an important and non-trivial result shown simultaneously and independently by three groups of researchers (see \cite{DW,SVd,DZ}). From this description, we get that the invariants of degree $n$ for the action of $\SL_n \times \SL_n$ are spanned by polynomials of the form $\det(\sum_i c_i X_i)$ (see \cite{DM,IQS}). This completes the proof.
\end{proof}

\subsection{Ideal of polynomials vanishing on $\NSING_{n,m}$}
We note that all the arguments above for understanding the ideal of polynomials vanishing on $\SING_{n,m}$ work equally well for $\NSING_{n,m}$, and one obtains:

\begin{lemma} \label{L-N-ideal}
Consider $\NSING_{n,m} \subseteq V = \Mat_{n}^m$, and let $J \subseteq \C[V]$ be the ideal of polynomial functions that vanish on $\NSING_{n,m}$. Then,
\begin{enumerate}
\item $J$ is graded;
\item $J_a$ is empty if $a < n$;
\item $J_n = \spa (\det(\sum_i c_iX_i): c_i \in \C).$
\end{enumerate}
\end{lemma}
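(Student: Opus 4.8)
The plan is to transcribe the proof of Proposition~\ref{P-ideal}, substituting $\NSING_{n,m}$ for $\SING_{n,m}$ throughout, and to verify at each step that the argument used only features shared by both varieties. Write $J$ for the vanishing ideal of $\NSING_{n,m}$ and $I$ for that of $\SING_{n,m}$; recall $\NSING_{n,m}\subseteq\SING_{n,m}$, hence $I\subseteq J$.

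For part (1) I would first observe that $\NSING_{n,m}$ is a cone: for $\lambda\in\C^\times$ one has $\sum_i t_i(\lambda X_i)=\lambda\sum_i t_i X_i$, and multiplying a matrix over the free skew field $\C\llangle t_1,\dots,t_m\rrangle$ by the central scalar $\lambda$ does not affect singularity, while $0\in\NSING_{n,m}$ trivially; hence $J$ is graded. For part (2), given a homogeneous $f$ of degree $a<n$ and a monomial occurring in it with nonzero coefficient, its support has size $\le n-1$, so the associated $0/1$ tuple $X$ has union support missing some row index; then $\sum_i t_i X_i$ has a zero row and is therefore singular over $\C\llangle t_1,\dots,t_m\rrangle$, so $X\in\NSING_{n,m}$ while $f(X)$ equals the nonzero coefficient. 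Thus $J_a=0$ for $a<n$.

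For part (3) — the substantive part — I would first check that $\NSING_{n,m}$ is stable under the twisted action of $\GL_m\times\GL_n\times\GL_n$ on $\Mat_n^m$: exactly as in the $\SING$ case, $\spa(g\cdot X)=B'(\spa X)(C^t)'$ for $g=(A,B,C)$, with $B',(C^t)'$ invertible and the $\GL_m$-factor only changing a basis of $\spa X$, and non-commutative singularity of a matrix space is preserved by left/right multiplication by invertible matrices. Consequently $J$, and hence $J_n\subseteq\C[\Mat_n^m]_n$, is a $\GL_m\times\GL_n\times\GL_n$-submodule. Now the two representation-theoretic lemmas used for $\SING$ apply verbatim: the Kronecker-coefficient computation of Lemma~\ref{L-kron1} is insensitive to the variety, and the weight-vector argument showing that an isotypic component $E_{\lambda,\mu,\nu}$ with $\mu\ne 1^n$ (or $\nu\ne 1^n$) has trivial intersection with $J_n$ goes through because the coordinate subspace $U$ of tuples whose last row (resp. last column) vanishes lies inside $\NSING_{n,m}$, again because $\sum_i t_i X_i$ then has a zero row (resp. column). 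Hence $J_n$ is contained in the multiplicity-one isotypic component $E_{(n),1^n,1^n}$, which the $\SING$ argument already identifies with $I_n=\spa(\det(\sum_i c_i X_i):c_i\in\C)$. Since each $\det(\sum_i c_i X_i)$ vanishes on $\SING_{n,m}\supseteq\NSING_{n,m}$, we also have $\spa(\det(\sum_i c_i X_i):c_i\in\C)=I_n\subseteq J_n$, and therefore $J_n=I_n=\spa(\det(\sum_i c_i X_i):c_i\in\C)$.

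The main obstacle is really just the bookkeeping of checking that the whole chain transfers — cone property, $\GL_m\times\GL_n\times\GL_n$-stability, and the containments $U\subseteq\NSING_{n,m}$ for the coordinate subspaces $U$ used in the weight-vector lemma. The one genuinely non-formal input is the invariance of non-commutative singularity of a matrix space under left/right multiplication by invertible matrices (and under basis change of the span), which follows from the shrunk-subspace characterization of $\NSING_{n,m}$ in \cite{GGOW16,IQS} or from invariance of $\det(\sum_i t_i X_i)$ over the free skew field; everything else is a transcription of the $\SING$ argument.
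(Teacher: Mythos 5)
Your proposal is correct and takes essentially the same route as the paper, which proves Lemma~\ref{L-N-ideal} simply by remarking that all the arguments for $\SING_{n,m}$ (including the observation, already made in a footnote and in the weight-vector lemma, that the relevant coordinate subspaces lie in $\NSING_{n,m}$) apply verbatim to $\NSING_{n,m}$. Your write-up just makes explicit the transfer checks — cone property, $\GL_m\times\GL_n\times\GL_n$-stability via the shrunk-subspace/invertible-multiplication argument, and the containments $U\subseteq\NSING_{n,m}$ — that the paper leaves implicit.
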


\section{Positive characteristic} \label{app.pos.char}
In this section, we will point out the parts of the paper that require characteristic zero, and how to make the requisite modifications for the statements to hold in. Let $K$ be an algebraically closed field of arbitrary characteristic. The first issue comes with the use of Lie algebras. Lie algebras are a little trickier to define because of the lack of derivatives when working over $K$. Nevertheless, one can define the Lie algebra $\Lie(G) = \g$ of an algebraic group $G$ as the space of all derivations of $K[G]$. As a vector space one can identify this with the tangent space at the identity element. Next, one does not have an exponential map that allows us to pass back from the Lie algebra to the group. 

Now, let $V$ be a vector space over $K$ and let $S \subseteq V$ be a subvariety. Let $\mathcal{G}_S$ denote its group of symmetries, $\mathcal{G}_S^\circ$ its identity component and $\g_S$ its Lie algebra. Let $I_S$ denote the ideal of polynomials in $K[V]$ vanishing on $S$. When we work over $K$ instead of $\C$, the proof of Lemma~\ref{Lgos-graded} clearly goes through and Proposition~\ref{P-Liealg-all} still remains true, for example by \cite[Proposition~10.31]{Milne}.

The other main issue in implementing this strategy is to understand the degree $n$ component of the ideal of polynomials vanishing on $\SING_{n,m}$, as done in the previous appendix.  Let $I = I_{\SING_{n,m}} \subseteq K[\Mat_{n}^m]$. On first glance, it looks like we used quite heavily the notion of complete reducibility for $\GL_m \times \GL_n \times \GL_n$ actions. But in fact, we can get away with far less. The first idea is to restrict our attention to $\SL_n \times \SL_n$. Clearly $I_n$ is an $\SL_n \times \SL_n$ subrepresentation. It need not break up as a direct sum of irreducibles, but will definitely have a composition series. Nevertheless, $I_n$ is a direct sum of weight spaces. We claim that the only highest weight vectors that can be in $I_n$ must have weight zero. Basically the argument we used in the previous section shows that any highest weight vector (for $\GL_n \times \GL_n$) in $I_n$ must have weight $((1,1,\dots,1), (1,1,\dots,1))$. Highest weight vectors for $\GL_n \times \GL_n$ are precisely the same as the highest weight vectors for $\SL_n \times \SL_n$, and the weight $((1,1,\dots,1), (1,1,\dots,1))$ for $\GL_n \times \GL_n$ corresponds to the zero weight for $\SL_n \times \SL_n$.

Now, let $\mathcal{X}$ denote the set of all weights for $I_n$ (w.r.t $\SL_n \times \SL_n)$ whose multiplicity is nonzero. It is well known (and easy to see) that the set of weights is stable under the action of the symmetric group $S_n$ (also known as the Weyl group). If $\mathcal{X}$ contains a non-zero weight, then it contains a non-zero dominant weight (using the action of $S_n$). Consider the collection of all non-zero dominant weights. Since this is a finite set, it has a maximal element w.r.t to the usual partial order ($\lambda \prec \mu$ if $\mu-\lambda$ is a sum of positive roots). Any weight vector for this maximal weight must be a highest weight vector! But this contradicts the discussion above, so $\mathcal{X}$ must be the singleton set $\{0\}$. 

In other words, $I_n$ is an $\SL_n \times \SL_n$ stable subspace of the zero weight space (in the space of degree $n$ polynomials on $\Mat_n^m$). Irreducible representations of $\SL_n \times \SL_n$ are indexed by their highest weights (holds true in all characteristics), and so any composition series for $I_n$ must only contain trivial representations. However, trivial representations do not have any self-extensions, so $I_n$ must be a direct sum of trivial representations. In other words, $I_n$ must be a subspace of the $\SL_n \times \SL_n$ invariants. On the other hand, we know (by \cite{DW}) that $\SL_n \times \SL_n$ invariants are spanned by polynomials of the form $\det(\sum_i c_i X_i)$, which are all clearly in $I_n$. This shows that Proposition~\ref{P-ideal} continues to hold.

Now, armed with the above results, one sees that the computation of $\g_S$ for $S = \SING_{n,m}$ is exactly the same, and we get $\g_S = \Lie(G_{n,m})$. However, the subgroups-subalgebras correspondence is not necessarily true over fields of positive characteristic, so we cannot immediately conclude that $G_S^\circ = G_{n,m}$. However, it is definitely clear that $G_{n,m} \subseteq G_S^\circ$. Suppose $G_S^\circ$ were a strictly larger algebraic group, then $\dim G_S^\circ > \dim G_{n,m} = \dim (\Lie(G_{n,m})) = \g_S$. Note that $\dim(G_{n,m}) = \dim(\Lie(G_{n,m}))$ follows from the fact that $G_{n,m}$ is smooth (or one can simply compare the dimensions to those in characteristic zero). Thus, we have $\dim G_S^\circ > \dim \g_S$, but this is a contradiction because $\dim \g_S$ is the dimension of the tangent space at identity for $G_S^\circ$, which is always at least $\dim G_S^\circ$ (see \cite[Proposition~1.37]{Milne}). Hence, we have $G_S^\circ = G_{n,m}$. The computation of the entire group of symmetries follows verbatim. The same arguments also compute the group of symmetries for $\NSING_{n,m}$ just as in the characteristic zero case.

The rest of the arguments are effectively the same. The coordinate subspaces in $\NSING_{n,m}$ and $\SING_{n,m}$ have exactly the same descriptions in terms of permutation free supports, and so as long as $\NSING_{n,m} \subsetneq \SING_{n,m}$, the latter cannot be a null cone. Finally, to show that $\NSING_{n,m} \subsetneq \SING_{n,m}$ for $n,m \geq 3$, we relied on an explicit example of $3$-tuple of $3 \times 3$ matrices which was in $\SING_{n,m}$, but not in $\NSING_{n,m}$. This example continues to hold in positive characteristic as well, which can be explicitly checked (it can also be derived as a special case of \cite[Proposition~1.8]{DM-explicit} for $p = 1$).

\end{document}